\documentclass[11pt, leqno]{amsart}

\usepackage[normalem]{ulem}
\usepackage[titletoc]{appendix}
\usepackage{lineno}

\usepackage{graphicx, psfrag, epstopdf}

\usepackage{booktabs}   
\usepackage{hyperref}
\usepackage{comment}
\usepackage[capitalize,nameinlink]{cleveref}

\usepackage{dsfont}
\usepackage{amsmath}
\usepackage[fleqn]{mathtools}
\usepackage{amssymb}
\usepackage{graphicx}
\usepackage{tikz}

\usepackage{amssymb}

\usepackage{amsmath}

\usepackage{amsfonts}

\usepackage[utf8]{inputenc}
\usepackage[T1]{fontenc}
\usepackage{mathpazo}
\usepackage{color}
\usepackage{yfonts}

\usepackage{paralist}

\usepackage{stmaryrd}
\usepackage{enumitem}

\usepackage{autonum} %

\def\eps{\varepsilon}
\def\diam{\mathrm{diam}}

\newcommand{\cR}{\mathcal R}

\newcommand{\norm}[1]{\| #1\|}

\def\cI{          \mathcal I}

\def\cJ{          \mathcal J}

\def\cB{          \mathcal B}
\def\cC{          \mathcal C}
\def\cD{          \mathcal D}
\def\cF{          \mathcal F}
\def\cG{          \mathcal G}

\definecolor{orange}{rgb}{1,.5,0}
 
\definecolor{olive}{rgb}{0.5,0.5,0}

\def\br{        \mathbf{r}}

\let\cal\mathcal

\def \R{{\mathbb R}}

\def \Z{{\mathbb Z}}
\def \N{{\mathbb N}}

\newcommand{\prf}{{\begin{proof}}}
	\newcommand{\epf}{{\end{proof}}}

\newcommand{\bw}{{\mathbf w}}

\newcommand{\cS}{{\mathcal S}}

\newcommand{\cP}{{\mathcal P}}

\newcommand{\cK}{{\mathcal K}}

\newcommand{\ary}{\begin{eqnarray}}
	\newcommand{\eary}{\end{eqnarray}}

\newcommand{\aryst}{\begin{eqnarray*}}
	\newcommand{\earyst}{\end{eqnarray*}}

\newcommand{\enmt}{\begin{enumerate}}
	\newcommand{\eenmt}{\end{enumerate}}

\definecolor{adjcol}{rgb}{0.55,0.00,0.55}

\providecommand{\Tube}{\mathcal{N}}
\providecommand{\bw}{\mathbf w}
\providecommand{\cW}{\mathcal W}
\providecommand{\cB}{\mathcal B}
\providecommand{\cC}{\mathcal C}

\usepackage{mathptmx}
\usepackage{bm}

\theoremstyle{plain}
\newtheorem{thm}{\bf Theorem}[section]

\newtheorem{lemma}[thm]{\bf Lemma}
\newtheorem{prop}[thm]{\bf Proposition}

\newtheorem{cor}[thm]{\bf Corollary}

\theoremstyle{definition}
\newtheorem{defi}[thm]{Definition}

\theoremstyle{remark}
\newtheorem{rema}[thm]{Remark}

\theoremstyle{definition}

\def\bee{\begin{equation}}
	\def\eee{\end{equation}}

\newif\ifA

\newlist{properties}{enumerate}{1}
\setlist[properties]{label=\textup{(P\arabic*)}, ref=\textup{(P\arabic*)}}

\def\Dil{\mathrm{S}}

\def\supp{{\rm supp}}

\def\cJ{\mathcal{J}}

\def\dist{ {\rm dist} }

\def\bd{{\mathbf{d}}}

\newcommand{\GL}{{\rm GL}}

\newcommand{\Gr}{{\rm Gr}}

\newcommand{\bE}{{\bf E}}

\newcommand{\bi}{{\bf i}}

\newcommand{\bj}{{\bf j}}

\newcommand{\bp}{{\bf p}}

\newcommand{\cM}{\mathcal{M}}

\newcommand{\dimH}{\dim_{\mathrm H}}
\newcommand{\dimB}{\dim_{\mathrm B}}
\newcommand{\dimaff}{\dim_{\mathrm{aff}}}

\definecolor{blue}{rgb}{0,0,1}
\definecolor{red}{rgb}{1,0,0}
\definecolor{green}{rgb}{0,.7,0}
\definecolor{brown}{rgb}{0.6,0.3,0.1}

\definecolor{revcolor}{rgb}{0.56,0,1}   %
\definecolor{delcolor}{gray}{0.5}       %

\DeclareFontFamily{U}{solomos}{}
\DeclareErrorFont{U}{solomos}{m}{n}{10}
\DeclareFontShape{U}{solomos}{m}{n}{
	<-> s*[1.1]  gsolomos8r
}{}

\begin{document}
	\bibliographystyle{alpha}
	
	\title[Dimension of affine IFS]{Dimension of affine iterated function systems on $\R^d$}

	\author{Jialun Li}
	\address{Jialun Li,  School of Mathematical Sciences, Fudan University, No. 220 Handan Road, Shanghai 200433, China}
	\email{  jialunli@fudan.edu.cn}

	\author{Wenyu Pan}
	\address{Wenyu Pan, Department of Mathematics, University of Toronto, 40 St George St, Toronto ON,	M5S 2E4, Canada}
	\email{wenyup.pan@utoronto.ca}
	
	\author{Zhiyuan Zhang}
	
	\address{Zhiyuan Zhang,
		Department of Mathematics, 
		Imperial College London, 
		London SW7 2AZ, 
		United Kingdom 
	}
	\email{zhiyuan.zhang@imperial.ac.uk} 		
	\begin{abstract}
		Let $\{f_i(x)=M_ix+a_i\}_{i\in I}$ be an affine iterated function system on $\R^d$ with attractor $\Lambda$, and let $\mu$ be an associated self-affine measure. Assume that the $M_i$ generate a Zariski dense subgroup of $\GL_d(\R)$. Under the strong open set condition, we show that $\dimH\Lambda$ equals the affinity dimension and that $\dimH\mu$ equals the Lyapunov dimension. 
		
		The above results are consequences of the following projection theorem, which requires no separation condition. For every $1\leq n\leq d$ and for Furstenberg measure a.e. $(d-n)$-dimensional subspace $V$, 
		\[\dimH (P^V)_*\mu=\min\{n,\,\dimH\mu\}.\]
		This projection theorem in fact holds under hypotheses weaker than Zariski density: simplicity of the Lyapunov spectrum and a mild transversality condition (the Oseledets non-concentration property). The source of dimension increment in our proof is the positive dimension of the fiberwise conditional measures,
		exploited through Wu's one-dimensional entropy increase theorem. A key ingredient in reducing the
		higher-dimensional problem to this setting is a new structural result: a slab-filling property of the measure.
	\end{abstract}
	
	\maketitle

	\tableofcontents

	\section{Introduction}
	An affine iterated function system (IFS) on \(\mathbb R^d\) is a finite family of affine maps \(\Phi=\{f_i(x)=M_i x+a_i\}_{i \in I} \), with invertible contracting matrices \(M_i \in \GL_{d}(\R)\) and translations \(a_i \in \R^{d} \).  
	Hutchinson \cite{Hut} proved that there is a unique nonempty compact set \(\Lambda\) satisfying \(\Lambda=\bigcup_i f_i(\Lambda)\). This set is called the attractor of the system or a self-affine set. A central problem in fractal geometry is to determine the Hausdorff dimension of \(\Lambda\).  The study of this question has revealed deep connections between geometric measure theory and dynamical systems.

	When the linear parts are conformal, the attractor $\Lambda$  is called a self-similar set. The classical results of Moran and Hutchinson in \cite{Moran1946,Hut} provide a simple formula for the Hausdorff dimension of self-similar sets under the open set condition.  Overlaps, however, introduce subtler arithmetic phenomena, already seen in Bernoulli convolutions \cite{Erdos1939, Solomyak1995}. See  \cite{hochman_self-similar_2014, hochman_self-similar_2015} for Hochman's recent breakthrough on self-similar measures.

	For a general affine IFS, contraction rates may vary with direction. 
	The Bedford-McMullen carpets \cite{Bedford1984,McMullen1984} show that the Hausdorff dimension can be strictly smaller than the box dimension, even under the open set condition. Their Hausdorff dimension can also depend on the translation vectors, even when the linear parts are fixed. In these examples, the linear parts preserve the coordinate directions. This motivates the irreducibility assumptions used to rule out such structures in more general dimension formulas.

	\subsection{Dimension of self-affine attractors}
	
	Falconer \cite{falconer_hausdorff_1988} introduced the \emph{affinity dimension}, defined in terms of the singular values of matrix products, and proved that it gives an upper bound for the Hausdorff dimension. He also showed that if each linear part has operator norm less than \(\frac{1}{3}\), then this bound is attained for Lebesgue-almost every choice of translations. 
	The main question is therefore to determine when a specific system attains this bound. 
	For partial results in this direction, see \cite{LalleyGatzouras1992,hueter_falconers_1995,Solomyak1998,KaenmakiShmerkin2009,BK}.
	
	A decisive advance came from B\'ar\'any, Hochman, and Rapaport in \cite{barany_hausdorff_2017}. Under a separation condition called the \emph{strong open set condition} (SOSC; see \cref{def SSCSOSC}), and assuming that the normalized matrices \(M_i/\sqrt{|\det M_i|}\) generate a noncompact and totally irreducible group, they proved that for $d = 2$, 
	\[
	\dimH \Lambda=\dimB \Lambda=\dimaff\Phi.
	\]
	Here, $\dimH,\dimB$ and $\dimaff$ denote the Hausdorff, box, and affinity dimensions, respectively. The case $d=3$ was settled by Rapaport in \cite{Rapaport24}.
	These developments suggest the following folklore conjecture:
	\begin{quote}
		\emph{
			Under suitable irreducibility conditions on the linear parts and a separation condition, the Hausdorff dimension of a self-affine set should equal its affinity dimension.
		}
	\end{quote}

	Our first main result confirms this conjecture for every $d$. 
	
	\begin{thm}\label{thm affinity}
		Let $\Phi$ be an affine IFS with   Zariski dense linear parts, and assume that $\Phi$ satisfies SOSC. Then the Hausdorff dimension of its attractor $\Lambda$ is equal to its affinity dimension, that is,
		\[\dim_{\mathrm{H}} \Lambda=\dim_{\mathrm{aff}}\Phi.\]
	\end{thm}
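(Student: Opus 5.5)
The upper bound $\dimH\Lambda\le\dimaff\Phi$ is Falconer's theorem and needs no hypotheses, so the whole task is the lower bound. I would follow the variational route of B\'ar\'any--Hochman--Rapaport. First pass from sets to measures. By the thermodynamic formalism for the singular value pressure (K\"aenm\"aki; for Zariski dense, hence irreducible, linear parts the equilibrium state is unique and quasi-Bernoulli), together with a standard approximation by Bernoulli measures on iterated systems $\Phi^N$, I would reduce to the following. For every $\eps>0$ there is a self-affine measure $\mu$, Bernoulli for some power $\Phi^N$ (which is still Zariski dense and still satisfies SOSC), whose Lyapunov dimension satisfies $\dim_{\mathrm{LY}}\mu\ge\dimaff\Phi-\eps$. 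It then suffices to show $\dimH\mu=\dim_{\mathrm{LY}}\mu$ whenever SOSC holds, i.e.\ the second announced result.

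For that measure statement I would use the Ledrappier--Young type formula for self-affine measures (B\'ar\'any--K\"aenm\"aki, Feng). It expresses $\dimH\mu$ as $\sum_k(\lambda_{k+1}-\lambda_k)/\lambda_{k+1}\cdot(\text{projection dimension})$ plus entropy terms. More precisely, it writes the dimension through the dimensions $\gamma_n$ of the projections $(P^V)_*\mu$ for Furstenberg-typical $(d-n)$-dimensional $V$, and the random-walk entropy $h$, which equals $-\sum p_i\log p_i$ under SOSC. The Lyapunov exponents are simple by Guivarc'h--Raugi/Gol'dsheid--Margulis, since the linear parts are Zariski dense. The formula then reads
\[
h=\sum_{n}(\lambda_{n+1}-\lambda_{n})\,\bigl(\text{conditional dimension terms}\bigr),
\]
and $\dimH\mu=\dim_{\mathrm{LY}}\mu$ holds exactly when each $\gamma_n$ equals $\min\{n,\dimH\mu\}$; the exact bookkeeping is as in Rapaport's $d=3$ argument. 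Zariski density also gives the Oseledets non-concentration property. So the projection theorem stated in the abstract applies, $\gamma_n=\min\{n,\dimH\mu\}$ for all $n$, and substituting gives equality with the Lyapunov dimension. Letting $\eps\to0$ yields $\dimH\Lambda\ge\dimaff\Phi$.

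The real obstacle is the projection theorem. I would prove it by contradiction. Suppose $\gamma_n<\min\{n,\dimH\mu\}$ and take the smallest such $n$. Project onto a typical $n$-dimensional quotient and study the fibers of the next coordinate direction: the conditional measures on the one-dimensional fibers have positive dimension by the Ledrappier--Young decomposition. I would then show that at most scales the projected measure, convolved with pieces of $\mu$ at comparable scales, is a convolution to which Wu's one-dimensional entropy increase theorem applies; this forces the entropy of the projection to increase strictly, contradicting the self-affine invariance. The hard part is reducing from $\R^n$ to one dimension. One must show that the projected measure is not saturated along a lower-dimensional slab, which is the slab-filling property. I would try to derive it from non-concentration and the minimality of $n$, namely that the lower projections are already full.
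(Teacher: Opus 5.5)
Your overall architecture matches the paper's. Falconer's bound gives $\dim_{\mathrm H}\Lambda\le\dimaff\Phi$. The lower bound reduces to Bernoulli self-affine measures of iterates; Zariski density and SOSC do pass to $\Phi^N$. For those measures one uses \cref{main thm}: $\dim_{\mathrm H}\mu=\dim_{\mathrm L}\mu$ under SOSC, via the Ledrappier--Young formula and the projection theorem, with simple spectrum and ONC supplied by Zariski density (\cref{cor:ONC Zariski dense}).

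The step you undersell is the reduction itself, and the route you indicate for it does not work as described. It is not a routine consequence of uniqueness and the quasi-Bernoulli property of the K\"aenm\"aki measure $m$. Take the Bernoulli measure on $I^N$ with weights $m([w])$. The entropy per symbol goes the right way. The Lyapunov side goes the wrong way: submultiplicativity of the singular value function only gives $\lim_{L}\frac{1}{LN}\mathbb E\log\phi^s(M_{w_L}\cdots M_{w_1})\le\frac1N\mathbb E_m\log\phi^s(M_w)$. Nothing in your argument prevents independent consecutive blocks from partially cancelling. Quasi-multiplicativity excludes this only after inserting a connecting word chosen according to both neighbours, which a Bernoulli product does not do. So you cannot conclude that the approximating measure has Lyapunov dimension at least $\dimaff\Phi-\eps$. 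Getting around this requires dominated subsystems of iterates that carry almost all of the pressure. That is exactly the content of the Morris--Sert variational principle \cite[Theorem~1.4]{morris_variational_2023}, which is what the paper invokes at this point. With that citation replacing the ``standard approximation'', your argument coincides with the paper's proof.

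Your sketch of the projection theorem is not needed for this statement, but one claim in it is wrong as stated. Let $m$ be the first failing index, so $D_m=m-1$ and $D_{m+1}<\min\{m,\dim\mu\}$. The next one-dimensional increment $D_{m+2}-D_{m+1}$ can vanish, so the next one-dimensional fibres need not carry positive dimension. The paper's positive-dimensional object is instead the fibre measure $\mu^{V_{m+1}(\bi_+)}_x$, of dimension $D_{d_0+1}-D_{m+1}>0$. By \cref{lem smallersupport} it lives on $x+V_{m_0+1}(\bi_+)$, with $m_0$ maximal such that $D_{m_0+1}=D_{m+1}$. The positivity actually exploited is $D_{m_0+2}-D_{m_0+1}>0$, and this is what fixes the time scale $T_n$ (normalised by $\chi_m-\chi_{m_0+1}$) in \cref{prop fiberwisedim}. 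Also, slab-filling (\cref{prop slabfilling}) asserts that $\mu$ \emph{fills} thin slabs along $U^{m-1}(x)$ to dimension $m-1$, not that it avoids them. It is derived from $D_m=m-1$, Oseledets geometry and H\"older continuity of $U^{m-1}$, not from non-concentration; ONC enters only through the spreading step.
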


	\subsection{Dimension of self-affine measures}
	One way to obtain lower bounds for the Hausdorff dimension of the attractor is to study the dimension of measures supported on it. We follow this approach by studying self-affine measures. Given weights \(p_i>0\) with \(\sum_i p_i=1\), the associated self-affine  measure is the unique Borel probability measure $\mu$ satisfying
	\[
	\mu=\sum_{i \in I} p_i(f_i)_*\mu,
	\]
	where \((f_i)_*\mu\) denotes the  pushforward of $\mu$ under $f_i$. 
	
	Henceforth, by an affine IFS, we always mean a tuple
	\aryst
	\cF = \big( \{f_i\}_{i \in I},\, (p_i)_{i \in I} \big),
	\earyst
	consisting of a family of affine contractions $\{f_i\}_{i \in I}$ and a positive probability vector $\bp = (p_i)_{i \in I}$.
	
	For self-affine measures, a central conjecture asserts the following:

	\begin{quote}
		\emph{Dimension is determined by entropy and Lyapunov exponents.}
	\end{quote}
	Here, the Lyapunov exponents are the numbers
	$0>\chi_1\geq \cdots\geq \chi_d$
	describing the exponential growth rates of the singular values of random products of the linear parts driven by $\bp$.  
	Two notions of entropy are relevant to this conjecture.
	The first one is the Shannon entropy of the probability vector, $H(\bp)= - \sum_i p_i\log p_i$. 
	The second is the Furstenberg entropy $H(\cF)$, which we define in \eqref{eq entropy F} after some preparation. It is known that $H(\cF) \leq \sum_{i}|\chi_i|$ always holds, and that $H(\cF) = H(\bp)$ under SOSC (\cref{def SSCSOSC}).
	
	The Lyapunov dimension of $\mu$ is
	\[ \dim_{\rm L}\mu=  \min\left\{m+\frac{H(\bp)+\sum_{1\leq j\leq m}\chi_j}{ |\chi_{m+1}|},d\right\},  \]
	where $m\leq d-1$ is the largest integer satisfying $\sum_{1\leq j\leq m}|\chi_j|\leq H(\bp)$.

	In \cite{barany_hausdorff_2017} and \cite{Rapaport24}, the authors confirmed that, under SOSC, the dimension of $\mu$ equals its Lyapunov dimension when $d=2,3$. Our second main result proves this for all $d$. This result follows as a special case of the general formula below, which expresses the Hausdorff dimension in terms of the Furstenberg entropy and the Lyapunov exponents. This formula holds without any separation conditions.
	
	\begin{thm}\label{main thm}
		Let $\cF$ be an affine IFS on $\R^d$ with  Zariski dense linear parts, and let $\mu$ be its self-affine measure. Then 
		\[ 
		\dim_{\mathrm{H}}\mu =     m+\frac{H(\cF)+\sum_{1\leq j\leq m}\chi_j}{ |\chi_{m+1}|},  \]
		where $m\leq d-1$ is the largest integer  satisfying  $\sum_{1\leq j\leq m}|\chi_j|\leq H(\cF)$.
		
		In particular, if in addition $\cF$ satisfies SOSC, then 
		\aryst
		\dim_{\mathrm{H}}\mu = \dim_{\mathrm{L}}\mu.
		\earyst
	\end{thm}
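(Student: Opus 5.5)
The plan is to derive \cref{main thm} from the projection theorem announced in the abstract, combined with the Ledrappier--Young type theory for self-affine measures. By Feng's exact dimensionality results, $\mu$ is exact dimensional. Its dimension satisfies a Ledrappier--Young formula
\[
\dimH\mu=\sum_{j=1}^{d}\frac{\gamma_j-\gamma_{j-1}}{|\chi_j|},
\]
where $\gamma_j$ denotes the entropy of $\mu$ projected along the Furstenberg-typical $(d-j)$-dimensional subspace, with $\gamma_0=0$ and $\gamma_d=H(\cF)$. Equivalently, $\dimH\mu$ is a sum over $j$ of the dimensions of the conditional measures on the successive fibers of the Oseledets flag. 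The $j$-th fiber contributes a quantity $\delta_j\in[0,1]$, and $\gamma_j-\gamma_{j-1}=\delta_j|\chi_j|$. The target formula is exactly the statement that $\delta_j=1$ for $j\le m$, that $\delta_j=0$ for $j\ge m+2$, and that $\delta_{m+1}$ is determined by the entropy balance $\sum_j\delta_j|\chi_j|=H(\cF)$. So I will show that the vector $(\delta_j)$ is ``greedy'': a later fiber cannot carry positive dimension while an earlier one is not full.

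The key step is to apply the projection theorem at level $n$. For Furstenberg-a.e.\ $V$ of dimension $d-n$ it gives $\dimH (P^V)_*\mu=\min\{n,\dimH\mu\}$. The projected measure $(P^V)_*\mu$ is itself exact dimensional. By the projected Ledrappier--Young formula its dimension equals $\sum_{j\le n}\delta_j$, computed along the first $n$ Lyapunov directions. Hence for every $n$,
\[
\sum_{j\le n}\delta_j=\min\Bigl\{n,\ \sum_{j\le d}\delta_j\Bigr\}.
\]
An easy induction then forces the greedy structure. Let $k$ be the first index with $\delta_k<1$. Taking $n=k$ gives $\sum_{j\le k}\delta_j=\sum_{j\le d}\delta_j$, because the left side is strictly less than $k$. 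It follows that $\delta_j=0$ for all $j>k$. Plugging this into the entropy identity $\sum_{j\le k}\delta_j|\chi_j|=H(\cF)$ with $\delta_j=1$ for $j<k$ gives $k=m+1$ and $\delta_{m+1}=(H(\cF)+\sum_{j\le m}\chi_j)/|\chi_{m+1}|$. If instead no such $k$ exists, the dimension is $d$, which the formula also covers when $m=d-1$. This yields the formula in \cref{main thm}.

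The main obstacle is the validity of these inputs under the present hypotheses. The Ledrappier--Young formulas with the fiber dimensions, in the style of B\'ar\'any--K\"aenm\"aki and Feng, must be established for projections to Furstenberg-generic subspaces. This requires the simple Lyapunov spectrum and the Oseledets non-concentration property, and I would verify both from Zariski density via Guivarc'h--Raugi and Gol'dsheid--Margulis. The real difficulty, of course, lies inside the projection theorem itself: that is where Wu's entropy increase and the slab-filling property enter. Here, however, I only invoke it.

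For the SOSC clause, I use the fact stated above that $H(\cF)=H(\bp)$ under SOSC. I also need to check that the resulting expression coincides with $\dim_{\mathrm L}\mu$, including the cap at $d$. When $H(\bp)\ge\sum_j|\chi_j|$, one has $H(\cF)\le\sum_j|\chi_j|$, so equality must hold, and both sides equal $d$.
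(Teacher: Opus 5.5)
Your proof is correct and is essentially the paper's: the greedy induction from $\sum_{j\le n}\delta_j=\min\{n,\sum_{j\le d}\delta_j\}$ is exactly the content of \cref{lem:equivalence}, combined with the Ledrappier--Young formula (\cref{thm Exactdim}) and $H(\cF)=H(\bp)$ under SOSC. The only difference is that you invoke the separation-free projection theorem as a black box, whereas the paper proves it in the same argument by applying \cref{main technical} (which needs SSC) to the B\'ar\'any--K\"aenm\"aki lift $g_i(x,y)=(f_i(x),\lambda y+b_i)$ on $\R^{d+1}$, checking that simple spectrum and ONC pass to the lift and that $\dim P^{V^{\cG}_m}\mu^{\cG}=\dim P^{V^{\cF}_m}\mu^{\cF}$. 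Also, the Ledrappier--Young formula itself needs neither simple spectrum nor ONC; those are hypotheses of the projection theorem.
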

	
	Combining \Cref{main thm} with the variational principle of Morris and Sert \cite[Theorem 1.4]{morris_variational_2023}, which relates the dimension of a self-affine set to the dimensions of its self-affine measures, yields \Cref{thm affinity}.

	\subsection{Projection theorems without separation conditions}
	
	A key tool is the Ledrappier–Young formula for self-affine measures, an analog of the classical formula from smooth dynamics. It was established by B\'ar\'any and K\"aenm\"aki in \cite{BK} and by Feng in \cite{fengDimensionInvariantMeasures2023}, and played an important role in both  \cite{barany_hausdorff_2017, Rapaport24}.

	The formula reduces the computation of $\dimH \mu$ to a projection theorem, and proving such a theorem is the main task. For $d=2$, B\'ar\'any, Hochman and Rapaport \cite[Theorem~1.3]{barany_hausdorff_2017} proved that, under the {\it exponential separation condition} (ESC) and condition \eqref{cond star} on the linear parts,
	\[
	\dim_{\mathrm{H}}(P^V\mu)=\min\{1,\dim_{\mathrm{H}}\mu\}
	\quad\text{for Furstenberg-almost every line }V.
	\]
	Here, \(P^V\) denotes the orthogonal projection onto $V^{\perp}$, and the Furstenberg measure describes the distribution of directions arising from the transposed random matrix products. ESC requires that the affine maps corresponding to distinct words of length \(n\) be at least \(c^n\) apart, for some fixed $c \in (0, 1)$ and every $n$.  Rapaport \cite{Rapaport24} subsequently proved the corresponding projection theorem for $d=3$, again under ESC and condition \eqref{cond star}.  
	
	Since ESC requires separation only in the affine group, it allows the sets $f_i(\Lambda)$ to overlap. Verifying ESC for a {\it deterministic} system with overlaps can nevertheless be difficult. 
	To our knowledge, the only systematic method relies on the algebraicity of the matrix entries; see \cite[Lemma 6.1]{HochmanSolomyak2017}.
	
	Our main projection theorem holds in any dimension and without any separation conditions.
	\begin{thm} \label{main thm2}
		Let $\cF$ be as in \cref{main thm}. Then, for every $1 \leq m \leq d$ and   $\nu_m^{+}$-a.e. $V \in \Gr(d, d-m+1)$, \footnote{ Here, $\nu_{m}^{+}$ is the distribution of the most contracting $(d-m+1)$-dimensional subspace; see \cref{sec preliminary}.}
		\aryst
		\dimH (P^{V})_* \mu = \min\{m - 1, \dimH \mu \}. 
		\earyst
	\end{thm}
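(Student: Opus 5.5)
We argue by contradiction, in the entropy-increase framework of Hochman and Bárány--Hochman--Rapaport. First, the upper bound $\dimH (P^V)_*\mu\le\min\{m-1,\dimH\mu\}$ is trivial, since $P^V$ is Lipschitz onto an $(m-1)$-dimensional space. For the lower bound we use induction on $m$. The case $m=1$ is empty. Assume the statement holds for $m-1$. By the Ledrappier--Young formula of Bárány--Käenmäki and Feng, $(P^V)_*\mu$ is exact dimensional for $\nu_m^+$-a.e. $V$, with a dimension $\alpha_m$ that is constant almost surely by ergodicity of the Furstenberg measure. The same formula gives the fiber dimensions along the flag of Oseledets subspaces. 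Write $\beta=\min\{m-1,\dimH\mu\}$ and suppose $\alpha_m<\beta$.

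The first step is to use the self-affinity $\mu=\sum_{|w|=n}p_w (f_w)_*\mu$ together with the dynamics of the random walk. At scale $2^{-k}$, the projection $(P^V)_*\mu$ is then a convex combination of projections of cylinder measures $(f_w)_*\mu$. Each such cylinder measure is an affinely distorted copy of $\mu$: after rescaling, it is concentrated near a translate of a $(d-m+1)$-slab whose long directions are governed by the Oseledets flag. The induction hypothesis tells us that the projection of such a piece along the $(m-1)$-dimensional quotient carries dimension $\min\{m-2,\dimH\mu\}$. Since $\alpha_m<\beta$, the Ledrappier--Young formula then forces the conditional measures of $\mu$ along the $m$-th fiber direction (the fiberwise conditionals) to have positive dimension. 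This positive fiber dimension is the source of entropy that we plan to exploit.

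The second step is a multiscale decomposition of $(P^V)_*\mu$ into components. Hochman's inverse theorem does not apply directly in dimension $m-1$, so instead we isolate a single one-dimensional direction inside $V^\perp$, namely the image of the $m$-th Oseledets direction. Along this direction we want to apply Wu's one-dimensional entropy increase theorem. This requires the \textbf{slab-filling property}: for most components, the measure $\mu$ restricted to a $2^{-k}$-ball nearly fills, in entropy, slabs transverse to that direction. Granting this, the projected component looks like a product of a one-dimensional positive-dimensional conditional and a full-dimensional piece. Convolving it with the projections of the cylinder pieces, which are nondegenerate in that direction because of the Oseledets non-concentration property, then yields an entropy gain of some $\delta>0$ at a positive proportion of scales. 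On the other hand, exact dimensionality and the equality $H(\cF)=$ Furstenberg entropy allow no such gain, because the entropy at scale $2^{-k}$ must grow at the rate $\alpha_m$. This contradiction gives $\alpha_m=\beta$.

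The main obstacle is proving the slab-filling property, which is the genuinely higher-dimensional input. My first approach is an induction over the flag. By the induction hypothesis, projections to lower quotients are already saturated, and we would combine this with an entropy-porosity argument showing that components cannot concentrate near lower-dimensional affine subspaces. Zariski density, or more precisely simplicity of the Lyapunov spectrum together with Oseledets non-concentration, is what rules out such concentration, via equidistribution of the random walk on flags. A secondary difficulty is making the non-conformal rescalings uniform across scales. Here we would use the standard device of stopping times for the random walk, so that the singular values of the chosen words are comparable to $2^{-k}$.
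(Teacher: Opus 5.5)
The step that is supposed to produce the increment does not go through as you describe it. Your ingredients match the paper's: positive-dimensional fiberwise conditionals as the entropy source, Wu's one-dimensional theorem, slab filling, and induction over $m$. The problem is the mechanism linking them.

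The projection $P^V$ sends each fiber $x+V$ to a single point. So the positive dimension of $\mu^V_x$ is invisible in $(P^V)_*\mu$ and in every one of its components. What survives along the last direction of $V^\perp$ is a conditional of dimension $\alpha_m-(m-2)$ (in your indexing, in the case $D_{m-1}=m-2$), and this may well be $0$.

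Your substitute is to convolve with projections of cylinder pieces that ONC makes nondegenerate. That is the B\'ar\'any--Hochman--Rapaport mechanism. In it, the set that must be spread is the distribution of projected cylinder translations at a fixed scale, and its entropy is controlled only under exponential separation. A transversality statement such as ONC does not supply the multiscale ball spreading that Wu's theorem requires of the set $A$.

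The missing idea is to compare two \emph{different} typical projections. Work in the paper's indexing: $D_m=m-1$ is known, $D_{m+1}<D_{d_0+1}$ is assumed, and $m_0$ is maximal with $D_{m_0+1}=D_{m+1}$. Draw $(\bi_+,\bj_+)$ from the self-coupling $\zeta_n$, so the two sequences share their first $T_n$ symbols and are independent afterwards. Then both are $\nu_+$-typical. Moreover $P^{J_m(x,\bj_+)}$, with $J_m(x,\bj_+)=V_{m+1}(\bj_+)+U^{m-1}(x)$, contracts the fiber $V_{m_0+1}(\bi_+)$ by a factor of about $2^{-n}$ instead of killing it.

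A box of length $2^{-C_0n}$ along the fiber then projects, modulo $U^{m-1}(x)$, to a union $\bigcup_j(a_j+B_j)$ on a line; quotienting by $U^{m-1}(x)$ is where slab filling enters. The two pieces are:
\begin{itemize}
\item The $a_j$ carry the projected fiber measure. It is ball spreading for two reasons. The fiber measure gives small mass to tubes around $V_{m_0+2}(\bi_+)$, since $D_{m_0+2}>D_{m_0+1}$. And ONC makes the kernel of $P^{J_m(x,\bj_+)}$ on $V_{m_0+1}(\bi_+)$ close to $V_{m_0+2}(\bi_+)$.
\item The $B_j$ are projections along $J_m(x,\bi_+)$ of $2^{-(C_0+1)n}$-boxes.
\end{itemize}
Wu's theorem then gives a covering number at scale $r_n$ exceeding $2^{(D_{m+1}+2\eps_0)n'}$. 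This contradicts the bound $2^{(D_{m+1}+\eps_0)n'}$ that comes from the exact dimension of $(P^{V_{m+1}(\bj_+)})_*\mu$.

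Note also that the fiber conditional may be carried by $V_{m_0+1}\subsetneq V_{m+1}$. Its positive one-dimensional dimension then sits along $E_{m_0+1}$, not along the first Oseledets direction of the fiber. This is why the coupling time is $T_n=\lfloor n\log 2/(\chi_m-\chi_{m_0+1})\rfloor$.

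Second, you never deal with overlaps, although the statement assumes no separation. In the paper this theorem is a reduction:
\begin{itemize}
\item Lift $\cF$ to $\cG$ on $\R^{d+1}$ by $g_i(x,y)=(f_i(x),\lambda y+b_i)$, with $\lambda$ smaller than every contraction rate.
\item $\cG$ has SSC, simple spectrum $\chi_1>\cdots>\chi_d>\log\lambda$, and $V^{\cG}_q=V^{\cF}_q\oplus\R$.
\item Hence $(P^{V^{\cG}_m})_*\mu^{\cG}$ and $(P^{V^{\cF}_m})_*\mu^{\cF}$ have the same dimension, and ONC at level $d$ (a consequence of Zariski density) is inherited.
\item The SSC theorem and the Ledrappier--Young equivalence then conclude.
\end{itemize}

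SSC is used essentially inside that theorem. It makes the skew product on $\Lambda\times\Sigma_+$ invertible, so fiber measures are equivariant under the backward dynamics; this drives the stopping-time proof that most dyadic atoms of the fiber measure are good. It also makes $U^{m-1}$ a function of the point $x$, H\"older on large sets. Without that, slabs along $U^{m-1}(x)$ are not even defined.

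Finally, the slab-filling step you single out as the main obstacle needs neither ONC, nor porosity, nor equidistribution on flags. Take $\rho>\delta$ between $e^{n\chi_m}$ and $e^{n\chi_{m-1}}$. Up to a factor $e^{\eps n}$, the slab $X(x,m,\rho,\delta)$ is the image under $f_{x[n]}$ of a box that is longer than $\Lambda$ in the directions of $V_m$, i.e.\ a neighbourhood of an affine copy of $V_m$. Its mass is therefore governed by the $(m-1)$-dimensional projection $(P^{V_m})_*\mu$, that is, directly by $D_m=m-1$. A doubling estimate plus H\"older continuity of $U^{m-1}$ finishes the covering count.
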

	It is worth mentioning that, for $d=2$, under ESC and condition \eqref{cond star}, Hochman and Rapaport proved in \cite{hochman_hausdorff_2021} that $\dim_{\rm H}\mu = \min\{2, \dim_{\rm L}\mu\}$.
	By Theorem \ref{main thm}, this is equivalent to $H(\cF)=\min\{H(\bp),|\chi_1|+|\chi_2|\}$. Our method describes the dimension of $\mu$ and its projections in terms of $H(\cF)$ and the Lyapunov exponents, but it says nothing directly about $H(\cF)$ itself. Hence, for the moment, this identity is out of reach for our method.
	
	For the dimensions of projections of self-affine sets and measures with generic translation vectors, see \cite{feng_dimensions_projected_2023,feng_dimensions_orthogonal_2025,morris_projections_2026}. 
	
	\subsection{Main technical result}
	\label{subsec:main tech result}

	To state our main technical result, we need to introduce several additional objects.
	
	Let $(\Sigma, \sigma)$ be the two-sided full shift over the alphabet $I$; that is,
	$\Sigma = I^{\Z}$, and $\sigma \colon \Sigma \to \Sigma$ is the shift map
	$\sigma\big( (i_n)_{n \in \Z} \big) = (i_{n+1})_{n \in \Z}$.
	Let $\nu_0 \in \cP(I)$ be the probability measure given by $\nu_0(\{i\}) = p_i$ for
	$i \in I$, and let $\nu = \nu_0^{\otimes \Z} \in \cP(\Sigma)$ be the associated
	Bernoulli measure. Then $\nu$ is $\sigma$-invariant and ergodic.

	Given $\bi = (i_k)_{k \in \Z} \in \Sigma$, we write $\bi_{+} = (i_k)_{k \geq 0}$ and
	$\bi_{-} = (i_k)_{k < 0}$ for its forward and backward halves, and we set
	$\Sigma_{+} = I^{\Z_{\geq 0}}$ and $\Sigma_{-} = I^{\Z_{< 0}}$, so that
	$\bi_{\pm} \in \Sigma_{\pm}$. We write $\nu_{\pm}$ for the projection of $\nu$ to $\Sigma_{\pm}$. For each integer $n \geq 1$, we set
	\aryst
	M_{\bi_{+}}^{n} = M_{i_{n-1}} \cdots M_{i_0}, \qquad
	M_{\bi_{-}}^{-n} = M_{i_{-n}}^{-1} \cdots M_{i_{-1}}^{-1}.
	\earyst
	Given a matrix $A\in \GL_d(\mathbb{R})$, we denote by $\alpha_1(A) \geq \alpha_2(A) \geq \cdots \alpha_d(A)$ the singular values of $A$.
	
	\begin{thm}[Oseledets]\label{thm Oseledets}
		Let $\cF = \big( \{ f_i \}_{i \in I}, (p_i)_{i \in I} \big)$ be an affine IFS on $\R^d$, and let $\nu$ be its Bernoulli measure. Then there exist an
		integer $k \in \{1, \dots, d\}$, integers $d_1, \dots, d_k \geq 1$ with
		$d_1 + \cdots + d_k = d$, and real numbers $0 > \chi_{1} > \cdots > \chi_{k}$, such that
		for $\nu$-almost every $\bi = (i_n)_{n \in \Z} \in \Sigma$, the following hold.
		\enmt
		\item For every $m \in \{1, \dots, d\}$,
		\aryst
		\lim_{n \to \infty} \frac{1}{n} \log \alpha_m \big( M_{\bi_{+}}^{n} \big) = \chi_j
		\quad \text{whenever } d_1 + \cdots + d_{j-1} < m \leq d_1 + \cdots + d_j.
		\earyst
		
		\item There is a flag
		\aryst
		\{0\} = V_{k+1}(\bi_{+}) \subsetneq V_{k}(\bi_{+}) \subsetneq \cdots
		\subsetneq V_{1}(\bi_{+}) = \R^d ,
		\earyst
		depending measurably on $\bi$ and only through $\bi_{+}$, with $\dim V_{j}(\bi_{+}) = d_{j} + \cdots + d_{k}$ for
		$j \in \{1, \dots, k\}$, such that for every $j \in \{1, \dots, k\}$,
		\begin{align}
			M_{i_0} V_{j}(\bi_{+}) &= V_{j}\big( (\sigma \bi)_{+} \big),\\
			\lim_{n \to \infty} \frac{1}{n} \log \norm{ M_{\bi_{+}}^{n} v } &= \chi_{j},
			\qquad v \in V_{j}(\bi_{+}) \setminus V_{j+1}(\bi_{+}) .
		\end{align}
		
		\item There is a flag
		\aryst
		\{0\} = U^{0}(\bi_{-}) \subsetneq U^{1}(\bi_{-}) \subsetneq \cdots
		\subsetneq U^{k}(\bi_{-}) = \R^d ,
		\earyst
		depending measurably on $\bi$ and only through $\bi_{-}$, with $\dim U^{j}(\bi_{-}) = d_{1} + \cdots + d_{j}$ for
		$j \in \{1, \dots, k\}$, such that for every $j \in \{1, \dots, k\}$,
		\begin{align}
			M_{i_{-1}}^{-1} U^{j}(\bi_{-})& = U^{j}\big( (\sigma^{-1} \bi)_{-} \big),\\
			\lim_{n \to \infty} \frac{1}{n} \log \norm{ M_{\bi_{-}}^{-n} v } &= -\chi_{j},
			\qquad v \in U^{j}(\bi_{-}) \setminus U^{j-1}(\bi_{-}) .
		\end{align}
		
		\item Set $E_{j}(\bi) = V_{j}(\bi_{+}) \cap U^{j}(\bi_{-})$ for $j \in \{1, \dots, k\}$.
		Then $\dim E_{j}(\bi) = d_{j}$, and
		\aryst
		V_{j}(\bi_{+}) =  \bigoplus_{l \geq j} E_{l}(\bi) \quad \text{and} \quad
		U^{j}(\bi_{-})  = \bigoplus_{l \leq j} E_{l}(\bi) .
		\earyst
		Moreover $M_{i_0} E_{j}(\bi) = E_{j}(\sigma \bi)$, and for $v \in \R^d \setminus \{0\}$
		and $j \in \{1, \dots, k\}$, one has $v \in E_{j}(\bi)$ if and only if
		\begin{equation}
			\lim_{n \to \infty} \frac{1}{n} \log \norm{ M_{\bi_{+}}^{n} v } = \chi_{j}
			\quad \text{and} \quad
			\lim_{n \to \infty} \frac{1}{n} \log \norm{ M_{\bi_{-}}^{-n} v } =  -\chi_{j} .
		\end{equation}
		
		\item Given a non-empty proper subset $J \subset \{1, \dots, k\}$, write
		$E_{J}(\bi)$ for the sum of the $E_{j}(\bi)$ with $j \in J$. Then
		\aryst
		\lim_{n \to \pm \infty} \frac{1}{n}
		\log \sin \measuredangle \big( E_{J}(\sigma^{n} \bi), \, E_{J^{c}}(\sigma^{n} \bi) \big)= 0.
		\earyst
		\eenmt
	\end{thm}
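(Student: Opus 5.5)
This is the multiplicative ergodic theorem, applied to the cocycle $\bi\mapsto M_{i_0}$ over the invertible ergodic system $(\Sigma,\sigma,\nu)$. The plan is to deduce it from the standard Kingman/Oseledets machinery rather than reprove it from scratch. The integrability hypothesis $\log^+\|M_{i_0}^{\pm1}\|\in L^1(\nu)$ holds trivially, since $I$ is finite and each $M_i$ is invertible.

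\textbf{Step 1: exponents (item (1)).} Apply Kingman's subadditive ergodic theorem to $\log\|\wedge^m M^n_{\bi_+}\|$. The limits $\lambda_1+\dots+\lambda_m$ are a.s. constant by ergodicity, which identifies $\frac1n\log\alpha_m(M^n_{\bi_+})\to\lambda_m$. Grouping equal values gives the distinct exponents $\chi_1>\dots>\chi_k$ and their multiplicities $d_j$. Negativity comes from contraction: $\|M_i\|<1$ for all $i$ forces $\lambda_1<0$.

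\textbf{Step 2: the slow filtration (item (2)).} Use the Raghunathan/Ruelle argument. Write the polar decomposition $(M^n_{\bi_+})^{*}M^n_{\bi_+}$, and let its eigenspaces for eigenvalues $\le e^{2n(\chi_j+\epsilon)}$ define $V_j^{(n)}$. The singular-value gaps from Step 1 give exponential decay of the angle between $V_j^{(n)}$ and $V_j^{(n+1)}$, so these are Cauchy in the Grassmannian. Their limit $V_j(\bi_+)$ is therefore a function of $\bi_+$ only, and it has the stated growth rate. Equivariance under $M_{i_0}$ follows from uniqueness of the filtration characterized by growth rates, because $M^{n}_{(\sigma\bi)_+}M_{i_0}=M^{n+1}_{\bi_+}$.

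\textbf{Step 3: the backward filtration (item (3)).} Apply the same argument to the inverse cocycle $M^{-n}_{\bi_-}$ over $\sigma^{-1}$. Its exponents are $-\chi_k>\dots>-\chi_1$, since $\alpha_m(A^{-1})=\alpha_{d+1-m}(A)^{-1}$ and the backward singular values have the same a.s. asymptotics by invariance of $\nu$. This produces $U^j(\bi_-)$, depending only on $\bi_-$.

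\textbf{Step 4: splitting and angles (items (4), (5)).} The main obstacle is transversality, namely $V_{j+1}(\bi_+)\cap U^j(\bi_-)=\{0\}$ for a.e. $\bi$. Once that holds, dimension counting gives $\dim E_j=d_j$ and the direct-sum decompositions, and $E_j$ is characterized by the two growth conditions. For transversality I would follow Ruelle and Walters: compare the time-$n$ singular subspaces of $M^n_{\sigma^{-n}\bi}$, which are images under $\sigma^{-n}$ of forward data, using the invariance $\sigma_*\nu=\nu$. The key point is that the top $m$-dimensional singular space and the bottom complementary one become asymptotically orthogonal, with angle decaying only subexponentially along a.e. orbit.

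Item (5) follows from a temperedness lemma. The function $\phi(\bi)=\log\sin\measuredangle(E_J(\bi),E_{J^c}(\bi))$ satisfies $|\phi\circ\sigma-\phi|\le C$, with $C$ bounded by $2\max_i\log(\|M_i\|\|M_i^{-1}\|)$. A measurable function with bounded (hence integrable) increments obeys $\frac1n\phi(\sigma^n\bi)\to0$ a.s. This is by the Birkhoff theorem applied to $\phi\circ\sigma-\phi$, whose mean is $0$ by the standard coboundary lemma; its tail behaviour is controlled by Poincaré recurrence to a set where $|\phi|$ is bounded.
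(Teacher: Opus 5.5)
\textbf{Overall.} The paper gives no argument of its own. It cites Arnold's Theorem 3.4.11 for items (1)--(4) and Bochi's Addendum 5 for item (5). Most of your sketch is the standard route and is fine: Kingman on exterior powers, the Raghunathan convergence of singular subspaces, and the inverse cocycle over $\sigma^{-1}$. Your proof of (5), via a measurable function with bounded increments plus the coboundary lemma and Poincar\'e recurrence, is exactly the usual tempering argument. The gap is in Step 4, at the only point of the two-sided theorem with real content: showing $V_{j+1}(\bi_+)\cap U^j(\bi_-)=\{0\}$ almost everywhere.

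\textbf{Why your transversality mechanism fails.} The top and bottom singular subspaces of a single matrix are exactly orthogonal for every $n$. But $U^j(\bi_-)$ is the limit of the top left-singular spaces of $M^n_{(\sigma^{-n}\bi)_+}$, while $V_{j+1}(\bi_+)$ is the limit of the bottom right-singular spaces of a different matrix, $M^n_{\bi_+}$. So orthogonality of singular spaces says nothing about the angle between these two limits, which in general is just some positive random variable. Moreover, ``angle decaying only subexponentially along a.e.\ orbit'' is item (5), which presupposes the splitting, so invoking it here is circular. Pointwise growth rates alone also give no contradiction: a nonzero $v\in V_{j+1}\cap U^j$ merely contracts fast forward and expands slowly backward. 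Some use of invariance of $\nu$ is needed.

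\textbf{The missing idea: a volume argument on the intersection.} Put $W(\bi):=V_{j+1}(\bi_+)\cap U^j(\bi_-)$.
\begin{itemize}
\item By the equivariance in (2) and (3), $M_{i_0}W(\bi)=W(\sigma\bi)$, so $\dim W$ equals a constant $w$ almost everywhere.
\item Suppose $w\ge 1$ and set $g(\bi):=\log \mathrm{J}_{W(\bi)}(M_{i_0})$, which is bounded because $I$ is finite.
\item Multiplicativity of Jacobians gives $\log \mathrm{J}_{W(\bi)}(M^n_{\bi_+})=\sum_{k=0}^{n-1}g(\sigma^k\bi)$ and $\log \mathrm{J}_{W(\bi)}(M^{-n}_{\bi_-})=-\sum_{k=1}^{n}g(\sigma^{-k}\bi)$.
\item By the forward and backward Birkhoff theorems, both normalized sums converge almost everywhere to $\int g\,d\nu$.
\item Using a basis, (2) gives $\limsup_n\frac1n\log\|M^n_{\bi_+}|_{V_{j+1}(\bi_+)}\|\le\chi_{j+1}$. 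Since the forward Jacobian on $W\subset V_{j+1}$ is at most $\|M^n_{\bi_+}|_{V_{j+1}}\|^w$, we get $\int g\,d\nu\le w\chi_{j+1}$.
\item Likewise, $W\subset U^j$ and (3) give $-\int g\,d\nu\le -w\chi_j$.
\end{itemize}
Together these give $w\chi_j\le w\chi_{j+1}$, contradicting $\chi_j>\chi_{j+1}$. With this argument, or simply a citation as the paper does, your dimension count, the direct-sum decompositions and the growth-rate characterization of $E_j$ in Step 4 go through.
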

	Parts (1)–(4) in the
	form stated here are \cite[Thm.~3.4.11]{Arnold}, and part (5) is \cite[Addendum~5]{Bochi}.

	The numbers $0 > \chi_1 > \cdots > \chi_k$ given by \cref{thm Oseledets} are called
	the {\it Lyapunov exponents} of $\cF$, and $\{ \chi_1, \dots, \chi_k \}$ the
	{\it Lyapunov spectrum} of $\cF$. We say that the spectrum is {\it simple} if $k = d$. The spectrum is simple whenever $\cF$ has Zariski dense linear parts
	(see \cite[Thm.~10.9]{benoist_random_2016}).

	Both \cref{main thm} and \cref{main thm2} follow quickly from a more technical statement, \cref{main technical}, which we state below after some preparation.

	We first define the separation conditions used in this paper.
	\begin{defi} \label{def SSCSOSC}
		Let $\cF$ be an affine IFS. We say that $\cF$ satisfies the {\it strong separation condition} (SSC) if the sets $f_i(\Lambda),i\in I$, are pairwise disjoint.
		
		We say that $\cF$ satisfies the {\it strong open set condition} (SOSC) if there exists a bounded open set $O$ such that $O\cap\Lambda\neq \emptyset$, $f_i(O)\subset O$ for every $i\in I$, and the sets $  f_i(O),i\in I$ are pairwise disjoint.
	\end{defi}

	\begin{defi} \label{def ONC}
		Let $\cF$ be an affine IFS on $\R^d$, and let $d_0 \in \{1, \cdots, d\}$.  We say that $\cF$ has the {\it Oseledets non-concentration property at level $d_0$} (ONC at level $d_0$) if, for every $m \in \{1, \cdots, d_0 - 1\}$, every $q \in \{m+1, \cdots, d_0\}$,  
		and $(\nu_-\otimes \nu_{+} \otimes \nu_{+})$-a.e. $(\bi_{-}, \bi_{+}, \bj_{+})$, we have 
		\aryst
		\measuredangle( V_{m+1}(\bj_{+}), U^{m-1}(\bi_{-}) \oplus E_{q}(\bi_-, \bi_{+}) ) > 0.
		\earyst
	\end{defi}
	For $q = m$, $U^{m-1}(\bi_{-}) \oplus E_{q}(\bi_-, \bi_{+}) = U^{m}(\bi_{-})$, and hence the above transversality is automatic.
	For $d=d_0=2$ (so that $m=1$ and $q=2$), the condition reduces to $\measuredangle( V_{2}(\bj_{+}), V_{2}(\bi_{+}) )  > 0$,  which is a form of irreducibility (or non-integrability). 
	
	The following is the main technical result of this paper.

	\begin{thm} \label{main technical}
		Let $\cF$ be an affine IFS on $\R^d$, and let $\mu$ be its self-affine measure.
		Assume that 
		\enmt
		\item $\cF$ satisfies SSC,
		\item the Lyapunov spectrum of $\cF$ is simple,
		\item $\cF$ has the ONC at level $d_0$  for some $d_0 \in \{1, \cdots, d\}$. 
		\eenmt
		Then for every $1 \leq m \leq d_0 + 1$, and  $\nu_m^{+}$-a.e.  $V \in \Gr(d, d-m+1)$, we have 
		\aryst
		\dimH (P^{V})_* \mu = \min\{m - 1, D_{d_0+1}\},
		\earyst
		where $D_{d_0+1}$ denotes the common value of $\dimH {P^{V_{d_0+1}}\mu}$ over $\nu_{d_0+1}^+$-a.e. $V_{d_0+1}\in \Gr(d,d-d_0)$ (by \cref{thm Exactdim}).
	\end{thm}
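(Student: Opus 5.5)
The plan is to argue by induction on $m$, from $m=1$ (where $V=\R^d$, $(P^V)_*\mu$ is a point and the claim is trivial) up to $m=d_0+1$. Write $D_m$ for the almost sure value of $\dimH (P^{V})_*\mu$ over $\nu_m^+$-a.e. $V\in\Gr(d,d-m+1)$; its existence and exact dimensionality come from \cref{thm Exactdim}. The Ledrappier--Young type structure from \cite{BK,fengDimensionInvariantMeasures2023} gives increments
\[ D_{m+1}=D_m+\gamma_m, \]
where $\gamma_m\in[0,1]$ is the dimension of the conditional measures of $(P^{V_{m+1}})_*\mu$ on the one-dimensional fibres $V_m/V_{m+1}$. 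Here $V_{m+1}=V_{m+1}(\bj_+)$ is contracted at rate $\chi_{m+1}$, and the fibre direction is the $E_m$-direction. Since $D_{m+1}\le D_{d_0+1}$ and $D_{m+1}\le m$ trivially, the theorem reduces to a dichotomy: either $\gamma_m=1$, or $D_{m+1}=D_{d_0+1}$. Iterating this dichotomy over $m$ gives $D_m=\min\{m-1,D_{d_0+1}\}$.

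To prove the dichotomy, suppose $\gamma_m<1$ while $D_{m+1}<D_{d_0+1}$. Then some later increment $\gamma_q$ with $q>m$ is positive. So the fibre measures of $\mu$ in the $E_q$ direction, relative to $U^{m-1}$, have positive dimension. The aim is to show that this positive-dimensional fibre structure increases entropy in the direction $V_m/V_{m+1}$ once we project along $V_{m+1}(\bj_+)$ for an independent future $\bj_+$. The mechanism is Wu's one-dimensional entropy increase theorem: convolving a measure of dimension $<1$ on a line with a positive-dimensional family of translates in a non-degenerate way strictly increases entropy at small scales.

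The steps are as follows.
\begin{enumerate}
\item Use the self-affinity to express the projected measure $(P^{V_{m+1}})_*\mu$, at scale $e^{\chi_m n}$, as an average over cylinders $f_{\bi|n}$ of pieces. Each piece is a linear image of $\mu$ composed with the projection.
\item Decompose $\mu$ along slabs parallel to $U^{m-1}(\bi_-)\oplus E_q(\bi)$. This is where the slab-filling property advertised in the abstract enters: one must show that $\mu$ restricted to thin slabs around $U^{m-1}\oplus E_q$ typically has components whose projection to the $E_q$ coordinate carries positive entropy at all scales, uniformly on a large-measure set.
\item Use ONC at level $d_0$, which gives $\measuredangle(V_{m+1}(\bj_+),U^{m-1}\oplus E_q)>0$. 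Under $P^{V_{m+1}(\bj_+)}$, the $E_q$ direction therefore maps non-trivially into the line $V_m/V_{m+1}$ modulo the image of $U^{m-1}$. Hence these components look like one-dimensional measures of positive entropy lying in the fibre direction.
\item Combine: the fibre measure of dimension $\gamma_m<1$ is effectively convolved with these positive-entropy pieces. Wu's theorem then yields an entropy gain $\delta>0$ at a positive proportion of scales. Via the Ledrappier--Young formula and the fact that Furstenberg-a.e. $V_{m+1}$ is generic, this contradicts the definition of $\gamma_m$ as the exact fibre dimension.
\end{enumerate}

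Simple spectrum is used throughout to ensure that the fibres $V_m/V_{m+1}$ are genuinely one-dimensional and that the exponents are separated. Separation of exponents makes the scales $e^{\chi_m n}$ and $e^{\chi_q n}$ distinct, so the $E_q$ components are "non-conformal noise" at the relevant scale.

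The main obstacle is step 2, the slab-filling and uniform positive entropy of the $E_q$ components inside slabs. Wu's theorem needs quantitative, scale-uniform input, whereas Ledrappier--Young gives only almost-sure asymptotic dimension of conditionals. I would first try to obtain this by an entropy porosity/component argument in the style of Hochman: use exact dimensionality of the $U^{m-1}\oplus E_q$ fibre measures together with equidistribution of the backward Oseledets data. The aim is to show that most level-$n$ components, after renormalizing by $M^{-n}$, fill the slab with entropy bounded below. SSC is used to identify $\mu$ with the coding measure, so that the conditional measures are transported correctly.
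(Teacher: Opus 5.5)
Your outer reduction is sound and matches the paper. By induction you only ever need the dichotomy at an index with $D_m=m-1$, and that is exactly \cref{prop diminc}.

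\textbf{Main gap: no genuine sumset.} The proof of the dichotomy is where the central idea is missing. You project along $V_{m+1}(\bj_+)$ for an \emph{independent} $\bj_+$ and use ONC to get a positive angle. Nothing in that setup produces a sumset.

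With $\bj_+$ at a fixed positive angle, $P^{J}$ has norm of order one on the fibre $x+V_{m_0+1}(\bi_+)$, where $J=V_{m+1}(\bj_+)+U^{m-1}(x)$. So the fibre structure and the transverse structure of $\mu$ are mixed at the same scales. There is then no decomposition of the form $\bigcup_a(a+B_a)$, with $A$ spreading and the $B_a$ of the right size, inside a common scale window.

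The upper bound $\dim (P^{V_{m+1}(\bj_+)})_*\mu=D_{m+1}$ holds for every typical $\bj_+$. What must be engineered is a lower-bound decomposition, and the paper's device for this is a coupling. Under $\zeta_n$, $\bi_+$ and $\bj_+$ share their first $T_n$ symbols and have independent tails. This gives $d(V_{m_0+1}(\bi_+),V_{m+1}(\bj_+))\le 2^{-(1-\eps)n}$ and $d(V_{m+1}(\bi_+),V_{m+1}(\bj_+))\le 2^{-2cn+\eps n}$.

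With the three scales $2^{-C_0n}\gg\rho_n\gg r_n$, the paper writes $P^{J_m(x,\bj_+)}=QP^{J_m(x,\bi_+)}+Q'$ with $\|Q\|,\|Q^{-1}\|\le 2$. Then two things land in the same window $[r_n,\rho_n]$ on the line $J_m(x,\bj_+)^\perp$:
\begin{itemize}
\item the transverse part of each $\rho_n$-box;
\item the fibre, of length $2^{-C_0n}$, compressed by $\lambda\approx 2^{-n}$ via $Q'$.
\end{itemize}
Only then does Wu's theorem apply, contradicting $N_{r_n}(P^{V_{m+1}(\bj_+)}Y)<2^{(D_{m+1}+\eps_0)n'}$.

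Correspondingly, ONC is not used at time $0$. It is applied to the independent tails at time $T_n$ and pulled back by $M^{-1}$ (\cref{lem kernel pullback}). Your step 1, a B\'ar\'any--Hochman--Rapaport style cylinder decomposition, does not substitute for this. Without ESC the translation parts carry no usable entropy, which is exactly why the paper abandons that route.

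\textbf{Second gap: slab-filling is given the wrong job.} In the paper, \cref{prop slabfilling} is where the hypothesis $D_m=m-1$ enters. It says that $\mu$ restricted to a $\rho\times\delta$ slab along $U^{m-1}(x)$ is $(m-1)$-regular at scale $\delta/\rho$. This is what justifies quotienting out $U^{m-1}$, in two ways:
\begin{itemize}
\item covering numbers factor as (fibres along the image of $U^{m-1}$) times (image on the line), via \cref{lem numbercoverofprojection};
\item the sizes $N_{r_n}(B_j)\approx 2^{(D_{m+1}-m+1)n'}$ are pinned down, which Wu's hypothesis $|B_a|\ge|B|^{1-\delta}$ needs.
\end{itemize}
Your step 3 quotients by $U^{m-1}$ without this input. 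A gain on the one-dimensional quotient would then not lift to a gain for the $m$-dimensional projection.

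The positive entropy in the $E_q$ direction, which you attribute to slab-filling, is in the paper a separate statement, \cref{prop fiberwisedim}. Its ingredients are:
\begin{itemize}
\item the specific choice $q=m_0+1$, with $m_0$ maximal such that $D_{m_0+1}=D_{m+1}$;
\item the support reduction of $\mu_x^{V_{m+1}(\bi_+)}$ to $x+V_{m_0+1}(\bi_+)$ (\cref{cor supportoffiberwisemeasure}), which sets the compression rate $2^{-n}$;
\item the SSC-equivariance of fibre measures under the backward dynamics, combined with stopping times and the gap $D_{m_0+2}-D_{m_0+1}>0$, to show that most dyadic atoms are good.
\end{itemize}
What it delivers is Wu's spreading condition, non-concentration at most scales in the window, not merely positive entropy.
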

	The SSC assumption in
	\Cref{main technical} can in fact be removed.  The lifting argument in \cref{subsec ly}, which we use to deduce \cref{main thm} and \cref{main thm2} from \Cref{main technical}, applies here as well because the ONC property passes to the lifted IFS. This yields the following statement, the strongest form of our results.
	\begin{thm}
		\label{main cor}
		Let $\cF$ be an affine IFS on $\R^d$, and let $\mu$ be its self-affine measure. 
		Assume that 
		\enmt
		\item the Lyapunov spectrum of $\cF$ is simple,
		\item $\cF$ has the ONC at level $d_0$  for some $d_0 \in \{1, \cdots, d\}$. 
		\eenmt
		Then for every $1 \leq m \leq d_0 + 1$, and $\nu_m^{+}$-a.e.  $V \in \Gr(d, d-m+1)$, we have 
		\aryst
		\dimH (P^{V})_* \mu = \min\{m - 1, D_{d_0+1}\},
		\earyst
		where $D_{d_0+1}$ denotes the common value of $\dimH (P^{V_{d_0+1}})_*\mu$ over $\nu_{d_0+1}^+$-a.e. $V_{d_0+1}\in \Gr(d,d-d_0)$.
	\end{thm}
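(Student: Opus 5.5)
The plan is to deduce \cref{main cor} from \cref{main technical} by a lifting argument that manufactures strong separation without changing the linear cocycle. Let $\cF=(\{f_i(x)=M_ix+a_i\}_{i\in I},\bp)$ be given. Consider the lifted IFS $\tilde\cF$ on $\R^{d}\times\R^{d'}$, or more simply on $\R^{d+1}$, where each map has block-triangular linear part.

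A convenient choice is $\tilde f_i(x,t)=(M_ix+a_i,\ \lambda t+b_i)$. Here $\lambda>0$ is tiny and the $b_i$ are chosen so that the one-dimensional IFS $\{\lambda t+b_i\}$ has disjoint first-level images. Then $\tilde\cF$ satisfies SSC. Its attractor projects onto $\Lambda$ under $\pi(x,t)=x$, and its self-affine measure $\tilde\mu$ satisfies $\pi_*\tilde\mu=\mu$.

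Choosing $\log\lambda<\chi_d$ places the new exponent $\log\lambda$ at the bottom of the spectrum. The new spectrum is then $\chi_1>\cdots>\chi_d>\log\lambda$, which is still simple. The Oseledets data of $\tilde\cF$ relate to those of $\cF$ as follows:
\begin{itemize}
\item the new most contracted direction is $\tilde V_{d+1}=\{0\}\times\R$;
\item $\tilde V_j=V_j\times\R$ for $j\le d$;
\item for $j\le d$, the new unstable flags satisfy $\pi\tilde U^j=U^j$, and $\tilde U^j$ is the graph of a measurable linear map $U^j\to\R$;
\item for $j\le d$, $\tilde E_j$ is the graph of a linear map over $E_j$, since the cocycle is block-diagonal;
\item $\tilde E_{d+1}=\{0\}\times\R$.
\end{itemize}

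Next I verify ONC at level $d_0$ for $\tilde\cF$. The condition only involves indices $m\le d_0-1$ and $q\le d_0\le d$. The relevant spaces are $\tilde V_{m+1}(\bj_+)=V_{m+1}(\bj_+)\times\R$ and $\tilde U^{m-1}\oplus\tilde E_q$, which is a graph over $U^{m-1}\oplus E_q$. A vector lying in both must project under $\pi$ into $V_{m+1}(\bj_+)\cap(U^{m-1}\oplus E_q)$, and this intersection is $\{0\}$ by the ONC for $\cF$. The vector then lies in the graph with zero base, hence is $0$. So the positive angle persists, and the lifted IFS has ONC at level $d_0$.

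Now apply \cref{main technical} to $\tilde\cF$ with the same $d_0$, and relate projections. For $1\le m\le d_0+1\le d+1$, a $\tilde\nu_m^+$-typical subspace is $\tilde V_m=V_m\times\R$, of dimension $d-m+2$ in $\R^{d+1}$. Projecting along it means projecting away the $t$-coordinate and then projecting along $V_m$. Thus $(P^{\tilde V_m})_*\tilde\mu$ is identified, up to a linear isomorphism, with $(P^{V_m})_*\mu$. The identification uses that $V_m\times\R\supset\{0\}\times\R$, so the quotient $\R^{d+1}/\tilde V_m$ equals $\R^d/V_m$.

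The distribution of $\tilde V_m$ is the pushforward of $\nu_m^+$ under $V\mapsto V\times\R$. Hence for $\nu_m^+$-a.e.\ $V$ we get
\[\dimH(P^V)_*\mu=\min\{m-1,\tilde D_{d_0+1}\},\]
and by the same identification at $m=d_0+1$, $\tilde D_{d_0+1}=D_{d_0+1}$. This is exactly the claim.

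The main obstacle is that \cref{main technical} presumably requires the lifted system to be an IFS of the same type, with invertible contractions and a Bernoulli driving measure; this holds here. The real bookkeeping is making sure that the Furstenberg measures $\tilde\nu_m^+$ of the lift are exactly the pushforwards described above, and that $D_{d_0+1}$ is defined consistently via \cref{thm Exactdim}. The first point follows from the uniqueness in \cref{thm Oseledets}: $V_m\times\R$ is invariant, and it has the correct growth rate $\chi_m$ on its complement directions. I would check the sign conventions for the flags there carefully.
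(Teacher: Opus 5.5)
Your proposal is correct and is essentially the paper's argument. The paper lifts via $g_i(x,y)=(f_i(x),\lambda y+b_i)$ with $\lambda$ smaller than $1/|I|$ and the minimal conorm of the $M_i$, records $V^{\cG}_q=V^{\cF}_q\oplus\R$ and $E^{\cG}_q=E^{\cF}_q\oplus\{0\}$, checks that ONC passes to the lift, applies \cref{main technical}, and transfers back using $\dim P^{V^{\cG}_m}\mu^{\cG}=\dim P^{V^{\cF}_m}\mu^{\cF}$.

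The only cosmetic difference is this: because the cocycle is block-diagonal, your ``graphs'' $\tilde U^j,\tilde E_j$ are exactly $U^j\oplus\{0\}$ and $E_j\oplus\{0\}$, so the ONC angles are literally unchanged rather than merely positive.
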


	The irreducibility condition used in \cite{barany_hausdorff_2017} and
	\cite{Rapaport24} can be summarized as \emph{proximality and strong
		irreducibility in all exterior powers}:
	\begin{equation}\tag{$\star$}\label{cond star}
		\parbox{0.8\textwidth}{For every $1 \leq k < d$, the induced action of the
			semigroup generated by $\{M_i\}_{i \in I}$ on $\wedge^k \R^d$ is proximal
			and strongly irreducible.}
	\end{equation}
	Under condition \eqref{cond star}, hypotheses (1) and (2) of \cref{main cor} hold automatically\footnote{Simple spectrum comes from proximality. Strong irreducibility implies that for each fixed $m$-dimensional subspace $W\subset \R^d$, $V_{m+1}(\bj_+)\cap W=\{0\}$ for $\nu_+$-almost every $\bj_+$.}. Since \cref{main cor} assumes no separation condition, it contains the results of \cite{barany_hausdorff_2017} and \cite{Rapaport24}. ONC also covers cases that condition \eqref{cond star} excludes, such as the triangular case which is treated separately in \cite{barany_hausdorff_2017}. ONC is thus a more natural transversality condition: it is compatible with the lifting argument that removes the separation hypothesis, and it retains exactly the non-integrability that the projection argument uses.

	\subsection{Idea of the proof of \Cref{main technical}}
	Let $\cF$ be as in \Cref{main technical}. By the Ledrappier-Young formula for self-affine measures (see \Cref{thm Exactdim}), for every $k\in \{1,\cdots, d+1\}$, there is a number $D_k$ such that $(P^{V_k(\bi_+)})_*\mu$ is exact dimensional with dimension $D_k$ for $\nu_+$-a.e. $\bi_+$. In particular, $D_{d+1}=\dim \mu$, and we have $D_k\leq D_{k+1}$ and $D_k\leq k-1$. In this notation, the conclusion of \Cref{main technical} is equivalent to
	\begin{equation}
		\label{eqn:LY reduction}
		D_{m+1}=\min\{m,\,D_{d_0+1}\}\qquad \text{for}\,\,1\leq m\leq d_0-1.
	\end{equation}

	Like many previous works on this topic, our proof is also based on a certain dimension increment argument. If \eqref{eqn:LY reduction} fails, then there is some $m \in \{1, \cdots, d_0-1\}$ with $$D_{m+1} < \min\{m,D_{d_0+1}\},$$ and we derive a contradiction by a bootstrapping argument, which increases the value of $D_{m+1}$. 
	
	The first key ingredient in our approach is a different source of dimension increment. 
	B\'ar\'any, Hochman and Rapaport \cite{barany_hausdorff_2017} express the projection of $\mu$ as a convolution of $\mu$ with a measure supported on the projected iterated affine maps. In their argument, dimension growth comes from the conditional entropy of this measure with respect to two partitions at polynomially comparable small scales. Under ESC, this entropy is positive when the dimension formula fails. In our approach, the source is the dimension of the {\it fiberwise conditional measure } $\mu_x^{V_{m+1}(\bi_{+})}$ on the fibers $x+V_{m+1}(\bi_+)$. Indeed, the gap between $D_{m+1}$ and $D_{d_0+1}$ ensures these measures have positive dimension: $\dimH \mu_x^{V_{m+1}(\bi_{+})} = D_{d_0+1}  - D_{m+1} > 0$ for typical $x\in \Lambda_+$ and $\bi_+\in \Sigma_+$.
	This new viewpoint is inspired by the recent breakthrough of Wu \cite{wuProjectionTheoremsCountably2025} on the exact overlaps conjecture, and was first implemented in the study of non-linear planar IFS in the recent work by the first two authors of this paper with Tong and Xu \cite{LPTX}. 
	
	The proof architecture is as follows, though the actual implementation deviates from it due to technical details.
	
	{\bf I}. 
	Let $C_0$ be a large constant depending only on $\cF$. We consider dimensions near the scale $2^{-C_0n}$ for large $n$. For a typical $\bi_+$,
	we find a typical box $Z$ of length $2^{-C_0 n}$ along $V_{m+1}$ and $2^{-(C_0+1)n}$ along $V_{m+1}^{\perp}$, so that $(P^{V_{m+1}(\bi_{+})})_* \mu^{Z}$ reflects the expected dimension $D_{m+1}$ at a scale not much smaller than $2^{-C_0n}$. We will always consider projections of component measures (i.e., the restriction of a measure to a small region, such as $\mu^Z$). We will use the exact dimensionality of Feng \cite{fengDimensionInvariantMeasures2023} for both the fiberwise measure and the projection measure.
	
	{\bf II}. We look for $\bj_{+}$ such that $V_{m+1}(\bj_{+})$ is close to, but distinct from, $V_{m+1}(\bi_{+})$, so that the projection of $\mu^{Z}$ along $V_{m+1}(\bj_{+})$ detects the dimension of $\mu^{V_{m+1}(\bi_{+})} $. The precise requirement on this projection measure is the {\it spreading property } of \cite{wuProjectionTheoremsCountably2025}. Both SSC and ONC enter at this step. Under SSC, the fiberwise measures are invariant under the backward dynamics, so spreading can be obtained by ergodic arguments; ONC provides the transversality needed to pass from the $\bi_+$-fiber to its $\bj_+$-projection. We would like to apply Wu's entropy increase theorem \cite{wuProjectionTheoremsCountably2025} (see \cref{thm Wu})  to conclude that $(P^{V_{m+1}(\bj_{+})})_* \mu^{Z}$ has dimension larger than $D_{m+1}$, which would give a contradiction.

	An immediate difficulty with this outline is that Wu's entropy increase theorem does not apply directly in dimensions greater than $1$: it is a sumset estimate on the line, bounding $|\bigcup_{a\in A}(a+B_a)|$ from below (see \cref{thm Wu}), and it requires only cardinality bounds on the sets $B_a$.  In higher dimensions, we also need information on the structure of these sets, since translates of subsets may concentrate in a low-dimensional subspace.

	To resolve this issue, we need the second key ingredient in our proof: a new structural result on the component measure in a tiny scale window.  For every $m$ with $D_m = m - 1$, 
	we show that near a typical $x\in \Lambda$, provided that $\rho^{1 + } < \delta < \rho$, the component measure in a slab of size $\rho$ along $U^{m-1}(x)$ and size $\delta$ along the orthogonal direction almost reaches the full dimension at scale $\delta$: one needs $(\rho/\delta)^{m-1-}$ many $\delta$-balls to cover a definite mass. We call this {\it slab-filling property}.
	We can find sufficiently many such slabs such that the dimension $D_{m+1}$ can be computed from the number of slabs. Moreover, we also have the H\"older continuity of $U^{m-1}(x)$ for $x$ in a large measure set. Hence, in a small ball of size $\rho$, we can assume all the slabs are parallel. This allows us to quotient out the direction $U^{m-1}(x)$, reducing it to the dimension-increment picture of the planar case. 

	A further difficulty arises in verifying the spreading property of fiber measures in higher dimensions, which is treated in \cref{sec proofofprop fiberwisedim}. The measure for which we need spreading is $(P^{V_{m+1}(\bj_+) + U^{m-1}(x)})_* \mu_{x,\bi_+}$, where $\mu_{x,\bi_+}$ is the restriction of $\mu_x^{V_{m+1}(\bi_{+})}$ to a small cube.
	In \cite{wuProjectionTheoremsCountably2025},  the spreading property follows from the properties of the CP process and Birkhoff's ergodic theorem.
	In the planar case \cite{LPTX}, this follows from the same property of the fiberwise measure itself, as $P^{V_{2}(\bj_{+})}$ embeds the support of $\mu_{x,\bi_+}$ into $V_{2}(\bj_{+})^{\perp}$. In higher dimensions, the situation is more delicate. The projection collapses the support of the fiberwise measure, so we must keep careful track of the angles between various Lyapunov subspaces associated with $\bi_{+}$ and $\bj_{+}$, and of their effect on projected measures. 
	
	Moreover, the boxes must be typical for two {\it nearby} $\bi_{+}$ and $\bj_{+}$ at the same time. 
	We therefore work with a non-trivial self-coupling measure $\zeta_n$ of $\nu_{+}$ on $\Sigma_+^2$, which allows us to find pairs $(\bi_+,\bj_+)$ for which all the relevant angles are close to the values predicted by the Lyapunov exponents. Another complication is that $\mu^{V_{m+1}(\bi_{+})}$ may be supported in the smaller subspace $V_{m_0+1}(\bi_{+})$ for some $m_0 > m$. This requires modifying the choice of $Z$.

	\subsection{Notation}
	
	We denote $\N = \{0, 1, 2, \cdots\}$.
	
	Given real numbers $a \in \R$ and $b > 0$, we denote by $a \pm b$ an unspecified number in $[a-b, a+b]$.
	
	\noindent{$\bullet$ \bf Vector spaces.}           
	Given a linear space $V$ equipped with a norm induced by an inner product.  Given a $\eta > 0$, we denote by $B^{V}(\eta)$ the open ball of radius $\eta$ in $V$ centered at the origin. With a slight abuse of notation, we will use $0$ to denote the origin in various normed linear spaces.
	
	Given a subspace $U \subset V$, we denote by $P_U : V \to U$ the orthogonal projection to $U$, and denote  $P^U = P_{U^{\perp}}$. We write $P^U\mu$ for the pushforward $(P^U)_{*}\mu$, and similarly for other operators.

	\noindent{$\bullet$ \bf Measures.}
	
	Given a complete separable metric space $X$, we denote by $\cM(X)$, resp. $\cP(X)$, the space of Borel measures, resp. Borel probability measures, on $X$.
	
	Let $\zeta$ be a probability measure on $X$, and let $Y \subset X$ be a Borel subset such that $\zeta(Y) > 0$. We  denote
	\aryst
	\zeta^Y = ( \zeta|_{Y} )  / \zeta(Y).
	\earyst

	\noindent{$\bullet$ \bf Implicit constants.} 
	
	Given $A\in \R$, $B>0$, and a parameter $q$ (potentially multidimensional), we write $A = O_{q}(B)$ or $A \lesssim_{q} B$  to mean that there is a positive constant $C$ depending  on $M$ and $q$ only such that $|A| \leq C B$.
	We also say that a statement involving $A,B$  is valid under the condition $A\ll_{q} B$ if it holds provided  $A\leq \eps B$ where $\eps>0$ is a small enough constant depending on $M$ and $q$ only. 
	When the above $C,\eps$ only depend on $M$, we do not write any subscript. We insist that the notations $O(\cdot)$, $\lesssim$, $\ll$.
	
	Given a parameter $q$, we write $C=C(q)$ to indicate that a certain constant $C$ depends on $q$ only.  
	
	\subsection*{Acknowledgement}
	Part of this work was carried out while W.P. and Z.Z. were visiting Fudan University in July 2026, and they thank the university for its hospitality. J.L. also thanks the Institut Mittag-Leffler and the program organizers for their hospitality during a stay in which part of this paper was written. W.P. is supported by NSERC Discovery Grant RGPIN-2025-06706.
	
	\subsection*{AI disclosure}
	
	The authors used LLM tools to search the literature, check grammar, and improve the writing of this manuscript. The authors drafted parts of \Cref{app miniscule} with the help of LLM tools, working from the authors' ideas and outlines, and similar results have been established in \cite{LPTX}. LLM tools were also used to assist with the linear algebra computation in \Cref{app linear algebra}. All mathematical ideas and results are due to the authors, who take full responsibility for the content.

	\section{Preliminaries} \label{sec preliminary}
	
	\subsection{The setup}
	Throughout this section, $\cF = ( \{ f_i(x) = M_i x + a_i \}_{i \in I}, (p_i)_{i \in I} )$ is an affine IFS on $\R^d$ with  a simple Lyapunov spectrum.
	We do {\em not} assume that $\cF$ has SSC; the additional structure available under SSC is collected in \cref{subsec SSC}, and the ONC at level $d_0$ is assumed only where it is explicitly stated.
	Since the statements of \cref{main thm}, \cref{main thm2} and \cref{main technical} are homothety invariant, we may assume without loss of generality that $f_i (B^{\R^d}(1/2)) \Subset B^{\R^d}(1/2)$ for every $i \in I$, and consequently $\Lambda \Subset B^{\R^d}(1/2)$.
	
	\subsection{Grassmannians, flags and slabs} \label{subsec grassmannianflagsslabs}
	
	Equip $\R^d$ with a standard euclidean norm. For non-zero $u, v \in \R^d$, let 
	\aryst
	\measuredangle(u, v) = \arccos \frac{\langle u, v \rangle}{\|u\| \, \|v\|} \in [0, \pi / 2].
	\earyst
	For a non-zero vector $u$ and a non-zero subspace $W \subset \R^d$, let
	\aryst
	\measuredangle(u, W) = \min_{w \in W \setminus \{0\}} \measuredangle(u, w) \in [0, \pi/2].
	\earyst
	For non-zero subspaces $V, W \subset \R^d$, the {\em minimal angle} between $V$ and $W$ is
	\aryst
	\measuredangle(V, W) = \min_{v \in V \setminus \{0\}} \measuredangle(v, W)
	= \min_{\substack{v \in V, \ w \in W \\ \|v\| = \|w\| = 1}} \measuredangle(v, w).
	\earyst
	We also set
	\aryst
	\bd(V, W) = \begin{cases}
		\max_{v \in V \setminus \{0\}} \sin \measuredangle(v, W) & \mbox{if } \dim V \leq \dim W, \\
		\max_{w \in W \setminus \{0\}} \sin \measuredangle(w, V) & \mbox{if } \dim W \leq \dim V,
	\end{cases}
	\earyst
	which is equivalent to the Hausdorff distance as subsets in the projective space $\mathbb{P}(\R^d)$.
	
	For a linear subspace $F \subset \R^d$, $x \in \R^d$ and $r \in (0, \infty]$, we write
	\aryst
	B^{F}(r) = \{ v \in F : \|v\| < r \}, \qquad B^{F}(x, r) = x + B^{F}(r),
	\earyst
	so that $B^{F}(\infty) = F$. For $k \in \{1, \cdots, d-1\}$, $V \in \Gr(d, k)$ and $r > 0$, we write 
	\aryst
	B(V, r) = \{ V' \in \Gr(d, k) : \bd(V, V') < r \}.
	\earyst
	We will abbreviate $B^F(x, r)$ as $B(x, r)$ when there is no ambiguity about the underlying space $F$, or such choice is not important.
	
	A {\em partial flag} in $\R^d$ is an ordered tuple $\bE = (E_i)_{i = 1}^{q}$ of linear subspaces of $\R^d$ such that 
	\aryst
	\{0\}\subsetneq E_{1} \subsetneq \cdots \subsetneq E_{q} = \R^d.
	\earyst
	Setting $E_0 = \{0\}$, its {\em successive complements} are the subspaces
	\aryst
	F_j = E_j \cap E_{j-1}^{\perp}, \qquad j \in \{1, \cdots, q\}.
	\earyst
	Given $x \in \R^d$, a partial flag $\bE$ and $\br = (r_1, \cdots, r_q) \in (0, \infty]^{q}$, the {\em $(x, \bE, \br)$-slab} is
	\begin{equation}
		\label{eqn:slab definition}
		S(x, \bE, \br):= x + B^{F_1}(r_1) + \cdots + B^{F_q}(r_q).
	\end{equation}

	\subsection{Symbolic notation}
	Recall from  \Cref{subsec:main tech result} the two-sided full shift
	$(\Sigma, \sigma)$, the Bernoulli measure $\nu$, the spaces $\Sigma_{\pm}$
	with the measures $\nu_{\pm}$, and the decomposition
	$\bi = (\bi_{-}, \bi_{+})$, under which $\Sigma = \Sigma_{-} \times \Sigma_{+}$
	and $\nu = \nu_{-} \otimes \nu_{+}$.
	By a slight abuse of notation, we denote the one-sided shift factor of
	$\sigma$ by $(\Sigma_{+}, \sigma)$ and the one-sided shift factor of
	$\sigma^{-1}$ by $(\Sigma_{-}, \sigma^{-1})$.
	For $\bi_{-} \in \Sigma_{-}$, $\bi_{+} \in \Sigma_{+}$ and $i \in I$, we write
	$\bi_{-} i \in \Sigma_{-}$ for the sequence $\bi_{-}$ followed by $i$, and
	$i \bi_{+} \in \Sigma_{+}$ for the sequence $\bi_{+}$ preceded by $i$; thus
	\aryst
	\sigma(\bi_{-}, \bi_{+}) = (\bi_{-} i_0, \ \sigma \bi_{+}), \qquad
	\sigma^{-1}(\bi_{-}, \bi_{+}) = (\sigma^{-1} \bi_{-}, \ i_{-1} \bi_{+}).
	\earyst
	
	Given an integer $n > 0$ and $(i_{1}, \cdots, i_{n} ) \in I^n$, we denote
	\begin{equation}
		\label{eqn:f iterate}
		f_{(i_1, \cdots, i_n)} = f_{i_n} \circ \cdots \circ f_{i_1}. 
	\end{equation} 
	For each $\bi_{-} \in \Sigma_{-}$, $\bi_{+} \in \Sigma_{+}$ and $n > 0$, we denote
	\aryst
	f_{\bi_{+}}^{n} = f_{i_{n-1}} \circ \cdots \circ f_{i_{0}}, \qquad f_{\bi_{-}}^{-n} = f_{i_{-n}}^{-1} \circ \cdots \circ f_{i_{-1}}^{-1}, \qquad
	\bi_{+}[n] = (i_{0}, \cdots, i_{n-1}).
	\earyst
	We define the cylinder sets in $\Sigma_{-}$ and the canonical partition $\cP$ of $\Sigma$ by
	\begin{equation}
		\label{eqn:partition Lambda}
		[i] = \{ \bi_{-} \in \Sigma_{-} : i_{-1} = i \}, \quad i \in I, \qquad \cP = \{ [i] \times \Sigma_{+} : i \in I \}.
	\end{equation}
	
	The coding map $\Pi : \Sigma_{-} \to \Lambda$,
	\aryst
	\Pi( \bi_{-} ) = \lim_{n \to \infty} f_{i_{-1}} \circ \cdots \circ f_{i_{-n}}(0) = \lim_{n \to \infty} f_{\bi_{-}[n]}(0),
	\earyst
	is a continuous surjection with $\Pi_{*} \nu_{-} = \mu$, and it satisfies
	\begin{equation}
		\label{eq coding equivariance}
		\Pi(\bi_{-}) = f_{i_{-1}} \big( \Pi(\sigma^{-1} \bi_{-}) \big), \qquad \bi_{-} \in \Sigma_{-}.
	\end{equation}
	
	Let $\hat\mu = \mu \times \nu_{+}$ and, with a slight abuse of notation, let
	\aryst
	\sigma : \Lambda \times \Sigma_{+} \to \Lambda \times \Sigma_{+}, \qquad \sigma(x, \bi_{+}) = (f_{i_0}(x), \sigma \bi_{+}).
	\earyst
	By \eqref{eq coding equivariance}, $\Pi \times \mathrm{id} : \Sigma \to \Lambda \times \Sigma_{+}$ satisfies $(\Pi \times \mathrm{id}) \circ \sigma = \sigma \circ (\Pi \times \mathrm{id})$ and $(\Pi \times \mathrm{id})_{*} \nu = \hat\mu$; in particular, $\hat\mu$ is $\sigma$-invariant.
	Without SSC, the map $\sigma$ on $\Lambda \times \Sigma_{+}$ need not be invertible, and we shall work on $(\Sigma, \sigma, \nu)$, using $\Pi \times \mathrm{id}$ as a factor map.

	\subsection{Oseledets subspaces and their distributions}
	Let $\chi_j$, $V_j$, $U^j$, $E_j$ be given by \cref{thm Oseledets}.
	For each $j \in \{1, \cdots, d\}$, we let
	\enmt
	\item $\nu^{+}_j = (V_j)_{*} \nu_{+}$ be the distribution of $V_j(\bi_{+})$ on $\Gr(d, d-j+1)$ as $\bi_{+}$ is distributed according to $\nu_{+}$;
	\item $\nu^{-}_j = (U^j)_{*} \nu_{-}$ be the distribution of $U^j(\bi_{-})$ on $\Gr(d, j)$ as $\bi_{-}$ is distributed according to $\nu_{-}$,
	\eenmt
	and we set $\nu^{+}_{d+1} = \delta_{\{0\}}$.
	\begin{rema} \label{rem stationary}
		By \cref{thm Oseledets}, we have $V_j(\bi_{+}) = M_{i_0}^{-1} V_j(\sigma \bi_{+})$ and $U^{j}(\bi_{-}) = M_{i_{-1}} U^{j}(\sigma^{-1} \bi_{-})$. Since $\nu$ is a product measure, this implies
		\aryst
		\nu^{+}_j &=& \sum_{i \in I} p_i \, (M_i^{-1})_{*} \nu^{+}_j , \qquad
		\nu^{-}_j \ = \ \sum_{i \in I} p_i \, (M_i)_{*} \nu^{-}_j .
		\earyst
		We do not use any uniqueness of stationary measures.
	\end{rema}
	
	\subsection{Partitions and fibrewise measures}
	\label{subsec:partitions}
	For $k \in \{1, \cdots, d+1\}$ and $\nu_{+}$-a.e. $\bi_{+}$, let $\xi_k(\bi_{+})$ denote the measurable partition of $\Lambda$ into the sets $\Lambda \cap (x + V_k(\bi_{+}))$, $x \in \Lambda$.
	Let $\{ \mu^{V_k(\bi_{+})}_x \}_{x \in \Lambda}$ be a system of conditional measures of $\mu$ with respect to $\xi_k(\bi_{+})$ \cite{Rohlin}; $\mu^{V_k(\bi_{+})}_x$ is a probability measure on $\Lambda \cap (x + V_k(\bi_{+}))$.
	
	For $k \in \{1, \cdots, d+1\}$, let $\mathcal{A}_k$ be the partition of $\Sigma$ with atoms
	\aryst
	\mathcal{A}_k(\bi_{-}, \bi_{+}) = \{ (\bj_{-}, \bi_{+}) : \Pi(\bj_{-}) - \Pi(\bi_{-}) \in V_k(\bi_{+}) \}
	\earyst
	(on the $\nu_{+}$-null set of $\bi_{+}$ where $V_k(\bi_{+})$ is undefined, we take singletons).
	Let $\{ \nu^{\mathcal{A}_k}_{\bi} \}_{\bi \in \Sigma}$ be a system of conditional measures of $\nu$ with respect to $\mathcal{A}_k$.
	Since $\nu = \nu_{-} \otimes \nu_{+}$ and $\xi_k(\bi_{+})$ pull back to $\mathcal{A}_k$ on the slice $\Sigma_{-} \times \{\bi_{+}\}$, for $\nu$-a.e. $\bi$, the image of $\nu^{\mathcal{A}_k}_{\bi}$ under $(\bj_{-}, \bi_{+}) \mapsto \Pi(\bj_{-})$ is the fiberwise measure $\mu^{V_k(\bi_{+})}_{\Pi(\bi_{-})}$.
	
	Let $\sigma \mathcal{A}_k$ be the partition of $\Sigma$ whose atom containing $\bi$ is 
	\aryst
	(\sigma \mathcal{A}_k)(\bi):= \sigma\big( \mathcal{A}_k(\sigma^{-1} \bi) \big).
	\earyst
	We have
	\ary
	\label{eqn:sigma A}
	\sigma \mathcal{A}_k = \mathcal{A}_k \vee \cP.
	\eary
	
	For $\nu_{+}$-a.e. $\bi_{+}$, fix an orthonormal basis $e_1(\bi_{+}), \cdots, e_d(\bi_{+})$ of $\R^d$ adapted to the flag $V_1(\bi_{+}) \supset \cdots \supset V_d(\bi_{+})$, i.e.
	\aryst
	e_j(\bi_{+}) \in V_j(\bi_{+}) \cap V_{j+1}(\bi_{+})^{\perp}, \qquad j \in \{1, \cdots, d\};
	\earyst
	the choice can be made measurably in $\bi_{+}$, and we fix such a choice.
	For $y \in \R^d$, we write $y_j = \langle y, e_j(\bi_{+}) \rangle$, so that $y = \sum_{j} y_j e_j(\bi_{+})$.

	For $N \geq 0$, let $\cD_N$ be the partition of $\R$ into the dyadic intervals $[k 2^{-N}, (k+1) 2^{-N})$, $k \in \Z$, and let $\cD_N(s)$ be the atom containing $s \in \R$; for $t \in \R$, we write $\cD_N + t$ for the translated partition and $(\cD_N + t)(s)$ for its atom containing $s$.
	
	Let $m \in \{1, \cdots, d-1\}$. For each translate $t = (t_{m+1}, \cdots, t_d) \in \R^{d-m}$, we define a partition of $\Lambda$ refining $\xi_{m+1}(\bi_{+})$ as follows. For $x \in \Lambda$ and $N \geq 0$, let
	\aryst
	\cD^{t}_{m+1, \bi_{+}, N}(x) &=& \{ y \in \Lambda : y_j = x_j \ (j \leq m), \ y_j \in (\cD_N + t_j)(x_j) \ (j \geq m+1) \};
	\earyst
	the sets $\cD^{t}_{m+1,\bi_+,N}(x)$, $x \in \Lambda$, are the atoms of a measurable partition $\cD^{t}_{m+1,\bi_+,N}$ of $\Lambda$. For $x \in \Lambda$ and $r > 0$, we also set
	\aryst
	T^{(m)}_r(x, \bi_{+}) &=& \{ y \in \Lambda : y_j = x_j \ (j \leq m), \ |y_{m+1} - x_{m+1}| < r \};
	\earyst
	this is a tube, not a partition. For every translate $t$ and every $k \geq 0$, we have
	\aryst
	\xi_{m+1}(\bi_{+}) \ \preceq \ \cD^{t}_{m+1, \bi_{+}, N}, \qquad
	\cD^{t}_{m+1, \bi_{+}, N}(x) \ \subset \ T^{(m)}_{2^{-N}}(x, \bi_{+}).
	\earyst
	In particular, for each fixed $t$ the partitions are increasing in $N$. In \cref{sec proofofprop fiberwisedim} below, the translate will be chosen depending on $n$ as a random translate (\cref{subsec:random partition}), in order to gain the boundary regularity \eqref{eq randomdyadicpartitions}; until then, the translate plays no role and may be taken to be $0$, in which case we drop the superscript $t$ from the notation.

	\subsection{Entropies and the Ledrappier--Young formula}\label{subsec ly}
	Define the fiberwise entropies of $\cF$ by
	\begin{equation}
		\label{eqn:fiberwise entropies}
		h_k = H_{\nu}\big( \cP \,\big|\, \widehat{\mathcal{A}_k} \big), \quad k \in \{1, \cdots, d+1\},
	\end{equation}
	where $H_{\nu}(\cdot \, | \, \cdot)$ stands for conditional entropy and $\widehat{\mathcal{A}_k}$ is the $\sigma$-algebra generated by $\mathcal{A}_k$. In particular, we have
	\aryst
	h_{d+1 }=H(\bp) =-\sum_{i\in I} p_i\log p_i  \quad \text{and}\quad \ h_{ 1} = H_{\nu_{-}}\big( \cP \,\big|\, \Pi^{-1} \mathcal{B} \big) \ \geq \ 0,
	\earyst
	where $\mathcal{B}$ is the Borel $\sigma$-algebra of $\R^d$.
	We define the {\em entropy of $\cF$} by
	\begin{equation}
		\label{eq entropy F}
		H(\cF) = h_{d+1} - h_1 = H(\bp) - H_{\nu_{-}}\big( \cP \,\big|\, \Pi^{-1} \mathcal{B} \big) \in [0, H(\bp)],
	\end{equation}
	{ which is also called the Furstenberg entropy.}
	Set
	\begin{equation}
		\label{eq def Dk}
		D_k = \sum_{\ell = 1}^{k-1} \frac{h_{\ell+1} - h_{\ell}}{|\chi_{\ell}|}, \qquad k \in \{1, \cdots, d+1\}.
	\end{equation}
	
	The following is \cite[Theorems 1.3 and 1.4]{fengDimensionInvariantMeasures2023}; see also \cite[Theorem 2.3]{BK}.

	\begin{thm}[Exact dimensionality and the Ledrappier--Young formula] \label{thm Exactdim}
		For every $k \in \{1, \cdots, d+1\}$ and $\hat\mu$-a.e. $(x, \bi_{+}) \in \Lambda \times \Sigma_{+}$, writing $V = V_k(\bi_{+})$, the following hold.
		\enmt
		\item The projection $(P^{V})_*\mu$ is exact dimensional at $P^{V} x$, with dimension $D_k$:
		\aryst
		\lim_{r \to 0} \frac{ \log \, (P^{V})_* \mu \big( B(P^{V} x, r) \big) }{ \log r } = D_k .
		\earyst
		In particular, $\dim_{\mathrm H} (P^{V})_* \mu = D_k \leq k-1$ for $\nu^{+}_k$-a.e. $V \in \Gr(d, d-k+1)$.
		\item The fibrewise measure $\mu^{V}_x$ is exact dimensional at $x$, with dimension $D_{d+1} - D_k$:
		\aryst
		\lim_{r \to 0} \frac{ \log \mu^{V}_x \big( B(x, r) \big) }{ \log r } = D_{d+1} - D_k.
		\earyst
		\item If $k \leq d$, the projection of $\mu^{V}_x$ along $V_{k}(\bi_{+})$ is exact dimensional at $x$, with dimension $D_{k+1} - D_k$:
		\ary\label{eq exactdim}
		\lim_{r \to 0} \frac{ \log \mu^{V}_x \big(   T^{(k-1)}_r  (x, \bi_{+}) \big) }{ \log r } = D_{k+1} - D_k.
		\eary
		\eenmt
		In particular, $\mu$ is exact dimensional with
		\aryst
		\dim \mu = D_{d+1}.
		\earyst
	\end{thm}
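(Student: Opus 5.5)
The statement is quoted from Feng \cite[Theorems 1.3 and 1.4]{fengDimensionInvariantMeasures2023}. The plan below is the standard Ledrappier--Young route as I would carry it out in the present notation. The idea is to compute local dimensions by comparing Bowen-type symbolic cylinders with Euclidean slabs adapted to the Oseledets flag, and to use the entropies $h_k$ of \eqref{eqn:fiberwise entropies} to count how many cylinders meet a fiber. I would proceed by induction on $k$. The base case $k=1$ is trivial: $V_1=\R^d$, so $P^{V_1}\mu$ is a point mass and $D_1=0$. For $k=d+1$, the conditional measure is $\delta_x$ and the fiberwise statement (2) is trivial.

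\emph{Step 1 (fiber entropy along cylinders).} For $\nu$-a.e.\ $\bi$ I apply the Shannon--McMillan--Breiman theorem to the conditional measures $\nu^{\mathcal A_k}_{\bi}$. By \eqref{eqn:sigma A}, $\sigma\mathcal A_k=\mathcal A_k\vee\cP$, so the refinements $\bigvee_{j<n}\sigma^{j}\cP$ restricted to an $\mathcal A_k$-atom have information increments $H_\nu(\cP\mid\widehat{\mathcal A_k})=h_k$. This gives
\[
\nu^{\mathcal A_k}_{\bi}\big([\bi_-[n]]\big)=e^{-n h_k+o(n)}.
\]
\emph{Step 2 (geometry of cylinders).} By \cref{thm Oseledets}(4),(5), the set $f_{\bi_-[n]}(\Lambda)$ is contained in a slab around $\Pi(\bi_-)$ with widths about $e^{n\chi_j}$ along the directions $E_j$. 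Up to subexponential angle losses, this is a slab of the form \eqref{eqn:slab definition} for the flag $U^1(\bi_-)\subset\cdots\subset U^d(\bi_-)$.

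\emph{Step 3 (inductive dimension count).} Next I relate the projection of $\mu^{V_k}_x$ along $V_{k+1}$ (the tube $T^{(k)}_r$ measure in (3)) to the entropy difference $h_{k+1}-h_k$. Fix $r=e^{n\chi_k}$. Inside $x+V_k(\bi_+)$, a tube of width $r$ in the $e_k$-direction is comparable to a union of level-$n$ cylinders in $\mathcal A_{k+1}$-atoms. These cylinders have width at most $r$ in the $e_k$ coordinate, since $e_k$ is the least contracted direction transverse to $V_{k+1}$ up to angles. Counting with Step 1 at levels $k$ and $k+1$ gives
\[
\mu^{V_k}_x\big(T^{(k-1)}_r\big)\approx e^{-n(h_{k+1}-h_k)}=r^{(h_{k+1}-h_k)/|\chi_k|},
\]
which is (3). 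Statements (1) and (2) then follow by chaining. The projection $P^{V_{k+1}}\mu$ locally disintegrates as $P^{V_k}\mu$ times the transverse one-dimensional measures, and the local dimensions add: $D_{k+1}=D_k+(D_{k+1}-D_k)$. A Marstrand-type lower bound plus a covering upper bound converts this into exact dimensionality. Finally, (2) follows from (1) and $\dim\mu=D_{d+1}$ via the disintegration $\mu=\int\mu^{V}_x\,d\mu$ together with exact dimensionality of the projection.

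\emph{Main obstacle.} The hard step is Step 3 without any separation condition. Cylinders overlap, so a Euclidean tube is not a union of cylinders. I would handle this by passing to the symbolic fiber measures $\nu^{\mathcal A_k}$, which encode overlaps exactly through $h_1$. I would also use Feng's transversality argument: the lower bound comes from Borel--Cantelli over dyadic scales, and the upper bound from a Frostman/entropy-averaging argument on $\mu^{V_k}_x$. The shift-invariance $V_k(\bi_+)=M_{i_0}^{-1}V_k(\sigma\bi_+)$ is what lets the fiber measures rescale consistently under $f_{\bi_-[n]}^{-1}$. Controlling the subexponential angle distortion from \cref{thm Oseledets}(5) uniformly on a large-measure set (Lusin) is the technical heart. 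I would defer the full details to \cite{fengDimensionInvariantMeasures2023}.
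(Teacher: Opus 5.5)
The paper does not prove this theorem; it imports it from \cite[Theorems 1.3 and 1.4]{fengDimensionInvariantMeasures2023} (see also \cite[Theorem 2.3]{BK}). Insofar as you also defer to Feng, you are doing exactly what the paper does. The outline you build around the citation, however, is not a proof, and two points in it are wrong as written.

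\emph{First: the sign in Step 3.} The sign of the entropy increment there contradicts your own count. Since $V_{k+1}(\bi_+)\subset V_k(\bi_+)$, the partition $\mathcal A_{k+1}$ refines $\mathcal A_k$, so $h_{k+1}\le h_k$ for $h_k=H_\nu(\cP\mid\widehat{\mathcal A_k})$. Concretely, $h_1=H(\bp)$ and $h_{d+1}=H_{\nu_-}(\cP\mid\Pi^{-1}\mathcal B)$, which is the reverse of the identification stated after \eqref{eqn:fiberwise entropies}. In the $\nu^{\mathcal A_{k+1}}_{\bi}$-weighted sense, about $e^{nh_{k+1}}$ level-$n$ cylinders meet $x+V_{k+1}(\bi_+)$, and by Step 1 each has $\mu^{V_k}_x$-mass about $e^{-nh_k}$. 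Your count therefore gives $\mu^{V_k}_x(T^{(k-1)}_r)\approx e^{-n(h_k-h_{k+1})}$, not $e^{-n(h_{k+1}-h_k)}$, which is $\ge 1$. The increments in \eqref{eq def Dk} have to be read as $h_\ell-h_{\ell+1}$.

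\emph{Second, and more seriously: additivity of local dimensions.} The claim that ``the local dimensions add'' is not a chaining step; it is the core of the Ledrappier--Young theorem. Write
$P^{V_{k+1}}\mu\big(B(P^{V_{k+1}}x,r)\big)=\int \mu^{V_k}_y\big(\{z\in y+V_k:\|P^{V_{k+1}}(z-x)\|<r\}\big)\,d\mu(y)$.
Each integrand is the mass of a width-$r$ tube in the fibre through $y$, positioned by $x$ rather than centred at a typical point of $\mu^{V_k}_y$.

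What you can get softly is one inequality. Item (3) at typical points, combined with a density/Egorov argument as in the proof of \cref{lem regularmeasureeverywhere}, bounds these tube masses from above. Given (1) at level $k$, this yields $\underline{\dim}_{\mathrm{loc}}P^{V_{k+1}}\mu\ge D_{k+1}$.

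The reverse inequality is dimension conservation, and it fails for general measures. For example, a measure carried by the graph of a wildly oscillating measurable function has point fibres and a one-dimensional projection, yet can have dimension $>1$. A Marstrand-type argument cannot supply it, since Marstrand concerns Haar-generic directions while $\nu^+_k$ may be singular. A covering argument cannot supply it either. The same objection applies to deducing (2) from (1) and $\dim\mu=D_{d+1}$.

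The missing inequality is a lower bound on ball and tube masses that is uniform in the scale and survives overlapping cylinders. This is precisely what \cite{fengDimensionInvariantMeasures2023} extracts from the dynamics. Without it your sketch yields only the lower bounds on local dimension, and the theorem must be taken from the citation, as the paper does.
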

	Here $T^{(0)}_r(x, \bi_{+}) = \{ y \in \Lambda : |y_1 - x_1| < r \}$, in accordance with the definition of $T^{(m)}_r$ for $m = 0$. The following lemma is immediate from \cref{thm Exactdim} and $|\chi_j|$ increasing.
	\begin{lemma}\label{lem:equivalence}
		Under the same assumption of \cref{thm Exactdim}, we have the following three statements are equivalent:
		\begin{itemize}
			\item We have
			\[ D_{d+1}=\max\{\delta_1+\cdots+\delta_d : \ \delta_j\in[0,1],\ H(\cF)=\sum_{1\leq j\leq d} |\chi_j|\delta_j   \}.  \]
			\item We have
			\[ \dimH\mu=    m+\frac{H(\cF)+\sum_{1\leq j\leq m}\chi_j}{ |\chi_{m+1}|},  \]
			where $m\leq d-1$ is the largest integer  satisfying  $\sum_{1\leq j\leq m}|\chi_j|\leq H(\cF)$.
			\item For every $1 \leq m \leq d$, for  $\nu_m^{+}$-a.e.  $V \in \Gr(d, d-m+1)$, we have 
			\aryst
			\dim P^{V} \mu = \min\{m - 1, \dim \mu \}.
			\earyst
		\end{itemize}
	\end{lemma}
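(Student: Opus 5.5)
We reduce the three statements to properties of the increments $\delta_\ell := D_{\ell+1}-D_\ell = (h_{\ell+1}-h_\ell)/|\chi_\ell|$ for $\ell\in\{1,\dots,d\}$. By \eqref{eq def Dk}, \[D_{d+1}=\sum_{\ell=1}^d\delta_\ell\quad\text{and}\quad \sum_{\ell=1}^d|\chi_\ell|\delta_\ell=h_{d+1}-h_1=H(\cF).\] Moreover $\delta_\ell\in[0,1]$. Nonnegativity holds because, by part (3) of \cref{thm Exactdim}, $\delta_\ell$ is the exact dimension of a measure at a point. The bound $\delta_\ell\le 1$ holds because that measure is the projection of $\mu^{V_\ell}_x$ onto the one-dimensional coordinate $y_\ell$, whose local dimension is at most $1$. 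So $(\delta_\ell)$ is an admissible point in the maximization of the first bullet.

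The linear program in the first bullet is elementary. Maximize $\sum\delta_j$ subject to $\delta_j\in[0,1]$ and $\sum|\chi_j|\delta_j=H(\cF)$, where the costs $|\chi_1|<\dots<|\chi_d|$ are increasing. A greedy exchange argument shows the maximizer is unique: $\delta_j=1$ for $j\le m$, $\delta_{m+1}=(H(\cF)-\sum_{j\le m}|\chi_j|)/|\chi_{m+1}|$, and $\delta_j=0$ for $j>m+1$, where $m$ is as in the second bullet. Indeed, moving mass from a larger index to a smaller one keeps the weighted sum fixed and strictly increases $\sum\delta_j$. Here we use $H(\cF)\le\sum|\chi_j|$, which also follows from the identity above. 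The maximum value is exactly the right-hand side of the second bullet, since $\dimH\mu=D_{d+1}$. Hence (1)$\iff$(2) is immediate. The uniqueness of the maximizer, which uses strict monotonicity of the $|\chi_j|$ (simple spectrum), shows both are equivalent to the statement that $(\delta_\ell)$ equals the greedy vector.

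For the third bullet, part (1) of \cref{thm Exactdim} gives $\dim P^V\mu=D_m=\sum_{\ell<m}\delta_\ell$ for $\nu_m^+$-a.e.\ $V$. So the third bullet reads $D_m=\min\{m-1,D_{d+1}\}$ for all $m$.

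If $(\delta_\ell)$ is the greedy vector, then $D_m=m-1$ for $m\le m_0+1$, where $m_0$ is the $m$ of the second bullet, and $D_m=D_{d+1}$ for $m\ge m_0+2$. Since $D_{d+1}\in[m_0,m_0+1]$, this is exactly $\min\{m-1,D_{d+1}\}$.

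Conversely, assume $D_m=\min\{m-1,D_{d+1}\}$ for all $m$. Let $q$ be the largest index with $q-1\le D_{d+1}$. Then $\delta_\ell=1$ for $\ell<q$, $\delta_q=D_{d+1}-(q-1)\in[0,1)$, and $\delta_\ell=0$ for $\ell>q$. This has the greedy form, and the constraint $\sum|\chi_\ell|\delta_\ell=H(\cF)$ forces $q=m_0+1$, giving the greedy vector.

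The only delicate point is the boundary case $\sum_{j\le m}|\chi_j|=H(\cF)$, where $\delta_{m+1}=0$ and the index bookkeeping must be checked. The potential obstacle is justifying $\delta_\ell\le1$ rigorously: it needs the tube measure in \eqref{eq exactdim} to be the pushforward to the line spanned by $e_\ell$, which holds by definition of $T^{(\ell-1)}_r$.
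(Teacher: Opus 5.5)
Your argument is correct and is essentially the paper's: the paper only says the lemma is immediate from \cref{thm Exactdim} and the monotonicity of $|\chi_j|$, and you supply exactly those details. These are the increments $\delta_\ell=D_{\ell+1}-D_\ell\in[0,1]$ with $\sum_\ell|\chi_\ell|\delta_\ell=H(\cF)$, the uniqueness of the greedy maximizer coming from strictly increasing costs, and the reformulation of the third bullet as $D_m=\min\{m-1,D_{d+1}\}$.

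Two points need more careful phrasing. First, a measure on $\R$ has local dimension at most $1$ only at almost every point, not at every point. So for $\delta_\ell\le 1$ you should use, via disintegration, that part (3) of \cref{thm Exactdim} holds at $\mu^{V_\ell(\bi_+)}_x$-typical points of almost every fiber. Second, in the converse take $q\le d$, so that the case $D_{d+1}=d$ (where $\delta_d=1$) is covered.
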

	
	Equipped with exact dimension and entropy, we now deduce \cref{main thm} and \cref{main thm2} from \cref{main technical}.

	\begin{proof}[Proof of \cref{main thm} and \cref{main thm2}]
		For $|I|=1$ the attractor is a single point, $\mu$ is a Dirac mass, and all the conclusions below are immediate; we assume $|I|\geq2$ from now on.
		
		Fix an enumeration $I=\{1,\cdots, N\}$, $N\geq 2$.
		Denote the IFS in question by $\cF = (\{ f_{i}(x) = M_i x + a_i \}_{i \in I}, (p_i)_{i \in I} )$.
		Since we do not assume that $\cF$ has SSC, we cannot apply \cref{main technical} to $\cF$ directly.
		To resolve this issue, we use the \lq\lq lifting\rq\rq\ construction of \cite[Section 4]{BK}, which we now describe.
		
		Choose $\lambda$ with
		\aryst
		0 < \lambda < \min\Big\{ \frac{1}{|I|}, \ \min_{i \in I, \, v \in \R^d \setminus \{0\}} \frac{ \| M_i v \| }{\| v \|} \Big\}.
		\earyst 
		Set $b_i = (i-1)/(N-1)$ for $i \in I$.
		For each $i \in I$, we define
		\aryst
		g_i(x, y) = (f_i(x), \lambda y + b_i ), \quad x \in \R^d, \ y \in \R,
		\earyst
		and we denote the IFS $(\{g_i \}_{i \in I}, (p_i)_{i \in I} )$ on $\R^{d+1}$ by $\cG$, which has SSC. 
		
		Since $\cF$ has Zariski dense linear parts, the Lyapunov spectrum $\chi_1 > \cdots > \chi_d$ of $\cF$ is simple.
		For every $n \geq 1$ and $(i_1, \cdots, i_n) \in I^n$ we have
		\aryst
		\min_{v \neq 0} \frac{\| M_{i_n} \cdots M_{i_1} v \|}{\|v\|} \ \geq \ \prod_{k = 1}^{n} \min_{v \neq 0} \frac{\| M_{i_k} v \|}{\|v\|} \ > \ \lambda^n ,
		\earyst
		hence $\chi_d \geq \log \min_{i, v} \frac{\| M_i v \|}{\|v\|} > \log \lambda$.
		Since the linear cocycle of $\cG$ is the direct sum of the linear cocycle of $\cF$ and the constant cocycle $\lambda$ on $\R e_{d+1}$, the Lyapunov spectrum of $\cG$ is
		\aryst
		\chi_1 > \cdots > \chi_d > \chi_{d+1} := \log \lambda ,
		\earyst
		which is simple.
		Moreover, for $\nu$-a.e. $\bi = (\bi_{-}, \bi_{+})$, the Oseledets subspaces and flags of $\cG$ are given by
		\begin{align}
			\label{eqn:Oseledets V F G}V^{\cG}_{q}( \bi_{+}) &= V^{\cF}_{q}( \bi_{+}) \oplus \R, \quad q \in \{1, \cdots, d\}\\
			\label{eqn:Oseledets F G}
			E^{\cG}_{q}(\bi_{-}, \bi_{+}) &= E^{\cF}_{q}(\bi_{-}, \bi_{+}) \oplus \{0\}, \quad q \in \{1, \cdots, d\}, \\
			E^{\cG}_{d+1}(\bi_{-}, \bi_{+}) &=V_{d+1}^\cG= \{0\} \oplus \R = \R e_{d+1}. \nonumber
		\end{align}
		Since $U^{m-1,\cG}=U^{m-1,\cF}\oplus\{0\}$ and $E^{\cG}_{q}=E^{\cF}_{q}\oplus\{0\}$ by \eqref{eqn:Oseledets F G}, while $V^{\cG}_{m+1}=V^{\cF}_{m+1}\oplus\R e_{d+1}$, the angle $\measuredangle(V^{\cG}_{m+1},U^{m-1,\cG}\oplus E^{\cG}_{q})$ equals $\measuredangle(V^{\cF}_{m+1},U^{m-1,\cF}\oplus E^{\cF}_{q})$: the minimum defining the latter is attained on vectors orthogonal to $e_{d+1}$. Hence $\cG$ inherits the ONC at level $d$ from $\cF$ due to Zariski density, so \cref{main technical} applies to $\cG$ with the same $d$; we then get, for $1\leq m\leq d+1$
		\[ \dim P^{V_{m}^{\cG}} \mu^{\cG}=\min\{m-1,\dim P^{V_{d+1}^\cG}\mu^{\cG} \}= \min\{m-1,\dim \mu^{\cF} \}%
		. \]
		
		Due to \eqref{eqn:Oseledets V F G}, we notice that 
		\[ \dim P^{V_{m}^{\cG}} \mu^{\cG}=\dim P^{V_{m}^{\cF}} \mu^{\cF}. \]
		Hence we can deduce Theorems \ref{main thm2} and \ref{main thm}, in the Zariski dense case, from \cref{lem:equivalence}.

		To see the last statement in Theorem \ref{main thm}, it suffices to see that if  $\cF$ has the SOSC property, then we have  $H(\cF) = H(\bp)$
		This follows from \cite[Corollary 2.7, 2.8]{barany_hausdorff_2017}.  
	\end{proof}    
	
	\subsection{Additional structure under SSC} \label{subsec SSC}
	In this subsection we assume in addition that $\cF$ has SSC.
	
	One property of SSC is for any pair of words $\bw,\bw'\in \cup_{n\geq 1} I^n$
	\begin{equation}\label{equ separation}
		f_{\bw}(\Lambda)\cap f_{\bw'}(\Lambda)=\emptyset
	\end{equation}
	unless one of $\bw,\bw'$ is a suffix of the other.

	Then $\Pi : \Sigma_{-} \to \Lambda$ is a homeomorphism, and $\Pi \times \mathrm{id}$ is a conjugacy between $(\Sigma, \sigma, \nu)$ and $(\Lambda \times \Sigma_{+}, \sigma, \hat\mu)$; we identify the two systems.
	Under this identification,
	\enmt
	\item $\cP = \{ f_i(\Lambda) \times \Sigma_{+} : i \in I \}$, and we also write $[i] = f_i(\Lambda) \subset \Lambda$;
	\item $\mathcal{A}_k(x, \bi_{+}) = \{ (y, \bi_{+}) : y \in \Lambda, \ y - x \in V_k(\bi_{+}) \}$;
	\item $h_{1} = 0$ and $H(\cF) = H(\bp)$;
	\item $\sigma : \Lambda \times \Sigma_{+} \to \Lambda \times \Sigma_{+}$ is a homeomorphism with $\sigma^{-1}(x, \bi_{+}) = (\Phi x, \ x[1] \bi_{+})$, where $\Phi = \Pi \sigma^{-1} \Pi^{-1} : \Lambda \to \Lambda$ is the expanding map with $\Phi|_{f_i(\Lambda)} = f_i^{-1}$, and $x[n] = \bi_{-}[n]$ for $x = \Pi(\bi_{-})$; 
	\item $U^k(x) = U^k(\Pi^{-1} x)$ is a measurable map from $\Lambda$ to $\Gr(d, k)$, and $\nu^{-}_k = (U^k)_{*} \mu$.
	\eenmt
	In particular, $\sigma$ is a bijection of $\Lambda \times \Sigma_{+}$ preserving $\hat\mu$, $\cP$ is a finite partition, and the fibrewise entropies \eqref{eqn:fiberwise entropies} satisfy
	\begin{equation} \label{eqn:hk SSC}
		h_k = H_{\hat\mu}(\cP \mid \mathcal{A}_k) \ \leq \ H_{\hat\mu}(\cP) \ \leq \ \log |I| \ < \ \infty, \qquad k \in \{1, \cdots, d\}.
	\end{equation}
	Let $\{ \hat\mu^{\mathcal{A}_k}_{(x, \bi_{+})} \}_{(x, \bi_{+}) \in \Lambda \times \Sigma_{+}}$ be the system of conditional measures of $\hat\mu$ with respect to $\mathcal{A}_k$, and let $\pi : \Lambda \times \Sigma_{+} \to \Lambda$ be the projection $\pi(x, \bi_{+}) = x$.
	We have
	\ary \label{eqn:pi cond}
	\pi_{*} \hat\mu^{\mathcal{A}_k}_{(x, \bi_{+})} = \mu^{V_k(\bi_{+})}_x \quad \mbox{for } \hat\mu\mbox{-a.e. } (x, \bi_{+}).
	\eary
	
	For $n \geq 1$, let
	\ary \label{eqn:def Pn}
	\cP_n := \bigvee_{j=0}^{n-1} \sigma^j \cP ,
	\eary
	with the convention that $\cP_0$ is the trivial partition.
	
	We have for every $k \in \{1, \cdots, d\}$ and for every $n \geq 0$,
	\ary \label{eqn:sigma n A}
	\sigma^n \mathcal{A}_k &=& \mathcal{A}_k \vee \cP_n. 
	\eary
	Moreover,
	\begin{enumerate}
		\item[(a)] $\mathcal{A}_k \leq \sigma \mathcal{A}_k \leq \sigma^2 \mathcal{A}_k \leq \cdots$, and $\bigvee_{n \geq 0} \sigma^n \mathcal{A}_k$ is the partition of $\Lambda \times \Sigma_{+}$ into points.
		\item[(b)] If $k \leq d-1$, then for every $n \geq 0$,
		\ary \label{eqn:sigma A k+1}
		\sigma^n \mathcal{A}_{k+1} = \mathcal{A}_{k+1} \vee \sigma^n \mathcal{A}_k .
		\eary
	\end{enumerate}
	
	\begin{lemma} \label{lem entropy iterates}
		Assume that $\cF$ has SSC, and let $k \in \{1, \cdots, d\}$. Then
		\ary \label{eqn:hk sigma}
		h_k = H_{\hat\mu}(\sigma \mathcal{A}_k \mid \mathcal{A}_k) = \int - \ln \hat\mu^{\mathcal{A}_k}_{(x, \bi_{+})} \big( (\sigma \mathcal{A}_k)(x, \bi_{+}) \big) \, d\hat\mu(x, \bi_{+}) ,
		\eary
		and, for every $n \geq 0$,
		\ary
		H_{\hat\mu}(\sigma^n \mathcal{A}_k \mid \mathcal{A}_k) \ = \ H_{\hat\mu}(\cP_n \mid \mathcal{A}_k) &=& n h_k . \label{eqn:n h k}
		\eary
	\end{lemma}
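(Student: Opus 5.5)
The plan is to derive everything from the identities \eqref{eqn:sigma A}, \eqref{eqn:sigma n A} and the finiteness \eqref{eqn:hk SSC}, using only standard conditional-entropy calculus and $\sigma$-invariance of $\hat\mu$. Under SSC, $\sigma$ is an invertible measure-preserving map of $(\Lambda\times\Sigma_+,\hat\mu)$. Also, the partition $\mathcal A_k$ is measurable, i.e.\ it is the partition into fibres of the measurable map $(x,\bi_+)\mapsto(P^{V_k(\bi_+)}x,\bi_+)$. Hence conditional entropies with respect to $\widehat{\mathcal A_k}$ and to $\mathcal A_k$ coincide, and $h_k=H_{\hat\mu}(\cP\mid\mathcal A_k)$ as stated in \eqref{eqn:hk SSC}.

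For the first identity, I will use $\sigma\mathcal A_k=\mathcal A_k\vee\cP$ from \eqref{eqn:sigma A}. Since $\mathcal A_k\le\sigma\mathcal A_k$, we get $H(\sigma\mathcal A_k\mid\mathcal A_k)=H(\mathcal A_k\vee\cP\mid\mathcal A_k)=H(\cP\mid\mathcal A_k)=h_k$. The integral formula is then the definition of conditional entropy via the Rokhlin conditional measures. The $\sigma\mathcal A_k$-atom of a point inside its $\mathcal A_k$-atom is exactly the $\cP$-atom intersected with that $\mathcal A_k$-atom. Therefore $-\ln\hat\mu^{\mathcal A_k}_{(x,\bi_+)}((\sigma\mathcal A_k)(x,\bi_+))$ is the information function $I_{\hat\mu}(\cP\mid\mathcal A_k)$, and it is integrable because $h_k\le\log|I|$.

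For the $n$-step statement, the equality $H(\sigma^n\mathcal A_k\mid\mathcal A_k)=H(\cP_n\mid\mathcal A_k)$ follows in the same way from \eqref{eqn:sigma n A}: $\sigma^n\mathcal A_k=\mathcal A_k\vee\cP_n$. For the value $nh_k$ I will argue by induction on $n$, using the chain $\mathcal A_k\le\sigma\mathcal A_k\le\cdots\le\sigma^n\mathcal A_k$ from (a). The chain rule gives
\[
H(\sigma^n\mathcal A_k\mid\mathcal A_k)=\sum_{j=0}^{n-1}H(\sigma^{j+1}\mathcal A_k\mid\sigma^j\mathcal A_k).
\]
Each summand equals $H(\sigma\mathcal A_k\mid\mathcal A_k)=h_k$ by invariance. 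Indeed, $\sigma^j$ maps atoms of $\mathcal A_k$ and $\sigma\mathcal A_k$ onto atoms of $\sigma^j\mathcal A_k$ and $\sigma^{j+1}\mathcal A_k$, and it preserves $\hat\mu$. Consequently it transports the conditional measures, $\hat\mu^{\sigma^j\mathcal A_k}_{\sigma^j z}=(\sigma^j)_*\hat\mu^{\mathcal A_k}_z$ by uniqueness of Rokhlin disintegration, and with them the information functions.

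The main point needing care is this invariance step for conditional entropy with respect to a non-finite measurable partition. Concretely, it requires checking that $\sigma^j\mathcal A_k$ is again a measurable partition and that the pushed-forward disintegration is a valid system of conditional measures. The first holds because it is $\mathcal A_k\vee\cP_j$ with $\cP_j$ finite. The second follows from uniqueness, since $\sigma$ is a bimeasurable bijection. The chain rule itself needs finiteness of the terms, which is guaranteed inductively by $h_k<\infty$.
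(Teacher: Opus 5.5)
Your proposal is correct and follows essentially the paper's argument: the first identity comes from $\sigma\mathcal{A}_k=\mathcal{A}_k\vee\cP$, and then the chain rule along $\mathcal{A}_k\le\sigma\mathcal{A}_k\le\cdots\le\sigma^n\mathcal{A}_k$ gives increments each equal to $h_k$, by transporting Rokhlin conditional measures under the $\hat\mu$-preserving bijection $\sigma$, together with $\sigma^n\mathcal{A}_k=\mathcal{A}_k\vee\cP_n$. The only cosmetic difference is that the paper uses $\sigma^{-1}$ to show consecutive increments agree, $H_{\hat\mu}(\sigma^{n+1}\mathcal{A}_k\mid\sigma^n\mathcal{A}_k)=H_{\hat\mu}(\sigma^{n}\mathcal{A}_k\mid\sigma^{n-1}\mathcal{A}_k)$, whereas you pull each increment back to $H_{\hat\mu}(\sigma\mathcal{A}_k\mid\mathcal{A}_k)$ directly with $\sigma^j$, so the induction you announce is not actually needed.
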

	\begin{proof}
		Since $h_k = H_{\hat\mu}(\cP \mid \mathcal{A}_k)$ by \eqref{eqn:hk SSC}, \eqref{eqn:sigma A} gives
		\aryst
		h_k = H_{\hat\mu}(\cP \vee \mathcal{A}_k \mid \mathcal{A}_k) = H_{\hat\mu}(\sigma \mathcal{A}_k \mid \mathcal{A}_k) = \int - \ln \hat\mu^{\mathcal{A}_k}_{(x, \bi_{+})} \big( (\sigma \mathcal{A}_k)(x, \bi_{+}) \big) \, d\hat\mu(x, \bi_{+}) ,
		\earyst
		which is \eqref{eqn:hk sigma}.
		Next, we show that for every $n\geq 1$, 
		\ary
		\label{eqn:H sigma A}
		H_{\hat\mu}(\sigma^{n+1} \mathcal{A}_k \mid \sigma^n \mathcal{A}_k) = H_{\hat\mu}(\sigma^{n} \mathcal{A}_k \mid \sigma^{n-1} \mathcal{A}_k).
		\eary
		Write $y = (x, \bi_{+})$.
		The map $\sigma^{-1}$ (item (4)) is a bijection preserving $\hat\mu$ that sends each atom of $\sigma^n \mathcal{A}_k$ onto an atom of $\sigma^{n-1} \mathcal{A}_k$. This gives, for $\hat\mu$-a.e. $y$,
		\aryst
		(\sigma^{-1})_{*} \hat\mu^{\sigma^{n} \mathcal{A}_k}_{y} = \hat\mu^{\sigma^{n-1} \mathcal{A}_k}_{\sigma^{-1} y} .
		\earyst
		Combining this with the $\sigma$-invariance of $\hat\mu$ to change variables, we obtain
		\aryst
		&&H_{\hat\mu}(\sigma^{n+1} \mathcal{A}_k \mid \sigma^{n} \mathcal{A}_k)
		= - \int \ln \hat\mu^{\sigma^n \mathcal{A}_k}_{y} \big( (\sigma^{n+1} \mathcal{A}_k)(y) \big) \, d\hat\mu(y) \\
		&=& - \int \ln \hat\mu^{\sigma^{n-1} \mathcal{A}_k}_{\sigma^{-1} y} \big( (\sigma^n \mathcal{A}_k)(\sigma^{-1} y) \big) \, d\hat\mu(y)
		= - \int \ln \hat\mu^{\sigma^{n-1} \mathcal{A}_k}_{y} \big( (\sigma^n \mathcal{A}_k)(y) \big) \, d\hat\mu(y) \\
		&=& H_{\hat\mu}(\sigma^n \mathcal{A}_k \mid \sigma^{n-1} \mathcal{A}_k).
		\earyst
		Since $\mathcal{A}_k \leq \sigma \mathcal{A}_k \leq \cdots \leq \sigma^n \mathcal{A}_k$, the chain rule for conditional entropy, \eqref{eqn:H sigma A} and  \eqref{eqn:hk sigma} give $H_{\hat\mu}(\sigma^n \mathcal{A}_k \mid \mathcal{A}_k) = \sum_{j=0}^{n-1} H_{\hat\mu}(\sigma^{j+1} \mathcal{A}_k \mid \sigma^j \mathcal{A}_k) = n h_k$, and $H_{\hat\mu}(\sigma^n \mathcal{A}_k \mid \mathcal{A}_k) = H_{\hat\mu}(\cP_n \mid \mathcal{A}_k)$ by \eqref{eqn:sigma n A}; this is \eqref{eqn:n h k}.
	\end{proof}

	The following proposition is obtained by applying \cite[Theorem A]{araujo_holder_2016} to $(\Lambda, \Phi, \mu)$ and the cocycle $x \mapsto M_{x[1]}^{-1}$, whose $n$-th iterate at $x = \Pi(\bi_{-})$ is $M^{-n}_{\bi_{-}}$.%

	\begin{prop}[H\"older continuity of the Oseledets subspaces] \label{prop Holdermap}
		There is $\alpha > 0$ such that for every $\eps > 0$, there is a subset $\Lambda' = \Lambda'(\cF, \eps) \subset \Lambda$ with $\mu(\Lambda') > 1 - \eps$ such that for every $k \in \{1, \cdots, d\}$, the map $x \mapsto U^k(x)$ is a $C^{\alpha}$ map from $\Lambda'$ to $\Gr(d, k)$.
	\end{prop}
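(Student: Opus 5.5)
The statement is a direct application of \cite[Theorem A]{araujo_holder_2016}, so the plan is mainly to check that its hypotheses hold for the dynamical system $(\Lambda,\Phi,\mu)$ and the cocycle $A(x)=M_{x[1]}^{-1}$. We will then translate its conclusion into the statement about $U^k$. The first step is to set up the base dynamics. Under SSC, $\Pi:\Sigma_-\to\Lambda$ is a homeomorphism conjugating the one-sided shift $\sigma^{-1}$ on $\Sigma_-$ to the expanding map $\Phi$, where $\Phi|_{f_i(\Lambda)}=f_i^{-1}$. Thus $(\Lambda,\Phi)$ is a uniformly expanding, topologically mixing Markov system: a full shift on $|I|$ symbols. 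Moreover $\mu=\Pi_*\nu_-$ is a Bernoulli measure, hence an equilibrium state of a Hölder (locally constant) potential and has local product structure. The cocycle $A(x)=M_{x[1]}^{-1}$ is locally constant on the cylinders $f_i(\Lambda)$. These cylinders are clopen and at positive mutual distance by SSC, so $A$ is Lipschitz, in particular Hölder, on $\Lambda$. Its $n$-th iterate at $x=\Pi(\bi_-)$ is $M^{-n}_{\bi_-}$.

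The second step is to identify the Oseledets data. By \cref{thm Oseledets}(3), the cocycle $M^{-n}_{\bi_-}$ has exponents $-\chi_1<\cdots<-\chi_d$, all positive and, by the standing simple-spectrum assumption, distinct. The flag $U^1(\bi_-)\subset\cdots\subset U^d(\bi_-)$ is characterized by growth rates of $\|M^{-n}_{\bi_-}v\|$, with the slowest growth on $U^1$. Hence $U^k(x)$ is the slow Oseledets subspace of dimension $k$ for the forward iterates of $A$ over $\Phi$. Such a subspace is determined by the future of $x$, which is exactly the setting where \cite[Theorem A]{araujo_holder_2016} gives Hölder regularity on sets of measure arbitrarily close to one. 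If that theorem is phrased for the full Oseledets splitting over an invertible base, I would pass to the natural extension $(\Sigma,\sigma^{-1},\nu)$. There the cocycle depends only on $\bi_-$, so the invariant subspaces $U^k$ depend only on the $\bi_-$ coordinate, and Hölder continuity in $\Sigma$ restricts to Hölder continuity on $\Lambda\cong\Sigma_-$.

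The third step is to assemble the conclusion. Theorem A yields an exponent $\alpha_k>0$ and, for each $\eps$, a set $\Lambda_k$ with $\mu(\Lambda_k)>1-\eps/d$ on which $U^k$ is $C^{\alpha_k}$. The subspace $U^k$ is the sum of the first $k$ Oseledets bundles. If the theorem only gives Hölder continuity of the individual bundles $E_j$, Hölder continuity of the sums on a set where the angles between bundles are bounded below is routine, using \cref{thm Oseledets}(5) and Lusin's theorem. Set $\Lambda'=\bigcap_k\Lambda_k$ and $\alpha=\min_k\alpha_k$; then $\mu(\Lambda')>1-\eps$, and every $U^k$ is $C^\alpha$ on $\Lambda'$, as required.

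The main obstacle is making sure the exponent $\alpha$ can be taken independent of $\eps$. In results of this type, the Hölder constant deteriorates as $\eps\to0$, but the exponent is controlled by the ratios of Lyapunov gaps to the expansion rate of $\Phi$ together with the Hölder exponent of $A$. I would verify that this is what Theorem A provides, rather than an exponent that also degrades with $\eps$. If it is not, I would redo its Pesin-set argument. Fix a uniform exponent $\alpha<\min_j(\chi_j-\chi_{j+1})/(2\log\lambda_{\max}^{-1})$. On a Pesin block, two points agreeing in the first $n$ symbols are at distance comparable to a geometric rate in $n$. Their $k$-dimensional slow subspaces are then $e^{-n(\min\text{gap}-\text{tempering})}$-close, by the standard graph-transform or cone estimate applied to the common first $n$ matrices. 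This yields Hölder continuity with the fixed exponent and an $\eps$-dependent constant.
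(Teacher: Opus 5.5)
Your proposal is correct and is essentially the paper's argument. The paper proves the proposition in one line by applying \cite[Theorem A]{araujo_holder_2016} to $(\Lambda,\Phi,\mu)$ and the cocycle $x\mapsto M_{x[1]}^{-1}$, whose $n$-th iterate at $x=\Pi(\bi_-)$ is $M^{-n}_{\bi_-}$, so that the $U^k$ are exactly its forward (slow) Oseledets filtration. Your checks of the hypotheses under SSC are what that citation implicitly uses, and your fallback steps (natural extension, summing individual bundles, redoing the Pesin-block estimate) are not needed, since that theorem already treats the forward filtration over a non-invertible Lipschitz base.
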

	Henceforth, we fix $\alpha \in (0, 1)$ as given by \cref{prop Holdermap} for the rest of the paper.

	\subsection{Regularity of Furstenberg measures}

	\begin{lemma} \label{prop flag transversal}
		Let $\mathcal{F}$ be an affine IFS on $\mathbb{R}^d$ with  Zariski dense linear parts. Then for every $1 \leq m \leq d-1$ and every $W \in \Gr(d, m)$,
		\aryst
		\nu^{+}_{m+1} \big( \{ V \in \Gr(d, d-m) : V \cap W \neq \{0\} \} \big) = 0 .
		\earyst
	\end{lemma}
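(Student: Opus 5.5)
The plan is to reduce the statement to a standard fact about stationary measures on projective space, by passing to exterior powers. The subspace $V_{m+1}(\bi_{+})$ has dimension $d-m$. For any fixed $W \in \Gr(d,m)$, the condition $V \cap W \neq \{0\}$ is equivalent to $\wedge^{d-m} V \wedge \wedge^{m} W = 0$ in $\wedge^d \R^d \cong \R$. Fix a generator $w$ of $\wedge^m W$; the map $\omega \mapsto \omega \wedge w$ is a nonzero linear functional on $\wedge^{d-m}\R^d$. Hence the bad set is the intersection of the image of the Plücker embedding $\Gr(d,d-m) \hookrightarrow \mathbb{P}(\wedge^{d-m}\R^d)$ with a projective hyperplane $H_W$. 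It therefore suffices to show that the pushforward of $\nu^{+}_{m+1}$ under the Plücker embedding gives zero mass to every projective hyperplane.

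Next I would identify this pushforward as a stationary measure for a group action. By \cref{rem stationary}, $\nu^{+}_{m+1} = \sum_i p_i (M_i^{-1})_* \nu^{+}_{m+1}$. Its Plücker image is therefore a $\mu'$-stationary measure on $\mathbb{P}(\wedge^{d-m}\R^d)$, where $\mu' = \sum_i p_i \delta_{\wedge^{d-m} M_i^{-1}}$. Hyperplanes of $\mathbb{P}(E)$ correspond to points of $\mathbb{P}(E^*)$, and $E = \wedge^{d-m}\R^d$ is irreducible under $\GL_d(\R)$. Zariski density of the semigroup generated by the $M_i$ (hence of the group generated by the $M_i^{-1}$) then implies that the action on $E$ is strongly irreducible. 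Zariski density also gives proximality on $E$, since $\GL_d$ contains elements with simple spectrum, whose exterior powers have a unique top eigenvalue, and the set of proximal elements in a Zariski dense semigroup is Zariski dense (Goldsheid–Margulis, Benoist–Quint \cite{benoist_random_2016}).

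I would then invoke the standard result (Furstenberg; \cite[Prop.~4.3 / Lemma 4.6 type]{benoist_random_2016}) that for a strongly irreducible action the stationary measures on $\mathbb{P}(E)$ give zero mass to every proper projective subspace. The sketch is the usual one. Among proper subspaces of maximal $\nu$-measure and minimal dimension, any two distinct ones have intersection of zero mass. Consequently only finitely many of them carry the maximal mass, and stationarity forces this finite family to be permuted by the support of $\mu'$. This contradicts strong irreducibility. The conclusion is exactly the lemma.

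The main obstacle is the identification step: \cref{rem stationary} explicitly avoids uniqueness of stationary measures, so I must check that no uniqueness is needed. The Furstenberg argument applies to any stationary measure, so it is indeed unnecessary. A further subtlety is that the argument must be run with measures supported on the Plücker image, working in $\mathbb{P}(\wedge^{d-m}\R^d)$ rather than in the Grassmannian, so that hyperplane sections make sense.
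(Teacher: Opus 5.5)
Your proof is correct, but it takes a different route from the paper. The paper reproves nothing. It notes that Zariski density of the semigroup generated by the $M_i^{-1}$ makes each $\wedge^k(M_i^{-1})$-action strongly irreducible and proximal (Goldsheid--Margulis, Guivarc'h--Raugi). This is Rapaport's hypothesis (1.6), so the paper then quotes \cite[Lemma 2.12]{Rapaport24} for $(M_i^{-1})_{i\in I}$.

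You instead prove the content of that lemma directly, in three steps.
\begin{enumerate}
\item The Pl\"ucker embedding turns the bad set into a hyperplane section of $\mathbb{P}(\wedge^{d-m}\R^d)$.
\item \cref{rem stationary} makes the Pl\"ucker image of $\nu^+_{m+1}$ stationary for $\sum_i p_i\delta_{\wedge^{d-m}M_i^{-1}}$.
\item Furstenberg's argument shows that any such stationary measure gives zero mass to every proper subspace, provided the action on $\wedge^{d-m}\R^d$ is strongly irreducible.
\end{enumerate}

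This route buys self-containedness and a weaker hypothesis. Proximality is never used, so you can drop that paragraph. As you observe, no uniqueness of the stationary measure is needed, which matches the paper's remark after \cref{rem stationary}. The conclusion holds whenever the group generated by the $M_i$ acts strongly irreducibly on $\wedge^{d-m}\R^d$.

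Two points deserve to be written out.
\begin{itemize}
\item The Furstenberg step has a specific order. First let $r$ be the minimal dimension of subspaces carrying positive mass. Then maximize the mass among $r$-dimensional subspaces; the maximum is attained because two distinct ones meet in a null set. The maximizers form a finite family, and stationarity forces each $\wedge^{d-m}M_i$ to permute it.
\item The passage from Zariski density to strong irreducibility needs justification. The subgroup preserving each member of a finite invariant family of proper subspaces is Zariski closed and of finite index in the Zariski closure of the group, which is $\GL_d(\R)$. Since $\GL_d(\R)$ is Zariski connected, that subgroup is all of $\GL_d(\R)$. This contradicts the irreducibility of $\wedge^{d-m}\R^d$ under $\GL_d(\R)$.
\end{itemize}
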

	\begin{proof}
		Since the semigroup generated by $\{M_i\}_{i\in I}$ is Zariski dense in $\mathrm{GL}_d(\R)$, the semigroup generated by $\{M_i^{-1}\}_{i \in I}$ is also Zariski dense. For $1 \leq k \leq d-1$ and $g \in \mathrm{GL}_d(\R)$, write $\rho_k(g) = \wedge^k g \in \mathrm{GL}(\wedge^k \R^d)$. 
		Hence, $\{\rho_k(M_i^{-1})\}_{i \in I}$ is strongly irreducible and proximal as a subsemigroup of $\GL(\wedge^k \R^d)$ \cite{goldsheid1989lyapunov,GuivarchRaugi89}, i.e., hypothesis (1.6) of \cite{Rapaport24} holds. Therefore, the claim is \cite[Lemma 2.12]{Rapaport24} applied to $(M_i^{-1})_{i \in I}$.
	\end{proof}
	
	The following result follows immediately from this lemma.
	\begin{cor}
		\label{cor:ONC Zariski dense}
		Let $\mathcal{F}$ be an affine IFS on $\mathbb{R}^d$ with   Zariski dense linear parts. Then $\mathcal{F}$ has the ONC at level $d$.
	\end{cor}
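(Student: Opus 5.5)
The corollary follows from \cref{prop flag transversal} once the independence structure in the definition of ONC is used. Fix $d_0 = d$, $m \in \{1, \cdots, d-1\}$ and $q \in \{m+1, \cdots, d\}$. We must show that for $(\nu_-\otimes\nu_+\otimes\nu_+)$-a.e. $(\bi_-, \bi_+, \bj_+)$,
\[
\measuredangle\big(V_{m+1}(\bj_+),\, U^{m-1}(\bi_-)\oplus E_q(\bi_-,\bi_+)\big) > 0 .
\]
Since both subspaces are closed, the minimal angle vanishes exactly when they intersect nontrivially. So it suffices to show that
\[
W(\bi_-,\bi_+) := U^{m-1}(\bi_-)\oplus E_q(\bi_-,\bi_+)
\]
satisfies $V_{m+1}(\bj_+)\cap W = \{0\}$ almost surely.

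First check dimensions. By \cref{thm Oseledets}(4), with simple spectrum, $\dim U^{m-1} = m-1$ and $\dim E_q = 1$. Moreover $E_q \subset V_q(\bi_+) \subset V_{m}(\bi_+)$ and $U^{m-1}\cap V_m = \{0\}$, because $\R^d = U^{m-1}\oplus V_m$. Hence the sum is direct and $W \in \Gr(d,m)$ for $\nu$-a.e. $(\bi_-,\bi_+)$, and $W$ depends measurably on $(\bi_-,\bi_+)$. (For $m=1$, $U^0=\{0\}$ and $W=E_q$ is a line.) Meanwhile $V_{m+1}(\bj_+) \in \Gr(d, d-m)$ is distributed according to $\nu^+_{m+1}$.

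Then apply Fubini. The variable $\bj_+$ is independent of $(\bi_-,\bi_+)$ under the product measure, so
\[
(\nu_-\otimes\nu_+\otimes\nu_+)\{V_{m+1}(\bj_+)\cap W(\bi_-,\bi_+)\neq\{0\}\}
= \int \nu^+_{m+1}\{V : V\cap W(\bi_-,\bi_+)\neq\{0\}\}\, d\nu(\bi_-,\bi_+) .
\]
For each fixed $(\bi_-,\bi_+)$ in the full-measure set where $W \in \Gr(d,m)$, the integrand is zero by \cref{prop flag transversal}. Hence the integral is zero. Measurability of the event follows from the measurability of the Oseledets maps and the closedness of $\{(V,W) : V\cap W\neq\{0\}\}$ in $\Gr(d,d-m)\times\Gr(d,m)$. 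Ranging over the finitely many pairs $(m,q)$ gives ONC at level $d$.

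The main point to verify is the direct-sum and dimension claim for $W$, which is what allows \cref{prop flag transversal} to be applied with a genuine $m$-dimensional subspace. Everything else is the independence of $\bj_+$ from $(\bi_-,\bi_+)$.
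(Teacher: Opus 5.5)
Your argument is correct and is exactly what the paper means when it says the corollary follows immediately from \cref{prop flag transversal}. You check that $U^{m-1}(\bi_-)\oplus E_q(\bi_-,\bi_+)$ is a genuine element of $\Gr(d,m)$, which uses the simple spectrum that Zariski density guarantees (as the paper notes, citing Benoist--Quint), and then apply the lemma fiberwise via the independence of $\bj_+$ from $(\bi_-,\bi_+)$ and Fubini.
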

	
	\begin{cor} \label{cor generic hyperplane ae}
		Assume that $\cF$ has a simple Lyapunov spectrum and the ONC at level $d_0$.
		Then for every $m \in \{1, \ldots, d_0 - 1\}$ and $(\nu_- \otimes \nu_+ \otimes \nu_+)$-a.e. $(\bi_-, \bi_+, \bj_+)$,
		\aryst
		V_{m+1}(\bi_{+}) \cap \big( U^{m-1}(\bi_-) + V_{m+1}(\bj_{+}) \big) \quad \mbox{is a hyperplane in } V_{m+1}(\bi_{+}).
		\earyst
	\end{cor}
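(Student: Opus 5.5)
This is linear algebra: count dimensions, then use ONC to exclude the degenerate case. Fix $m\in\{1,\dots,d_0-1\}$ and a triple $(\bi_-,\bi_+,\bj_+)$ for which the ONC inequalities of \cref{def ONC} hold for every $q\in\{m+1,\dots,d_0\}$. Such triples form a full-measure set, since there are only finitely many $q$. Write $V=V_{m+1}(\bi_+)$, of dimension $d-m$, and $W=U^{m-1}(\bi_-)+V_{m+1}(\bj_+)$. Since $\dim U^{m-1}=m-1$ and $\dim V_{m+1}(\bj_+)=d-m$, we have $\dim W\le d-1$. Hence $V+W$ is either $\R^d$ or a proper subspace.

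First, suppose $V+W=\R^d$ and $\dim W=d-1$. Then
\[
\dim(V\cap W)=\dim V+\dim W-d=(d-m)+(d-1)-d=d-m-1,
\]
so $V\cap W$ is a hyperplane in $V$. The goal is therefore to prove two facts from ONC at $q=m+1$: that $U^{m-1}(\bi_-)\cap V_{m+1}(\bj_+)=\{0\}$, and that $V\not\subset W$.

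For $\dim W=d-1$: by \cref{thm Oseledets}(4), $U^{m-1}(\bi_-)\subset U^{m-1}(\bi_-)\oplus E_{m+1}(\bi_-,\bi_+)$. ONC gives a positive angle between $V_{m+1}(\bj_+)$ and this larger space, hence also between $V_{m+1}(\bj_+)$ and $U^{m-1}(\bi_-)$. So the sum is direct, and $\dim W=d-1$.

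For $V\not\subset W$, I pass to complements. Let $X=U^{m-1}(\bi_-)\oplus E_{m+1}(\bi_-,\bi_+)$, which has dimension $m$. ONC says $X\cap V_{m+1}(\bj_+)=\{0\}$, and since $\dim X+\dim V_{m+1}(\bj_+)=d$, we get $\R^d=X\oplus V_{m+1}(\bj_+)$. Now $E_{m+1}(\bi)\subset V_{m+1}(\bi_+)=V$ by \cref{thm Oseledets}(4). If $V\subset W$, then $E_{m+1}(\bi)\subset W$, so
\[
X=U^{m-1}(\bi_-)+E_{m+1}(\bi)\subset W,
\]
and therefore $\R^d=X+V_{m+1}(\bj_+)\subset W$. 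This contradicts $\dim W=d-1$. Hence $V\not\subset W$, so $V+W=\R^d$ and the dimension count above applies.

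I do not expect a real obstacle here. The one point to get right is the choice of $q$: the useful case is $q=m+1$, because $E_{m+1}$ is the part of $V_{m+1}(\bi_+)$ that is complementary to $U^{m-1}$ within $X$. The measurability and full-measure bookkeeping is routine, and only ONC at $q=m+1$ is actually needed.
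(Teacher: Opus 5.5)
Your proposal is correct and follows essentially the paper's argument: ONC at $q=m+1$ gives $V_{m+1}(\bj_+)\oplus\big(U^{m-1}(\bi_-)\oplus E_{m+1}(\bi_-,\bi_+)\big)=\R^d$, hence $V+W=\R^d$, and Grassmann's formula finishes. The paper reads $V+W=\R^d$ off directly from $E_{m+1}\subset V$, whereas you argue by contradiction via $V\not\subset W$; you also make explicit that $\dim W=d-1$ (from $U^{m-1}(\bi_-)\cap V_{m+1}(\bj_+)=\{0\}$), which the paper uses tacitly in its dimension count.
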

	\begin{proof}
		Fix $m \in \{1, \ldots, d_0 - 1\}$.
		By \Cref{thm Oseledets} (3) and the ONC at level $d_0$ (with $q = m+1$), there is a $(\nu_- \otimes \nu_+ \otimes \nu_+)$-conull set $\Omega_1 \subset \Sigma_- \times \Sigma_+ \times \Sigma_+$ such that for every $(\bi_-, \bi_+, \bj_+) \in \Omega_1$,
		\begin{align}
			\label{eqn:generic hyperplane}
			&E_{m+1}(\bi_-, \bi_+) \subset V_{m+1}(\bi_+), \qquad \dim \big( U^{m-1}(\bi_-) \oplus E_{m+1}(\bi_-, \bi_+) \big) = m,\\
			&\measuredangle \big( V_{m+1}(\bj_{+}), \, U^{m-1}(x) \oplus E_{m+1}(\bi_-, \bi_{+}) \big) > 0 .
		\end{align}
		Fix $(\bi_-, \bi_+, \bj_+) \in \Omega_1$ and write $U = U^{m-1}(\bi_-)$, $V = V_{m+1}(\bi_+)$, $V' = V_{m+1}(\bj_+)$, $E = U \oplus E_{m+1}(\bi_-, \bi_+)$. Then \eqref{eqn:generic hyperplane} implies $V' \oplus E = \R^d$, and hence $V + (U + V') = \R^d$.
		By Grassmann's formula,
		\aryst
		\dim \big( V \cap (U + V') \big) = \dim V + \dim (U + V') - \dim \big( V + (U + V') \big)= d - m - 1 ,
		\earyst
		completing the proof.
	\end{proof}
	
	\begin{cor}
		\label{cor:theta q}
		Assume that $\cF$ has a simple Lyapunov spectrum and the ONC at level $d_0$.
		Then for every $m \in \{1, \ldots, d_0 - 1\}$, $q\in \{m+1,\cdots,d_0\}$ and $(\nu_- \otimes \nu_+ \otimes \nu_+)$-a.e. $(\bi_-, \bi_+, \bj_+)$,
		$$\Theta_q(x, \bi_{+}, \bj_{+}) := \measuredangle\big( H_{U^{m-1}(\bi_-), \bi_{+}}(\bj_{+}), E_q(\bi_-, \bi_{+}) \big)>0,$$
		where $H_{U^{m-1}(\bi_-), \bi_{+}}(\bj_{+}):=V_{m+1}(\bi_{+}) \cap \big( U^{m-1}(\bi_-) + V_{m+1}(\bj_{+}) \big)$.
	\end{cor}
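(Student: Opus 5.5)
We argue pointwise on a full-measure set, using only linear algebra together with \cref{cor generic hyperplane ae} and the ONC. Fix $m \in \{1,\ldots,d_0-1\}$ and $q \in \{m+1,\ldots,d_0\}$. We work on the conull set where all of the following hold simultaneously: the conclusions of \cref{thm Oseledets} for $(\bi_-,\bi_+)$ and for $\bj_+$; the conclusion of \cref{cor generic hyperplane ae}; and the ONC inequality for the pair $(m,q)$. Write
\[
U = U^{m-1}(\bi_-),\quad V = V_{m+1}(\bi_+),\quad V' = V_{m+1}(\bj_+),\quad E = E_q(\bi_-,\bi_+),\quad H = V \cap (U + V').
\]
Since $q \ge m+1$, \cref{thm Oseledets}(4) gives $E \subset V$, and $E$ is a line. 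Both $H$ and $E$ are subspaces of $V$, $E$ is one-dimensional, and the minimal angle is taken over nonzero vectors. Hence $\Theta_q > 0$ is equivalent to $E \not\subset H$, that is, $E \cap H = \{0\}$.

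Suppose, for contradiction, that $E \subset H$. Then $E \subset U + V'$, so
\[
U \oplus E \subset U + V'.
\]
Here the sum $U \oplus E$ is direct, because $E \subset V_q(\bi_+)$ while $U \subset U^{m-1}(\bi_-) = \bigoplus_{l \le m-1} E_l$, and these two spaces meet only in $0$ by the direct sum decomposition in \cref{thm Oseledets}(4). Counting dimensions, $\dim(U \oplus E) = m$. On the other hand, ONC gives $(U \oplus E) \cap V' = \{0\}$, and $\dim V' = d-m$, so $(U \oplus E) \oplus V' = \R^d$. Combined with the inclusion above, this forces $U + V' = \R^d$.

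This contradicts the dimension count. Indeed, $\dim(U + V') \le (m-1) + (d-m) = d-1 < d$. Consequently $E \not\subset H$, and $\Theta_q > 0$ on the chosen conull set. Notice that \cref{cor generic hyperplane ae} is not even strictly needed here; it only tells us additionally that $H$ is a hyperplane in $V$. Measurability of $\Theta_q$ is also not required, since the claim is just an almost-everywhere strict inequality.

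The only subtle point is notational. The definition writes $\Theta_q(x,\bi_+,\bj_+)$ with $U^{m-1}(\bi_-)$, and similarly \cref{cor generic hyperplane ae} writes $U^{m-1}(x)$. We interpret $x = \Pi(\bi_-)$; under SSC this is consistent with the identification $U^k(x) = U^k(\Pi^{-1}x)$. With this reading, the argument above is purely pointwise and involves no real obstacle.
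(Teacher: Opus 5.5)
Your proof is correct and is essentially the paper's argument. Both reduce $\Theta_q=0$ to $E_q(\bi_-,\bi_+)\subset U^{m-1}(\bi_-)+V_{m+1}(\bj_+)$ and then contradict the ONC. The difference is cosmetic: the paper writes $e=u+v'$ and exhibits the nonzero vector $v'=e-u\in\big(U^{m-1}(\bi_-)\oplus E_q(\bi_-,\bi_+)\big)\cap V_{m+1}(\bj_+)$, using $U^{m-1}(\bi_-)\cap V_{m+1}(\bi_+)=\{0\}$. You instead reach the contradiction through the dimension count $\dim\big(U^{m-1}(\bi_-)+V_{m+1}(\bj_+)\big)\le d-1$.
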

	
	\begin{proof}
		Fix a triple $(\bi_-, \bi_{+}, \bj_{+})$ at which the Oseledets subspaces are defined, $U^{m-1}(\bi_-) \cap V_{m+1}(\bi_{+}) = \{0\}$ (\Cref{thm Oseledets}), and the conclusion of \cref{def ONC} holds. These conditions hold for a.e.\ triple.
		Put
		$$U = U^{m-1}(\bi_-), \quad V = V_{m+1}(\bi_{+}), \quad V' = V_{m+1}(\bj_{+}) \quad \text{and} \quad L = E_q(x, \bi_{+}).$$
		Since $q \geq m+1$, we have $L \subset V$, and since $L$ is a line, 
		\aryst
		\Theta_q = 0 \ \Longleftrightarrow L \subset V \cap (U + V')\ \Longleftrightarrow \ L \subset U + V' .
		\earyst
		Suppose this were the case, and write $0 \neq e \in L$ as $e = u + v'$ with $u \in U$, $v' \in V'$.
		Then $v' = e - u$ belongs to $(U \oplus L) \cap V'$, and $v' \neq 0$, since otherwise $e = u \in U \cap V = \{0\}$.
		Hence
		$$\big( U^{m-1}(x) \oplus E_q(x, \bi_{+}) \big) \cap V_{m+1}(\bj_{+}) \neq \{0\},$$
		contradicting the ONC at level $d_0$.
		Therefore $\Theta_q > 0$ almost everywhere.
	\end{proof}

	\section{Strategy for the proof of Theorem \ref{main technical}}
	
	For the rest of the paper, $\cF = ( \{ f_i(x) = M_i x + a_i \}_{i \in I}, (p_i)_{i \in I} )$ is an affine IFS on $\R^d$, and we assume
	\enmt
	\item $\cF$ has the SSC property,
	\item the Lyapunov spectrum of $\cF$ is simple,
	\item $\cF$ has the ONC at level $d_0$ for some $d_0 \in \{1, \cdots, d\}$. 
	\eenmt
	We now state the key proposition of this paper.
	
	\begin{prop} \label{prop diminc}
		Let $m \in \{1, \cdots, d_0   - 1 \}$ satisfy that $D_m = m -1$ and $D_{m+1} < m$. Then we have $D_{d_0+1} = D_{m+1}$.
	\end{prop}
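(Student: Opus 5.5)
We argue by contradiction. Assume $D_m=m-1$, $D_{m+1}<m$ and $D_{d_0+1}>D_{m+1}$. Since $D_k$ is nondecreasing, \cref{thm Exactdim}(2) shows that for $\hat\mu$-a.e. $(x,\bi_+)$ the fiberwise measure $\mu_x^{V_{m+1}(\bi_+)}$ is exact dimensional with dimension $D_{d+1}-D_{m+1}\ge D_{d_0+1}-D_{m+1}=:\beta>0$. Let $m_0\ge m+1$ be the smallest index with $D_{m_0+1}>D_{m+1}$. Then $D_{m_0+1}-D_{m_0}>0$, so by \cref{thm Exactdim}(3) the fiber measure has positive dimension in the $e_{m_0}(\bi_+)$-direction. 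The strategy is to show that this positive fiber dimension forces the projection dimension $D_{m+1}$ up by a definite amount $\kappa(\beta)>0$ at a suitable scale. This contradicts the exact dimensionality of $(P^{V_{m+1}(\bi_+)})_*\mu$ with exponent $D_{m+1}$.

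\textbf{Slab-filling.} First I would use the hypothesis $D_m=m-1$ to prove a slab-filling statement. For $\mu$-typical $x$ and $\rho^{1+\eps}<\delta<\rho$, the restriction of $\mu$ to the slab $S(x,\bE,\br)$ has the following property: it has size $\rho$ along $U^{m-1}(x)$ and size $\delta$ transversally, and covering a fixed proportion of its mass needs about $(\rho/\delta)^{m-1-\eps}$ balls of radius $\delta$. The argument pushes forward by $f^{n}$ for a suitable $n$, using $\dim P^{V_m}\mu=m-1$, the Lyapunov growth rates along $E_1,\dots,E_{m-1}$, and the angle control in \cref{thm Oseledets}(5). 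Together with the Hölder continuity of $U^{m-1}$ on $\Lambda'$ (\cref{prop Holdermap}), this lets me take all slabs inside a $\rho$-ball to be essentially parallel to a single $U=U^{m-1}(x)$. I can then quotient by $U$: projecting along $U$ turns the $(m-1)$-dimensional full directions into a one-dimensional transversal problem inside $V_{m+1}(\bi_+)/(\dots)$. After this reduction, $D_{m+1}$ can be read off from slab counts.

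\textbf{Choosing the pair $(\bi_+,\bj_+)$.} Using a self-coupling $\zeta_n$ of $\nu_+$ on $\Sigma_+^2$, I would pick $(\bi_+,\bj_+)$ with $V_{m+1}(\bj_+)$ at distance about $2^{-n}$ from $V_{m+1}(\bi_+)$, with both typical and all relevant angles controlled. The key object is the hyperplane $H=V_{m+1}(\bi_+)\cap(U+V_{m+1}(\bj_+))$ from \cref{cor generic hyperplane ae}. By \cref{cor:theta q} with $q=m_0$, $H$ is transverse to $E_{m_0}$. Consequently $P^{V_{m+1}(\bj_+)+U}$ restricted to the fiber $x+V_{m+1}(\bi_+)$ acts, modulo $H$, as a one-dimensional map that sees the $e_{m_0}$-component of the fiber measure.

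\textbf{Entropy increase.} At scale $2^{-C_0n}$, I would take a box $Z$ that is typical in the sense of step I of the outline. The measure $(P^{V_{m+1}(\bj_+)+U})_*\mu^Z$ then decomposes as a union over slab/fiber pieces $a$ of translates $a+B_a$. Here the $a$ are governed by $P^{V_{m+1}(\bi_+)}\mu^Z$, whose dimension is $D_{m+1}$, and the sets $B_a$ are images of fiber pieces. I would then apply Wu's theorem (\cref{thm Wu}) on the line to get total dimension at least $D_{m+1}+\kappa$. Finally, since $(P^{V_{m+1}(\bj_+)})_*\mu$ also has dimension $D_{m+1}$ for typical $\bj_+$, the slab-filling property converts this back into an upper bound on slab counts, which gives the contradiction.

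\textbf{Main obstacle.} I expect the hardest step to be the spreading (non-concentration) property required by Wu's theorem for the projected fiber measures $(P^{V_{m+1}(\bj_+)+U})_*\mu_{x,\bi_+}$. This projection collapses most of the fiber, so spreading has to be derived from the SSC backward invariance of the fiber measures (\eqref{eqn:sigma n A}, \cref{lem entropy iterates}) combined with a careful angle bookkeeping among $E_q(\bi)$, $V_{m+1}(\bi_+)$, $V_{m+1}(\bj_+)$ and $U$. A further complication is handling the case $m_0>m+1$, which requires choosing $Z$ adapted to $V_{m_0+1}$.
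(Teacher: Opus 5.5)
Your outline has the same architecture as the paper's proof: slab-filling from $D_m=m-1$, the coupling $\zeta_n$, the hyperplane $H$ and ONC, spreading of projected fiber measures from SSC, Wu's theorem on the line $J_m(x,\bj_{+})^{\perp}$, and a contradiction with the exact dimensionality of $(P^{V_{m+1}(\bj_+)})_*\mu$. However, the central entropy-increase step is set up backwards, and as written it does not give the contradiction. (Below I use the paper's indexing: $m_0$ is the largest index with $D_{m_0+1}=D_{m+1}$, so your $m_0$ is the paper's $m_0+1$.)

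\cref{thm Wu}, and its continuous form \cref{lem:ball-sumset}, is asymmetric. The translation set $A$ must be charged by a spreading measure, and the gain is measured against $N_r(B)$, where each $B_a\subset B$ is nearly as large as $B$. You take the $a$ from $P^{V_{m+1}(\bi_+)}\mu^Z$ and the $B_a$ from images of fiber pieces. With that assignment the theorem would at best enlarge the fiber images, not the exponent $D_{m+1}$. It would also require spreading of the projected measure rather than of the fiber measure, whereas your last paragraph correctly identifies the fiber side as the one that spreads.

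In the paper the roles are reversed, and this dictates the geometry. The master box $Z(\bi_+,n)$ has length $2^{-C_0n}$ along $V_{m_0+1}(\bi_+)$, which is the support of the fiber measure by \cref{cor supportoffiberwisemeasure}, and width $\rho_n=2^{-(C_0+1)n}$ transversally. It is cut into $\rho_n$-boxes $y_j+Z_0(\bi_+,n)$. Because $P^{V_{m+1}(\bi_+)}$ collapses the long direction, the whole box projects into a ball of radius about $\rho_n$. Hence each $B_j=P^{J_m(x,\bi_+)}(G_j)$ has $N_{r_n}(B_j)\ge 2^{(D_{m+1}-m+1-2\eps_0)n'}$, by the lower regularity of \cref{lem regularmeasureeverywhere} at scale $\rho_n$ divided by the trivial bound on the $U^{m-1}(x)$-fibers. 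Their union has $N_{r_n}\le 2^{(D_{m+1}-m+1+3\eps_0)n'}$, by exact dimensionality together with the slab-filling lower bound on those fibers.

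Write $P^{J_m(x,\bj_+)}=QP^{J_m(x,\bi_+)}+Q'$. The map $Q'$ contracts the fiber direction by $\lambda\approx 2^{-n}$, so fiber scales in $[2^{-C_0n-n'},2^{-C_0n}]$ land on $[r_n,\rho_n]$. The positions $Q'(y_j)$ then form $A$, charged by the spreading measure $(Q')_*\mu_{y,\bi_+}^{\cD_{m+1,\bi_+,C_0n}(y)}$ (\cref{lem J0isalmostfullinJ}). Only with this assignment is the gain relative to $D_{m+1}-(m-1)$; when $D_{m+1}=m-1$ one needs the second alternative of \cref{thm Wu}.

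Your transversality input is also weaker than what is needed. \cref{cor:theta q} concerns independent triples, whereas $\bi_+,\bj_+$ drawn from $\zeta_n$ share their first $T_n$ symbols, so it gives no transversality uniform in $n$ for the coupled pair. More importantly, transversality of $H$ to the line $E_{m_0+1}$ is not enough. The non-concentration the fiber dynamics provides (from $D_{m_0+2}-D_{m_0+1}>0$, via the stopping-time argument of \cref{lem goodonaverage}) is on tubes parallel to $V_{m_0+2}(\bi_+)$. It passes to $P^{J_m(x,\bj_+)}$ only if $K=V_{m_0+1}(\bi_+)\cap J_m(x,\bj_+)$ is within $2^{-l}/\sqrt d$ of $V_{m_0+2}(\bi_+)$ (\cref{lem slab transfer}). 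If $K$ is tilted by a fixed angle, a $K$-tube of width $2^{-N-l}$ in a $2^{-N}$-cube lies in no thin $V_{m_0+2}(\bi_+)$-tube, and the fiber measure may concentrate on it.

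The paper gets the needed closeness by applying ONC at the shifted triple $(x',\bi'_+,\bj'_+)$, which is $\mu\otimes\nu_+\otimes\nu_+$-distributed. It then pulls back by $Df^{T_n}_{\bi_+}$ (\cref{lem kernel pullback}), which makes $K$ exponentially close to $V_{m_0+2}(\bi_+)$. The same pull-back gives $\lambda\ge 2^{-(1+\eps)n}$ in \cref{prop fiberwisedimatsinglescale}, which calibrates the scale matching above. Note that $\lambda$, not $d(V_{m+1}(\bi_+),V_{m+1}(\bj_+))$, is the right normalization: the latter is typically of order $2^{-2cn}\gg 2^{-n}$ when $m_0>m$.
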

	\begin{proof}[Proof of \cref{main technical}]
		By \cref{thm Exactdim}, $D_1=0$ and $D_j\le j-1$ for every $j$; moreover $D_{j+1}-D_j$, being the dimension of a measure on a line, is at most $1$.
		Let $m\in\{1,\cdots,d_0-1\}$ be the largest index with $D_m=m-1$. Then $D_{m+1}<m$ or $m=d_0-1$ and $D_{d_0}=d_0-1$. 
		
		For the first case, \cref{prop diminc} gives $D_{d_0+1}=D_{m+1}$. By \cref{thm Exactdim} again, $\dim P^V\mu=D_j$ for $\nu_j^+$-a.e. $V\in\Gr(d,d-j+1)$, so it remains to identify $D_j$ with $\min\{j-1,D_{d_0+1}\}$. If $ j\le m$, then $D_j=j-1\le D_{d_0+1}$. If $d_0+1\ge j>m$, then $D_j=D_{d_0+1}$ by monotonicity, and $D_{d_0+1}=D_{m+1}<m\le j-1$. Hence $D_j=\min\{j-1,D_{d_0+1}\}$.
		
		In the second case $m=d_0-1$ and $D_{d_0}=d_0-1$. The conclusion follows similarly.
	\end{proof}

	\section{Fiberwise measure and Projection measure}

	\subsection{Support of fiberwise measures}

	We will denote by $\Omega^{0}_{k} \subset I^{\N}$ a subset with full $\nu_{+}$-measure such that the statement in \cref{thm Exactdim} holds with $V = V_k(\bi_{+})$ for every $\bi_{+} \in \Omega^{0}_k$.

	We need the following lemmas, on more refined information on fiberwise measures.
	\begin{lemma} \label{lem smallersupport}
		Let $m \in \{1, \cdots, d_0 - 2\}$ satisfy that $D_{m+1} = D_{m+2}$.  Then for $(\mu \otimes \nu_{+})$-a.e. $(x, \bi_{+})$,  the measure $\mu^{V_{m+1}(\bi_{+})}_x$ is supported on $x + V_{m+2}(\bi_{+})$, and 
		\aryst
		\mu^{V_{m+1}(\bi_{+})}_x |_{x + V_{m+2}(\bi_{+})} = \mu^{V_{m+2}(\bi_{+})}_x.
		\earyst
	\end{lemma}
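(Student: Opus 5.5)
The plan is to read the hypothesis $D_{m+1}=D_{m+2}$ through \cref{thm Exactdim}(3), with $k=m+1$. For $\hat\mu$-a.e. $(x,\bi_+)$, the projection of the fiber measure $\mu^{V_{m+1}(\bi_+)}_x$ onto the $e_{m+1}(\bi_+)$-coordinate is exact dimensional at $x$ with dimension $D_{m+2}-D_{m+1}=0$. On its own this only says the dimension is zero, not that the projection is a Dirac mass, so I will work with entropy rather than dimension. By \eqref{eq def Dk}, $D_{m+2}-D_{m+1}=(h_{m+2}-h_{m+1})/|\chi_{m+1}|$, so the hypothesis is exactly $h_{m+1}=h_{m+2}$. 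Since $\mathcal{A}_{m+2}\preceq\mathcal{A}_{m+1}$ (the atoms of $\mathcal{A}_{m+2}$ are finer, because $V_{m+2}\subset V_{m+1}$), this reads
\[H_{\hat\mu}(\cP\mid\mathcal{A}_{m+1})=H_{\hat\mu}(\cP\mid\mathcal{A}_{m+2}).\]

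Next I iterate using \cref{lem entropy iterates}. For every $n$ we get $H_{\hat\mu}(\sigma^n\mathcal{A}_{m+1}\mid\mathcal{A}_{m+1})=nh_{m+1}=nh_{m+2}=H_{\hat\mu}(\cP_n\mid\mathcal{A}_{m+2})$. By \eqref{eqn:sigma A k+1} (with $k=m+1$), $\sigma^n\mathcal{A}_{m+2}=\mathcal{A}_{m+2}\vee\sigma^n\mathcal{A}_{m+1}$. The chain rule applied to $\mathcal{A}_{m+1}\le\mathcal{A}_{m+2}\le\sigma^n\mathcal{A}_{m+2}$ and to $\mathcal{A}_{m+1}\le\sigma^n\mathcal{A}_{m+1}\le\sigma^n\mathcal{A}_{m+2}$, with the two $n h$ terms equal, yields
\[H(\mathcal{A}_{m+2}\mid\mathcal{A}_{m+1})=H(\mathcal{A}_{m+2}\mid \sigma^n\mathcal{A}_{m+1}),\]
whenever these quantities are finite. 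They may in fact be infinite, since $\mathcal{A}_{m+2}$ is uncountable. To avoid this I would run the argument with the conditional information of the finite partition $\cP_n$, which is more robust: equality of conditional entropies of $\cP_n$ given the nested $\sigma$-algebras $\widehat{\mathcal{A}_{m+2}}\supseteq\widehat{\mathcal{A}_{m+1}}$ forces $\cP_n$ to be conditionally independent of $\widehat{\mathcal{A}_{m+2}}$ given $\widehat{\mathcal{A}_{m+1}}$. In other words,
\[\hat\mu^{\mathcal{A}_{m+2}}_y(P)=\hat\mu^{\mathcal{A}_{m+1}}_y(P)\quad\text{for every atom } P\in\cP_n \text{ and a.e. } y.\]

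Then I pass to the limit. By item (a), $\bigvee_n\sigma^n\mathcal{A}_{m+1}=\mathcal{A}_{m+1}\vee\bigvee_n\cP_n$ is the point partition, so the algebra generated by the $\cP_n$ restricted to an $\mathcal{A}_{m+1}$-atom separates points. Hence $\hat\mu^{\mathcal{A}_{m+1}}_y$ and $\hat\mu^{\mathcal{A}_{m+2}}_y$ agree on a generating algebra, so they are equal as measures. Since $\hat\mu^{\mathcal{A}_{m+2}}_y$ lives on the $\mathcal{A}_{m+2}$-atom of $y$, we conclude that $\hat\mu^{\mathcal{A}_{m+1}}_y$ is supported there. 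Pushing forward by $\pi$ using \eqref{eqn:pi cond} gives both claims: $\mu^{V_{m+1}(\bi_+)}_x$ is supported on $x+V_{m+2}(\bi_+)$, and on that set it coincides with $\mu^{V_{m+2}(\bi_+)}_x$.

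\textbf{Main obstacle.} The delicate step is the conditional-independence deduction in the third paragraph. Here $\mathcal{A}_{m+2}$ and $\mathcal{A}_{m+1}$ are non-finite measurable partitions, and the entropy equality only controls one fixed finite partition at a time. I would handle it via the strict concavity of $t\mapsto -t\log t$ (Jensen's inequality applied to the martingale $\hat\mu^{\mathcal{A}_{m+2}}_y(P)$ conditioned on $\widehat{\mathcal{A}_{m+1}}$), using $H_{\hat\mu}(\cP_n)\le n\log|I|<\infty$. I also need to check carefully that the identity $H(\cP_n\mid\mathcal{A}_{m+2})=nh_{m+2}$ is legitimately available; this is \cref{lem entropy iterates} with $k=m+2\le d$, which applies because $m\le d_0-2$.
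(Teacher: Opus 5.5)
Your proposal is correct and is essentially the paper's argument. The paper runs Step 3 of the proof of Proposition 2 in \cite{LX} for $\mathcal{A}_{m+1}\preceq\mathcal{A}_{m+2}$, feeding in \eqref{eqn:sigma A k+1} and \eqref{eqn:n h k} to get $\hat\mu^{\mathcal{A}_{m+1}}_{(x,\bi_{+})}=\hat\mu^{\mathcal{A}_{m+2}}_{(x,\bi_{+})}$ a.e.\ and then pushing forward by \eqref{eqn:pi cond}; this is exactly your entropy equality, conditional independence via strict concavity, and limit in $n$. Two small remarks: since \eqref{eqn:n h k} already gives $H_{\hat\mu}(\cP_n\mid\mathcal{A}_{m+1})=H_{\hat\mu}(\cP_n\mid\mathcal{A}_{m+2})$ directly, you can drop the detour through \eqref{eqn:sigma A k+1} and the possibly infinite $H(\mathcal{A}_{m+2}\mid\mathcal{A}_{m+1})$, and your first use of $\preceq$ runs opposite to the paper's convention ($\mathcal{A}_{m+1}\preceq\mathcal{A}_{m+2}$ means $\mathcal{A}_{m+2}$ is finer).
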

	\Cref{lem smallersupport} can be proved by running the proof of \cite[Proposition 2]{LX} (Step 3 there, up to equation (2.4)) %
	by replacing $(M, \mu, f)$ of \cite{LX} with $(\Lambda \times {\Sigma_{+}}, \hat{\mu}, \sigma^{-1})$. More precisely,
	we apply the argument to the partitions
	$\mathcal{A}_{m+1} \preceq \mathcal{A}_{m+2}$, and the conditional measures  $\hat{\mu}^{\mathcal{A}_{m+1}}_{(x, \bi_{+})}$, $\hat{\mu}^{\mathcal{A}_{m+2}}_{(x, \bi_{+})}$;
	\cite[Lemma 3]{LX} is verified in \eqref{eqn:sigma A k+1} with $k = m+1$; the identity $H_\mu(f^{-n} \eta \mid \eta) = n H_\mu(f^{-1} \eta \mid \eta)$ is \eqref{eqn:n h k}.
	The argument of \cite{LX} then yields the counterpart of \cite[(2.4)]{LX}: for $\hat{\mu}$-a.e. $(x, \bi_{+})$,
	\ary \label{eqn:cond equal}
	\hat{\mu}^{\mathcal{A}_{m+1}}_{(x, \bi_{+})} = \hat{\mu}^{\mathcal{A}_{m+2}}_{(x, \bi_{+})}.
	\eary
	Using \eqref{eqn:pi cond} gives, for $\hat{\mu}$-a.e. $(x, \bi_{+})$, that $\mu^{V_{m+1}(\bi_{+})}_x = \mu^{V_{m+2}(\bi_{+})}_x$ and that this measure is supported on $x + V_{m+2}(\bi_{+})$.

	As an immediate corollary, we obtain the following.
	\begin{cor} \label{cor supportoffiberwisemeasure}
		Let $m \in \{1, \cdots, d_0-1\}$ and let $m_0 \in \{m, \cdots, d_0-1\}$ be the largest integer with $D_{m_0+1} = D_{m+1}$. For $(\mu \otimes \nu_{+})$-a.e. $(x, \bi_{+})$,  the measure $\mu^{V_{m+1}(\bi_{+})}_x$ is supported on $x + V_{m_0+1}(\bi_{+})$, and 
		\aryst
		\mu^{V_{m+1}(\bi_{+})}_x |_{x + V_{m_0+1}(\bi_{+})} = \mu^{V_{m_0+1}(\bi_{+})}_x.
		\earyst
	\end{cor}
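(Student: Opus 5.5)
The plan is to iterate \cref{lem smallersupport} along the chain $m+1 \le m+2 \le \cdots \le m_0+1$. Since $D_k$ is nondecreasing in $k$ and $D_{m_0+1} = D_{m+1}$, we get
\[
D_{m+1} = D_{m+2} = \cdots = D_{m_0+1}.
\]
Hence for every $k \in \{m, \cdots, m_0-1\}$ we have $D_{k+1} = D_{k+2}$. The index range is right: $k \le m_0 - 1 \le d_0 - 2$ and $k \ge m \ge 1$, so \cref{lem smallersupport} applies with $k$ in place of $m$. If $m_0 = m$ there is nothing to prove, since $\mu^{V_{m+1}(\bi_+)}_x$ is by construction a conditional measure on $\Lambda \cap (x + V_{m+1}(\bi_+))$.

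For each such $k$, \cref{lem smallersupport} gives a $(\mu\otimes\nu_+)$-conull set $G_k$ on which two things hold. First, $\mu^{V_{k+1}(\bi_+)}_x$ is supported on $x + V_{k+2}(\bi_+)$. Second, it coincides there with $\mu^{V_{k+2}(\bi_+)}_x$. Because the support statement holds, the restriction to $x+V_{k+2}(\bi_+)$ is the whole measure, so on $G_k$ we have the equality of probability measures
\[
\mu^{V_{k+1}(\bi_+)}_x = \mu^{V_{k+2}(\bi_+)}_x .
\]

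Now intersect the finitely many conull sets: $G = \bigcap_{k=m}^{m_0-1} G_k$ is still conull. On $G$ we chain the equalities to obtain
\[
\mu^{V_{m+1}(\bi_+)}_x = \mu^{V_{m+2}(\bi_+)}_x = \cdots = \mu^{V_{m_0+1}(\bi_+)}_x .
\]
The last measure is supported on $x + V_{m_0+1}(\bi_+)$, which is the support claim. The restriction identity then follows, since the restriction of $\mu^{V_{m+1}(\bi_+)}_x$ to $x+V_{m_0+1}(\bi_+)$ is the measure itself.

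The only point needing care is that the conditional measures are defined only almost everywhere and up to null modification. Each equality holds on a conull set of pairs $(x,\bi_+)$ with the same base point, with no change of variables, so intersecting finitely many such sets suffices. There is no real obstacle beyond this bookkeeping.
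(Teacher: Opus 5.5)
Your proposal is correct and is essentially the paper's argument. The paper states the corollary as an immediate consequence of \cref{lem smallersupport}, which amounts to chaining that lemma along $k=m,\dots,m_0-1$, as you do: monotonicity of $D_k$ gives $D_{k+1}=D_{k+2}$ for each such $k$, and one then intersects the finitely many conull sets.
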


	\subsection{Regularity of projected measures}

	We will use the following notion related to a separable complete metric space $X$.
	\begin{defi}[Covering number and Regular measure] \label{def regularmeasure}
		Given a subset $A \subset X$ and $\rho > 0$, we denote by $N_{\rho}(A)$ the minimal number of $\rho$-balls in $X$ needed to cover $A$. 
		
		Given a Borel probability measure $\omega$ on $X$,  $\beta \geq 0$, $ \eps  > 0$ and  $\rho, \eta \in (0, 1)$. We say that $\omega$ is  
		\enmt
		\item {\em $(\beta, \rho, \eta, \eps)$-lower regular}  if every Borel subset $A \subset X$ with $\omega(A) \geq \eta$ satisfies $N_{\rho}(A) \geq \rho^{ - \beta + \eps}$,
		\item {\em $(\beta, \rho, \eta, \eps)$-upper regular} if there is a Borel subset $A \subset X$ with $\omega(A) \geq 1 - \eta$ such that $N_{\rho}(A) \leq \rho^{- \beta - \eps}$.
		\eenmt
		We say that $\omega$ is {\em $(\beta, \rho, \eta, \eps)$-regular} if $\omega$ is both lower regular and upper regular with the same parameters.
	\end{defi}
	
	We also have the following notion, which is closely related to \cref{def regularmeasure}.
	\begin{defi}  \label{def upperfrostman}
		Given $\rho > 0$ and a Borel probability measure $\omega$ on $X$, we say that $\omega$ is {\em $(\beta, \rho, \eta)$-upper Frostman} if there is a decomposition $\omega = \omega' + \omega''$ into two positive measures so that $| \omega'' | < \eta$ and 
		\aryst
		\sup_{x \in X} \omega'(B(x, \rho)) < \rho^{\beta}.
		\earyst
	\end{defi}

	The following two statements are elementary, but they isolate the step that is used repeatedly below. The first is the general counting principle; the second is the case of balls.
	
	\begin{lemma} \label{lem counting}
		Let $\omega$ be a probability measure on a set $X$, let $\cK$ be a family of subsets of $X$, and let $\eta' > \eta \geq 0$ and $t > 0$. Suppose that there are positive measures $\omega', \omega''$ with $\omega = \omega' + \omega''$, $|\omega''| \leq \eta$ and $\sup_{K \in \cK} \omega'(K) \leq t$. Then every $A \subset X$ with $\omega(A) \geq \eta'$ requires at least $(\eta' - \eta)/t$ members of $\cK$ to cover it.
	\end{lemma}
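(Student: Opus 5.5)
The plan is a direct mass-counting argument. Fix $A \subset X$ with $\omega(A) \geq \eta'$, and suppose $A \subset K_1 \cup \cdots \cup K_N$ with $K_i \in \cK$. The goal is the lower bound $N \geq (\eta'-\eta)/t$.

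First, remove the small part of the measure. From $\omega = \omega' + \omega''$ and $|\omega''| \leq \eta$ we get
\[
\omega'(A) = \omega(A) - \omega''(A) \geq \eta' - |\omega''| \geq \eta' - \eta > 0 .
\]

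Second, apply subadditivity of the positive measure $\omega'$ to the cover, and bound each piece by the hypothesis $\sup_{K \in \cK} \omega'(K) \leq t$:
\[
\eta' - \eta \leq \omega'(A) \leq \omega'\Big(\bigcup_{i=1}^N K_i\Big) \leq \sum_{i=1}^N \omega'(K_i) \leq N t .
\]
Dividing by $t>0$ gives $N \geq (\eta'-\eta)/t$, as claimed.

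If $A$ admits no finite cover by members of $\cK$, the conclusion holds trivially. The same chain of inequalities also handles countable covers by countable subadditivity. There is no real obstacle here. The only point needing care is measurability: $A$ and the members of $\cK$ should be measurable, or one works with outer measure, where subadditivity still holds. In the applications below, $\cK$ will be a family of balls or slabs, so measurability is automatic.
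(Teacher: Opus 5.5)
Your proof is correct and is essentially the paper's own argument: drop $\omega''$ to get $\omega'(A) \geq \eta' - \eta$, then use subadditivity over the cover to bound $\omega'(A) \leq \sum_i \omega'(K_i) \leq N t$. Your remarks on countable covers and measurability are harmless additions that the paper leaves implicit.
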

	\begin{proof}
		Let $\{ K_i \}_{i \in \cI} \subset \cK$ cover $A$. Then
		\aryst
		\eta' - \eta \ \leq \ \omega(A) - |\omega''| \ \leq \ \omega'(A) \ \leq \ \sum_{i \in \cI} \omega'(K_i) \ \leq \ |\cI| \, t ,
		\earyst
		so $|\cI| \geq (\eta' - \eta)/t$.
	\end{proof}
	
	\begin{cor} \label{lem upperfrostman implies regular}
		Let $\eta' > \eta > 0$, $\eps > 0$ and $\beta \in (0, d)$. Let $\omega$ be a $(\beta, \rho, \eta)$-upper Frostman probability measure on $X$ for some $\rho \ll_{\eps, \eta', \eta} 1$. Then $\omega$ is $(\beta, \rho, \eta', \eps)$-lower regular.
	\end{cor}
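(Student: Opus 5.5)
The plan is to apply \cref{lem counting} with $\cK$ the family of all $\rho$-balls in $X$ and $t = \rho^{\beta}$.

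First, since $\omega$ is $(\beta,\rho,\eta)$-upper Frostman, \cref{def upperfrostman} gives a decomposition $\omega = \omega' + \omega''$ into positive measures with
\[
|\omega''| < \eta \quad\text{and}\quad \sup_{x\in X}\omega'(B(x,\rho)) < \rho^{\beta}.
\]
These are exactly the hypotheses of \cref{lem counting} for $\cK = \{B(x,\rho): x\in X\}$ and $t=\rho^\beta$.

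Next, let $A\subset X$ be Borel with $\omega(A)\geq \eta'$. \cref{lem counting} shows that any cover of $A$ by $\rho$-balls uses at least $(\eta'-\eta)\rho^{-\beta}$ of them. Hence
\[
N_\rho(A)\geq (\eta'-\eta)\rho^{-\beta}.
\]

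It then remains to check that $(\eta'-\eta)\rho^{-\beta}\geq \rho^{-\beta+\eps}$, which is equivalent to $\rho^{\eps}\leq \eta'-\eta$. Since $\eta'>\eta$, this holds as soon as $\rho\leq (\eta'-\eta)^{1/\eps}$. That smallness condition depends only on $\eps,\eta',\eta$, so it is exactly what the hypothesis $\rho\ll_{\eps,\eta',\eta}1$ provides. Therefore $N_\rho(A)\geq\rho^{-\beta+\eps}$, which is $(\beta,\rho,\eta',\eps)$-lower regularity in the sense of \cref{def regularmeasure}.

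No step is a genuine obstacle. The only points needing care are that $\eta'>\eta$ strictly, so the constant $\eta'-\eta$ is positive, and that $\eps>0$, so $\rho^{\eps}\to 0$ and can be made smaller than $\eta'-\eta$.
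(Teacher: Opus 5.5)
Your proposal is correct and follows the paper's proof essentially verbatim: you take the upper Frostman decomposition, apply \cref{lem counting} with $\cK$ the family of $\rho$-balls and $t=\rho^{\beta}$, and absorb the constant $\eta'-\eta$ by requiring $\rho^{\eps}\le \eta'-\eta$. Your explicit threshold $\rho\le(\eta'-\eta)^{1/\eps}$ is exactly what the paper leaves implicit in $\rho\ll_{\eps,\eta',\eta}1$.
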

	\begin{proof}
		Let $\omega = \omega' + \omega''$ be the decomposition given by \cref{def upperfrostman}. Apply \cref{lem counting} with $\cK$ the family of balls of radius $\rho$ and $t := \rho^{\beta}$. It gives $N_{\rho}(A) > (\eta' - \eta) \, \rho^{-\beta}$ for every Borel set $A$ with $\omega(A) \geq \eta'$. Since $\rho \ll_{\eps, \eta', \eta} 1$, the right-hand side exceeds $\rho^{-\beta + \eps}$.
	\end{proof}

	In applications, we will typically take $X$ as a Euclidean space $\R^k$ for some $k \geq 1$.
	By a slight abuse of notation, given $\lambda > 0$,  we define the dilation
	\aryst
	\Dil_{\lambda} : \R^k \to \R^k, \qquad x \mapsto \lambda x.
	\earyst
	
	Given $m \in \{1, \dots,  d_0-1\}$, $\rho, \delta > 0$ and $\bi_{+} \in \Sigma_+$, we set
	\aryst
	W(\bi_{+}, m, \rho, \delta) := S\big( 0, ( V_{m+1}(\bi_+), \R^d ), ( \rho, \delta ) \big),
	\earyst
	the slab defined in \eqref{eqn:slab definition}. 	 
	
	\begin{lemma} \label{lem regularmeasureeverywhere}
		Let $m \in \{1, \cdots, d_0 - 1\}$ and let $C > 0$. For every $\eps, \eta > 0$ and every $n \gg_{\cF,   C, \eps, \eta} 1$,  there is a subset $\Gamma(n) \subset \Lambda \times \Sigma_{+}$ with $(\mu \otimes \nu_{+})(\Gamma(n)) > 1 - \eta$, such that  for every $(x, \bi_{+}) \in \Gamma(n)$,  the measure $$(\Dil_{2^{C n}})_*(P^{V_{m+1}(\bi_{+})})_* \mu^{x + W(\bi_{+}, m, 2^{-C n}, 2^{- C n}) }$$ is $(D_{m+1}, 2^{-  n}, \eps, \eps)$-lower regular. %
	\end{lemma}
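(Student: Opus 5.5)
The plan is to prove an upper Frostman bound for the rescaled measure and then apply \cref{lem upperfrostman implies regular}. Write $V=V_{m+1}(\bi_+)$, $\rho=2^{-Cn}$, $r=2^{-(C+1)n}$, and $\omega=(\Dil_{2^{Cn}})_*(P^V)_*\mu^{x+W}$ with $W=W(\bi_+,m,\rho,\rho)$. A ball of radius $2^{-n}$ for $\omega$ pulls back to a tube $T_r(z):=(P^V)^{-1}B(z,r)$ of radius $r$ around an affine translate of $V$. So it suffices to find a decomposition $\mu^{x+W}=\omega'+\omega''$ with $|\omega''|<\eps/2$ and
\[
\sup_z \omega'(T_r(z))\le 2^{-n(D_{m+1}-\eps/2)}.
\]
Since $W$ is comparable to the ball $B(x,\rho)$ (both radii equal $\rho$), all densities can be compared with balls.

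\textbf{Good sets.} Fix a small $\eps_1=\eps_1(\eps,C)$. By Egorov applied to \cref{thm Exactdim}(1),(2) with $k=m+1$ and to the exact dimensionality of $\mu$, I get a set $G\subset\Lambda\times\Sigma_+$ with $\hat\mu(G)>1-\eta^2$ and a threshold $r_0$ such that for $(y,\bi_+)\in G$ and $s<r_0$ the following hold:
\[
P^V\mu(B(P^Vy,s))\le s^{D_{m+1}-\eps_1},\qquad
\mu^V_y(B(y,s))\le s^{D_{d+1}-D_{m+1}-\eps_1},\qquad
\mu(B(y,s))\ge s^{D_{d+1}+\eps_1}.
\]
For fixed $\bi_+$, let $G_{\bi_+}$ denote the slice of $G$. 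By a Lebesgue density argument for $\mu$ (Besicovitch, with balls comparable to $x+W$), for $n$ large there is a set of $(x,\bi_+)$ of $\hat\mu$-measure $>1-\eta$ on which $x\in G_{\bi_+}$ and
\[
\mu\big((x+W)\setminus G_{\bi_+}\big)\le \tfrac{\eps}{2}\,\mu(x+W).
\]
This set is $\Gamma(n)$. I then set $\omega'=\mu^{x+W}|_{G_{\bi_+}}$ and let $\omega''$ be the remainder.

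\textbf{Key estimate.} Disintegrate $\mu$ along the fibres $y+V$. For any $z$,
\[
\mu\big((x+W)\cap G_{\bi_+}\cap T_r(z)\big)=\int_{T_r(z)}\mu^V_y\big((x+W)\cap G_{\bi_+}\big)\,d\mu(y).
\]
Only fibres meeting $(x+W)\cap G_{\bi_+}$ contribute. For such a fibre, pick a point $y'\in(x+W)\cap G_{\bi_+}$ on it; then $(x+W)\cap(y'+V)\subset B(y',2\rho)$, so the integrand is at most $(2\rho)^{D_{d+1}-D_{m+1}-\eps_1}$. If the set is nonempty, $T_r(z)$ contains a point $y''$ of $G_{\bi_+}$, so $T_r(z)\subset T_{2r}(P^Vy'')$ and $\mu(T_r(z))\le(2r)^{D_{m+1}-\eps_1}$. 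Since $x\in G_{\bi_+}$ and $B(x,\rho)\subset x+W$, we also have $\mu(x+W)\ge\rho^{D_{d+1}+\eps_1}$. Dividing gives
\[
\omega'(T_r(z))\lesssim r^{D_{m+1}-\eps_1}\rho^{-D_{m+1}-3\eps_1}=2^{-nD_{m+1}}\,2^{O(\eps_1)(C+1)n}.
\]
Choosing $\eps_1\ll\eps/(C+1)$ yields the desired Frostman bound at scale $2^{-n}$ after rescaling. \cref{lem upperfrostman implies regular} (with $\eta'=\eps$ and Frostman error $\eps/2$) then gives $(D_{m+1},2^{-n},\eps,\eps)$-lower regularity.

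\textbf{Main obstacle.} The delicate point is the uniformity over fibres. The Egorov bounds hold only at good points, whereas the tube $T_r(z)$ and the fibres meet $x+W$ at arbitrary points. This is why $\omega'$ is taken to be the restriction to $G_{\bi_+}$ and why the centers are enlarged ($\rho\to2\rho$, $r\to2r$) so that they become good points. The density step that makes $|\omega''|$ small must be done measurably in $(x,\bi_+)$, using that the fibrewise conditional measures and $\hat\mu$ are compatible via \eqref{eqn:pi cond}.
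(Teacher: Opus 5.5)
Your proof is correct and uses the same ingredients as the paper's proof. These are Egorov sets from \cref{thm Exactdim}; disintegration along the fibres $y+V_{m+1}(\bi_+)$, with a good point chosen on each contributing fibre and a good point in the tube; the lower bound on $\mu(x+W)$; a density step; and \cref{lem upperfrostman implies regular}. The difference is organizational. The paper first proves an atom-mass bound at good points for the anisotropic partition $\cD^{\bi_+}_{Cn,(C+1)n}$, which needs the density theorem for partitions to pass from $\mu(J_1\cap\text{atom})$ to the whole atom, and then covers each tube by $C_d$ atoms. You bound the $G_{\bi_+}$-part of each tube directly, which removes that intermediate layer. One small adjustment: if you run the density step through the quantitative \cref{lem besicovich} rather than the limiting density theorem plus Egorov, choose $\hat\mu(G^c)\ll \eta\eps$ instead of $\eta^2$, since that lemma costs a factor $C_B/\eps$.
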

	
	\begin{proof}
		We introduce a local notation, which is only used in this proof.
		Let $\cD_{Cn, (C+1)n}^{\bi_+}$ be the dyadic partition with side length $2^{-Cn}$ along $V_{m+1}(\bi_{+})$ and $2^{-(C+1)n}$ along $V_{m+1}(\bi_{+})^\perp$, and $\cD_{Cn, (C+1)n}^{\bi_+}(x)$ denote the atom containing $x$. 
		
		We first prove a claim: for any $C, \eps > 0$, there exist a subset $\Gamma' \subset \Lambda \times \Sigma_{+}$ with $(\mu \otimes \nu_{+})(\Gamma') > 1 - \eps$ and $n_0 = n_0(\cF,  C, \eps)$ such that for any $n \geq n_0$ and any $(x, \bi_{+}) \in \Gamma'$, we have
		\begin{equation}\label{equ:d-upper-bound}
			\mu(\cD_{Cn, (C+1)n}^{\bi_+}(x))\leq 2^{-(C(D_{d+1}-D_{m+1})+ (C+1) D_{m+1}-\eps)n} 
		\end{equation}

		\begin{proof}
			The main reason that we state the claim with a partition is that the density theorem works for general partitions. 
			
			By the exact dimensionality of the fiberwise and the projected measures (\Cref{thm Exactdim}) and Egorov's theorem, for each $\eps > 0$, there exist a subset $J_1 \subset \Lambda \times \Sigma_{+}$ with $(\mu \otimes \nu_{+})(J_1) > 1 - \eps^{3}$ and $n_0$ such that for all $(x, \bi_{+}) \in J_1$ and $r < 2^{-C n_0}$, we have
			\begin{align}
				\label{equ:fibre-upp}\mu_{x}^{V_{m+1}(\bi_+)}(B(x,r))&\leq r^{D_{d+1}-D_{m+1}-\eps} \\
				\label{equ:proj-upp}(P^{V_{m+1}(\bi_+)})_{*} \mu \big( B( P^{V_{m+1}(\bi_+)} x, r ) \big) &\leq r^{D_{m+1}-\eps}
			\end{align}
			By the Lebesgue density theorem for partitions and Egorov's theorem (do the argument for each piece $f_i(\Lambda)$, $i \in I$, then put together), we can find a subset $J_2 \subset J_1$ with $(\mu \otimes \nu_{+})(J_2) > 1 - 2\eps$ such that for all $(x, \bi_{+}) \in J_2$ and $n > n_0$, we have
			\[ \mu(J_1\cap \cD_{Cn, (C+1)n}^{\bi_+}(x))\geq \frac{1}{2}\mu(\cD_{Cn, (C+1)n}^{\bi_+}(x)). \]
			Therefore for $(x, \bi_{+}) \in J_2$, we have
			\aryst &&\mu(\cD_{Cn, (C+1)n}^{\bi_+}(x))\leq 2\mu(J_1\cap \cD_{Cn, (C+1)n}^{\bi_+}(x))\\
			&&=2\int \mu_y^{V_{m+1}(\bi_+)}(\cD_{Cn, (C+1)n}^{\bi_+}(x)\cap J_1) \, d\mu(y). 
			\earyst
			The integration region for $y$ can shrink to $ (P^{V_{m+1}(\bi_+)})^{-1}(P^{V_{m+1}(\bi_+)}\cD_{Cn, (C+1)n}^{\bi_+}(x))$.
			
			If $(y + V_{m+1}(\bi_+)) \cap J_1 \cap \cD_{Cn, (C+1)n}^{\bi_+}(x)$ is non-empty, take an element $(z, \bi_{+})$ in the intersection; since the fibre-width of the atom is $2^{-Cn}$, by \eqref{equ:fibre-upp} we have
			\aryst\mu_y^{V_{m+1}(\bi_+)}(\cD_{Cn, (C+1)n}^{\bi_+}(x)\cap J_1)&\leq& \mu_z^{V_{m+1}(\bi_+)}(B^{V_{m+1}(\bi_+)}(z, 2^{-Cn + 1}))\\
			&\leq& 2^{-Cn(D_{d+1}-D_{m+1}-\eps)}. \earyst
			If the intersection is empty, the bound is also true. Hence, since the projected atom $P^{V_{m+1}(\bi_+)}\cD_{Cn, (C+1)n}^{\bi_+}(x)$ has transverse width $2^{-(C+1)n}$, by \eqref{equ:proj-upp} we obtain
			\aryst 
			&&\mu(\cD_{Cn, (C+1)n}^{\bi_+}(x))\leq 2^{-Cn(D_{d+1}-D_{m+1}-\eps)} \, (P^{V_{m+1}(\bi_+)})_{*} \mu \big( P^{V_{m+1}(\bi_+)} \cD_{Cn, (C+1)n}^{\bi_+}(x) \big) \\
			&&\leq 2^{-Cn(D_{d+1}-D_{m+1}-\eps)} \, (P^{V_{m+1}(\bi_+)})_{*} \mu \big( B( P^{V_{m+1}(\bi_+)} x, 2^{-(C+1)n + 1} ) \big) \\
			&&\leq 2^{-(C(D_{d+1}-D_{m+1})+(C+1)D_{m+1})n + (2C+1)\eps n}.
			\earyst
			The factor $2^{(2C+1)\eps n}$ is absorbed by enlarging $\eps$.	
		\end{proof}
		
		By the exact dimensionality of $\mu$ (\Cref{thm Exactdim}) and Egorov's theorem, we can find a subset $\Gamma_1 \subset \Lambda \times \Sigma_{+}$ with $(\mu \otimes \nu_{+})(\Gamma_1) > 1 - \eps/2$ and $n_1 = n_1(\cF, C, \eps)$, such that for every $(x, \bi_{+}) \in \Gamma_1$ and $n \geq n_1$, we have \begin{equation}\label{equ:w-lower-bound}
			\mu(x+W(\bi_+,2^{-Cn}))\geq 2^{-(CD_{d+1}+\eps)n},
		\end{equation}
		where we abbreviate $W(\bi_+, 2^{-Cn}) := W(\bi_+, m, 2^{-Cn}, 2^{-Cn})$.
		By the Lebesgue density theorem for cubes 
			applied to $\Gamma'$, there exist a subset $\Gamma_2 \subset \Lambda \times \Sigma_{+}$ with $(\mu \otimes \nu_{+})(\Gamma_2) \geq 1 - 2\eps$ and $n_2 = n_2(\cF, C, \eps)$ such that for any $n \geq n_2$ and any $(x, \bi_{+}) \in \Gamma_2$, we have
			\begin{equation}\label{equ:w-density}
				\frac{\mu((x+W(\bi_+,2^{-Cn}))\cap \Gamma')}{\mu(x+W(\bi_+,2^{-Cn}))}\geq 1-\eps/2 
			\end{equation}

			For any $(x, \bi_{+}) \in \Gamma_1 \cap \Gamma_2$, we verify that the measure 
			\[\omega := (\Dil_{2^{C n}})_*(P^{V_{m+1}(\bi_{+})})_* \mu^{x + W(\bi_{+}, 2^{-Cn}) }\] 
			satisfies the hypothesis of \cref{lem upperfrostman implies regular} for $n\geq \max\{n_1,n_2\}$.
			
			Set $\Psi := \Dil_{2^{Cn}} \circ P^{V_{m+1}(\bi_{+})}$ and $\nu := \mu^{x + W(\bi_+, 2^{-Cn})}$, so that $\omega = \Psi_{*} \nu$. Split
			\aryst
			\nu' := \nu( \, \cdot \, \cap \Gamma') , \qquad \nu'' := \nu( \, \cdot \, \setminus \Gamma') , \qquad \omega' := \Psi_{*} \nu' , \qquad \omega'' := \Psi_{*} \nu'' ,
			\earyst
			so that $\omega = \omega' + \omega''$. By \eqref{equ:w-density}, $|\omega''| = \nu(\Gamma'^{c}) \leq \eps/2$.
			
			Next we bound the balls of $\omega'$. Let $y \in \R^{d-m}$. As noticed above, $(x + W(\bi_+, 2^{-Cn})) \cap \Psi^{-1}(B(y, 2^{-n})) \cap \Gamma'$ can be covered by at most $C_d$ atoms of the form $\cD_{Cn, (C+1)n}^{\bi_+}(z_j)$ with $z_j \in \Gamma'$, and by \eqref{equ:d-upper-bound} and \eqref{equ:w-lower-bound} each of these atoms has $\nu$-mass at most $2^{-(D_{m+1}-\eps)n}$. Hence
			\aryst
			\omega'\big( B(y, 2^{-n}) \big) \ = \ \nu\big( \Psi^{-1}(B(y, 2^{-n})) \cap \Gamma' \big) \ \leq \ C_d \, 2^{-(D_{m+1}-\eps)n} \ \leq \ 2^{-(D_{m+1}-2\eps)n}
			\earyst
			for $n$ large, since $C_d \leq 2^{\eps n}$. Writing $\rho := 2^{-n}$ and $\beta := D_{m+1}$, this says exactly that $\omega$ is $(\beta, \rho, \eps/2)$-upper Frostman.
			
			\Cref{lem upperfrostman implies regular}, applied with $\eta = \eps/2$, $\eta' = \eps$ and exponent $\eps$, therefore gives, for every Borel subset $A$ with $\omega(A) \geq \eps$,
			\aryst
			N_{\rho}(A) \ > \ \rho^{-\beta + \eps} \ =  \rho^{-D_{m+1} + \eps} .
			\earyst
			Since $A$ is an arbitrary subset with $\omega(A) \geq \eps$, we conclude that $\omega$ is $(D_{m+1}, 2^{-n}, \eps, \eps)$-lower regular.
		\end{proof}
		
		The way we  use  the Besicovitch covering argument in the above theorem will repeat several times later. We record here a general version.
		\begin{lemma} \label{lem besicovich}
			Let $\omega$ be a Borel probability measure on $\R^d$, let $\Gamma \subset \R^d$ be a Borel subset, let $\eta \in (0, 1)$, and let $W = S(0, \bE, \br)$ be a slab centered at the origin (see \eqref{eqn:slab definition}). Set
			\aryst
			A := \big \{ x \in \R^d : \omega \big ( (x + W) \cap \Gamma \big ) \geq (1 - \eta) \, \omega(x + W) \big \}.
			\earyst
			Then we have
			\aryst
			\omega(A^{c}) \leq \frac{C_B}{\eta} \, \omega(\Gamma^{c}),
			\earyst
			where $C_B > 0$ is a constant depending only on $d$. 
		\end{lemma}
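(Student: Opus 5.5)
The plan is a Besicovitch covering argument. The key geometric fact is that the slab $W = S(0,\bE,\br)$ is a symmetric convex body: by \eqref{eqn:slab definition} it is a sum of balls in mutually orthogonal subspaces. Hence $W$ is centrally symmetric, $-W = W$, and it is the unit ball of a norm on $\R^d$ (after treating infinite radii as a degenerate direction, which we handle by restricting to a bounded piece or by a limiting argument). Translates $x+W$ are therefore balls of a fixed radius in a norm $\|\cdot\|_W$ that is equivalent to an ellipsoidal one. The Besicovitch covering theorem holds for balls of a fixed shape in any finite-dimensional normed space, with a constant depending only on $d$. More simply, since all the sets are translates of a single convex symmetric body, one can use a Vitali-type packing: if $(x+W)\cap(x'+W)\neq\emptyset$, then $x'+W \subset x+3W$. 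The number of disjoint translates $x'+W$ that meet $x+W$ is then bounded by $\mathrm{vol}(3W)/\mathrm{vol}(W)\cdot$const, which depends only on $d$. When some $r_i=\infty$, we reduce to the quotient by the corresponding subspace $F_i$. There the condition is unchanged, because $x+W$ is invariant under translation by $F_i$, and the measure descends to the pushforward.

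The steps are as follows.

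First, we note that $A^c = \{x : \omega((x+W)\setminus\Gamma) > \eta\,\omega(x+W)\}$. It suffices to bound $\omega(K)$ for an arbitrary compact $K\subset A^c$, by inner regularity.

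Second, we cover $K$ by the family $\{x+W : x\in K\}$ and extract a subfamily $\{x_j+W\}$ that covers $K$ and has overlap multiplicity at most $C_B$. For this we use either the Besicovitch theorem in the norm $\|\cdot\|_W$, or the fixed-radius greedy argument. In the greedy argument, we pick a maximal set of points $x_j\in K$ with $x_j - x_k\notin 2W$. Then $K\subset\bigcup_j (x_j+2W)$. The covering by the enlarged sets is harmless provided we run the argument for $W$ replaced by a slab comparable to it. Cleaner still, we apply Besicovitch directly, since all sets are balls of equal radius in a fixed norm.

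Third, we sum over the subfamily:
\[
\omega(K)\le \sum_j \omega(x_j+W) < \eta^{-1}\sum_j \omega\big((x_j+W)\cap\Gamma^c\big) \le \eta^{-1} C_B\, \omega(\Gamma^c),
\]
where the last inequality uses bounded multiplicity.

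The main obstacle is making sure the covering constant depends only on $d$ and not on the eccentricity of $W$, that is, on the ratios $r_i/r_j$. This is resolved by the linear invariance of the argument. The linear map $\mathrm{diag}(r_i^{-1})$ on $F_1\oplus\cdots\oplus F_q$ sends $W$ to the fixed body $B^{F_1}(1)+\cdots+B^{F_q}(1)$, which is comparable to the Euclidean unit ball with constants depending only on $d$. It maps translates to translates and pushes $\omega$ forward, while preserving all the quantities in the inequality. So we may assume $W$ is this standard body and apply the standard Besicovitch theorem for Euclidean balls, inflated by a $d$-dependent factor. This passes through a covering by $x+c_dW$ with disjoint $x+W$, so some care is needed. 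The cleanest way is to invoke Besicovitch in the normed space $(\R^d,\|\cdot\|_{W})$ directly, which has constant bounded in terms of $d$ alone.
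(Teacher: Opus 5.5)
Your proposal is correct and follows the paper's proof. Both extract from $\{x+W\}_{x\in A^c}$ a subcover whose multiplicity is bounded in terms of $d$, and then sum $\omega(x_j+W)<\eta^{-1}\,\omega\big((x_j+W)\cap\Gamma^c\big)$ over it.

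Two simplifications are available. First, take a maximal family with $x_j-x_k\notin W$ rather than $2W$: this already gives a cover by the sets $x_j+W$ themselves. Second, since $W=-W$ is convex, the sets $x_j+\tfrac12 W$ are pairwise disjoint, and those with $z\in x_j+W$ lie in $z+\tfrac32 W$. A volume count then bounds the multiplicity by $3^d$ whatever the shape of $W$, once infinite radii are quotiented out. So the eccentricity concern and the appeal to Besicovitch in general normed spaces are unnecessary.
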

		\begin{proof}
			We cover $A^c$ by $x+W$ with $x\in A^c$. By Besicovitch, we can find a collection of points $x_j$ with $x_j+W$ covering $A^c$ and with bounded multiplicity $C_B$. Then we have %
			\[\mu(A^c)\leq \sum \mu(x_j+ W)\leq \sum \mu((x_j+ W  )\cap \Gamma^c)/\eta \leq \frac{C_B}{\eta}\mu(\Gamma^c).\]
		\end{proof}

		\section{Fiberwise measure: spreading of the projection measure}\label{sec proofofprop fiberwisedim} 
		
		\subsection{Setting and main result} \label{subsec setting1}

		In this section, we fix some $m \in \{1, \cdots, d_0-1\}$ satisfying $D_{m+1} <  D_{{d_0}+1}$.
		We denote by $m_0 \in \{m, \cdots, d_0 - 1 \}$ the largest number such that $D_{m+1} = D_{m_0 +1}$.
		In particular, by  \cref{cor supportoffiberwisemeasure}, $\mu_x^{V_{m+1}(\bi_+)}=\mu_x^{V_{m_0+1}(\bi_+)}$ for a.e. $(x,\bi_+)$.

		\smallskip
		
		\noindent{\bf $\bullet$ Conditional measures, Dyadic partitions.}

		We denote
		\aryst
		\mu_{x, \bi_{+}} := \mu_x^{V_{m+1}(\bi_{+})}.
		\earyst
		We will abusively regard $\mu_{x, \bi_{+}}$ as an element of $\cP(x + V_{m+1}(\bi_{+})  )$ or $\cP(\R^d)$, via the inclusion $x + V_{m+1}(\bi_{+}) \subset \R^d$.
		
		We will need the restriction of $\mu_{x, \bi_{+}}$ to dyadic cubes at the scales used in the proof. The partitions are chosen as follows. For each $n$, we apply \Cref{lem:avg} (with $d - m$ in place of $d$ and with $(C, N) = (C_0, n)$) to the push-forward of $\mu \otimes \nu_{+}$ under $(x, \bi_{+}) \mapsto ( x_j )_{j \geq m+1}$, and fix a translate $t = t(n) \in \R^{d-m}$ as given there. For each $N$ in the window
		\aryst
		C_0 n \ \leq \ N \ < \ (C_0 + 1) n,
		\earyst
		we then use the partitions $\cD_{m+1, \bi_{+}, N} := \cD^{t(n)}_{m+1, \bi_{+}, N}$ of Subsection \ref{subsec:partitions}; thus each atom is a cube of side length $2^{-N}$ in a translation of $V_{m+1}(\bi_{+})$. Since all partitions for $N$ in this window come from the {\em same} translate, they are increasing in $N$, each refining $\cD_{m+1, \bi_{+}, C_0 n}$; the families chosen for different values of $n$ are unrelated and are never used simultaneously, and we suppress the dependence on $n$ from the notation. 
		The translate $t(n)$ is chosen from \cref{lem:avg} so that the following holds: 
		We set
		\aryst
		\partial_{c_0} \cD_{m+1, \bi_{+}, N}(x):=\{y \in \cD_{m+1, \bi_{+}, N}(x): d(y, \partial \cD_{m+1, \bi_{+}, N}(x)) < c_0 \, 2^{-N}\}.
		\earyst
		where $\partial \cD_{m+1, \bi_{+}, N}(x)$ denotes the boundary of the atom relative to the translation of $V_{m+1}(\bi_{+})$ containing it. 
		For this fixed $n$, we have that for every $c_0 \in (0,1/2]$,
		\begin{equation} \label{eq randomdyadicpartitions}
			\frac{1}{n}\sum_{0\leq k< n} (\mu \otimes \nu_{+})\{ (x, \bi_{+}) : x\in \partial_{c_0} \cD_{m+1, \bi_{+}, C_0n+k}(x))\}  \leq 9 d \sqrt{c_0}.
		\end{equation}

		\smallskip
		
		\noindent{\bf $\bullet$ Ball spreading property.}

		\begin{defi} \label{def spreading}
			Let $n, l \geq 1$ be integers, $\eps > 0$.
			A measure $\zeta \in \cP(\R)$ is {\rm $(n, l, \eps)$-ball spreading} if 
			\aryst
			\zeta \Big\{ x : \frac{1}{n} | \{ 1 \leq k \leq n : \zeta( B(x,2^{-k}) ) \leq 2 \zeta( B(x,2^{-k-l}) ) \}| < \eps  \Big\} > 1 - \eps.
			\earyst
		\end{defi}

		\smallskip

		\noindent{\bf $\bullet$ Rescaled times and self-couplings of $\nu_{+}$.}

		For each $M  \in  \GL_d(\R)$, recall that we denote by $\alpha_1(M),  \cdots, \alpha_d(M)$ the set of singular values of $M$ such that $\alpha_1(M) \geq \cdots \geq \alpha_d(M)$. For a subspace $V \subset \R^d$ of dimension $k$, we write
		\[ \mathrm{J}_V(M) := \frac{\| (\wedge^{k} M) \, v \|}{\|v\|}, \qquad 0 \neq v \in \wedge^{k} V. \]
		
		We denote by $\cB_{k}$ the sub-$\sigma$-algebra on $\Sigma_{+}$ generated by the functions of $\omega = (\omega_i)_{i \in \N}$ depending only on $\omega_0, \cdots, \omega_{k-1}$. 
		Given some integer $n > 0$, we will denote
		\ary
		\label{eqn:Tn c}
		T_n =  \lfloor n \log 2 / (\chi_{m} - \chi_{m_0+1})\rfloor,\qquad 2c=\frac{\chi_{m} - \chi_{m+1}}{\chi_{m} - \chi_{m_0+1}} .
		\eary
		We set
		\aryst
		\cC_n := \cB_{T_n}, \quad n \geq 1.
		\earyst
		By definition, the atom of $\cC_n$ containing $\bi_{+} = (i_{k})_{k \in \N} $, denoted by $\cC_n(\bi_{+})$, is  
		\aryst
		\{ \bj_{+} = (j_{k})_{k \in \N} \in \Sigma_{+} :  j_k  = i_{k}, \ 0 \leq k \leq T_n  - 1 \}.
		\earyst
		We define a self-coupling  $\zeta_{n}$  of $\nu_{+}$ by letting
		\begin{equation}
			\label{eqn:self coupling}
			\zeta_{n}|_{ \cC_n(\bi_{+}) \times \cC_n(\bj_{+}) } = \begin{cases}
				0, & \cC_n(\bi_{+}) \neq \cC_n(\bj_{+}), \\
				\frac{1}{\nu_{+}(\cC_n(\bi_{+}))}\nu_{+} |_{\cC_n(\bi_{+})} \otimes \nu_{+} |_{\cC_n(\bi_{+})},& \cC_n(\bi_{+}) = \cC_n(\bj_{+}).
			\end{cases}
		\end{equation}
		It is straightforward to see that there is a unique self-coupling of $\nu_{+}$ with the above property.

		\smallskip
		
		\noindent{\bf $\bullet$  Main result.}

		The following is the main result of this section, the proof of which will occupy the remainder of this section.
		\begin{prop} \label{prop fiberwisedim}
			There is $\eps_0 = \eps_0(\cF) > 0$ such that the following holds. For every integer $C_0 \gg_{\cF} 1$ and  every $ \eta > 0$, 
			there is an integer $l = l( \cF,   \eta) > 0$ such that for every $\eps \in (0, \eps_0)$,  for every $n \gg_{\cF,   C_0, \eps, \eta} 1$,  there is a Borel subset $\widetilde\Gamma(n) \subset \Lambda \times \Sigma_{+} \times \Sigma_{+}$ with $ (\mu \otimes \zeta_n)   (\widetilde\Gamma(n) ) > 1 - \eta$  
			such that    the following is true.
			\enmt
			\item \label{itm 1P52}  For every $(x, \bi_{+}, \bj_{+}) \in \widetilde\Gamma(n)$, we have  
			\ary \label{eq Vm+1iVm+1jclose}
			d(V_{m_0+1}(\bi_{+}), V_{m+1}(\bj_{+})) \leq 2^{-  (1 - \eps) n},
			\eary
			\begin{equation}\label{equ:angle-Vmm}
				d(V_{m+1}(\bi_{+}), V_{m+1}(\bj_{+})) \mathrel{\leq}  2^{- 2c n + \eps n},
			\end{equation}
			\aryst
			&& \measuredangle( V_{m+1}(\bi_{+}), U^{m-1}(x) ),\ \measuredangle( V_{m+1}(\bj_{+}), U^{m-1}(x) ) \geq 2^{ - \eps n},  
			\earyst
			where $c\in (0,1/2]$ is the constant given in \eqref{eqn:Tn c}.
			\item  For every $(x, \bi_{+}, \bj_{+}) \in { \widetilde\Gamma(n)}$, the probability measure
			$$(\Dil_{2^{(C_0+1)n}})_*(P^{V_{m+1}(\bj_{+}) + U^{m-1}(x)})_* ( \mu_{x, \bi_{+}} )^{\cD_{m+1, \bi_{+}, C_0 n}(x) }$$ is $(n, l, \eta+2\eps)$-ball spreading and the diameter of the support is less than $2^{\eps n}$.
			\eenmt
		\end{prop}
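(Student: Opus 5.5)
We propose to split the proof into the geometric part (1) and the measure-theoretic part (2). Both rely on Oseledets asymptotics along the common prefix of $\bi_+$ and $\bj_+$ under $\zeta_n$. Under $\zeta_n$, the pair $(\bi_+,\bj_+)$ agrees on the first $T_n$ symbols and is conditionally independent afterwards. Writing $A = M^{T_n}_{\bi_+}$ (equal to $M^{T_n}_{\bj_+}$), we have $V_k(\bi_+) = A^{-1}V_k(\sigma^{T_n}\bi_+)$ and $V_k(\bj_+)=A^{-1}V_k(\sigma^{T_n}\bj_+)$. Here $\sigma^{T_n}\bi_+$ and $\sigma^{T_n}\bj_+$ are independent $\nu_+$-typical points, and they are independent of the prefix.

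For part (1), we use Egorov on the Oseledets limits. The singular directions of $A^{-1}$ expand the $e_j$-components at rate roughly $e^{-\chi_j T_n}$, up to $2^{\eps n}$ errors. We also use \Cref{thm Oseledets}(5), which keeps angles subexponential. Pulling back by $A^{-1}$ contracts the distance between two generic subspaces transverse to the flag $U^\bullet(\bi_-)$. Here $\bi_-$ is a backward point playing the role of $x$, and the transversality needed is supplied by the ONC and \Cref{cor:theta q}. The distance between $V_{m+1}(\bi_+)$ and $V_{m+1}(\bj_+)$ is then governed by the gap $\chi_m-\chi_{m+1}$, since only the $(m+1)$-th through $d$-th directions need to be aligned and the worst ratio is $e^{(\chi_{m+1}-\chi_m)T_n}$. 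This gives $2^{-2cn+\eps n}$ by the choice of $T_n$ and $c$.

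For \eqref{eq Vm+1iVm+1jclose}, we compare $V_{m_0+1}(\bi_+)$ with $V_{m+1}(\bj_+)$. The relevant ratio is $e^{(\chi_{m_0+1}-\chi_m)T_n}\approx 2^{-n}$. The lower bounds on the angles with $U^{m-1}(x)$ follow from \Cref{thm Oseledets}(5) together with independence of $x$ from $\bi_+,\bj_+$, via Egorov. We restrict to a set of $\mu\otimes\zeta_n$-measure $>1-\eta/2$ where all these estimates hold uniformly.

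For part (2), the diameter bound is immediate. The cube $\cD_{m+1,\bi_+,C_0n}(x)$ has size $2^{-C_0n}$ inside $x+V_{m+1}(\bi_+)$, and under \eqref{equ:angle-Vmm} and the angle bounds the projection along $V_{m+1}(\bj_+)+U^{m-1}(x)$ shrinks it by a factor about $2^{-n\pm\eps n}$. Here we use \Cref{cor supportoffiberwisemeasure}: the support lies in $x+V_{m_0+1}(\bi_+)$, where the effective scale is set by \eqref{eq Vm+1iVm+1jclose}. After dilating by $2^{(C_0+1)n}$, the diameter is at most $2^{\eps n}$.

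For the spreading property, we pull the picture back under the dynamics. Under SSC, $\sigma^{-1}$ is invertible on $\Lambda\times\Sigma_+$ and maps fibres to fibres, so $\mu_{x,\bi_+}$ at scales $2^{-C_0n-k}$ corresponds, via $f^{-T}$ for a suitable $T$, to the fiber measure at a typical point at unit-order scale. The projection direction along $V_{m+1}(\bj_+)+U^{m-1}(x)$ is then transported to a codimension-one direction in $V_{m_0+1}$ that is uniformly transverse to the support, by \Cref{cor generic hyperplane ae} and \Cref{cor:theta q}. The projected measure at those scales therefore becomes a one-dimensional projection of a typical fiberwise measure. By \Cref{thm Exactdim}(3), this measure has dimension $D_{m_0+2}-D_{m_0+1}>0$, which is positive by maximality of $m_0$ and $D_{m+1}<D_{d_0+1}$. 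A measure of positive exact dimension satisfies the doubling-failure condition $\zeta(B(y,2^{-k}))\le 2\zeta(B(y,2^{-k-l}))$ only rarely, once $l$ is large in terms of $\eta$. Summing over $k\in[0,n)$ with Birkhoff's theorem along the $\sigma^{-1}$-orbit gives the frequency bound. The boundary estimate \eqref{eq randomdyadicpartitions} controls the cost of restricting to dyadic cubes, since atoms near the boundary lose mass. We use \Cref{lem besicovich} to pass from good points to good component measures.

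The main obstacle is this last transfer step. At intermediate scales $2^{-C_0n-k}$ the projection direction is not exactly transported to a fixed transverse direction. The Lyapunov spaces of $\bi_+$ and $\bj_+$ have aligned only up to \eqref{equ:angle-Vmm}, while the support lies in $V_{m_0+1}$. One must therefore track that, for the relevant range of $k$, the error angle times the scale is negligible compared with $2^{-k-l}$. I would first try to handle this by splitting $k$ into blocks according to which exponent $\chi_q$, $m+1\le q\le m_0+1$, dominates, using $\Theta_q>0$ in each block.
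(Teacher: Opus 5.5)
\textbf{Part (1).} Your part (1) is essentially the paper's argument: singular-value gaps $\alpha_{m_0+1}(M)/\alpha_m(M)\approx 2^{-n}$ and $\alpha_{m+1}(M)/\alpha_m(M)\approx 2^{-2cn}$ for the common-prefix product $M=Df^{T_n}_{\bi_+}=Df^{T_n}_{\bj_+}$. One correction: ONC plays no role there. Since $V_{m+1}(\bj_+)$ is itself an Oseledets space of the same product, Oseledets at $\bj_+$ gives $\mathrm{J}_{V_{m+1}(\bj_+)}(M)\le 2^{\eps n/3}\alpha_{m+1}(M)\cdots\alpha_d(M)$. The volume comparison of \cref{lem distance-control} then yields both distance bounds.

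\textbf{The gap in part (2).} \cref{thm Exactdim}(3) gives small tube mass (positive dimension) only for the projection of $\mu_{x,\bi_+}$ along $V_{m_0+2}(\bi_+)$. On the support $x+V_{m_0+1}(\bi_+)$, the projection along $J=V_{m+1}(\bj_+)+U^{m-1}(x)$ is the projection along the hyperplane $K=V_{m_0+1}(\bi_+)\cap J$. It is not enough that $K$, or its pull-back, be ``uniformly transverse'': a measure carried by a translate of a hyperplane $K\neq V_{m_0+2}$ can have positive-dimensional projection along $V_{m_0+2}$ yet collapse to a point under $P^K$. What you need is $\|P^K-P^{V_{m_0+2}(\bi_+)}\|\le 2^{-l}/\sqrt d$.

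Moreover, \cref{cor:theta q} cannot be used quantitatively, uniformly in $n$, at time $0$ or at pulled-back times. There $\bi_+,\bj_+$ share a prefix of length at least $T_n$, so the law is not comparable to $\mu\otimes\nu_+\otimes\nu_+$. It applies at time $T_n$, where $(x',\bi'_+,\bj'_+)$ has law $\mu\otimes\nu_+\otimes\nu_+$.

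\textbf{The missing idea} is \cref{lem kernel pullback}. With $q=m_0+1$ only, ONC at time $T_n$ gives $\sin\measuredangle(H',E_{m_0+1}(x',\bi'_+))>\delta$ for most $\bj_+$. Hence $K'=MK$ is a graph over $V_{m_0+2}(\bi'_+)$ of slope at most $e^{\eps n}$. Pulling back by $M^{-1}$ across the common prefix multiplies the slope by $e^{-(\chi_{m_0+1}-\chi_{m_0+2}-2\eps)T_n}$, so $d(V_{m_0+2}(\bi_+),K)\le 2^{-\kappa n/2}$ with $\kappa=(\chi_{m_0+1}-\chi_{m_0+2})/(\chi_m-\chi_{m_0+1})$.

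\textbf{Why your ``main obstacle'' disappears.} Localize to the sub-atoms $D$ of $\cD_{m+1,\bi_+,N}$, $N=C_0n+k$. Since $\diam D\le\sqrt d\,2^{-N}$ and the tubes have width $2^{-N-l}$, the directional precision required is $2^{-l}$, independent of $k$. So the single estimate $2^{-\kappa n/2}\le 2^{-l}/\sqrt d$ transfers goodness from $V_{m_0+2}(\bi_+)$ to $J$ at every $k\in[0,n)$ at once (\cref{lem slab transfer}). The rate $2^{-2cn}$ and a block decomposition over $q\in[m+1,m_0+1]$ are beside the point.

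Goodness with respect to $V_{m_0+2}(\bi_+)$ does not depend on $\bj_+$ and is exactly equivariant under the inverse branches. That is where your stopping-time pullback, SSC and the boundary estimate belong. Note that positive exact dimension alone does not make doubling failures rare; what gets transported to every scale is the single-scale tube bound at typical points.

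\textbf{Lower bound on $\lambda$.} You also need $\lambda(x,\bi_+,\bj_+)\ge 2^{-(1+\eps)n}$, not just the upper bound coming from \eqref{eq Vm+1iVm+1jclose}. Otherwise the dilation $2^{(C_0+1)n}$ does not match the natural normalization $2^{C_0n}/\lambda$. If $\lambda\ll 2^{-n}$, the dilated measure sits in a tiny ball and fails spreading at all coarse scales. The paper gets this bound from \cref{lem lowerboundforpojection}, using transversality of $E_m(x,\bj_+)$ to $J$ and to $MJ$, of $\ell'$ to $J'$, and \eqref{equ:hum-1}.
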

		In the application, we use this scaling factor $\Dil_{2^{(C_0+1)n}}$. For the proof, the following equivalent form of the second conclusion of \cref{prop fiberwisedim} is more convenient: it normalizes by the factor $T(\tilde{x},n)$ produced by the single-scale estimate, rather than by the fixed dilation $2^{(C_0+1)n}$. By \cref{lem Qtospreading} the two differ by at most $2^{\eps n}$, which accounts for the $2\eps$ in its spreading parameter.
		\enmt
		\item[(2')]  {\begingroup\makeatletter\def\@currentlabel{2'}\makeatother\label{itm 2P52}\endgroup}  
		For every $\tilde{x}=(x, \bi_{+}, \bj_{+}) \in \widetilde\Gamma(n)$, there exists $T(\tilde{x},n)>0$ with 
		$$|\log_2 T(\tilde{x},n)-(C_0+1)n|\leq 2\eps n, $$
		such that  the probability measure $$(\Dil_{T(\tilde{x},n)})_*(P^{V_{m+1}(\bj_{+}) + U^{m-1}(x)})_* ( \mu_{x, \bi_{+}} )^{ \cD_{m+1, \bi_{+}, C_0 n}(x)   }$$ is $(n, l, \eta)$-ball spreading.
		\eenmt
		Here $T(\tilde{x},n) = 2^{C_0 n} / \lambda(x, \bi_{+}, \bj_{+})$, with $\lambda$ as in \eqref{eqn:lambda norm} below; this factor is chosen so that the projected atom $(P^{V_{m+1}(\bj_{+}) + U^{m-1}(x)})_*\mu_{x,\bi_+}^{\cD_{m+1, \bi_{+}, C_0 n}(x)}$ has support of diameter at most $\sqrt{d-m}$. 
		
		\smallskip
		
		\noindent{\bf $\bullet$ Strategy of the proof.}
		
		The proof of \cref{prop fiberwisedim} has two steps, and it rests on four ingredients.
		
		\enmt
		\item {\em Step 1: a typical triple satisfies \eqref{itm 1P52}.} We construct a set $\widetilde\Gamma(n)$ of triples with $(\mu \otimes \zeta_n)(\widetilde\Gamma(n)) > 1 - \eta/2$ on which \eqref{itm 1P52} holds. The conditions in \eqref{itm 1P52} are closeness and angle conditions on the Oseledets subspaces, and they hold on a large set by the Oseledets theorems.
		
		\item {\em Step 2: the ball spreading property.} We shrink $\widetilde\Gamma(n)$ to the triples with $Q > 1 - \eta^7$, using \cref{prop fiberwisedimatsinglescale} to pass from atom-goodness to sequence-goodness and \cref{lem goodonaverage} to show that most atoms are good; the set removed has measure at most $4\eta^2$. On what remains, \cref{lem Qtospreading} gives the ball spreading, and hence item \eqref{itm 2P52}.
		
		\eenmt
		
		Step 2 is organised around two notions of goodness. Both concern an atom $D$ of $\cD_{m+1, \bi_{+}, N}$ and the normalized restriction $\mu^{D}_{x, \bi_{+}}$ of $\mu_{x, \bi_{+}}$ to $D$:
		
		\enmt
		\item the atom $D$ is {\em good at level $l$} if $\mu^{D}_{x, \bi_{+}}$ gives mass $< \eta$ to every slab of width $2^{-N-l}$ in the fiber parallel to $V_{m_0+2}(\bi_{+})$ (\cref{def goodcube});
		\item the sequence $\bj_{+}$ is {\em good for $(D,l)$} if the same holds for the slabs parallel to $K = V_{m_0+1}(\bi_{+}) \cap J$, where $J = V_{m+1}(\bj_{+}) + U^{m-1}(x)$ (\cref{def goodsequence}).
		\eenmt
		
		The logical order is then the following. \cref{lem goodonaverage} shows that, for $l$ large, most atoms at most of the scales $C_0 n + k$, $0 \leq k < n$, are good. \Cref{prop fiberwisedimatsinglescale} then passes from the atom to the sequence, so that $\bj_{+}$ is good for most atoms at most scales, and \cref{lem Qtospreading} converts this into the ball spreading of $(P^{J})_* \mu_{x, \bi_{+}}^{\cD_{m+1, \bi_{+}, C_0 n}(x)}$ dilated by $T(\tilde{x}, n)$. \Cref{lem NCcondition} enters here as well: it uses ONC and supplies the transversality of the hyperplane $H'$ to the Oseledets subspace $E_{m_0+1}(x', \bi'_{+})$ that \cref{prop fiberwisedimatsinglescale} requires.
		
		Among the four, \cref{lem goodonaverage} is where the dimension gap $D_{m+1} < D_{d_0+1}$ does its work. It enters in \cref{lem goodonaverage}: by \eqref{eq exactdim}, \aryst \lim_{r \to 0} \frac{ \log \mu_{x, \bi_{+}}( \Tube_r(x, \bi_{+}) ) }{ \log r } = D_{m_0+2} - D_{m_0+1} > 0 , \earyst so the positivity of this exponent is what makes the tubes carry small mass. 
		\smallskip
		
		\begin{center}
			\small
			\begin{tabular}{@{}p{0.31\textwidth}p{0.35\textwidth}p{0.12\textwidth}p{0.12\textwidth}@{}}
				\toprule
				Ingredient & Role & Stated in & Proved in \\
				\Cref{lem NCcondition} & transversality from ONC, used for the ball spreading & \S\ref{subsec projectionmeasure} & \S\ref{subsec projectionmeasure}  \\
				\cref{lem goodonaverage} & most atoms are good; uses the dimension gap & \S\ref{subsec toolbox} & \S\ref{subsec proofsoflemma511} \\
				\Cref{prop fiberwisedimatsinglescale} & atom-goodness $\Rightarrow$ sequence-goodness & \S\ref{subsec toolbox} & \S\ref{subsec toolbox}\\
				\Cref{lem Qtospreading} & goodness at most scales $\Rightarrow$ ball spreading & \S\ref{subsec proofofmainprop} & \S\ref{subsec proofsoflemma512} \\
				\bottomrule
			\end{tabular}
		\end{center}

		\smallskip

		\subsection{Projection of a measure} \label{subsec projectionmeasure}
		We introduce a decomposition of the measure $\mu\otimes\zeta_n$. For a probability measure $\theta$ on $\Sigma_{+}$ and a measurable map $X$ defined $\theta$-a.e.\ on $\Sigma_{+}$, we write $\big( X(\bj_{+}) \big)_{\bj_{+} \sim \theta}$ for the distribution of $X_* \theta$.

		We record the properties of $\zeta_n$ that will be used. All of them follow directly from the definition.
		Write
		\aryst
		\nu_{+}^{\cC_n(\bi_{+})} := \frac{ \nu_{+}( \, \cdot \, \cap \cC_n(\bi_{+}) ) }{ \nu_{+}( \cC_n(\bi_{+}) ) }
		\earyst
		for the normalized restriction of $\nu_{+}$ to the cylinder $\cC_n(\bi_{+})$, and let $t := T_n$.
		\enmt
		\item For every Borel set $A \subset \Lambda \times \Sigma_{+} \times \Sigma_{+}$,
		\aryst
		(\mu \otimes \zeta_n)(A) = \int 1_A(x, \bi_{+}, \bj_{+}) \, d\nu_{+}^{\cC_n(\bi_{+})}(\bj_{+}) \, d(\mu \otimes \nu_{+})(x, \bi_{+}) .
		\earyst
		\item For all measurable $G \subset \Lambda \times \Sigma_{+}$ and $A \subset \Sigma_{+}$,
		\ary \label{eq coupling estimate}
		&&(\mu \otimes \zeta_n) \big\{ (x, \bi_{+}, \bj_{+}) \ : \ (x, \bi_{+}) \in G, \ \bj_{+} \in A \big\} \\
		&\geq& \ 1 - (\mu \otimes \nu_{+})(G^{c}) - \nu_{+}(A^{c}).\nonumber
		\eary
		\item Since $\nu_{+}$ is a Bernoulli measure,
		\ary \label{eqn:sigma nu}
		(\sigma^{t})_* \, \nu_{+}^{\cC_n(\bi_{+})} = \nu_{+} .
		\eary
		For $\bj_{+} \in \cC_n(\bi_{+})$ we set
		\ary \label{eq x'bi'bj'}
		(x', \bi'_{+}, \bj'_{+}) := \big( f^{t}_{\bi_{+}}(x), \ \sigma^{t} \bi_{+}, \ \sigma^{t} \bj_{+} \big) .
		\eary
		If $(x, \bi_{+}, \bj_{+})$ is distributed according to $\mu \otimes \zeta_n$, then $(x', \bi'_{+}, \bj'_{+})$ is distributed according to $\mu \otimes \nu_{+} \otimes \nu_{+}$. For every measurable $B \subset \Lambda \times \Sigma_{+} \times \Sigma_{+}$ and every $(x, \bi_{+})$,
		\aryst
		\nu_{+}^{\cC_n(\bi_{+})} \big\{ \bj_{+} : (x', \bi'_{+}, \bj'_{+}) \in B \big\} = \nu_{+} \big\{ \bj'_{+} : (x', \bi'_{+}, \bj'_{+}) \in B \big\} .
		\earyst
		\eenmt
		\smallskip
		
		Given $W \in \Gr(d, m-1)$, we denote
		\aryst
		H_{W, \bi_{+}}( \bj_{+} ) := V_{m+1}(\bi_{+} ) \cap ( W + V_{m+1}(\bj_{+}) ).
		\earyst
		By \Cref{cor generic hyperplane ae} and Fubini, for $(\mu \otimes \nu_{+})$-a.e.\ $(x, \bi_{+})$, the set of $\bj_{+}$ for which $H_{U^{m-1}(x), \bi_{+}}(\bj_{+})$ is not a hyperplane of $V_{m+1}(\bi_{+})$ is $\nu_{+}$-null; it is then $\nu_{+}^{\cC_n(\bi_{+})}$-null for every $n$, since $\nu_{+}^{\cC_n(\bi_{+})}\ll \nu_+$.

		\noindent{\bf $\bullet$ The map $M$.}
		For $\bj_{+} \in \cC_n(\bi_{+})$, the sequences $\bi_{+}$ and $\bj_{+}$ have the same first $t$ symbols, so that we may set
		\ary \label{eq M def}
		M \ := \ Df^{t}_{\bi_{+}} \ = \ Df^{t}_{\bj_{+}} .
		\eary
		This notation, with $t = T_n$ and $\bj_{+} \in \cC_n(\bi_{+})$, is used throughout the section. With $(x', \bi'_{+}, \bj'_{+})$ as in \eqref{eq x'bi'bj'}, the equivariance of the Oseledets flags (\Cref{thm Oseledets}) yields
		\begin{equation} \label{eq M equivariance}
			M V_{k}(\bi_{+}) = V_{k}(\bi'_{+}), \quad M V_{k}(\bj_{+}) = V_{k}(\bj'_{+}), \quad M U^{k}(x) = U^{k}(x'), \quad M E_{k}(x, \bi_{+}) = E_{k}(x', \bi'_{+})
		\end{equation}
		for every $k$. In particular,
		\ary \label{eq H pullback}
		H_{U^{m-1}(x), \bi_{+}}(\bj_{+}) \ = \ M^{-1} H_{U^{m-1}(x'), \bi'_{+}}(\bj'_{+}) .
		\eary
		For $x \in \Lambda$ and $\bj_{+} \in \Sigma_{+}$, we write
		\begin{equation} \label{eq J def}
			J = J(x, \bj_{+}) := V_{m+1}(\bj_{+}) + U^{m-1}(x) ,
		\end{equation}
		so that $H_{U^{m-1}(x), \bi_{+}}(\bj_{+}) = V_{m+1}(\bi_{+}) \cap J$. For $\bj_{+} \in \cC_n(\bi_{+})$ and $(x', \bi'_{+}, \bj'_{+})$ as above,
		\aryst
		J' := J(x', \bj'_{+}) = MJ \qquad \mbox{and} \qquad H' := H_{U^{m-1}(x'), \bi'_{+}}(\bj'_{+}) = V_{m+1}(\bi'_{+}) \cap J'.
		\earyst

		For every $(x, \bi_{+})$, with $(x', \bi'_{+}, \bj'_{+})$ as in \eqref{eq x'bi'bj'}, we have the equality of laws
		\aryst
		\big( H_{U^{m-1}(x'), \bi'_{+}}(\bj'_{+}) \big)_{\bj_{+} \sim \nu_{+}^{\cC_n(\bi_{+})}}
		\ = \
		\big( H_{U^{m-1}(x'), \bi'_{+}}(\bj'_{+}) \big)_{\bj'_{+} \sim \nu_{+}},
		\earyst
		because $\bj'_{+} = \sigma^{t} \bj_{+}$ and \eqref{eqn:sigma nu}; both sides are probability measures on the set of subspaces of $V_{m+1}(\bi'_{+})$.
		For $(\mu \otimes \nu_{+})$-a.e.\ $(x, \bi_{+})$, this measure is supported on $\Gr(V_{m+1}(\bi'_{+}), d-m-1)$, by \Cref{cor generic hyperplane ae} applied at $(x', \bi'_{+})$, which is admissible for a.e.\ $(x, \bi_{+})$ since $(x, \bi_{+}) \mapsto (x', \bi'_{+})$ preserves $\mu \otimes \nu_{+}$.

		We have the following. 
		\begin{lemma} \label{lem NCcondition}
			\label{cor NCcondition shifted}
			Assume that $\cF$ has the ONC at level $d_0$. For every $\eta > 0$, there is $\delta = \delta(\cF, \eta) > 0$ such that the following holds. For every $n \geq 1$, there is a set $\Gamma_0(n) \subset \Lambda \times \Sigma_{+}$ with $(\mu \otimes \nu_{+})(\Gamma_0(n)) > 1 - \eta$ such that, for every $q \in \{m+1, \cdots, d_0\}$ and every $(x, \bi_{+}) \in \Gamma_0(n)$, with $(x', \bi'_{+}, \bj'_{+})$ as in \eqref{eq x'bi'bj'},
			\ary \label{eq NC section shifted}
			\nu_{+}^{\cC_n(\bi_{+})} \Big\{ \bj_{+} \ : \sin \measuredangle\big( H_{U^{m-1}(x'), \bi'_{+}}(\bj'_{+}), \, E_q(x', \bi'_{+}) \big) > \delta \Big\} \ > \ 1 - \eta .
			\eary
		\end{lemma}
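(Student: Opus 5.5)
The plan is to reduce the statement to a single, $n$-independent estimate on $\mu\otimes\nu_+\otimes\nu_+$. Then we transport it along the measure-preserving map $(x,\bi_+)\mapsto(x',\bi'_+)=(f^{t}_{\bi_+}(x),\sigma^t\bi_+)$ and the law identity \eqref{eqn:sigma nu}.

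First, I fix $q\in\{m+1,\dots,d_0\}$ and consider the function
\[
g_q(y,\bk_+,\bl_+):=\sin\measuredangle\big(H_{U^{m-1}(y),\bk_+}(\bl_+),\,E_q(y,\bk_+)\big)
\]
on $\Lambda\times\Sigma_+\times\Sigma_+$. Under SSC we identify $y$ with $\bi_-=\Pi^{-1}(y)$, so by \cref{cor:theta q} we have $g_q>0$ for $(\mu\otimes\nu_+\otimes\nu_+)$-a.e.\ triple. Measurability is clear since all Oseledets subspaces are measurable and $H$ is a hyperplane a.e.\ by \cref{cor generic hyperplane ae}. Since $\{g_q>\delta\}$ increases to a conull set as $\delta\downarrow0$, there is $\delta=\delta(\cF,\eta)>0$ such that
\[
(\mu\otimes\nu_+\otimes\nu_+)\{g_q\le\delta\}<\eta^2/(d+1)\quad\text{for every } q.
\]

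Second, I apply Fubini. Let $G_q$ be the set of $(y,\bk_+)$ with $\nu_+\{\bl_+: g_q(y,\bk_+,\bl_+)\le\delta\}\ge\eta$. Markov's inequality gives $(\mu\otimes\nu_+)(G_q)<\eta/(d+1)$. Set $G:=\Lambda\times\Sigma_+\setminus\bigcup_q G_q$, a set of measure $>1-\eta$ not depending on $n$.

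Third, I transport to scale $n$. Define
\[
\Gamma_0(n):=\{(x,\bi_+): (f^{T_n}_{\bi_+}(x),\sigma^{T_n}\bi_+)\in G\}.
\]
Since the map $(x,\bi_+)\mapsto(f_{i_0}(x),\sigma\bi_+)$ preserves $\mu\otimes\nu_+$ (this is the $\sigma$-invariance of $\hat\mu$), so does its $T_n$-th iterate, and hence $(\mu\otimes\nu_+)(\Gamma_0(n))=(\mu\otimes\nu_+)(G)>1-\eta$. For $(x,\bi_+)\in\Gamma_0(n)$, the pair $(x',\bi'_+)$ depends only on $(x,\bi_+)$. Moreover, by \eqref{eqn:sigma nu}, the law of $\bj'_+=\sigma^t\bj_+$ under $\nu_+^{\cC_n(\bi_+)}$ is $\nu_+$. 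Therefore the left side of \eqref{eq NC section shifted} equals $\nu_+\{\bj'_+: g_q(x',\bi'_+,\bj'_+)>\delta\}>1-\eta$ for each $q$, which is the claim.

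The only point requiring care, and the main (mild) obstacle, is the first step. There one must check that \cref{cor:theta q}, stated for $\nu_-\otimes\nu_+\otimes\nu_+$ with $x$ abusively denoting $\bi_-$, indeed applies on $\Lambda\times\Sigma_+\times\Sigma_+$ via the conjugacy $\Pi$ under SSC. One must also check that the null set where $H$ fails to be a hyperplane, or where Oseledets data is undefined, is invariant in the right sense under the shift, so that it causes no trouble at $(x',\bi'_+,\bj'_+)$. Both follow from $\Pi_*\nu_-=\mu$ and the invariance just used.
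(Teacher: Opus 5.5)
Your proof is correct and follows the paper's argument essentially step for step: positivity almost everywhere from \cref{cor:theta q}, continuity of measure to choose $\delta$, Fubini plus Markov to obtain an $n$-independent good set, and then pulling that set back along the measure-preserving map $\sigma^{T_n}$ using \eqref{eqn:sigma nu}. The only cosmetic difference is that you pick one $\delta$ for all $q$ at once with threshold $\eta^2/(d+1)$, whereas the paper takes $\delta=\min_q\delta_q$ with thresholds $(\eta/(d_0-m))^2$.
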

		
		\begin{proof}
			Fix $q \in \{m+1, \cdots, d_0\}$ and write
			$$\Theta_q(x, \bi_{+}, \bj_{+}) := \sin \measuredangle\big( H_{U^{m-1}(x), \bi_{+}}(\bj_{+}), E_q(x, \bi_{+}) \big),$$
			a measurable function defined for $(\mu \otimes \nu_{+} \otimes \nu_{+})$-a.e.\ triple.
			It follows from \Cref{cor:theta q} that $\Theta_q > 0$ almost everywhere.
			Using the continuity of measure, there exists $\delta_q = \delta_q(\cF, \eta) > 0$ such that
			\aryst
			(\mu \otimes \nu_{+} \otimes \nu_{+}) \{ \Theta_q \leq \delta_q \} \ < \ \Big( \frac{\eta}{d_0 - m} \Big)^2 .
			\earyst
			By Fubini and Markov's inequality, the set
			\aryst
			\Gamma(q) := \Big\{ (x, \bi_{+}) \ : \ \nu_{+} \{ \bj_{+} : \Theta_q(x, \bi_{+}, \bj_{+}) \leq \delta_q \} < \frac{\eta}{d_0 - m} \Big\}
			\earyst
			has $(\mu \otimes \nu_{+})$-measure $> 1 - \frac{\eta}{d_0 - m}$.
			Set $\delta := \min\{\delta_q : m+1\leq q\leq d_0\}$ and $\Gamma := \bigcap_{m+1\leq q\leq d_0} \Gamma(q)$.
			Then, $(\mu \otimes \nu_{+})(\Gamma) > 1 - \eta$, and for $(x, \bi_{+}) \in \Gamma$ and every $q$, 
			\begin{equation}
				\label{eq NC section} 
				\nu_{+} \{ \bj_{+} : \Theta_q(x, \bi_{+}, \bj_{+}) > \delta \} \ \geq \ \nu_{+} \{ \bj_{+} : \Theta_q(x, \bi_{+}, \bj_{+}) > \delta_q \} \geq \ 1 - \eta.
			\end{equation}
			Set $$\Gamma_0(n) := \{ (x, \bi_{+}) : (x', \bi'_{+}) \in \Gamma \}.$$ We have
			$$(\mu \otimes \nu_{+})(\Gamma_0(n)) = (\mu \otimes \nu_{+})(\Gamma) > 1 - \eta .$$
			Now fix $(x, \bi_{+}) \in \Gamma_0(n)$ and $q$.
			Since $\bj'_{+} = \sigma^{T_n} \bj_{+}$ and $(x', \bi'_{+})$ does not depend on $\bj_{+}$, the set on the LHS of the equation below is the preimage under $\sigma^{T_n}$ of the set on the RHS, so \eqref{eqn:sigma nu} gives
			\aryst
			&&\nu_{+}^{\cC_n(\bi_{+})} \Big\{ \bj_{+} : \sin \measuredangle\big( H_{U^{m-1}(x'), \bi'_{+}}(\bj'_{+}), E_q(x', \bi'_{+}) \big) > \delta \Big\}\\ 
			&=& \nu_{+} \Big\{ \bj'_{+} : \sin \measuredangle\big( H_{U^{m-1}(x'), \bi'_{+}}(\bj'_{+}), E_q(x', \bi'_{+}) \big) > \delta \Big\}
			>1 - \eta ,
			\earyst
			the inequality being \eqref{eq NC section} at $(x', \bi'_{+}) \in \Gamma$. 
		\end{proof}

		\subsection{Goodness, transfer and the single-scale estimate} \label{subsec toolbox}
		We collect here the notation and the auxiliary results used in the proof of \cref{prop fiberwisedim}.
		
		\noindent{\bf $\bullet$ Tubes.}
		Let $\Xi \subset \Lambda \times \Sigma_{+}$ be the full measure set of $(x, \bi_{+})$ at which \cref{cor supportoffiberwisemeasure} applies and $U^{m-1}(x) \cap V_{m+1}(\bi_{+}) = \{0\}$. For $(x, \bi_{+}) \in \Xi$, the measure $\mu_{x, \bi_{+}}$ is supported on
		\ary \label{eq A def}
		A(x, \bi_{+}) := x + V_{m_0+1}(\bi_{+}) .
		\eary
		For $(x, \bi_{+}) \in \Xi$ and $\bj_{+} \in \Sigma_{+}$ with $U^{m-1}(x) \cap V_{m+1}(\bj_{+}) = \{0\}$, the space $J = J(x, \bj_{+})$ {of \eqref{eq J def}} is a hyperplane of $\R^d$, and we put
		\begin{align}
			\label{eqn:lambda norm}
			\  K&=K(x, \bi_{+}, \bj_{+}) := V_{m_0+1}(\bi_{+}) \cap J = V_{m_0+1}(\bi_{+})  \cap (V_{m+1}(\bj_{+}) + U^{m-1}(x)),\\
			\label{eqn:lambda norm2}
			\  \lambda&=\lambda(x, \bi_{+}, \bj_{+}) := \sup \big\{ \|P^{J} w\| \ : \ w \in V_{m_0+1}(\bi_{+}), \ \|w\| = 1 \big\}. 
		\end{align}
		When $(x, \bi_{+})$ and $\bj_{+}$ are clear from the context, we simply write $A$, $J$, $K$, and $\lambda$.

		For $(x, \bi_{+}) \in \Xi$, $y \in A(x, \bi_{+})$, a hyperplane $W$ of $V_{m_0+1}(\bi_{+})$ and $r > 0$, we write
		\begin{equation} \label{eq slab def}
			\widehat\Tube_r(y, W) := \{ z \in A(x, \bi_{+}) \ : \ d(z - y, W) < r \} \quad \mbox{and} \quad \Tube_r(y, W) := \widehat\Tube_r(y, W) \cap \Lambda
		\end{equation}
		for the tube of width $r$ around $y + W$ in the fiber $A(x, \bi_{+})$ and for its trace on $\Lambda$. For $W = V_{m_0+2}(\bi_{+})$, we abbreviate $\widehat\Tube_r(y, \bi_{+}) := \widehat\Tube_r(y, V_{m_0+2}(\bi_{+}))$ and $\Tube_r(y, \bi_{+}) := \Tube_r(y, V_{m_0+2}(\bi_{+}))$.
		\smallskip

		\noindent{\bf $\bullet$ Good atoms.}
		Let $\bi_{+} \in \Sigma_{+}$, let $N \geq 1$ be an integer, and let $D$ be an atom of $\cD_{m+1, \bi_{+}, N}$.
		The atom $D$ is contained in a single fiber $x + V_{m+1}(\bi_{+})$, and the conditional measure $\mu_{x, \bi_{+}} = \mu_x^{V_{m+1}(\bi_{+})}$ depends only on this fiber. Consequently, whether $\mu_{x, \bi_{+}}(D) > 0$ occurs and, if so, the normalized restriction
		\aryst
		\mu^{D}_{x, \bi_{+}} := \frac{ \mu_{x, \bi_{+}}( \, \cdot \, \cap D ) }{ \mu_{x, \bi_{+}}(D) }
		\earyst
		does not depend on the choice of $x \in D$.
		
		\begin{defi} \label{def goodcube}
			Let $l \geq 1$ be an integer and $0 < \eta \leq 1$. Let $\bi_{+} \in \Sigma_{+}$, $N \geq 1$, and let $D$ be an atom of $\cD_{m+1, \bi_{+}, N}$ with $\mu_{x, \bi_{+}}(D) > 0$ for $x \in D$.
			We say that $D$ is {\em $\eta$-good at level $l$} if writing $\zeta_D := (P^{V_{m_0+2}(\bi_{+})})_* \mu^{D}_{x, \bi_{+}}$, 
			\ary \label{eq good atom}
			\sup_{p} \zeta_D(B(p, 2^{-N-l})) < \eta.
			\eary
			By the above, this is a property of $D$, $N$ and $\bi_{+}$ only.
		\end{defi}
		In terms of the tubes \eqref{eq slab def}, for $(x, \bi_{+}) \in \Xi$ and $D = \cD_{m+1, \bi_{+}, N}(x)$ with $\mu_{x, \bi_{+}}(D) > 0$, condition \eqref{eq good atom} reads
		\ary \label{eq good atom tube}
		\mu^{D}_{x, \bi_{+}} \big( \Tube_{2^{-N-l}}(y, \bi_{+}) \big) \ < \ \eta \qquad \mbox{for every } y \in A(x, \bi_{+}) .
		\eary

		If $D$ is $\eta$-good at level $l$, we have
		\ary
		\label{eqn:supp zeta 1}
		\diam \Big( \supp \zeta_D\Big) \ \in \ 2^{-N} \big[ 2^{-l}, \ \sqrt{d} \, \big] .
		\eary
		The upper bound follows from $\diam (D) \leq \sqrt{d} \, 2^{-N}$. For the lower bound, if $\diam(\supp \zeta_D) < 2^{-N-l}$, a single ball of radius $2^{-N-l}$ centered in $\supp \zeta_D$ would have $\zeta_D$-mass $1 \geq \eta$. 
		
		One important property of goodness:
		\begin{prop} \label{lem goodonaverage}
			For every $\eta > 0$, there is an integer $l = l(\cF, \eta) > 0$ such that  for any $n \gg_{\cF,   \eta} 1$, we have
			\aryst
			\frac{1}{n} \sum_{k = 0}^{n-1}  
			\mu \otimes \nu_{+} \{(x,\bi_+):\ \mbox{$\cD_{ m+1,\bi_{+}, C_0n+ k}(x)$ is $\eta^8$-good at level $l$} \}  \geq  1 - \eta^{9}.
			\earyst
		\end{prop}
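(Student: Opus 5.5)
The plan is to get goodness of atoms from the exact dimensionality of the projection of $\mu_{x,\bi_+}$ along $V_{m_0+2}(\bi_+)$, together with the boundary regularity \eqref{eq randomdyadicpartitions} of the random dyadic partitions.

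\textbf{Exponent.} By \cref{cor supportoffiberwisemeasure}, $\mu_{x,\bi_+}=\mu^{V_{m_0+1}(\bi_+)}_x$ a.e. Theorem~\ref{thm Exactdim}(3) with $k=m_0+1$ then gives, for a.e.\ $(x,\bi_+)$,
\[
\lim_{r\to 0}\frac{\log \mu_{x,\bi_+}(\Tube_r(x,\bi_+))}{\log r}=D_{m_0+2}-D_{m_0+1}=:\beta .
\]
Here $\beta>0$ because $m_0$ is maximal with $D_{m_0+1}=D_{m+1}<D_{d_0+1}$. By Egorov, there are $r_0$ and a set $G$ with $(\mu\otimes\nu_+)(G)>1-\eta^{20}$ such that $\mu_{x,\bi_+}(\Tube_r(y,\bi_+))\le r^{\beta/2}$ for all $(y,\bi_+)\in G$ and $r<r_0$.

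\textbf{Tube masses are comparable at nearby centers.} If $\Tube_r(y',\bi_+)$ meets $G$ in a point $y$ (same fiber), then $\Tube_r(y',\bi_+)\subset \Tube_{2r}(y,\bi_+)$. The uniform Frostman bound therefore applies to every tube of width $r$ that contains a point of $G$. This holds uniformly over translates, which is exactly what the supremum in \eqref{eq good atom tube} requires.

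\textbf{Good atoms.} Fix a scale $N=C_0n+k$ and an atom $D=\cD_{m+1,\bi_+,N}(x)$. Say $D$ is \emph{dense} if $\mu^D_{x,\bi_+}(G^c)\le \eta^8/2$, and \emph{not too small} if $\mu_{x,\bi_+}(D)\ge 2^{-N(\dim\mu_{x,\bi_+}+\epsilon)}$. For a tube of width $2^{-N-l}$ we have two cases. If it misses $G$, its $\mu^D$-mass is at most $\eta^8/2$. Otherwise its mass is at most
\[
2^{-(N+l)\beta/2}\big/\mu_{x,\bi_+}(D).
\]
This bound is useless if $\mu_{x,\bi_+}(D)$ is only known to be exponentially small in $N$. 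So the crude bound is insufficient, and a \emph{relative} estimate is needed instead.

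The right route is a local entropy/density statement inside the atom: for most atoms $D$, the measure $\mu^D_{x,\bi_+}$ gives small mass to every $2^{-N-l}$ tube. I would obtain this through a conditional-measure martingale argument, using the partition $\cD_{m+1,\bi_+,N+l}$ refined by the $V_{m_0+2}$-direction tubes. The quantity to control is the average over $k$ of
\[
H\Big(\mu^{D}_{x,\bi_+},\ \text{tubes of width } 2^{-N-l}\ \Big|\ \cD_N\Big).
\]
Its sum over the scales $N=C_0n,\dots,C_0n+n-1$ in steps of $l$ telescopes to roughly $n\beta$, by exact dimensionality of the projected measure. A Markov/pigeonhole argument then shows that for most $k$ and most atoms the $l$-step conditional entropy is at least $\beta l/2$.

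\textbf{From entropy to goodness (main obstacle).} Large entropy alone does not forbid one tube carrying mass $\eta^8$. To exclude this, I would use a maximal-type bound: for a.e.\ point $y$, the ratio
\[
\frac{\mu_{x,\bi_+}(\Tube_{2^{-N-l}}(y))}{\mu_{x,\bi_+}(\Tube_{2^{-N}}(y))}\le 2^{-l\beta/2}
\]
holds for most scales. This follows from exact dimensionality applied along scales; such a ratio is a telescoping product of one-step ratios whose logarithms average to $-\beta$.

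The tube $\Tube_{2^{-N}}(y)\cap D$ has to be compared with $D$ itself, and this is where \eqref{eq randomdyadicpartitions} enters. Away from the $c_0$-boundary, the atom $D$ contains $\Tube_{c_02^{-N}}(y)\cap D$ and is contained in $\Tube_{\sqrt d 2^{-N}}(y)$. The doubling defect between these is handled by a Besicovitch argument (\cref{lem besicovich}).

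Finally I would choose $c_0$ with $9d\sqrt{c_0}<\eta^{9}/3$, then $l$ so that $2^{-l\beta/4}<\eta^8/2$, and absorb all exceptional sets into a total of $\eta^9$. This last combination, ruling out a single heavy tube uniformly over all translates $y$ rather than only at typical centers, is the delicate step. I expect it to require the covering by at most $2^{l}$-many tubes centered at $G$-points, as in the second step above.
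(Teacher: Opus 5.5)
\textbf{There is a genuine gap.} Your argument never links the quantity that defines goodness to the quantities you actually control.

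Goodness of $D=\cD_{m+1,\bi_+,N}(x)$ concerns the projection along $V_{m_0+2}(\bi_+)$ of the \emph{restriction} $\mu^D_{x,\bi_+}$ to a $2^{-N}$-cube. By contrast, \eqref{eq exactdim}, your Egorov set $G$, and the ratio $\mu_{x,\bi_+}(\Tube_{2^{-N-l}}(y))/\mu_{x,\bi_+}(\Tube_{2^{-N}}(y))$ all concern the projection of the \emph{whole} fiber measure. Exact dimensionality does not pass from the latter to the former.
\begin{itemize}
\item \emph{The entropy step runs the wrong way.} On the fiber, $\cD_N$ refines the partition into $2^{-N}$-tubes, so $H(\text{tubes at }2^{-N-l}\mid\cD_N)\le H(\text{tubes at }2^{-N-l}\mid\text{tubes at }2^{-N})$. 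Telescoping therefore gives only an \emph{upper} bound $\approx n\beta$ on the sum of local projection entropies, never a lower bound.
\item \emph{The maximal step fails on scales.} Exact dimensionality allows plateaus of the $\log$-mass of length $o(N)$ but $\gg l$ on a positive proportion of scales. So averaging the logarithms gives no decay at most scales.
\item \emph{The maximal step fails on mass comparison.} Even granting decay, $\mu_{x,\bi_+}(\Tube_{2^{-N}}(y))\approx 2^{-N(D_{m_0+2}-D_{m_0+1})}$ is in general exponentially larger than $\mu_{x,\bi_+}(D)\approx 2^{-N(D_{d+1}-D_{m_0+1})}$. The inclusion $D\subset\Tube_{\sqrt d\, 2^{-N}}(y)$ goes the wrong way for transferring a ratio bound to $\mu^D_{x,\bi_+}$.
\end{itemize}
No argument built only from these inputs can work. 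Let $\theta$ be the middle-thirds Cantor measure, $F$ its distribution function, and $\omega=(\mathrm{id},F)_*\theta$ on $[0,1]^2$, with tubes taken to be vertical strips. The horizontal projection $\theta$ is exact dimensional of dimension $\log 2/\log 3>0$. Yet for every dyadic square $D=I\times J$ of side $2^{-N}$, the $\theta$-mass of $F^{-1}(J)$ sits in one Cantor interval of length $3^{-N}\ll 2^{-N-l}$. Hence no atom is good.

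\textbf{The missing idea is self-affine renormalization, which is the core of the paper's proof.}
\begin{itemize}
\item \emph{Decomposition.} Let $\tau_k$ be the first $t$ at which the tube-width expansion $2^{\lambda_t}$ of $f_x^{-t}$ from \eqref{eq tube identity} exceeds $2^{N+l/2}$. Under SSC, the relevant part of $D\cap A(x,\bi_+)$ splits into disjoint cylinder pieces $E_\bw=A\cap f_\bw(\Lambda)$; they are disjoint because the stopping words are suffix-free.
\item \emph{Local-to-global transfer.} On each piece, \eqref{eq fibre equivariance} shows that $\mu_{x,\bi_+}$ is a multiple of $(f_\bw)_*$ of the \emph{full} fiber measure at the renormalized point. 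A $2^{-N-l}$-tube through a point $y_\bw\in E_\bw$ pulls back into a tube of width $<2^{-l/4}$ centered at the renormalized $y_\bw$. The single-scale bound (G1), valid on a set of measure close to $1$ once $l$ is large, then gives mass $\le\eta^{10}$ per piece, uniformly in the translate. This transfer is exactly what your proposal lacks.
\item \emph{Exceptional sets.} Pieces not contained in $D$ (non-full renormalizations) force $d(x,\partial D)<2^{-l/4-N}$ by \cref{lem fullness}, and only there is \eqref{eq randomdyadicpartitions} used. Pieces whose renormalized point lies outside $\Gamma_0$ are handled by the quasi-isometry of $k\mapsto\tau_k$ together with the $\sigma$-invariance of $\mu\otimes\nu_{+}$.
\end{itemize}
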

		
		\smallskip
		\noindent{\bf $\bullet$ Good sequences for an atom.}
		\begin{defi} \label{def goodsequence}
			Let $l \geq 1$ be an integer and $0 < \eta \leq 1$. Let $(x, \bi_{+}) \in \Xi$, $N \geq 1$, and let $D$ be an atom of $\cD_{m+1, \bi_{+}, N}$ contained in the fiber $x + V_{m+1}(\bi_{+})$ with $\mu_{x, \bi_{+}}(D) > 0$. Let $\bj_{+}$ be such that {$U^{m-1}(x) \cap V_{m+1}(\bj_{+}) = \{0\}$ and} $K(x, \bi_{+}, \bj_{+})$ (given by \eqref{eqn:lambda norm}) is a hyperplane of $V_{m_0+1}(\bi_{+})$.
			We say that $\bj_{+}$ is {\em $\eta$-good for $(D, l)$ with respect to $(x, \bi_{+})$} if
			\ary \label{eq good sequence}
			\sup_{p} \, (P^{V_{m+1}(\bj_{+}) + U^{m-1}(x)})_* \mu^{D}_{x, \bi_{+}} \big( B(p, \lambda 2^{-N-l}) \big) < \eta.
			\eary
			Recall that $\lambda$ in the above expression is given by \eqref{eqn:lambda norm2}.
			Unlike \cref{def goodcube}, this definition depends on $x$ through $U^{m-1}(x)$.
		\end{defi}
		As before, if $\bj_{+}$ is $\eta$-good for $(D, l)$ with respect to $(x, \bi_{+})$, then $$\diam \big( P^{J}(\supp \mu^{D}_{x, \bi_{+}}) \big) \in \lambda 2^{-N} [2^{-l}, \sqrt{d}].$$

		The following lemma, whose proof is given in \cref{app linear algebra}, transfers good cubes between nearby hyperplanes.  
		\begin{lemma}\label{lem slab transfer}
			Let $(x, \bi_{+}) \in \Xi$, and let $D$, $l$, $\eta$, $\bj_{+}$ and $K = K(x, \bi_{+}, \bj_{+})$ be as in \cref{def goodcube} and \cref{def goodsequence}.
			If $D$ is $\eta$-good at level $l$ and
			\aryst
			\| P^{K} - P^{V_{m_0+2}(\bi_{+})} \| \ \leq \ \frac{ 2^{-l} }{ \sqrt{d} } ,
			\earyst
			then $\bj_{+}$ is $4 \eta$-good for $(D, l)$ with respect to $(x, \bi_{+})$.
		\end{lemma}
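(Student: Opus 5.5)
We work inside the fiber $A = x + V_{m_0+1}(\bi_{+})$, where $\mu^{D}_{x,\bi_{+}}$ is supported, and compare the two projections $P^{V_{m_0+2}(\bi_{+})}$ and $P^{J}$ restricted to this fiber. The plan is to reduce both goodness conditions to statements about tubes in $A$ around hyperplanes of $V_{m_0+1}(\bi_{+})$. Condition \eqref{eq good atom tube} is exactly such a statement for the hyperplane $V_{m_0+2}(\bi_{+})$. We will show that \eqref{eq good sequence} is a statement of the same kind for $K = V_{m_0+1}(\bi_{+}) \cap J$.

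\emph{Step 1: rewrite \eqref{eq good sequence}.} Restricted to $V_{m_0+1}(\bi_{+})$, the map $P^{J}$ has kernel $K$, a hyperplane. So it factors as a linear functional of $d(\cdot, K)$ times a fixed unit vector in $J^{\perp}$. By the definition \eqref{eqn:lambda norm2} of $\lambda$, for $z, z' \in A$ we get
\[
\|P^{J}(z - z')\| = \lambda \, \|P^{K}(z - z')\|_{V_{m_0+1}(\bi_{+})},
\]
where $P^{K}$ is understood as the orthogonal projection inside $V_{m_0+1}(\bi_{+})$ onto $K^{\perp} \cap V_{m_0+1}(\bi_{+})$, a line. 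Hence the preimage in $A$ of a ball $B(p, \lambda 2^{-N-l})$ under $P^{J}$ is exactly a tube $\widehat\Tube_{2^{-N-l}}(y, K)$ for some $y \in A$. Condition \eqref{eq good sequence} with constant $4\eta$ therefore becomes
\[
\mu^{D}_{x,\bi_{+}}\bigl(\Tube_{2^{-N-l}}(y, K)\bigr) < 4\eta \quad \text{for every } y \in A .
\]

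\emph{Step 2: cover a $K$-tube by $V_{m_0+2}(\bi_{+})$-tubes.} Everything relevant lives in $D$, which has diameter at most $\sqrt{d}\,2^{-N}$. Fix $y$, and take $z \in D \cap \widehat\Tube_{2^{-N-l}}(y, K)$. Pick a base point $y_0 \in D$ on the tube, so that $d(y_0 - y, K) < 2^{-N-l}$. Then
\[
|P^{V_{m_0+2}}(z - y_0)| \le |P^{K}(z - y_0)| + \|P^{K} - P^{V_{m_0+2}}\| \, \|z - y_0\| < 2 \cdot 2^{-N-l} + \frac{2^{-l}}{\sqrt{d}} \, \sqrt{d}\, 2^{-N} = 3 \cdot 2^{-N-l},
\]
using the hypothesis of the lemma. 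So $D \cap \Tube_{2^{-N-l}}(y, K)$ lies in the one-dimensional preimage of an interval of length $6 \cdot 2^{-N-l}$ under $P^{V_{m_0+2}(\bi_{+})}$. Such an interval is covered by $3$ intervals of length $2 \cdot 2^{-N-l}$, that is, by $3$ tubes $\Tube_{2^{-N-l}}(y_i, \bi_{+})$.

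\emph{Step 3: conclude.} Applying \eqref{eq good atom tube} to each of the three tubes gives mass $< 3\eta \le 4\eta$. This is the required bound, so $\bj_{+}$ is $4\eta$-good for $(D, l)$. The slack between $3$ and $4$ absorbs any constant lost if the estimate in Step 2 is done with slightly cruder constants, for instance from the choice of base point.

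\emph{Main obstacle.} The step needing the most care is Step 1, namely that the scale $\lambda$ exactly converts balls in $J^{\perp}$ into tubes of width $2^{-N-l}$ around $K$. This requires checking two things. First, $P^{J}|_{V_{m_0+1}(\bi_{+})}$ has rank one, because $J$ is a hyperplane and $K$ is a hyperplane of $V_{m_0+1}(\bi_{+})$. Second, $\lambda$ is the norm of this rank-one map and is attained on $K^{\perp} \cap V_{m_0+1}(\bi_{+})$, so that $\|P^{J} w\| = \lambda \, d(w, K)$ for $w \in V_{m_0+1}(\bi_{+})$.
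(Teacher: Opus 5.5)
Your proof is correct and follows the paper's argument essentially verbatim. You rewrite \eqref{eq good sequence} via $\|P^{J} w\| = \lambda\, d(w,K)$ on $V_{m_0+1}(\bi_{+})$ as a bound on $K$-tubes of width $2^{-N-l}$. You then use the hypothesis on $\|P^{K} - P^{V_{m_0+2}(\bi_{+})}\|$ together with $\diam(D) \le \sqrt{d}\, 2^{-N}$ to place $D \cap \widehat\Tube_{2^{-N-l}}(y,K)$ inside $\widehat\Tube_{3 \cdot 2^{-N-l}}(y_0, V_{m_0+2}(\bi_{+}))$.

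One small correction: three open tubes of width $2^{-N-l}$ cannot cover an open tube of width $3\cdot 2^{-N-l}$, since they miss two parallel hyperplanes, so four are needed. This is exactly where the constant $4\eta$ in the statement comes from, as your slack remark anticipates.
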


		The following lemma, whose proof is also given in \cref{app linear algebra}, pulls the hyperplane $H_{U^{m-1}(x'), \bi'_{+}}(\bj'_{+})$ back from time $T_n$ to time $0$, where it is close to $V_{m_0+2}(\bi_{+})$.
		
		\begin{lemma}\label{lem kernel pullback}
			Let $n \geq 1$, $t := T_n$, and $\eps > 0$. Let $(x, \bi_{+}) \in \Xi$ and $\bj_{+} \in \cC_n(\bi_{+})$. Let $(x', \bi'_{+}, \bj'_{+})$ be as in \eqref{eq x'bi'bj'}, and let $M = Df^{t}_{\bi_{+}} = Df^{t}_{\bj_{+}}$ be the map \eqref{eq M def}. Assume that
			\enmt
			\item[(a)] $\big\| M |_{V_{m_0+2}(\bi_{+})} \big\| \leq e^{t(\chi_{m_0+2} + \eps)}$ and $\| M v' \| \geq e^{t(\chi_{m_0+1} - \eps)} \| v' \|$ for $v' \in E_{m_0+1}(x, \bi_{+})$;
			\item[(b)] $H' = H_{U^{m-1}(x'), \bi'_{+}}(\bj'_{+})$ defined in \eqref{eq J def} is a hyperplane of $V_{m+1}(\bi'_{+})$, and
			\ary \label{equ:hum-1}
			\sin \measuredangle\big( H', \, E_{m_0+1}(x', \bi'_{+}) \big) \ > \ e^{-\eps n} .
			\eary
			\eenmt
			Then $K = K(x, \bi_{+}, \bj_{+})$ defined in \eqref{eqn:lambda norm} 
			is a hyperplane of $V_{m_0+1}(\bi_{+})$, and
			\ary \label{eq kernel close}
			d\big( V_{m_0+2}(\bi_{+}), \, K \big) \ \leq \ e^{\eps n} \, e^{- t (\chi_{m_0+1} - \chi_{m_0+2} - 2 \eps)} .
			\eary
		\end{lemma}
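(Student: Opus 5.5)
We pull everything back through $M^{-1}$, using the equivariance \eqref{eq M equivariance}. Write $V := V_{m_0+1}(\bi_{+})$ and $V' := MV = V_{m_0+1}(\bi'_{+})$. By \eqref{eq H pullback}, $H_{U^{m-1}(x),\bi_+}(\bj_+) = M^{-1}H'$. Since $J' = MJ$, we have $M^{-1}(V'\cap J') = V\cap J = K$.

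\emph{Step 1: $K$ is a hyperplane of $V$.} By (b), $H'$ is a hyperplane of $V_{m+1}(\bi'_{+})$. Assumption \eqref{equ:hum-1} shows that the line $E_{m_0+1}(x',\bi'_{+})$ is not contained in $H'$. The subspace $V'$ sits inside $V_{m+1}(\bi'_{+})$ and contains $E_{m_0+1}(x',\bi'_{+})$, so $V'\not\subset H'$. Hence $V'\cap H' = V'\cap J'$ is a hyperplane of $V'$, and pulling back by $M^{-1}$ shows that $K$ is a hyperplane of $V$.

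\emph{Step 2: a transversal description of $K'$.} We use the splitting
\[ V = E_{m_0+1}(x,\bi_{+}) \oplus V_{m_0+2}(\bi_{+}), \]
which $M$ carries to $V' = E' \oplus V_{m_0+2}(\bi'_{+})$ with $E' = E_{m_0+1}(x',\bi'_+)$. Set $K' := V'\cap H'$. Because $K'$ is a hyperplane of $V'$ not containing $E'$, each unit $u'\in V_{m_0+2}(\bi'_{+})$ can be written uniquely as $u' = k' + s e'$ with $k'\in K'$, $e'\in E'$ a unit vector, and $s\in\R$. The angle condition \eqref{equ:hum-1} bounds the coefficient: $|s|\lesssim e^{\eps n}$, since $\sin\measuredangle(K',E')\geq \sin\measuredangle(H',E') > e^{-\eps n}$. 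Checking this elementary projection bound is the main technical point. The intended argument is that $s e'$ is the component of $u'$ along $E'$ in the decomposition $V' = K'\oplus E'$, whose projection norm is at most $1/\sin\measuredangle(K',E')$.

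\emph{Step 3: pull back and bound the distance.} Given a unit vector $u\in V_{m_0+2}(\bi_{+})$, put $u' = Mu$. By (a),
\[ \|u'\|\leq e^{t(\chi_{m_0+2}+\eps)}. \]
Applying Step 2 to $u'/\|u'\|$ and pulling back gives
\[ u = M^{-1}k' + s\|u'\|\,M^{-1}e', \qquad M^{-1}k'\in K. \]
Here $M^{-1}e'$ lies in $E_{m_0+1}(x,\bi_+)$, and (a) gives
\[ \|M^{-1}e'\|\leq e^{-t(\chi_{m_0+1}-\eps)}. \]
Combining these bounds,
\[ \dist(u,K) \leq e^{\eps n}\, e^{t(\chi_{m_0+2}+\eps)}\, e^{-t(\chi_{m_0+1}-\eps)}, \]
possibly up to a harmless constant that can be absorbed by adjusting $\eps$. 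This is exactly \eqref{eq kernel close}. Since $\dim V_{m_0+2}(\bi_+) = \dim K$, taking the supremum over unit $u$ bounds $\bd(V_{m_0+2}(\bi_{+}),K)$, as required.
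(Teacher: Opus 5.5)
Your proposal is correct and follows essentially the paper's argument. The paper writes $K'$ as the graph of a map $\phi\colon V_{m_0+2}(\bi'_{+})\to E'$ with $\|\phi\|\le 1/\sin\measuredangle(E',K') < e^{\eps n}$, then pulls it back to $\psi=(M|_{E_{m_0+1}(x,\bi_{+})})^{-1}\circ\phi\circ M|_{V_{m_0+2}(\bi_{+})}$; this is exactly your vector-by-vector decomposition $u'=k'+se'$ followed by hypothesis (a). No constant is lost, because $|s| = d(u',K')/d(e',K') \le 1/\sin\measuredangle(E',K') < e^{\eps n}$ exactly, so the hedge ``up to a harmless constant'' is unnecessary.
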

		
		We use the following result on the single scale information on the projected measure.
		
		\begin{prop} \label{prop fiberwisedimatsinglescale}
			Assume that $C_0 \gg_{\cF} 1$.
			There are $\eps_0 = \eps_0(\cF) > 0$ and $ C_1 =  C_1(\cF) > 0$ such that the following holds.
			Let $\eta > 0$, $\eps \in (0, \eps_0)$, and let $l \geq 1$ be an integer.
			For every integer $n \gg_{\cF,   \eta, \eps, l} 1$, there exists a set $\Gamma_n \subset \Lambda \times \Sigma_{+}$ with $(\mu \otimes \nu_{+})(\Gamma_n) > 1 - \eta^{9}$ such that the following holds. For every $(x, \bi_{+}) \in \Gamma_n$, there exists a set $B_n(x, \bi_{+}) \subset \cC_n(\bi_{+})$ with $\nu_{+}^{\cC_n(\bi_{+})}\big( B_n(x, \bi_{+}) \big) \geq 1 - \eta^{9}$ and the following two properties hold for every $\bj_{+} \in B_n(x, \bi_{+})$:
			\enmt
			\item for every $0 \leq k<n$ and every atom $D$ of $\cD_{m+1, \bi_{+}, C_0 n + k}$ contained in $\cD_{m+1, \bi_{+}, C_0 n}(x)$, if $D$ is $\eta^{8}$-good at level $l$, then $\bj_{+}$ is $\eta^{4}$-good for $(D, l)$ with respect to $(x,\bi_+)$;
			\item $\lambda(x, \bi_{+}, \bj_{+}) \geq 2^{-n} e^{-   C_1   \eps n}$ with $\lambda(x,\bi_+,\bj_+)$ given in \eqref{eqn:lambda norm}. 
			\eenmt
		\end{prop}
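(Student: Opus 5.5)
The plan is to reduce both properties to statements about the pulled-back hyperplane $K = K(x,\bi_{+},\bj_{+})$ and then use \cref{lem kernel pullback} together with \cref{lem slab transfer}. First I build $\Gamma_n$ as an intersection of three sets. The first is a set of $(x,\bi_{+})$ on which the Oseledets growth estimates of hypothesis (a) of \cref{lem kernel pullback} hold at time $t=T_n$; it has large measure by \cref{thm Oseledets} and Egorov. The second is the set $\Gamma_0(n)$ of \cref{lem NCcondition}, with parameter $\eta^{9}/4$ and $q=m_0+1$. This yields $\delta=\delta(\cF,\eta)$ such that, for most $\bj_{+}\in\cC_n(\bi_{+})$, we have $\sin\measuredangle(H',E_{m_0+1}(x',\bi'_{+}))>\delta>e^{-\eps n}$ once $n$ is large. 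The third is a set where the angles between $U^{m-1}(x)$, $V_{m+1}(\bi_{+})$ and $E_{m_0+1}(x,\bi_{+})$ are at least $e^{-\eps n}$, obtained from \cref{thm Oseledets}(5) and Egorov. I take $B_n(x,\bi_{+})$ to be the set of $\bj_{+}$ satisfying the ONC estimate, together with the a.e. genericity from \cref{cor generic hyperplane ae}. Its $\nu_{+}^{\cC_n(\bi_{+})}$-measure is controlled through \eqref{eqn:sigma nu}.

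For property (1), fix such a $\bj_{+}$. \cref{lem kernel pullback} shows that $K$ is a hyperplane of $V_{m_0+1}(\bi_{+})$ with $d(V_{m_0+2}(\bi_{+}),K)\le e^{\eps n}e^{-t(\chi_{m_0+1}-\chi_{m_0+2}-2\eps)}$. Since $t=T_n\asymp n\log 2/(\chi_m-\chi_{m_0+1})$, this bound is $2^{-c'n}$ for some $c'=c'(\cF)>0$, provided $\eps<\eps_0(\cF)$. Two hyperplanes of $V_{m_0+1}(\bi_{+})$ at distance $r$ have projections $P^K$ and $P^{V_{m_0+2}}$ differing by $O(r)$ in norm when restricted to $V_{m_0+1}(\bi_{+})$, which is where the measure lives. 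So for $n\gg_l 1$ we get $\|P^K-P^{V_{m_0+2}(\bi_{+})}\|\le 2^{-l}/\sqrt d$. \cref{lem slab transfer} then says that every $\eta^{8}$-good atom $D$ makes $\bj_{+}$ $4\eta^{8}$-good, hence $\eta^{4}$-good, for $(D,l)$. This holds uniformly in $k$ and in $D$, because the hypothesis of \cref{lem slab transfer} involves only the subspaces.

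For property (2), note that $P^J$ restricted to $V_{m_0+1}(\bi_{+})$ has kernel $K$. Hence $\lambda$ equals $\|P^J e\|$, where $e$ is a unit vector in $V_{m_0+1}(\bi_{+})\cap K^{\perp}$, i.e. $\lambda=\sin\measuredangle(e,J)$. I would estimate this through the time-$t$ push-forward. The identity $MJ=J'$ gives $\sin\measuredangle(e,J)\gtrsim \|Me\|^{-1}\|M^{-1}|\|^{-1}\cdot$ (angles at time $t$). More concretely, decompose $e$ along $E_{m_0+1}(x,\bi_{+})$ and $K$. At time $t$ the $E_{m_0+1}$-component is separated from $J'$ by the ONC angle $\delta$ together with the non-degeneracy of $U^{m-1}(x')+V_{m+1}(\bj'_{+})$. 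Pulling back loses at most the ratio of the expansion rates of $M$ between $J$-directions (at most $e^{t(\chi_m+\eps)}$, the slowest contraction among $U^{m-1}$ and the relevant part of $V_{m+1}(\bj_{+})$) and $E_{m_0+1}$ (at least $e^{t(\chi_{m_0+1}-\eps)}$). This gives $\lambda\gtrsim \delta\, e^{-\eps n}e^{-t(\chi_m-\chi_{m_0+1}+2\eps)}\ge 2^{-n}e^{-C_1\eps n}$, by the choice of $T_n$.

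I expect this last step to be the main obstacle. The directions of $J$ are not Oseledets directions for $\bi_{+}$, so bounding $\|M^{-1}\|$ on $J'$ and controlling the angle at time $0$ requires careful linear algebra. I would first try to write $J=U^{m-1}(x)\oplus V_{m+1}(\bj_{+})$ and use \eqref{eq M equivariance} for each summand separately, with the angle bounds from the third set absorbing the constants.
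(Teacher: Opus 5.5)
Your argument for property (1) is the paper's. You get hypotheses (a) and (b) of \cref{lem kernel pullback} on a large set, deduce $d(V_{m_0+2}(\bi_{+}),K)\le 2^{-c'n}\le 2^{-l}/\sqrt d$, and conclude with \cref{lem slab transfer}. In (2), the reduction to $\lambda\ge |P^J e|$ for a unit $e\in E_{m_0+1}(x,\bi_{+})$ and the target exponent $e^{-t(\chi_m-\chi_{m_0+1})}$ are also right. The gap is the pull-back step, which you yourself flag as unresolved. As you describe it, it would not work.

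The distortion of the distance to $J$ under $M$ is not governed by how $M$ contracts vectors of $J$. By \eqref{eq proj identity} it is governed by the normal Jacobian:
\[
d(e,J)=\frac{\mathrm{J}_J(M)}{|\det M|}\,|Me|\,\sin\measuredangle(Me,J') .
\]
Note that $|Me|$ appears in the numerator, not as $\|Me\|^{-1}$.

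Your justification of the factor $e^{t(\chi_m+\eps)}$ as ``the slowest contraction among $U^{m-1}$ and the relevant part of $V_{m+1}(\bj_{+})$'' is wrong. $U^{m-1}(x)\subset J$ contains $E_1$, which is contracted only like $e^{t\chi_1}$. An operator-norm argument, $d(Me,J')\le\|M\|\,d(e,J)$, then gives only $\lambda\gtrsim 2^{-n(\chi_1-\chi_{m_0+1})/(\chi_m-\chi_{m_0+1})}$, which is too weak when $m\ge 2$.

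The exponent $\chi_m$ belongs to the line complementary to $J=U^{m-1}(x)\oplus V_{m+1}(\bj_{+})$, namely $L=E_m(x,\bj_{+})$. The paper applies \cref{lem lowerboundforpojection} with this $L$. For $0\neq w\in L$ this gives
\[
\frac{|\det M|}{\mathrm{J}_J(M)}\le \frac{|Mw|}{|w|}\cdot\frac{1}{\sin\measuredangle(L,J)} .
\]
This needs two conditions:
\begin{enumerate}
\item[(i)] $|Mw|\le e^{t(\chi_m+\eps)}|w|$ on $E_m(x,\bj_{+})$;
\item[(ii)] $\sin\measuredangle(E_m(x,\bj_{+}),J)\ge e^{-\eps n}$ (the paper also records the time-$t$ analogue).
\end{enumerate}
To pass from the ONC angle with $H'$ to a lower bound on $d(e',J')$, you need a third condition. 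Write $e'=h+u$ with $h\in H'\subset J'$ and $u\in\ell'$, where $\ell'=V_{m+1}(\bi'_{+})\cap(H')^{\perp}$. Then you need
\begin{enumerate}
\item[(iii)] a quantitative lower bound on $\sin\measuredangle(\ell',J')$.
\end{enumerate}
Non-degeneracy of $U^{m-1}(x')+V_{m+1}(\bj'_{+})$ alone does not give this.

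All three conditions depend on $\bj_{+}$ or $\bj'_{+}$. They must therefore be built into $B_n(x,\bi_{+})$ through the triple measure $\mu\otimes\zeta_n$, Fubini and Markov, as the paper does with $\widetilde\Gamma_3(n)$. Your third set only constrains $\bi_{+}$-angles at time $0$, which do not enter this estimate. The a.e.\ genericity from \cref{cor generic hyperplane ae} gives no quantitative bound. Once (i)--(iii) are added to $B_n$, your sketch becomes the paper's proof.
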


		\begin{proof}

			If $m_0 = d-1$, then $\chi_{m_0+2}$, $V_{m_0+2}(\bi_{+})$ and $\kappa$ are undefined; we adopt the conventions ($\chi_{d+1} = \chi_d-1$, $V_{d+1} = \{0\}$).

			Now assume that $m_0 < d -1$.
			Put
			\begin{equation}
				\label{eqn:kappa c1 epsilon0}
				\kappa := \frac{ \chi_{m_0+1} - \chi_{m_0+2} }{ \chi_m - \chi_{m_0+1} } > 0 ,
				\quad
				c_1 := \frac{2}{\chi_m - \chi_{m_0+1}} + \frac{1}{\log 2} ,
				\quad
				\eps_0 := \min \Big\{ \frac{ \chi_{m_0+1} - \chi_{m_0+2} }{2}, \ \frac{\kappa}{2 c_1} \Big\}.
			\end{equation}
			These constants depend on $\cF$ only. Throughout the proof, $\eps \in (0, \eps_0)$ is the number given in the statement.

			We fix an arbitrary $\eta > 0$.
			We let $n \gg_{\cF,   \eta} 1$ be a large integer.
			
			Recall that $\Xi$ has full $(\mu \otimes \nu_{+})$-measure. Since the marginal of $\mu \otimes \zeta_n$ on $(x, \bj_{+})$ is $\mu \otimes \nu_{+}$, the triples with $(x, \bj_{+}) \in \Xi$ have full $(\mu \otimes \zeta_n)$-measure.
			
			Throughout the proof, $t = T_n$ and $(x', \bi'_{+}, \bj'_{+})$ are as in \eqref{eq x'bi'bj'}.
			We need a second set of triples.
			Let $\widetilde\Gamma_3(n) \subset \Lambda \times \Sigma_{+} \times \Sigma_{+}$ be the set of $(x, \bi_{+}, \bj_{+})$ with $\bj_{+} \in \cC_n(\bi_{+})$ such that, with $J = V_{m+1}(\bj_{+}) + U^{m-1}(x)$, $J' = V_{m+1}(\bj'_{+}) + U^{m-1}(x') = MJ$ and $H' = V_{m+1}(\bi'_{+}) \cap J'$ as in \eqref{eq J def},
			\enmt
			\item[(i)] $\| M v \| \leq e^{t(\chi_m + \eps)} \| v \|$ for $v \in E_m(x, \bj_{+})$;
			\item[(ii)] $\measuredangle\big( E_m(x, \bj_{+}), \, J \big) > e^{-\eps n}$ and $\measuredangle\big( E_m(x', \bj'_{+}), \, J' \big) > e^{-\eps n}$;
			\item[(iii)] $\sin \measuredangle( \ell', J' ) > e^{-\eps n}$, where $\ell' := V_{m+1}(\bi'_{+}) \cap (H')^{\perp}$.
			\eenmt
			By \Cref{thm Oseledets}, each of the conditions (i) and (ii) fails on a set of $\mu \otimes \zeta_n$-measure $< \eta^{21}/400$ once $n \gg 1$.
			For (iii), $\measuredangle(\ell', J') > 0$ if and only if $H'$ is a proper hyperplane of $V_{m+1}(\bi'_{+})$, which holds a.e. by \Cref{cor generic hyperplane ae} applied to $(x', \bi'_{+}, \bj'_{+})$; by the continuity of measure, the sublevel set $\{ \sin \measuredangle(\ell', J') \leq e^{-\eps n} \}$ has measure $< \eta^{21}/400$ once $n \gg 1$.
			Hence
			\aryst
			(\mu \otimes \zeta_n) \big( \widetilde\Gamma_3(n) \big) \ > \ 1 - \eta^{21} / 100 .
			\earyst
			
			We let $\Gamma_n \subset \Lambda \times \Sigma_{+}$ be the set of $(x, \bi_{+})$ in $\Xi$ such that:
			\enmt
			\item $\big\| Df^{t}_{\bi_{+}} |_{V_{m_0+2}(\bi_{+})} \big\| \leq e^{t(\chi_{m_0+2} + \eps)}$ and $\| Df^{t}_{\bi_{+}} v' \| \geq e^{t(\chi_{m_0+1} - \eps)} \| v' \|$ for $v' \in E_{m_0+1}(x, \bi_{+})$;
			\item $(x, \bi_{+}) \in \Gamma_0(n)$, the set given by \cref{lem NCcondition} with $\eta^{10}$ in place of $\eta$; in particular, with $\delta = \delta(\cF, \eta^{10})$ from that lemma,
			\aryst
			\nu_{+}^{\cC_n(\bi_{+})} \Big\{ \bj_{+} \ : \ \measuredangle\big( H', \, E_{m_0+1}(x', \bi'_{+}) \big) > \delta \Big\} \ > \ 1 - \eta^{10},
			\earyst
			\item {the set of $\bj_{+}$ with $(x, \bj_{+}) \in \Xi$ and} $(x, \bi_{+}, \bj_{+}) \in \widetilde\Gamma_3(n)$ has $\nu_{+}^{\cC_n(\bi_{+})}$-measure at least $1 - \eta^{10}$.
			\eenmt
			We prove the proposition with this set $\Gamma_n$. We first show that for $n \gg 1$,
			\ary
			\label{eqn:Gamma n estimate}
			(\mu \otimes \nu_{+})(\Gamma_n) \ > \ 1 - 2 \eta^{10} \ > \ 1 - \eta^{9}.
			\eary
			By \Cref{thm Oseledets} and \cref{cor Oseledets restriction}, item (1) holds outside a set of $(x, \bi_{+})$ of $\mu \otimes \nu_{+}$-measure $< \eta^{11}$ once $n\gg 1$. For item (2), \cref{lem NCcondition} yields that $(\mu\otimes \nu_+)(\Gamma_0(n))>1-\eta^{10}$. For item (3), we have $$(\mu\otimes \zeta_n)\big(\widetilde\Gamma_3(n)^c\big)<\eta^{21}/100,$$ since the triples with $(x, \bj_{+}) \in \Xi$ have full measure.
			By Markov's inequality and the fact that $\mu \otimes \zeta_n$ disintegrates over $(x, \bi_{+})$ with fiber measures $\nu_{+}^{\cC_n(\bi_{+})}$, item (3) fails on a set of $\mu\otimes \nu_+$-measure $<2\eta^{11}$.   This justifies \eqref{eqn:Gamma n estimate}.
			
			Fix an arbitrary $(x, \bi_{+}) \in \Gamma_n$ and set 
			\aryst
			B_n(x, \bi_{+}):=\{\bj_{+} \in \cC_n(\bi_{+})\mid { (x, \bj_{+}) \in \Xi,}\ (x, \bi_{+}, \bj_{+}) \in \widetilde\Gamma_3(n)\,\,\text{and}\,\,\eqref{equ:hum-1}\,\, \text{holds}\}.
			\earyst
			Since $(x, \bj_{+}) \in \Xi$, we have $U^{m-1}(x) \cap V_{m+1}(\bj_{+}) = \{0\}$, and hence $J = J(x, \bj_{+})$ is a hyperplane of $\R^d$ and $\lambda(x, \bi_{+}, \bj_{+})$ is defined.
			By items (2) and (3) of the definition of $\Gamma_n$, and since $\sin \measuredangle(H', E_{m_0+1}(x', \bi'_{+})) > \delta(\cF, \eta^{10})$ implies \eqref{equ:hum-1} once $n\gg 1$ such that $e^{-\eps n} <\delta(\cF, \eta^{10})$, we have
			\aryst
			\nu_{+}^{\cC_n(\bi_{+})} (B_n(x,\bi_+))\geq 1 - 2 \eta^{10} \geq 1 - \eta^{9}.
			\earyst
			Let $K = K(x, \bi_{+}, \bj_{+})$ be as in \eqref{eqn:lambda norm}.
			Item (1) of the definition of $\Gamma_n$ is hypothesis (a) of \cref{lem kernel pullback}, and \eqref{equ:hum-1} is hypothesis (b). Hence $K$ is a hyperplane of $V_{m_0+1}(\bi_{+})$ and \eqref{eq kernel close} holds. Recall the constants $\kappa,c_1,\epsilon_0$ defined in \eqref{eqn:kappa c1 epsilon0}.
			Since $t = T_n \geq n \log 2 / (\chi_m - \chi_{m_0+1}) - 1$ and $\epsilon<\epsilon_0$, once $n \gg_{\cF, l} 1$,
			\aryst
			&&\| P^{K} - P^{V_{m_0+2}(\bi_{+})} \| \ = \  d\big( V_{m_0+2}(\bi_{+}), K \big)\\
			&\leq& e^{\chi_{m_0+1} - \chi_{m_0+2}} \, 2^{-(\kappa - c_1 \eps) n}\leq \ 2^{- \kappa n / 2} \ \leq \ \frac{ 2^{-l} }{ \sqrt{d} },
			\earyst
			which is the hypothesis of \cref{lem slab transfer}.
			Since $D$ is $\eta^{8}$-good at level $l$, \cref{lem slab transfer} shows that $\bj_{+}$ is $4 \eta^{8}$-good for $(D, l)$ with respect to $(x, \bi_{+})$, hence $\eta^{4}$-good for $(D, l)$ as $\eta \ll 1$.
			This proves the first assertion of the proposition.
			
			Let $M$, $J$, $J' = MJ$ and $H'$ be as in \eqref{eq M def} and \eqref{eq J def}, and let $\ell'$ be as in condition (iii) of $\widetilde\Gamma_3(n)$.
			Fix a unit vector $e \in E_{m_0+1}(x, \bi_{+})$ and set $e' := M e / |M e|$, a unit vector of $E_{m_0+1}(x', \bi'_{+})$.
			By definition, $\lambda = \lambda(x, \bi_{+}, \bj_{+}) \geq | P^{J} e |$.
			
			We apply \cref{lem lowerboundforpojection}, \eqref{eq proj ratio}, with this $M$ and $J$, with $L := E_m(x, \bj_{+})$, so that $ML = E_m(x', \bj'_{+})$, with $v := e$ and a nonzero $w \in L$.
			The angle hypothesis \eqref{eqn:angle bound} of the lemma holds with $\rho = e^{-\eps n}$ by condition (ii) of $\widetilde\Gamma_3(n)$, and we obtain
			\ary \label{eq lambda lower 1}
			| P^{J} e | \ \geq \ e^{-\eps n} \cdot | P^{J'} e' | \cdot \frac{ |M e| }{ |M w| / |w| } .
			\eary
			We bound the three factors.
			For the second, write $e' = h + u$ with $h \in H'$ and $u \in \ell'$. Then $d(e', H') = |u|$, and since $h \in J'$,
			\aryst
			| P^{J'} e' | \ = \ d(e', J') \ = \ d(u, J') \ = \ |u| \, \sin \measuredangle(\ell', J') \ \geq \ e^{-\eps n} \, d(e', H') \ > \ e^{-2 \eps n} ,
			\earyst
			by condition (iii) of $\widetilde\Gamma_3(n)$ and \eqref{equ:hum-1}.
			For the third, item (1) of the definition of $\Gamma_n$ gives $|M e| \geq e^{t (\chi_{m_0+1} - \eps)}$, and condition (i) of $\widetilde\Gamma_3(n)$ gives $|M w| / |w| \leq e^{t (\chi_m + \eps)}$.
			Inserting these bounds into \eqref{eq lambda lower 1} and using $t = T_n \leq \frac{n \log 2}{\chi_m-\chi_{m_0+1}}$, 
			\ary \label{eq lambda lower}
			\lambda \ \geq \ | P^{J} e | \ \geq \ e^{-3 \eps n} \, e^{- 2 \eps t} \, e^{- t (\chi_m - \chi_{m_0+1})} \ \geq \ 2^{-n} \, e^{- { C_1} \, \eps n},
			\eary
			where ${ C_1} := 3 + \frac{2 \log 2}{\chi_m-\chi_{m_0+1}} $. This proves the second assertion of the proposition and completes the proof.
		\end{proof}

		\subsection{Proof of \Cref{prop fiberwisedim}} \label{subsec proofofmainprop}
		
		The proof follows the roadmap given at the end of \S\ref{subsec setting1}.  We carry out the steps in turn.

		\subsubsection*{Step 1:}
		
		Let $(\zeta_n \in \cP(\Sigma_{+}\times \Sigma_{+}) )_{n \in \N}$ be the sequence of self-couplings of $\nu_{+}$ given in \eqref{eqn:self coupling}. 
		
		Fix $C_0, \eps, \eta > 0$. Without loss of generality, we may let $\eta \ll_{\cF } 1$.
		Let $l > 0$ be an integer, which will only be used in Step 2, and will  eventually be determined by \cref{lem goodonaverage}. To simplify notation, throughout the proof, we use  ``$n \gg 1$'' to mean $n \gg_{\cF,   C_0, \eps, \eta, l} 1$.

		Let $\Gamma_0(n)$ be given by \cref{lem NCcondition} with $\eta^{10}$ in place of $\eta$, so that $$(\mu \otimes \nu_{+})(\Gamma_0(n)) > 1 - \eta^{10}.$$
		
		Let $\widetilde\Gamma_1(n)$ be the set of $(x, \bi_{+}, \bj_{+})$ satisfying both \eqref{eq Vm+1iVm+1jclose} and \eqref{equ:angle-Vmm}.
		
		By \Cref{thm Oseledets} and by letting $n \gg 1$, there is a subset $\widetilde\Gamma_2(n) \subset \Lambda \times \Sigma_{+} \times \Sigma_{+}$ with $(\mu \otimes \zeta_n)(\widetilde\Gamma_2(n)) > 1 - \eta^{21}/100$ such that for every $(x, \bi_{+}, \bj_{+}) \in \widetilde\Gamma_2(n)$, we have
		\ary \label{eq angleseparation}
		&& \measuredangle( U^{m}(x), V_{m+1}(\bi_+)), \measuredangle( U^{m}(x), V_{m+1}(\bj_+)), \\
		&& \measuredangle( U^{m}(x'), V_{m+1}(\bi'_+)), \measuredangle( U^{m}(x'), V_{m+1}(\bj'_+)) > 2^{-\eps n}. \nonumber
		\eary
		Here $(x', \bi'_{+}, \bj'_{+})$ is given by \eqref{eq x'bi'bj'}.
		
		We let
		\ary
		\widetilde\Gamma(n):= { (\Gamma_0(n) \times \Sigma_{+}) \cap \widetilde\Gamma_1 (n) \cap \widetilde\Gamma_2 (n)}.
		\eary
		
		\textbf{Let us first prove item \eqref{itm 1P52} in \Cref{prop fiberwisedim}:} the first two displays of item \eqref{itm 1P52} are the two conditions defining $\widetilde\Gamma_1(n)$, and the third display follows from \eqref{eq angleseparation}, since $U^{m-1}(x) \subset U^{m}(x)$.
		
		We now prove that
		\ary \label{eq measurelower}
		(\mu \otimes \zeta_n)(\widetilde\Gamma (n)) > 1 - \frac{\eta}{2}.
		\eary
		By construction, and because the marginal of $\mu \otimes \zeta_n$ on $(x, \bi_{+})$ is $\mu \otimes \nu_{+}$, the $(\mu \otimes \zeta_n)$-measures of ${\Gamma_0 (n)\times \Sigma_{+}}$ and ${\widetilde\Gamma_2 (n)}$ are at least $1 - \eta^{10}$. It suffices to show that 
		\ary
		(\mu \otimes \zeta_n)(\widetilde\Gamma_1 (n)) > 1 - \eta/10.
		\eary
		
		Take a typical $(x, \bi_{+}, \bj_{+})$ with respect to $\mu \otimes \zeta_n$. By definition of $\zeta_n$, $\bj_{+} \in \cC_n(\bi_{+})$; let $M = Df^{t}_{\bi_{+}} = Df^{t}_{\bj_{+}}$ be the map \eqref{eq M def}, with $t = T_n$.
		Let $V := V_{m+1}(\bj_{+})$. We consider the following five conditions on $M$:
		\ary
		\label{eq svgap}
		\alpha_{m}( M ) &> &2^{(1-\frac{\eps }{3})n} \alpha_{m_0+1}(M),\\
		\label{eq svgap2}
		\alpha_m(M) &>& 2^{(2{c} - \frac{\eps}{3}) n} \, \alpha_{m+1}(M) , \\
		\label{equ dfjv}
		\mathrm{J}_{V}(M)&\leq& 2^{\frac{\eps n}{3}}\alpha_{m+1}(M)\cdots \alpha_d(M), \\
		\label{equ dfiv}
		\sup_{v \in V_{m_0+1}(\bi_{+}) \setminus \{0\}} \frac{\|Mv\|}{\|v \|}&\leq& 2^{\frac{\eps n}{3} }\alpha_{m_0+1}(M),\\
		\label{equ dfiv2}
		\sup_{v \in V_{m+1}(\bi_{+}) \setminus \{0\}} \frac{\| M v \|}{\| v \|} &\leq& 2^{\frac{\eps n}{3}} \, \alpha_{m+1}(M) .
		\eary
		
		We first show that the conditions imply \eqref{eq Vm+1iVm+1jclose} and \eqref{equ:angle-Vmm} of \cref{prop fiberwisedim}.  Assume \eqref{eq svgap}--\eqref{equ dfiv2}, and let $L$ be a line in $V_{m_0+1}(\bi_{+})$.
		Then \cref{lem distance-control}, applied with $m' = m_0$ and $C = 2^{\eps n / 3}$ (its hypotheses are \eqref{equ dfiv} and \eqref{equ dfjv}), together with \eqref{eq svgap} gives
		\aryst
		d\big( V_{m+1}(\bj_{+}), L \big) \ \leq \ 2^{2 \eps n / 3} \, \frac{\alpha_{m_0+1}(M)}{\alpha_m(M)} \ < \ 2^{-(1-\eps) n}.
		\earyst
		Since $L \subset V_{m_0+1}(\bi_{+})$ was an arbitrary line, taking the maximum over $L$ yields \eqref{eq Vm+1iVm+1jclose}.
		
		Likewise, for every line $L \subset V_{m+1}(\bi_{+})$, \cref{lem distance-control} applied with $V = V_{m+1}(\bj_{+})$, $m' = m$ and $C = 2^{\eps n / 3}$ (hypotheses \eqref{equ dfiv2} and \eqref{equ dfjv}) gives, by \eqref{eq svgap2},
		\aryst
		d\big( V_{m+1}(\bj_{+}), L \big) \ \leq \ 2^{2 \eps n / 3} \, \frac{\alpha_{m+1}(M)}{\alpha_m(M)} \ < \ 2^{-2cn + \eps n} ,
		\earyst
		and taking the maximum over $L$ gives \eqref{equ:angle-Vmm}.
		
		Next we show that \eqref{eq svgap}--\eqref{equ dfiv2} hold outside small sets. This is the content of \cref{lem Oseledets statistics}. Applied with $\eta / 20$ in place of $\eta$, it gives a Borel set $A \subset \Sigma_{+}$ with $\nu_{+}(A) > 1 - \eta / 20$ such that for $n \gg 1$, every $\bi_{+} \in A$ satisfies \eqref{eq svgap}, \eqref{eq svgap2}, \eqref{equ dfiv}, and \eqref{equ dfiv2} with $M = Df^{t}_{\bi_{+}}$, and every $\bj_{+} \in A$ satisfies \eqref{equ dfjv} with $M = Df^{t}_{\bj_{+}}$ and $V = V_{m+1}(\bj_{+})$ (this is \eqref{eq stat jac} of the lemma, read at $\bj_{+}$).

		Hence, if $\bi_{+} \in A$ and $\bj_{+} \in A \cap \cC_n(\bi_{+})$, then the five conditions hold for the common map $M = Df^{t}_{\bi_{+}} = Df^{t}_{\bj_{+}}$, and by the above $(x, \bi_{+}, \bj_{+}) \in \widetilde\Gamma_1(n)$ for every $x \in \Lambda$. Since $\zeta_n$ is carried by $\{ (\bi_{+}, \bj_{+}) : \bj_{+} \in \cC_n(\bi_{+}) \}$, \eqref{eq coupling estimate} with $G = \Lambda \times A$ and the same set $A$ gives, for $n \gg 1$,
		\aryst
		&&(\mu \otimes \zeta_n)(\widetilde\Gamma_1(n)) \ \geq \ (\mu \otimes \zeta_n) \big\{ (x, \bi_{+}, \bj_{+}) : \bi_{+} \in A, \ \bj_{+} \in A \big\} \\
		&\geq& \ 1 - 2 \, \nu_{+}(A^{c})> 1 - \eta / 10.
		\earyst

		\subsubsection*{Step 2:}
		Before verifying item \eqref{itm 2P52} in \Cref{prop fiberwisedim}, we need some preparation.

		Our proof relies on two notions of goodness, introduced in \S\ref{subsec toolbox}. The first is a property of an atom of $\cD_{m+1, \bi_{+}, N}$ alone, and the second involves a base point $x$ and a second sequence $\bj_{+}$ as well.
		
		We assemble the single scale information we obtained from \cref{prop fiberwisedimatsinglescale}.

		Consider the function $Q : \Lambda \times \Sigma_{+} \times \Sigma_{+} \to \R$ given by
		\aryst
		&&Q(x, \bi_{+}, \bj_{+})\\
		&=& \frac{1}{n} \sum_{k = 0}^{n - 1} 
		\sum_{\substack{D \in \cD_{m+1, \bi_{+}, C_0n+ k} \\  D \subset \cD_{m+1, \bi_{+}, C_0 n}(x) } } \mu_{x, \bi_{+}}^{\cD_{m+1, \bi_{+}, C_0 n}(x)}(D) \cdot 1 \{ \mbox{$D$ is $\eta^8$-good at level $l$} \}  \cdot  \\
		&&   1\{ \mbox{$\bj_{+}$ is $\eta^4$-good for $( D, l)$}   \}.
		\earyst 
		By \cref{prop fiberwisedimatsinglescale},   we have 
		\aryst
		&& \int Q(x, \bi_{+}, \bj_{+}) d(\mu \otimes \zeta_n)(x, \bi_{+}, \bj_{+}) \\
		&\geq& \frac{1 - \eta^9}{n} \sum_{k = 0}^{n - 1}  
		\int_{\Gamma_n} \Big (
		\sum_{\substack{D \in \cD_{m+1, \bi_{+}, C_0n+ k} \\  D \subset \cD_{ m+1,\bi_{+}, C_0 n}(x) } }   \mu_{x, \bi_{+}}^{\cD_{m+1, \bi_{+}, C_0 n}(x)}(D)  1 \{ \mbox{$D$ is $\eta^8$-good at level $l$} \} \Big ) d(\mu \otimes \nu_{+})(x, \bi_{+}).
		\earyst
		Notice that since the goodness only depends on the subcube in $\cD_{C_n+k}$ and by the property of conditional measures we have
		\aryst
		&&  \frac{1}{n} \sum_{k = 0}^{n-1} \int \Big (
		\sum_{\substack{D \in \cD_{ m+1,\bi_{+}, C_0n+ k} \\  D \subset \cD_{ m+1,\bi_{+}, C_0 n}(x) } }  \mu_{x, \bi_{+}}^{\cD_{m+1,\bi_{+}, C_0 n}(x)}(D)  1 \{ \mbox{$D$ is $\eta^8$-good at level $l$} \} \Big ) d(\mu \otimes \nu_{+})(x, \bi_{+})  \\
		&=&  \frac{1}{n} \sum_{k = 0}^{n-1} \int  
		1 \{ \mbox{$\cD_{ m+1,\bi_{+}, C_0n+ k}(x)$ is $\eta^8$-good at level $l$} \}   d(\mu \otimes \nu_{+})(x, \bi_{+}).
		\earyst
		
		We also need the following Lemma.

		\begin{lemma} \label{lem Qtospreading}
			Let $l \geq 1$ be an integer and $0 < \eta \leq 1/16$.
			Let $\tilde{x} = (x, \bi_{+}, \bj_{+})$ with $(x, \bi_{+}) \in \Xi$ be such that $\lambda = \lambda(x, \bi_{+}, \bj_{+})$ in \eqref{eqn:lambda norm} is defined and positive, and set
			$T(\tilde{x}, n) := 2^{C_0 n} / \lambda(x, \bi_{+}, \bj_{+})$.
			Let $c_d := \lfloor \log_2 (1 + \sqrt{d}) \rfloor + 1$. %
			Assume that $n \geq (c_d + 1)/\eta$ and 
			$Q(x, \bi_{+}, \bj_{+}) > 1 - \eta^{7}$. Then %
			the measure   $$(\Dil_{T(\tilde{x}, n)})_*(P^{V_{m+1}(\bj_{+}) + U^{m-1}(x)})_*  \mu_{x, \bi_{+}} ^{\cD_{m+1, \bi_{+}, C_0 n}(x)}$$ is $(n, l + c_d + 1, 3 \eta)$-ball spreading.
			
			If moreover \eqref{eq Vm+1iVm+1jclose} holds and $\lambda(x, \bi_{+}, \bj_{+}) \geq 2^{-(1+\eps) n}$, then
			\aryst
			\big| \log_2 T(\tilde{x}, n) - (C_0 + 1) n \big| \ \leq \ \eps n .
			\earyst
			
		\end{lemma}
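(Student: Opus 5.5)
The plan is to work directly with the one-dimensional measure
\[
\omega := (\Dil_{T(\tilde x,n)})_*(P^{J})_*\mu_{x,\bi_+}^{\cD_{m+1,\bi_+,C_0n}(x)}, \qquad J = V_{m+1}(\bj_+)+U^{m-1}(x).
\]
Here $J$ is a hyperplane, so $P^J$ maps onto a line. For each $0\le k<n$, decompose $\omega=\sum_D \mu^{\cD_{m+1,\bi_+,C_0n}(x)}_{x,\bi_+}(D)\,\omega_D$ over the atoms $D\in\cD_{m+1,\bi_+,C_0n+k}$ contained in $\cD_{m+1,\bi_+,C_0n}(x)$. Here $\omega_D$ denotes the dilated projection of $\mu^D_{x,\bi_+}$.

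First I record two scale facts. The measure $\mu_{x,\bi_+}$ lives on $x+V_{m_0+1}(\bi_+)$, and $P^J$ contracts unit vectors of $V_{m_0+1}(\bi_+)$ by at most $\lambda$. Hence $\diam\supp\omega_D\le \sqrt d\,2^{-k}$, since the dilation by $2^{C_0n}/\lambda$ exactly normalizes this. Next, if $D$ is $\eta^8$-good and $\bj_+$ is $\eta^4$-good for $(D,l)$, then \eqref{eq good sequence} says $\omega_D(B(p,2^{-k-l}))<\eta^4$ for every $p$.

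Next, convert $Q>1-\eta^7$ into pointwise information. Call a pair $(k,D)$ bad if it fails either goodness, and let $\beta_k$ be the total $\omega$-mass of the bad atoms at level $k$. Then $\frac1n\sum_k\beta_k<\eta^7$. The key estimate is the following. Fix a good level $k$ and a point $z$, and set $k'=k-c_d$. Every atom whose image meets $B(z,2^{-k-l-1})$ has image inside $B(z,2^{-k'})$, since $\sqrt d\,2^{-k}+2^{-k-l-1}\le 2^{-k+c_d}$. Therefore
\[
\omega(B(z,2^{-k'-l-c_d-1}))\le \eta^4\,\omega(B(z,2^{-k'}))+\beta_k(z),
\]
where $\beta_k(z)$ is the bad mass carried by atoms whose images meet $B(z,2^{-k'})$. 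The spreading inequality $\omega(B(z,2^{-k'}))\le 2\,\omega(B(z,2^{-k'-l-c_d-1}))$ can then hold only if $\beta_k(z)\ge \frac14\,\omega(B(z,2^{-k'}))$, using $\eta\le1/16$. In that case the bad mass is a definite fraction of the ball.

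The main obstacle is to show that this happens for few $(k,z)$, weighted by $\omega$. I would use the Besicovitch covering argument of \cref{lem besicovich}, here on the line with intervals of radius $2^{-k'}$. It shows that for each $k$, the $\omega$-measure of the set of $z$ with $\beta_k(z)\ge\frac14\omega(B(z,2^{-k'}))$ is at most $C\beta_k$ for an absolute constant $C$, after enlarging the balls by a bounded factor. Averaging over $k$ then bounds this exceptional mass by $C\eta^7$. Markov's inequality in $k$ shows that, outside a set of $\omega$-mass $\le\eta$, the fraction of such $k$ is $\le C\eta^6\ll\eta$.

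The shift $k\mapsto k'=k-c_d$ moves the range $[0,n)$ to $[1,n]$ with an error of at most $c_d+1$ indices. This error costs a fraction $(c_d+1)/n\le\eta$. Together with the previous two contributions, the total is $\le3\eta$, which gives $(n,l+c_d+1,3\eta)$-ball spreading.

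For the second statement, \eqref{eq Vm+1iVm+1jclose} places every unit vector of $V_{m_0+1}(\bi_+)$ within $2^{-(1-\eps)n}$ of $V_{m+1}(\bj_+)\subset J$. Hence $\lambda\le 2^{-(1-\eps)n}$. Combined with the assumed lower bound $\lambda\ge 2^{-(1+\eps)n}$, this gives $|\log_2 T-(C_0+1)n|=|\log_2\lambda^{-1}-n|\le\eps n$.
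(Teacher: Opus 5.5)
Your argument is correct and follows essentially the paper's route. At each level $k$ you split the projected measure into good and bad atoms, and use the diameter bound $\sqrt{d}\,2^{-k}$ to trap the good atoms meeting a small ball inside a ball $2^{c_d+l+1}$ times larger. You then control the bad mass by a Besicovitch argument on the line (the paper's \cref{lem:ball-compare}) and conclude by averaging over scales with Markov's inequality. The differences are cosmetic: you bound the exceptional set by $C\beta_k$ at every level instead of first using Markov to isolate levels with bad mass at most $\eta^4$, and you account explicitly for the index shift $k\mapsto k-c_d$, which the paper leaves implicit in the hypothesis $n\geq (c_d+1)/\eta$.
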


		We can now complete the proof of \cref{prop fiberwisedim}.
		In what follows, \cref{prop fiberwisedimatsinglescale} is applied with $\eps \log 2 / { C_1}$ in place of $\eps$, where ${ C_1} = { C_1}(\cF)$ is the constant in its item (2). Let $\Gamma_n$ and $B_n(x,\bi_+)$ be the sets given in this proposition. Its item (2) then reads $\lambda(x, \bi_{+}, \bj_{+}) \geq 2^{-(1+\eps) n}$, and its item (1) is unchanged.
		We have $(\mu \otimes \nu_{+})(\Gamma_n) > 1 - \eta^{9}$ for every $n \gg_{\cF,   \eta} 1$. Together with \cref{lem goodonaverage}, we have 
		\aryst
		\int Q(x, \bi_{+}, \bj_{+}) d(\mu \otimes \zeta_n)(x, \bi_{+}, \bj_{+}) \geq (1 - 2\eta^9)( 1 - \eta^{9})   > 1 - 3 \eta^{9}.
		\earyst
		
		Let
		\aryst
		\widetilde\Gamma_4(n):=\{(x,\bi_+,\bj_+): (x,\bi_+)\in \Gamma_n, \ \bj_+\in B_n(x,\bi_+),\ Q(x,\bi_+,\bj_+)>1-\eta^7\}. 
		\earyst
		The Markov's inequality gives $$(\mu \otimes \zeta_n) \{ Q \leq 1 - \eta^7 \} \leq \eta^{-7} \int (1 - Q) \, d(\mu \otimes \zeta_n) < 3 \eta^2.$$ And the set of triples with $(x, \bi_{+}) \in \Gamma_n$ and $\bj_{+} \in B_n(x, \bi_{+})$ has $(\mu \otimes \zeta_n)$-measure at least $(1 - \eta^9)^2$. Hence $$(\mu \otimes \zeta_n)(\widetilde\Gamma_4(n)) > 1 - 4 \eta^2.$$
		We replace the set $\widetilde\Gamma(n)$ of Step 1 by $\widetilde\Gamma(n) \cap \widetilde\Gamma_4(n)$. By \eqref{eq measurelower}, the new set has $(\mu \otimes \zeta_n)$-measure $> 1 - \eta/2 - 4 \eta^2 > 1 - \eta$, as $\eta \ll 1$.

		Let $\tilde{x} = (x, \bi_{+}, \bj_{+}) \in \widetilde\Gamma(n)$; we verify the hypotheses of \cref{lem Qtospreading} at $\tilde{x}$. By item (2) of \cref{prop fiberwisedimatsinglescale}, $\lambda = \lambda(x, \bi_{+}, \bj_{+}) \geq 2^{-(1+\eps) n} > 0$, so that $T(\tilde{x}, n) = 2^{C_0 n}/\lambda$ is defined, and \eqref{eq Vm+1iVm+1jclose} holds since $\tilde{x} \in \widetilde\Gamma_1(n)$.
		
		Steps 1 and 2 remain valid with $\eta$ replaced by $\eta/3$, and then $Q(\tilde{x}) > 1 - (\eta/3)^7$, because $\tilde{x} \in \widetilde\Gamma_4(n)$, while $(\mu \otimes \zeta_n)(\widetilde\Gamma(n)) > 1 - \eta$. Taking also $l > c_d + 1$ and $n \geq 3(c_d + 1)/\eta$, which costs nothing, \cref{lem Qtospreading} applies with $\eta/3$ and $l - c_d - 1$ in place of $\eta$ and $l$, and gives exactly the $(n, l, \eta)$-ball spreading asserted in item \eqref{itm 2P52}. As $\tilde{x} \in \widetilde\Gamma(n)$ was arbitrary, this completes the proof.

		In the next two sections, we will give the proofs of \cref{lem goodonaverage} and \cref{lem Qtospreading}. This will complete the proof of \cref{prop fiberwisedim}.

		\subsection{Most atoms are good: proof of \cref{lem goodonaverage}} \label{subsec proofsoflemma511}

		We use the tubes $\Tube_r(y, \bi_{+}) = \widehat\Tube_r(y, \bi_{+}) \cap \Lambda$ of \eqref{eq slab def} around the direction $V_{m_0+2}(\bi_{+})$, and the reformulation \eqref{eq good atom tube} of goodness in terms of them.

		Note that \eqref{eq exactdim} states that for $(\mu \otimes \nu_{+})$-a.e. $(x, \bi_{+})$, we have 
		\ary 
		\lim_{r \to 0} \frac{ \log \mu_{x, \bi_{+}}( \Tube_r(x, \bi_{+})  ) }{ \log r } = D_{m_0 + 2} - D_{m_0 + 1} > 0.
		\eary
		We let $n \gg_{\cF,  \eta} 1$ be a large integer.

		\smallskip

		\noindent{\bf $\bullet$ Stopping time.}
		For $x \in \Lambda$ and $t \geq 0$, write $x[t]\bi_+ = (i_{-t}(x),\dots,i_{-1}(x),i_0, i_1, \dots)\in \Sigma_{+}$ so that $f_x^{-t}:= (f_{x[t]})^{-1}$ is an affine map (see \eqref{eqn:f iterate}). We denote by $L_{x,t}$ its linear part. By the equivariance of the Oseledets flags (\Cref{thm Oseledets}),
		\ary
		\label{eqn:L equivariant}
		L_{x,t} \, V_{k}(\bi_{+}) = V_{k}(x[t]\bi_{+}) \qquad \mbox{for every } k .
		\eary
		We define a sequence of functions $( \lambda_t :  \Lambda  \times \Sigma_{+} \to \R )_{t \geq 0}$ by
		\ary \label{eq scaling}
		2^{\lambda_{t}(x, \bi_{+})} := \frac{ \mathrm{J}_{V_{m_0+1}(\bi_{+})}(L_{x,t}) }{ \mathrm{J}_{V_{m_0+2}(\bi_{+})}(L_{x,t}) } ,
		\eary
		the ratio of the Jacobians of $L_{x,t}$ on $V_{m_0+1}(\bi_{+})$ and on $V_{m_0+2}(\bi_{+})$. It is the expansion rate of the width of tubes:
		\ary \label{eq tube identity}
		f_{x}^{-t} \big( \widehat\Tube_{r}(x, \bi_{+}) \big) = \widehat\Tube_{r 2^{\lambda_{t}(x, \bi_{+})}} \big( f_{x}^{-t}(x), x[t]\bi_{+} \big), \qquad r > 0 .
		\eary
		Since we have the equivariant relation \eqref{eqn:L equivariant}, applying
		\Cref{lem base times height} to $L = L_{x,t}$, $V = V_{m_0+1}(\bi_{+})$ and $W = V_{m_0+2}(\bi_{+})$, we obtain $$d(L_{x,t} v, V_{m_0+2}(x[t]\bi_{+})) = 2^{\lambda_t(x, \bi_{+})} \, d(v, V_{m_0+2}(\bi_{+}))$$ for $v \in V_{m_0+1}(\bi_{+})$. Since $f_x^{-t}(x + v) - f_x^{-t}(x) = L_{x,t} v$, this gives \eqref{eq tube identity}.

		The increments of $\lambda_t$ are bounded above and below. Let $M_i$ denote the linear part of $f_i$, and put
		\aryst
		C_f := \max_{i \in I} \log_2 \| M_i^{-1} \| = \max_{i \in I} \log_2 \frac{1}{\alpha_d(M_i)} , \qquad c_- := \min_{i \in I} \log_2 \frac{1}{\| M_i \|} > 0,
		\earyst
		so that $c_- > 0$ because the maps $f_i$ are contractions. By the chain rule for Jacobians on subspaces and \Cref{lem base times height},
		\aryst
		2^{\lambda_{t+1}(x, \bi_{+}) - \lambda_t(x, \bi_{+})} = \frac{ \mathrm{J}_{V'_t}({M'}) }{ \mathrm{J}_{W'_t}({M'}) } = d({M'} u', {M'} W'_t) ,
		\earyst
		where ${M'} := M_{i_{{-t-1}}(x)}^{-1}$, $V'_t := L_{x,t} V_{m_0+1}(\bi_{+})$, $W'_t := L_{x,t} V_{m_0+2}(\bi_{+})$, and
		$u'$ is a unit vector of $V'_t \cap (W'_t)^{\perp}$. We obtain
		\ary \label{eq lambda increments}
		\lambda_0(x, \bi_{+}) = 0, \qquad c_- \ \leq \ \lambda_{t+1}(x, \bi_{+}) - \lambda_t(x, \bi_{+}) \ \leq \ C_f \qquad (t \geq 0) .
		\eary

		Let $l = l(\cF, \eta) > 4 C_f + 4$ be a large integer. We let $\Gamma_0 = \Gamma_0(l)$ be the set of $(x, \bi_{+}) \in \Lambda \times \Sigma_{+}$ such that
		\enmt
		\item[(G1)] $\mu_{x, \bi_{+}} \big( \Tube_{2^{-l/4}}(x, \bi_{+}) \big) < \eta^{10}$;
		\item[(G2)] for every $t \geq l$ and every unit vector $e \in E_{m_0+1}(x, \bi_{+})$,
		\aryst
		\big\| Df^{t}_{\bi_{+}} |_{V_{m_0+2}(\bi_{+})} \big\| \ \leq \ | Df^{t}_{\bi_{+}} e | ;
		\earyst
		\item[(G3)] $\sin \measuredangle \big( E_{m_0+1}(x, \bi_{+}), V_{m_0+2}(\bi_{+}) \big) > 3 \cdot 2^{-l/4}$.
		\eenmt
		Condition (G1) will be used in the estimate for Type 1 below, and (G2) and (G3) will be used in the estimate for Type 2.
		
		\Cref{thm Oseledets} and \eqref{eq exactdim} imply that, provided that $l \gg_{\cF, \eta} 1$, each of (G1)--(G3) fails on a set of measure $< \eta^{30}/3$.  This gives
		\ary \label{eq Gamma0big}
		(\mu \otimes \nu_{+})(\Gamma_0) > 1 - \eta^{30}.
		\eary

		We define a sequence of stopping times $( \tau_k :    \Lambda \times \Sigma_{+} \to \N )_{k \geq 0}$ with respect to the natural filtration by 
		\aryst
		\tau_k( x, \bi_{+}  ) = \inf \{ t \in \Z_{+} :    \lambda_t( x, \bi_{+}  ) > C_0 n +  k + l/2 \},
		\earyst
		This is the stopping time such that the image under $f_x^{-t}$ of the band $\widehat\Tube_{2^{-l-C_0n-k}}(x,\bi_{+})$ has width at least $2^{-l/2}$, by \eqref{eq tube identity}.
		
		By \eqref{eq lambda increments}, $\tau_k$ is finite and
		\begin{equation}
			\label{eq tau bounds}
			\frac{ C_0 n + k + l/2 }{ C_f } \ \leq \ \tau_k(x, \bi_{+}) , \qquad C_0 n + k + l/2 \ < \ \lambda_{\tau_k}(x, \bi_{+}) \ \leq \ C_0 n + k + l/2 + C_f .
		\end{equation}
		From now on, we assume $n \geq C_f \, l / C_0$. This gives 
		$\tau_k(x, \bi_{+}) \geq l$ for every $k \geq 0$ and every $(x, \bi_{+})$, and hence condition (G2) is available at time $t = \tau_k(x, \bi_{+})$.

		\noindent{\bf $\bullet$ Completing the proof.}

		Fix $0 \leq k < n$, and write $N := C_0 n + k$ throughout; the notation $\cD_{\bi_+,N}(x) := \cD_{m+1,\bi_{+}, C_0 n + k}(x)$ is also used below.
		For each $( x, \bi_{+} ) \in  \Lambda \times \Sigma_{+}$, we set 
		\aryst
		T_{n, k}(   x, \bi_{+} ) :=  f_{x}^{- \tau_k( x, \bi_{+})} ( \cD_{ \bi_{+}, N}(x)).
		\earyst
		We have    
		\aryst
		T_{n,k}(x, \bi_{+}) \cap \Lambda \ = \ f_{x}^{- \tau_k( x, \bi_{+})} \Big( \cD_{ \bi_{+}, N}(x) \cap f_{x[\tau_k(x, \bi_{+})]}(\Lambda) \Big) .
		\earyst
		
		We say that $T_{n, k}(  x, \bi_{+} )$ is {\it full} if
		\aryst
		\mathcal{A}_{m_0+1} \big( \sigma^{- \tau_k( x, \bi_{+})}(x,\bi_{+}) \big) \cap \Lambda \ \subset \ T_{n, k}(  x, \bi_{+} ).
		\earyst
		Here $\mathcal{A}_{m_0+1}$ is  given in Subsection \ref{subsec:partitions}.

		We claim that if $(x, \bi_{+}) \in \Gamma_0$, $\sigma^{- \tau_k(x, \bi_{+})}(x, \bi_{+}) \in \Gamma_0$, and $T_{n, k}( x, \bi_{+} )$ is not full, then
		\begin{equation} \label{eq closetoboundary}
			d \big( x, \partial \cD_{ \bi_{+}, N}(x) \big) < 2^{-l/4 - N} .
		\end{equation}
		This is \cref{lem fullness}, proved in \cref{app linear algebra}.

		We say that  $(x, \bi_{+} ) \in  \Lambda \times \Sigma_{+}$ is of
		\enmt
		\item[Type 1 for $k$:]  if $(x, \bi_{+}) \in \Gamma_0$, $\sigma^{ - \tau_k(x, \bi_{+})}(x, \bi_{+}) \in \Gamma_0$, and $T_{n, k}(  x, \bi_{+} )$ is full;
		\item[Type 2 for $k$:] if $(x, \bi_{+}) \in \Gamma_0$, $\sigma^{ - \tau_k(x, \bi_{+})}(x, \bi_{+}) \in \Gamma_0$, but   $T_{n, k}(   x, \bi_{+} )$ is not full;
		\item[Type 3 for $k$:] if $(x, \bi_{+}) \notin \Gamma_0$ or $\sigma^{ - \tau_k(x, \bi_{+})}(x, \bi_{+}) \notin \Gamma_0$.
		\eenmt
		
		For $q \in \{1, 2, 3\}$,  we denote by $\Gamma_{q, k}$ the set of points in $\Lambda \times \Sigma_{+}$ of Type $q$ for $k$.

		{\em $\bullet$ Estimates for  Type 1:}

		Let $A = A(x, \bi_{+})$ (see \eqref{eq A def}), where $(x, \bi_{+}) \in \Xi$. By \eqref{eq scaling}, $\lambda_t(y, \bi_{+})$ depends on $y$ only through $y[t]$. Hence, the event $\{ \tau_k(y, \bi_{+}) = t \}$ is determined by $y[t]$. If $t = \tau_k(y, \bi_{+})$ and $z \in \Lambda$ satisfy $z[t] = y[t]$, then $\tau_k(z, \bi_{+}) = t$. And in the family of words $\cW_k := \{ y[\tau_k(y, \bi_{+})] : y \in \Lambda \}$, no word is a proper suffix of another.
		
		By the strong separation condition \eqref{equ separation}, $f_{\bw}(\Lambda) \cap f_{\bw'}(\Lambda) = \emptyset$ unless one of the words $\bw, \bw'$ is a suffix of the other. Together with the above observation, distinct words of $\cW_k$ give disjoint cylinders.
		
		Let $y \in A \cap \Lambda$, $t := \tau_k(y, \bi_{+})$ and $\bw := y[t]$, so that $\sigma^{-t}(y, \bi_{+}) = (f_{\bw}^{-1}(y), \bw \bi_{+})$. By the equivariance of the Oseledets flags,
		\aryst
		f_{\bw} \big( \cal A_{m_0+1}(\sigma^{-t}(y, \bi_{+})) \cap \Lambda \big) \ = \ A \cap f_{\bw}(\Lambda) ,
		\earyst
		and $T_{n,k}(y, \bi_{+})$ is full if and only if  $A \cap f_{\bw}(\Lambda) \subset \cD_{\bi_+,N}(x)$.
		
		Now let
		\aryst
		\cW_k(x) &:=& \big\{ y[\tau_k(y, \bi_{+})] \ : \ y \in \cD_{\bi_+,N}(x) \cap A \cap \Lambda, \ (y, \bi_{+}) \in \Gamma_{1,k} \big\} \subset \cW_k , \\
		E_{\bw} &:=& A \cap f_{\bw}(\Lambda) \quad (\bw \in \cW_k(x)) .
		\earyst
		We have
		\ary \label{eq type1 decomposition}
		\big\{ y \in \cD_{\bi_+,N}(x) \cap A \ : \ (y, \bi_{+}) \in \Gamma_{1,k} \big\} \ \subset \ \bigsqcup_{\bw \in \cW_k(x)} E_{\bw} \ \subset \ \cD_{\bi_+,N}(x) .
		\eary
		Moreover, for $\bw \in \cW_k(x)$ and $z \in E_{\bw}$, we have $\tau_k(z, \bi_{+}) = |\bw|$, %
		and writing
		\aryst
		(z, \bi_{+})' \ := \ \sigma^{-|\bw|}(z, \bi_{+}) \ = \ (f_{\bw}^{-1}(z), \bw \bi_{+}),
		\earyst
		we obtain from the equivariance of the fiber measures under the skew product that
		\ary \label{eq fibre equivariance}
		\mu_{x, \bi_{+}}( \, \cdot \, \cap E_{\bw}) \ = \ \mu_{x, \bi_{+}}(E_{\bw}) \ (f_{\bw})_* \, \mu_{(z, \bi_{+})'} \qquad \mbox{for every } z \in E_{\bw} .
		\eary
		
		Fix $y \in A$. For every $\bw \in \cW_k(x)$ such that $E_{\bw} \cap \Tube_{2^{-N-l}}(y, \bi_{+})$ contains a point of Type 1 for $k$, fix one such point $y_{\bw}$. Then $(y_{\bw}, \bi_{+})' \in \Gamma_0$ by the definition of Type 1. Applying \eqref{eq fibre equivariance} with $z = y_{\bw}$ to the set $\Tube_{2^{-N-l}}(y,\bi_+)$ gives
		\[ \mu_{x,\bi_+} \big( E_{\bw} \cap \Tube_{2^{-N-l}}(y,\bi_+) \big) = \mu_{x,\bi_+}(E_{\bw}) \ \mu_{(y_{\bw},\bi_+)'} \big( f_{\bw}^{-1} \Tube_{2^{-N-l}}(y,\bi_+) \big) . \]
		It follows from \eqref{eq tau bounds} and the assumption $l > 4 C_f + 4$ that %
		$$2^{-l-N+1+\lambda_{|\bw|}(y_{\bw}, \bi_{+})} \le 2^{-l/2 + C_f + 1} < 2^{-l/4}.$$  Moreover, as $y_{\bw} \in \Tube_{2^{-N-l}}(y, \bi_+)$, %
		applying \eqref{eq tube identity} at $y_{\bw}$ with $t = |\bw|$, we obtain
		\[ f_{\bw}^{-1} \Tube_{2^{-N-l}}(y, \bi_+) \ \subset \ \widehat\Tube_{2^{-N-l+1+\lambda_{|\bw|}(y_{\bw}, \bi_{+})}} \big( (y_{\bw}, \bi_+)' \big) \ \subset \ \widehat\Tube_{2^{-l/4}} \big( (y_{\bw}, \bi_+)' \big) . \]
		Hence, since $(y_{\bw}, \bi_{+})' \in \Gamma_0$ satisfies (G1), %
		\[ \mu_{(y_{\bw},\bi_+)'} \big( f_{\bw}^{-1} \Tube_{2^{-N-l}}(y,\bi_+) \big) \ \leq \ \mu_{(y_{\bw},\bi_+)'} \big( \Tube_{2^{-l/4}} ( (y_{\bw},\bi_+)' ) \big) \ \leq \ \eta^{10} . \]
		Summing over $\bw \in \cW_k(x)$ and using \eqref{eq type1 decomposition} yields, for every $y \in A$,
		\begin{align}
			\label{eq type1 estimate}
			&\mu_{x,\bi_+}\big( \Tube_{2^{-l-N}}(y,\bi_+) \cap \Gamma_{1,k} \cap \cD_{\bi_+,N}(x) \big)\\
			\leq &\ \sum_{\bw \in \cW_k(x)} \mu_{x,\bi_+} \big( E_{\bw} \cap \Tube_{2^{-N-l}}(y,\bi_+) \cap \Gamma_{1,k} \big) \\
			\leq&\eta^{10} \sum_{\bw \in \cW_k(x)} \mu_{x,\bi_+}(E_{\bw}) \ \leq \ \eta^{10} \mu_{x,\bi_+}(\cD_{\bi_+,N}(x)). \nonumber
		\end{align}

		{\em $\bullet$  Estimates for  Type 2:}
		We have seen that in this case, \eqref{eq closetoboundary} holds.
		Thus 
		we obtain
		\ary \label{eq integraloftype2}
		&& 	\frac{1}{n} \sum_{k = 0}^{n-1}\int  \frac{  \mu_{x, \bi_{+}}(\cD_{ \bi_{+}, C_0n+ k }(x) \cap \Gamma_{2, k} )   }{ \mu_{x, \bi_{+}}(\cD_{ \bi_{+}, C_0n+ k }(x) ) }     d(\mu \otimes \nu_{+})(x, \bi_{+})   \\
		&\leq&  \frac{1}{n} \sum_{k = 0}^{n-1}\int \frac{\mu_{x,\bi_{+}}(  \partial_{2^{-l/4}} \cD_{ \bi_{+}, C_0n+ k}(x) )}{\mu_{x,\bi_{+}}(\cD_{ \bi_{+}, C_0n+ k}(x) )}   d(\mu \otimes \nu_{+})(x, \bi_{+}) \nonumber\\
		& =&  \frac{1}{n} \sum_{k = 0}^{n-1}\int 1_{\partial_{2^{-l/4}} \cD_{ \bi_{+}, C_0n+ k}(x) }(x,\bi_{+}) d(\mu \otimes \nu_{+})(x, \bi_{+}) \ \leq \ 9 d \cdot 2^{-l/8},\nonumber
		\eary
		where the last inequality is \eqref{eq randomdyadicpartitions} with $c_0 = 2^{-l/4} \leq 1/2$.
		
		{\em $\bullet$  Estimates for Type 3:} 
		By definition, we have  
		\ary \label{eq type31}
		&& \frac{1}{n} \sum_{k = 0}^{n-1}
		\int  \frac{  \mu_{x, \bi_{+}}(\cD_{ \bi_{+}, C_0n+ k }(x) \cap \Gamma_{3, k} )   }{ \mu_{x, \bi_{+}}(\cD_{ \bi_{+}, C_0n+ k }(x) ) }     d(\mu \otimes \nu_{+})(x, \bi_{+}) \\ 
		&=& \frac{1}{n} \sum_{k = 0}^{n-1}
		\int   1_{  \Gamma_{3, k}    }(x, \bi_{+})    d(\mu \otimes \nu_{+})(x, \bi_{+}) \nonumber  \\
		&\leq& \frac{1}{n} \sum_{k = 0}^{n-1}
		\int   1_{  \Gamma_{0}^c    }( \sigma^{ - \tau_k(x, \bi_{+})}(x, \bi_{+})   )    d(\mu \otimes \nu_{+})(x, \bi_{+}) \ + \ (\mu \otimes \nu_{+})(\Gamma_0^c) . \nonumber
		\eary
		The last term comes from the condition $(x, \bi_{+}) \notin \Gamma_0$ in the definition of Type 3 and is already of the required size by \eqref{eq Gamma0big}. It remains to bound the first.
		Our key observation is the following:  the collection of sequences $( \tau_k( x, \bi_{+}) )_{k \geq 0}$ is {\it uniformly quasi-isometric}:
		there exist $a_1 > 1, b_1 > 0$ such that for any $(x,   \bi_{+})  \in \Lambda \times \Sigma_{+}$  and integers $k_1 > k_2 \geq 0$ we have
		\aryst
		a_1^{-1}(k_1 - k_2) - b_1 < \tau_{k_1}(x, \bi_{+})  - \tau_{k_2}( x, \bi_{+}) < a_1(k_1 - k_2) + b_1.
		\earyst
		Indeed, \eqref{eq tau bounds} gives
		$$\lambda_t(x,\bi_+)-(C_0 n + k_2 + l/2+C_f) \leq \lambda_t(x,\bi_+)-\lambda_{\tau_{k_2}}(x, \bi_{+}) \ <\lambda_t(x,\bi_+)-(\ C_0 n + k_2 + l/2).$$
		By the increment formula given in \eqref{eq lambda increments}, $\lambda_t$ exceeds $C_0 n + k_1 + l/2$ after at most $(k_1 - k_2)/c_- + 1$ further steps and not before $(k_1 - k_2)/C_f - 1$ further steps. This gives the display with $a_1 := \max\{ C_f, 1/c_-,2 \}$ and $b_1 := 2$, constants depending on $\cF$ only.
		Thus, there exists $C > 1$ such that the first term on the right of \eqref{eq type31} is bounded by the
		\begin{equation}
			\label{eq integraloftype32} 
			\frac{C}{n} \sum_{t = 0}^{Cn-1}
			\int   1_{  \Gamma_{0}^c    }( \sigma^{ - t}(x, \bi_{+})   )    d(\mu \otimes \nu_{+})(x, \bi_{+})
			\leq C^2 (\mu \otimes \nu_{+})(\Gamma_0^c) < C^2  \eta^{30},
		\end{equation}
		so that
		\ary
		\eqref{eq type31} < (C^2 + 1) \eta^{30} . 
		\eary

		\textbf{We are now ready to finish the proof.}
		
		By summarizing the above discussions, we see the following. For the estimate for $\Gamma_{2,k}$, we now choose $l = l(\cF,  \eta) > 4 C_f + 4$ so large that \eqref{eq Gamma0big} holds and that $9 d \cdot 2^{-l/8} \leq \eta^{18}$. The right hand side of \eqref{eq integraloftype2} is then at most $\eta^{18}$. Suppose that
		\ary \label{eq types23 small}
		\mu_{x, \bi_{+}}\big( \cD_{ \bi_{+}, C_0n+ k }(x) \cap (\Gamma_{2, k} \cup \Gamma_{3,k}) \big) \ \leq \ \frac{ \eta^8 }{2} \, \mu_{x, \bi_{+}}\big( \cD_{ \bi_{+}, C_0n+ k }(x) \big) .
		\eary
		Then, for every $y \in A(x, \bi_{+})$, since the three types partition $\cD_{\bi_{+}, C_0n+k}(x)$, the estimate \eqref{eq type1 estimate} for Type 1 together with \eqref{eq types23 small} gives
		\aryst
		\mu^{\cD_{\bi_{+}, C_0n+k}(x)}_{x, \bi_{+}} \big( \Tube_{2^{-l-N}}(y, \bi_{+}) \big) \ \leq \ \eta^{10} + \frac{\eta^8}{2} \ < \ \eta^8 ,
		\earyst
		which implies $\cD_{\bi_{+}, C_0n+ k}(x)$ is $\eta^8$-good at level $l$ by \eqref{eq good atom} in the form \eqref{eq good atom tube}. Consequently, $\cD_{\bi_{+}, C_0n+ k}(x)$ is {\it not} $\eta^8$-good at level $l$ only if
		
		\aryst
		\mu_{x, \bi_{+}}(\cD_{ \bi_{+}, C_0n+ k }(x) \cap (\Gamma_{2, k} \cup \Gamma_{3,k}) )  >  \frac{ \eta^8 }{2} \mu_{x, \bi_{+}}(\cD_{ \bi_{+}, C_0n+ k }(x) ).
		\earyst
		In summary, we have 
		\aryst
		&& 1 \{ \mbox{ $\cD_{\bi_{+}, C_0n+ k}(x) $ is {\it not} $\eta^8$-good at level $l$} \}     \\
		&\leq& 1\Big \{  \mu_{x, \bi_{+}}(\cD_{ \bi_{+}, C_0n+ k }(x) \cap \Gamma_{2, k}  )    >  \frac{ \eta^8 }{4} \mu_{x, \bi_{+}}(\cD_{ \bi_{+}, C_0n+ k }(x) )   \Big \} \\
		& + & 1\Big \{  \mu_{x, \bi_{+}}(\cD_{ \bi_{+}, C_0n+ k }(x) \cap \Gamma_{3, k}  )    >  \frac{ \eta^8 }{4} \mu_{x, \bi_{+}}(\cD_{ \bi_{+}, C_0n+ k }(x) )   \Big \}.
		\earyst
		By Markov's inequality, \eqref{eq integraloftype2}  and \eqref{eq integraloftype32},  the integral of the last line above (average over $0\leq k\leq n-1$) with respect to $\mu \otimes \nu_{+}$ is bounded from above by
		\aryst
		\frac{4}{ \eta^8 }( (C^2 + 1) (\mu \otimes \nu_{+} )(   \Gamma_0^c ) + 9 d \cdot 2^{-l/8} ).
		\earyst
		By \eqref{eq Gamma0big} and the choice of $l$ above, this is at most $\frac{4}{\eta^8} \big( (C^2 + 1) \eta^{30} + \eta^{18} \big) < \eta^{9}$ for $\eta \ll_{\cF, C_0} 1$. This is the choice of $l = l(\cF,  \eta)$ in the statement. The requirement $n \geq C_f \, l / C_0$ made after \eqref{eq tau bounds} is the only place where $n$ has to be large in terms of $l$.
		This finishes the proof.

		\subsection{Goodness implies spreading: proof of Lemma \ref{lem Qtospreading}} \label{subsec proofsoflemma512}

		{\em The bound on $T(\tilde{x}, n)$.} Assume that \eqref{eq Vm+1iVm+1jclose} holds and that $\lambda=\lambda(x,\bi_+,\bj_+) \geq 2^{-(1+\eps) n}$. Since $V_{m+1}(\bj_{+}) \subset J=V_{m+1}(\bj_{+}) + U^{m-1}(x)$, every unit vector $w \in V_{m_0+1}(\bi_{+})$ satisfies $$\| P^{J} w \| = d(w, J) \leq d(w, V_{m+1}(\bj_{+})) \leq d\big( V_{m_0+1}(\bi_{+}), V_{m+1}(\bj_{+}) \big).$$ So \eqref{eq Vm+1iVm+1jclose} gives $\lambda \leq 2^{-(1 - \eps) n}$. Together with the lower bound $\lambda \geq 2^{-(1+\eps) n}$ assumed above, we have
		\aryst
		\log_2 T(\tilde{x}, n) - (C_0 + 1) n = \log_2 (1/\lambda) - n\in [-\epsilon n, \epsilon n].
		\earyst

		Next, we establish the spreading property.
		By Markov's inequality, applied to the average over $0 \leq k \leq n-1$ that defines $Q$, there is a subset $\mathcal{S} \subset \{0, \cdots, n-1\}$ with $|\mathcal{S}| > (1 - \eta^3) n$, so that for every $k \in \mathcal{S}$, we have
		\aryst
		\sum_{\substack{D \in \cD_{m+1, \bi_{+}, C_0n+ k} \\  D \subset \cD_{m+1, \bi_{+}, C_0 n}(x) } }  \mu_{x, \bi_{+}}^{\cD_{m+1, \bi_{+}, C_0 n}(x)}(D) \cdot 1 \{ \mbox{$D$ is $  \eta^8 $-good at level $l$} \} \cdot   1\{ \mbox{$\bj_{+}$ is $ \eta^4$-good for $(D, l)$}   \} 
		> 1 - \eta^4.
		\earyst
		Let us denote
		\aryst
		\theta_D := (P^{V_{m+1}(\bj_{+}) + U^{m-1}(x)})_* ( \mu_{x, \bi_{+}}^{D} ), \qquad D \in \cD_{m+1, \bi_{+}, C_0 n + k}, \ D \subset \cD_{m+1, \bi_{+}, C_0 n}(x).
		\earyst
		Fix some $k \in \mathcal{S}$.
		Let $\cR \subset  \cD_{m + 1, \bi_{+}, C_0n+ k}$ be the set of $D$ contained in  $\cD_{m + 1 , \bi_{+}, C_0 n}(x)$ such that $D$ is $  \eta^8$-good at level $l$, and $\bj_{+}$ is $ \eta^4$-good for $( D, l)$. Then
		\aryst
		\sum_{D \in \cR} \mu_{x, \bi_{+}}^{\cD_{m + 1, \bi_{+}, C_0 n}(x)}(D) > 1 - \eta^4.
		\earyst
		Write $N := C_0 n + k$. Then for each $D \in \cR$, by \eqref{eq good sequence}, we have
		\aryst
		\sup_{p} \ \theta_D \big( B(p, \lambda\, 2^{-N - l}) \big) \ < \ \eta^4 ,
		\earyst
		Moreover, $\diam(\supp \theta_D) \leq \sqrt{d-m}\, \lambda\, 2^{-N}$.
		Set
		\aryst
		\zeta := (P^{V_{m+1}(\bj_{+}) + U^{m-1}(x)})_*  (  \mu_{x, \bi_{+}}^{\cD_{m + 1, \bi_{+}, C_0 n}(x)} )  =  \omega' + \omega''
		\earyst
		where
		\aryst
		\omega' =  \sum_{D \in \cR} \mu_{x, \bi_{+}}^{\cD_{m + 1, \bi_{+}, C_0 n}(x)}(D) \, \theta_D , \qquad
		\omega'' =   \sum_{D \notin \cR} \mu_{x, \bi_{+}}^{\cD_{m + 1, \bi_{+}, C_0 n}(x)}(D) \, \theta_D ,
		\earyst
		the sums running over the atoms $D \in \cD_{m + 1, \bi_{+}, C_0 n + k}$ contained in $\cD_{m + 1, \bi_{+}, C_0 n}(x)$.
		By the definition of $k\in\cS$, we have 
		\aryst
		| \omega'' | &\leq&   \sum_{D \notin \cR} \mu_{x, \bi_{+}}^{\cD_{m  + 1, \bi_{+}, C_0 n}(x)}(D) \leq   \eta^4.
		\earyst

		Let $c_d$ be as in \cref{lem Qtospreading}, so that $2^{c_d} > 1 + \sqrt{d-m}$. For $D \in \cR$, the diameter bound above shows that $\supp \theta_D \subset B(y, 2^{c_d} \lambda 2^{-N})$ whenever $\supp \theta_D$ meets $B(y, \lambda 2^{-N-l})$. Hence, for every $y \in (V_{m+1}(\bj_+)+U^{m-1}(x))^{\perp}$,
		\begin{align}\label{equ:omega'}
			\omega'\big( B(y, \lambda 2^{-N-l}) \big)
			=& \sum_{\substack{D \in \cR \\ \supp \theta_D \cap B(y, \lambda 2^{-N-l}) \neq \emptyset}} \mu^{\cD_{m+1, \bi_{+}, C_0 n}(x)}_{x, \bi_{+}}(D) \, \theta_D\big( B(y, \lambda 2^{-N-l}) \big) \\
			\mathrel{\leq}& \eta^4 \sum_{\substack{D \in \cR \\ \supp \theta_D \cap B(y, \lambda 2^{-N-l}) \neq \emptyset}} \mu^{\cD_{m+1, \bi_{+}, C_0 n}(x)}_{x, \bi_{+}}(D)
			\ \leq \ \eta^4 \, \omega'\big( B(y, 2^{c_d} \lambda 2^{-N}) \big) .\nonumber
		\end{align}

		Let 
		\aryst
		\Omega_k \ := \ \Big\{ \, y\in  \supp \zeta \ : \ \omega''\big( B(y, 2^{c_d} \lambda 2^{-N}) \big) \ \leq \ \eta^{1/2} \, \omega'\big( B(y, 2^{c_d} \lambda 2^{-N}) \big) \Big\} .
		\earyst
		The restriction to $\supp \zeta$ guarantees that $\zeta(B(y, 2^{c_d} \lambda 2^{-N})) > 0$ for $y \in \Omega_k$.
		
		We claim that 
		\ary\label{equ:omega''}
		\zeta(\Omega_k^c)\leq \eta^3
		\eary (See \cref{lem:ball-compare} for the proof.)
		Take an arbitrary $y \in \Omega_k$. Then 
		\aryst
		&& \zeta( B(y,\lambda 2^{-N-l})) = \omega'( B(y,\lambda 2^{-N-l}))  + \omega''( B(y,\lambda 2^{-N-l}))  \\
		&\leq&   \eta^4   \omega'( B(y,2^{c_d} \lambda 2^{-N}) )  + \omega''(B(y,2^{c_d} \lambda 2^{-N})) \leq     (  \eta^4  + \eta^{1/2} ) \omega'( B(y,2^{c_d} \lambda 2^{-N}) ) \\
		&<& \zeta(B(y,2^{c_d} \lambda 2^{-N})) / 2 ,
		\earyst
		where the last inequality uses $\omega' \leq \zeta$, $\eta^4 + \eta^{1/2} < 1/2$ (as $\eta \leq 1/16$) and $\zeta(B(y, 2^{c_d} \lambda 2^{-N})) > 0$ (as $y \in \supp \zeta$).
		
		Set $\tilde\zeta := (\Dil_{T(\tilde{x}, n)})_* \zeta$. Since $\Dil_{T} B(y, r) = B(Ty, Tr)$ and $T(\tilde{x}, n) \, \lambda 2^{-N} = 2^{-k}$, the last display reads, for every $k \in \mathcal{S}$ and every $y \in \Omega_k$,
		\ary \label{eq dilated ball spreading}
		\tilde\zeta \big( B(T(\tilde{x}, n) y, \ 2^{-k-l}) \big) \ < \ \tilde\zeta \big( B(T(\tilde{x}, n) y, \ 2^{c_d - k}) \big) / 2 ,
		\eary
		that is, spreading between the radii $2^{-(k - c_d)}$ and $2^{-(k+l)}$, which differ by the factor $2^{l + c_d}$.

		Put $\Omega_k := \emptyset$ for $k \in \{0, \ldots, n-1\} \setminus \mathcal{S}$, and let $g(y) := \frac{1}{n} \big| \{ 0 \leq k \leq n-1 : y \notin \Omega_k \} \big|$. Since $\zeta(\Omega_k^c) \leq \eta^3$ for $k \in \mathcal{S}$ by \eqref{equ:omega''}, and $\zeta(\Omega_k^c) = 1$ for the fewer than $\eta^3 n$ indices $k \notin \mathcal{S}$,
		\aryst
		\int g \, d\zeta \ = \ \frac{1}{n} \sum_{k = 0}^{n-1} \zeta(\Omega_k^c) \ < \ \eta^3 + \eta^3 \ = \ 2 \eta^3 .
		\earyst
		Set $\Omega := \{ y \in \R : g(y) < \eta \}$. By Markov's inequality,  we have $\zeta(\Omega) > 1 - \eta$ as $\eta<1/2$, and for every $y \in \Omega$,
		
		\aryst
		\big| \{ 0 \leq k \leq n-1 :  y \in \Omega_k \} \big| > (1 - \eta) n.
		\earyst
		Hence, the dilated set $\Dil_{T(\tilde{x}, n)} \Omega$ has $\tilde\zeta$-measure $\tilde\zeta(\Dil_{T(\tilde{x}, n)} \Omega) = \zeta(\Omega) > 1 - \eta$, and each of its points $\tilde{y} = T(\tilde{x}, n) y$ satisfies the ball inequality \eqref{eq dilated ball spreading} for more than $(1 - \eta) n$ scales $0 \leq k \leq n-1$.

		\section{Slab-Filling property} \label{sec SF}

		\subsection{Setting and main result}
		Recall that given  
		a partial flag $\bE = (E_i)_{i = 1}^{q}$ of $\R^d$,  a tuple $\br = (r_i)_{i = 1}^{q} \in (0, \infty]^{q}$, and $x \in \R^d$, the slab  $S(x, \bE, \br)$ is defined in Subsection \ref{subsec grassmannianflagsslabs}.  		
		
		Let $X = S(x, \bE, \br)$, we denote $\lambda X = S(x, \bE, \lambda \br)$ for every  $\lambda > 0$.
		
		We say that  a subset $Y \subset \R^d$ is {\it $C$-commensurable} with $X = S(x, \bE, \br )$ for a constant $C \geq 1$ if we have $C^{-1} X \subset Y \subset C X$. In this case, we denote
		\aryst
		Y \overset{C} \sim X.
		\earyst
		Given a partial flag  $\bE = (E_i)_{i = 1}^{q}$ of $\R^d$ , a tuple $\br = (r_i)_{i = 1}^{q} \in \R_+^{q}$ with $r_{m+1} = \cdots = r_q = \delta$ for some $1 \leq m \leq q-1$,   we have 
		\aryst
		S(x, \bE, \br) \overset{d} \sim S(x, (E_{1}, \cdots, E_{m}, \R^d), (r_1, \cdots, r_{m}, \delta) ).
		\earyst

		Given a $\mu$-typical $x \in \Lambda$, an integer $n > 0$, and $\bi_{+} \in \Sigma_{+}$, $m \in \{1, \cdots, d\}$, $\epsilon>0$, and $\rho, \delta \in (  e^{n (\chi_{m}+\epsilon)},e^{n  (\chi_{m-1}-\epsilon) } )$ with $\rho > \delta$,  we denote
		\aryst
		X(x, m, \rho, \delta ) &:=& S( x, ( U^{m - 1}(x), \R^{d} ), ( \rho, \delta ) ), \\
		Y(\bi_{+}, n, m, \rho, \delta ) &:=& S( 0, (  V_{d+1-j}(\bi_+) )_{j = 1}^{d},  (  e^{- n \chi_d} \delta, \cdots, e^{- n \chi_m} \delta , e^{- n \chi_{m-1}} \rho, \cdots, e^{- n \chi_{1}} \rho  )    ).
		\earyst
		
		Recall that $\alpha\in (0,1)$ is the H\"{o}lder exponent given in \Cref{prop Holdermap} and the definition of $(\beta,\rho,\eta,\eps)$-regular measure from \cref{def regularmeasure}. The main result of this section is the following.
		
		\begin{prop}[Slab-Filling property] \label{prop slabfilling}
			Assume that we have $D_{m} = m-1$.
			Let $n \geq 1$ be an integer, and let $\rho, \delta, \eps', \eps > 0$, $\eta,  \eta_0   \in (0, 1)$. 
			Assume that $n \gg_{\cF,   \eps, \eps', \eta,  \eta_0} 1$ and $e^{n( \chi_{m-1} - \eps )} > \rho > \delta > e^{n ( \chi_{m} + \eps )}$ and $\delta^{1 - \alpha/10} >  \rho >   \delta^{1 - \eps'}$. 
			Then	there is a subset $\Lambda''   \subset \Lambda$ such that $\mu(  \Lambda''  ) > 1 -  \eta_0$ and for every $x \in \Lambda''$, for 
			\aryst
			X = X(x, m, \rho, \delta )  =    S( x, ( U^{m - 1}(x), \R^{d} ),  ( \rho, \delta  ) ),
			\earyst 
			the measure $(\Dil_{1/\rho})_{*} ( \mu^{X} )$ is $( m-1 , \delta/\rho, \eta, \eps   )$-regular.
		\end{prop}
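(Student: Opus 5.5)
The proof will transport the slab $X$ by the backward dynamics to unit scale, where $D_m=m-1$ says that a projection of $\mu$ has full dimension $m-1$; regularity at that scale is then pulled back to $X$.

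\emph{Step 1: upper regularity.} This part is essentially free. The slab $X$ has $m-1$ long directions of length $\rho$ and short directions of width $\delta$. Hence $X$ is covered by $O_d\big((\rho/\delta)^{m-1}\big)$ balls of radius $\delta$. After the dilation $\Dil_{1/\rho}$, these become balls of radius $\delta/\rho$, which gives the upper bound $N_{\delta/\rho}\le (\rho/\delta)^{m-1+\eps}$ for the whole support once $n\gg 1$.

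\emph{Step 2: reduction to unit scale for lower regularity.} By \cref{lem upperfrostman implies regular} it suffices to show that $(\Dil_{1/\rho})_*\mu^X$ is $(m-1-\eps,\delta/\rho,\eta/2)$-upper Frostman. Concretely, we must bound $\mu(X\cap B(y,\delta))\le (\delta/\rho)^{m-1-\eps}\mu(X)$ for $y$ outside a set of small $\mu^X$-mass.

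Write $x=\Pi(\bi_-)$ and apply $f^{-n}_{\bi_-}$, whose linear part is $M^{-n}_{\bi_-}$. By \cref{thm Oseledets}, this map expands $E_j(\bi)$ at rate $e^{-n\chi_j}$. The image of $X$ is therefore commensurable, up to $e^{O(\eps n)}$ and the Oseledets angles, with a slab of the form $Y(\bi_+,n,m,\rho,\delta)$. Since $\rho<e^{n(\chi_{m-1}-\eps)}$ and $\delta>e^{n(\chi_m+\eps)}$, the directions $E_j$ with $j\ge m$ get width $\ge e^{n\eps}$, which is bigger than the diameter of $\Lambda$. So the image of $X$, intersected with $\Lambda$, is essentially a tube around a translate of $V_m(\bi'_+)$ for the shifted sequence, with the other $m-1$ directions of width $e^{-n\chi_j}\rho$.

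The self-affinity relation $\mu|_{f_w\Lambda}=\mu(f_w\Lambda)(f_w)_*\mu$, using SSC for the cylinder at depth $n$, turns $\mu^X$ into a normalized restriction of $\mu$ to such a tube. Its projection $P^{V_m}$ is then governed by \cref{thm Exactdim}(1) with $D_m=m-1$.

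\emph{Step 3: the main obstacle.} The main obstacle is that the long sides $e^{-n\chi_j}\rho$ with $j\le m-1$ are not all of unit size: they range between $\rho e^{-n\chi_{m-1}}<1$ and $\rho e^{-n\chi_1}$. The balls $B(y,\delta)$ likewise map to ellipsoids rather than balls. The plan is to use exact dimensionality of $(P^{V_m})_*\mu$ at dimension $m-1$, together with Egorov's theorem, so that the projected measure gives mass $\lesssim r^{m-1-\eps}$ to $r$-balls at all relevant scales. The target bound then follows from a counting argument: an ellipsoid with axes $e^{-n\chi_j}\delta$ inside a box with sides $e^{-n\chi_j}\rho$ covers a proportion $(\delta/\rho)^{m-1}$ of the product. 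This requires a multi-scale, product-type estimate, proved by the Besicovitch density argument of \cref{lem besicovich} applied to nested slabs.

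To make the image slab aligned with a fixed flag, I would use \cref{prop Holdermap}. The condition $\delta^{1-\alpha/10}>\rho$ guarantees that $U^{m-1}(y)$ varies by at most $\rho^{\alpha}\ll\delta/\rho$ across $X$. The condition $\rho>\delta^{1-\eps'}$ makes $\rho/\delta$ a genuine power of $\delta$, which lets the $e^{O(\eps n)}$ distortion errors be absorbed into the exponent $\eps$. Restricting to $\Lambda''$, defined by the Egorov, Oseledets-angle and density good sets, gives mass $>1-\eta_0$.
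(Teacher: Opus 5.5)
Step~1 is fine (the paper also treats upper regularity as trivial). Step~2, however, has a genuine gap.

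\textbf{The single-cylinder reduction fails.} The identity $\mu|_{f_w(\Lambda)}=\mu(f_w(\Lambda))\,(f_w)_*\mu$ for the single word $w=x[n]$ does not turn $\mu^X$ into a rescaled restriction of $\mu$. The cylinder $f_{x[n]}(\Lambda)$ has extent only about $e^{n\chi_j}\le e^{n\chi_m}<e^{-n\eps}\delta$ along $E_j$ for $j\ge m$, while $X$ has width $\delta$ in those directions. So $X$ meets many depth-$n$ cylinders, and $\mu(X\cap f_{x[n]}(\Lambda))$ is in general only a small fraction of $\mu(X)$. Pulling back along the coding of $x$ sees only that fraction. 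A set $A$ with $\mu^X(A)>\eta$ may miss $f_{x[n]}(\Lambda)$ entirely. Even for $y\in f_{x[n]}(\Lambda)$, the ball $B(y,\delta)$ collects mass from neighbouring cylinders. Hence neither lower regularity nor your upper Frostman bound follows.

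\textbf{How the paper handles all cylinders.} Writing $\mu=\int_{I^n}(f_{\bj})_*\mu\,d\nu_0^{\otimes n}(\bj)$, it picks by Markov a word $\bj$ with $\mu(f_{\bj}^{-1}(3X))\le 2\eta^{-1}K\,\mu(f_{\bj}^{-1}(X\cap A\cap\widetilde\Lambda))$. Since $x$ need not lie in $f_{\bj}(\Lambda)$, the geometry must be read off at other base points. The set $f_{\bj}^{-1}(X\cap A\cap\widetilde\Lambda)$ is covered, with multiplicity at most $3^d$, by slabs $y_l+e^{-n\eps''}Y(\bj\bi_{+},n,m,\rho,\delta)$ centred in that set. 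Their $f_{\bj}$-images lie in $3X$ by \cref{lem slabmapping} at $(y_l,\bj\bi_{+})$, together with the comparison of slabs at nearby H\"older-good points (\cref{rem slabcovering}). That comparison is where the H\"older control you mention is really needed. A second pigeonhole then gives one such slab in which the pulled-back $A$ has density $>\eta/(4\cdot 3^dK)$. Two ingredients missing from your plan make this work. First, the doubling bound $\mu(3X)\le K_0\mu(X)$ of \cref{lem 2XX}. Second, one generic $\bi_{+}$ together with a large set of $x$ for which $\sigma^{-n}(x,\bi_{+})$ is good. This makes the good sets available at every $(y_l,\bj\bi_{+})$, and at the preimages of the $\delta$-ball centres when converting back to balls via \cref{cor slabmapping}.

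\textbf{Step~3 does not close as written either.} Exact dimensionality of $(P^{V_m})_*\mu$ on balls does handle the numerator. The pulled-back $\delta$-ball is essentially a tube along $V_m$ with transverse sides $e^{-n\chi_j}\delta$, $j<m$. It is covered by $O_d(e^{-n\sum_{j<m}\chi_j})$ tubes of transverse radius $O(\delta)$ centred in an Egorov set, each of mass at most $\delta^{m-1-\eps}$. But you also need the big tube, whose transverse sides range from $e^{-n\chi_1}\rho$ to $e^{-n\chi_{m-1}}\rho$, to have mass at least roughly its transverse volume. Ball estimates only see its inscribed ball of radius $e^{-n\chi_1}\rho$. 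For $m\ge 3$ this loses a factor $e^{n\sum_{j<m}(\chi_1-\chi_j)}$, exponential in $n$ at a rate independent of $\eps,\eps'$, which cannot be absorbed into $(\rho/\delta)^{\eps}$. Nor does \cref{lem besicovich} help: it controls the density of a good set inside slabs, not slab masses from below.

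The paper's fix in \cref{lem conditionalentropy} uses no dimension input for the denominator. Since $Z=e^{-\eps''n}Y'$ is wider than $\Lambda$ along $V_m$, only $O_d(1/\mathrm{vol}(P^{V_m}Z))$ disjoint half-translates of $Z$ centred in $\Lambda$ fit. So the points with $\mu(y+Z)\le\mathrm{vol}(P^{V_m}Z)^{1+\eps}$ carry mass $O_d(\mathrm{vol}(P^{V_m}Z)^{\eps})$, which is small because $\mathrm{vol}(P^{V_m}Z)\le e^{-n\eps/2}$. Combining this with the numerator bound, a density set from \cref{lem besicovich}, and \cref{lem counting} yields the count $(\rho/\delta)^{m-1-O(\eps)}$, with no multi-scale product estimate needed.
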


		The remaining lemmas are the preparation for the proof of \cref{prop slabfilling}. \Cref{lem slabcovering} gives a covering by slabs with bounded multiplicity, \cref{lem 2XX} gives the doubling property $\mu( 3 X(x, m, \rho, \delta) ) \leq K_0 \mu( X(x, m, \rho, \delta) )$; and \cref{lem slabmapping} with its \cref{cor slabmapping} compares the image of a slab under $f_{\bi_{+}}^n$ with the slab at the image point.
		
		\subsection{Covering and mapping between slabs}
		
		We begin with a rather general lemma.

		\begin{lemma}[Covering by slabs with bounded multiplicity] \label{lem slabcovering}
			Let $m\in \{1,\cdots,d\}$ and let $\Lambda_1 \subset \Lambda$ be a subset on which $x \mapsto U^{m-1}(x)$ is $\alpha$-H\"older continuous.
			Let $1\gg_{\mathcal{F},\Lambda_1}\rho \geq \delta > 0$ satisfy $\delta^{1 - \alpha/10} > \rho$. Then given any subset $B \subset \Lambda_1$, the family $\{ X(x, m, \rho, \delta) \}_{x \in B}$ admits a finite subcovering $\{ X(x_l, m, \rho, \delta) \}_{l = 1}^{L}$ of $B$ such that the tripled family $\{ 3 X(x_l, m, \rho, \delta) \}_{l = 1}^{L}$ has multiplicity at most $C_d$, where $C_d > 0$ depends only on $d$.
		\end{lemma}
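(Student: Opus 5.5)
The plan is to reduce to a Besicovitch/Vitali-type covering for a family of "almost parallel" slabs. The key point is that, thanks to the Hölder continuity, the orientation of $U^{m-1}(x)$ varies negligibly at the relevant scale. Precisely, if $x,y\in\Lambda_1$ with $\|x-y\|\lesssim\rho$, then
\[
\bd(U^{m-1}(x),U^{m-1}(y))\lesssim_{\Lambda_1}\rho^{\alpha}.
\]
Tilting a $\rho$-long direction by $\rho^\alpha$ moves points by at most $\rho^{1+\alpha}$. The hypothesis $\delta^{1-\alpha/10}>\rho$ gives $\rho^{1+\alpha}\le \rho\cdot\rho^{\alpha}\ll\delta$: indeed $\rho^{1+\alpha}<\delta^{(1-\alpha/10)(1+\alpha)}$, and $(1-\alpha/10)(1+\alpha)>1$, so this is $\delta^{1+c\alpha}$ for some $c>0$. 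Hence, for $\rho\ll_{\cF,\Lambda_1}1$, any two slabs $\lambda X(x,m,\rho,\delta)$ and $\lambda' X(y,m,\rho,\delta)$ with $\lambda,\lambda'\in\{1,3,9\}$ whose base points are within distance $C\rho$ satisfy
\[
X(y,m,\rho,\delta)\subset 2X(\text{translated to }y\text{ with the frame } U^{m-1}(x)),
\]
and conversely. In words, within any $\rho$-ball the slabs are $2$-commensurable with slabs of a single fixed orientation.

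First, partition $\R^d$ into cubes $Q$ of side $\rho$ and treat $B\cap Q$ separately. Fix $x_Q\in B\cap Q$ and set $U=U^{m-1}(x_Q)$. For $x\in B\cap Q$, the slab $X(x,m,\rho,\delta)$ is $2$-commensurable with the translate $x+S(0,(U,\R^d),(\rho,\delta))$. The latter is a translate of a fixed convex symmetric body, essentially a box after a linear change of coordinates. Apply the linear map that rescales $U$ by $1/\rho$ and $U^\perp$ by $1/\delta$; it turns these bodies into unit-size, cylinder-like sets. A greedy maximal-separated selection (a Vitali-type argument) then gives points $x_l\in B\cap Q$ such that the slabs $\tfrac13 X(x_l)$ are pairwise disjoint and the slabs $X(x_l)$ cover $B\cap Q$. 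Here $\tfrac13$ is measured in the fixed frame, and commensurability absorbs the constants; concretely, I choose $x_l$ so that the $P_{U^\perp}$-images are $\delta$-separated or the $P_U$-images are $\rho$-separated, in the rescaled norm. Finiteness follows from compactness of $\overline{B}$ (or boundedness of $\Lambda$), since a separated set in a bounded region is finite.

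Next comes the multiplicity bound for the tripled slabs. Suppose a point $z$ lies in $3X(x_l)$ for many $l$. All such $x_l$ lie within $O(\rho)$ of $z$, so they lie in $O_d(1)$ neighboring cubes $Q$. Within that $O(\rho)$ region, all orientations agree up to $2$-commensurability with the frame at one chosen point. The sets $\tfrac13X(x_l)$ coming from a single cube are disjoint, and each is contained in $9X(z)$ in that common frame. Comparing volumes in the rescaled coordinates, where all these sets are unit-scale convex bodies, shows that each cube contributes at most $C_d$ indices. Summing over the $O_d(1)$ cubes gives multiplicity at most $C_d$.

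I expect the main obstacle to be the almost-parallel comparison. I must check carefully that the $\rho^{1+\alpha}$ tilt error is smaller than $\delta$ uniformly, with constants independent of $\rho$; this is exactly where $\delta^{1-\alpha/10}>\rho$ and $\rho\ll_{\Lambda_1}1$ enter, the latter absorbing the Hölder constant. I will also need to track the commensurability constants through the scalings $3$ and $\tfrac13$, so that the final $C_d$ depends on $d$ alone.
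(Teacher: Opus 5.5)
Your proof is correct and is essentially the paper's. The key input is the same mutual $2$-commensurability of the slabs at $O(\rho)$-close points of $\Lambda_1$, which comes from H\"older continuity together with $\rho^{1+\alpha}\ll\delta$; this is followed by a greedy selection and a volume-packing count. The paper avoids your per-cube frozen frame by running the greedy selection globally, choosing each new center outside the previously selected slabs so that covering by the actual slabs $X(x_l)$ is automatic. In your version, take the frozen-frame separation at scale $\tfrac12 S(0,(U,\R^d),(\rho,\delta))$, so that maximality yields a cover by $X(x_l)$ rather than $2X(x_l)$.
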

		
		\begin{proof}
			Write $K(x) = X(x, m, \rho, \delta) - x = B^{U^{m-1}(x)}(\rho) + B^{U^{m-1}(x)^{\perp}}(\delta)$.
			
			\smallskip
			First, we claim that if $x, x' \in \Lambda_1$ satisfy $3 X(x, m, \rho, \delta) \cap 3 X(x', m, \rho, \delta) \neq \emptyset$, then
			\ary
			\label{eqn:K inclusion}
			K(x') \subset 2 K(x) \quad \mbox{and} \quad K(x) \subset 2 K(x').
			\eary
			Indeed, since $\diam(3 X(x, m, \rho, \delta)) \leq 3 \sqrt{2} \rho$, we have $|x - x'| \leq 6 \sqrt{2} \rho$, and hence, by the H\"older continuity on $\Lambda_1$,  the Grassmannian distance $\theta := \bd( U^{m-1}(x), U^{m-1}(x') )$ satisfies $\theta \leq C_{\Lambda_1} (6 \sqrt{2} \rho)^{\alpha}$, where $C_{\Lambda_1}>0$ is the H\"{o}lder constant of $x\mapsto U^{m-1}(x)$ on $\Lambda_1$. For $v = a + b \in K(x')$ with $a \in B^{U^{m-1}(x')}(\rho)$ and $b \in B^{U^{m-1}(x')^{\perp}}(\delta)$, writing $U = U^{m-1}(x)$, we have
			\aryst
			&&| P^{U^{\perp}} v | \leq |P^{U^{\perp}} a| + |P^{U^{\perp}} b|\leq |P^{U^{\perp}} a|+\bd(U,U')|b| \leq \rho + \theta \delta,\\
			&&| P^{U} v | \leq |P^{U} a| + |P^{U} b|\leq \bd(U',U)|a|+|P^Ub| \leq \theta \rho + \delta.
			\earyst
			Therefore $v \in ( 1 + \theta \rho / \delta ) K(x)$. By the assumption $\delta^{1 - \alpha/10} > \rho$, we have $\rho^{1 + \alpha} \leq \rho^{\frac{1}{1 - \alpha/10}} < \delta$. Hence, for $\rho\ll_{\cF, \Lambda_1} 1$, we have $\theta \rho / \delta \leq C_{\Lambda_1} (6 \sqrt{2})^{\alpha} \rho^{1 + \alpha} / \delta \leq 1$. This proves $K(x')\subset 2K(x)$, and the reverse inclusion follows by symmetry.
			
			Next, we select the subcovering greedily: pick $x_1\in B$ and, inductively, as long as $B\setminus \bigcup_{l'\leq l} X(x_{l'},m,\rho,\delta)$ is non-empty, pick $x_{l+1}$ in this set.
			The retained centers satisfy $x_{l'} \notin X(x_l)$ for $l' > l$; hence, they are pairwise $\delta$-separated (recall that $X(x) \supset x+B^{\R^d}(\delta)$). Since $\Lambda$ is bounded, the procedure stops after finitely many steps, say $L$, at which point $B\subset \bigcup_{l=1}^L X(x_l,m,\rho,\delta)$. We write $X(x_l)=X(x_l,m,\rho,\delta)$ from now on.
			
			Let $z\in \mathbb{R}^d$ and let $\cI :=\{l\in \{1,\cdots,L\}: z\in 3X(x_l)\}$. We show that
			\ary
			|\cI|\leq 28^d.
			\eary
			Suppose $\cI\neq \emptyset$ and fix $l_0\in \cI$. Then \eqref{eqn:K inclusion} gives that, for every $l \in \cI$, we have $\frac{1}{2} K(x_{l_0}) \subset K(x_l) \subset 2 K(x_{l_0})$.  For each pair $l \neq l'$ in $I$, if $l'$ was retained after $l$, then $x_{l'} - x_l \notin K(x_l)$, and since $K(x_l) \supset \frac{1}{2} K(x_{l_0})$ and $K(x_{l_0})$ is symmetric, we obtain $x_l - x_{l'} \notin \frac{1}{2} K(x_{l_0})$ for every pair $l \neq l'$. Hence the sets $x_l + \frac{1}{4} K(x_{l_0})$, $l \in \cI$, are pairwise disjoint. We also have $$x_l+\frac{1}{4}K(x_{l_0}) \subset z + 3 K(x_l)+\frac{1}{4}K(x_{l_0}) \subset z + 6 K(x_{l_0})+\frac{1}{4}K(x_{l_0})\subset z+7K(x_{l_0}),$$ and volume counting gives
			\aryst
			| \cI | \leq \frac{ \mathrm{vol} ( 7 K(x_{l_0}) ) }{ \mathrm{vol} ( \frac{1}{4} K(x_{l_0}) ) } = 28^{d}.
			\earyst
		\end{proof}
		\begin{rema} \label{rem slabcovering}
			Note that the proof of \cref{lem slabcovering} shows that under the same assumptions, any two slabs $X(x, m, \rho, \delta)$ and $X(x', m, \rho, \delta)$ with $x, x' \in \Lambda_1$ whose triples intersect are $2$-commensurable.
		\end{rema}

		\begin{lemma} \label{lem 2XX}
			Let $m \in \{1, \cdots, d\}$, let $n \geq 1$ be an integer and let  $\rho, \delta, \eps, \eta'' > 0$. 
			Assume that $n \gg_{\cF,   \eps,   \eta'' } 1$, $e^{n( \chi_{m-1} - \eps )} > \rho \geq \delta > e^{n ( \chi_{m} + \eps )}$ and $\delta^{1 - \alpha/10} >  \rho$. 
			Then there exist $K_0= K_0( d, \eta'') > 1$ and $\Lambda_0 \subset \Lambda$ with $\mu(\Lambda_0) > 1 - \eta''$ such that for every $x \in \Lambda_0$
			\ary
			\label{eqn:measure 3K}
			\mu( 3 X(x, m, \rho, \delta) ) \leq K_0\mu( X(x, m, \rho, \delta) ).
			\eary
		\end{lemma}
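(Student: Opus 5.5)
The plan is a direct Vitali/Besicovitch-type counting argument at the single fixed pair of scales $(\rho,\delta)$, using \cref{lem slabcovering} as the covering tool. No dynamics is needed. The Lyapunov window $e^{n(\chi_{m-1}-\eps)}>\rho\geq\delta>e^{n(\chi_m+\eps)}$ is used only to guarantee that $\rho\to 0$ as $n\to\infty$, so that the smallness condition $\rho\ll_{\cF,\Lambda_1}1$ of \cref{lem slabcovering} holds once $n\gg_{\cF,\eps,\eta''}1$.

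First, I apply \cref{prop Holdermap} with $\eps=\eta''/2$ to get $\Lambda_1:=\Lambda'(\cF,\eta''/2)$ with $\mu(\Lambda_1)>1-\eta''/2$, on which $x\mapsto U^{m-1}(x)$ is $C^\alpha$. The Hölder constant on $\Lambda_1$ depends only on $\cF$ and $\eta''$, so taking $n$ large makes $\rho$ small enough for \cref{lem slabcovering}; the hypothesis $\delta^{1-\alpha/10}>\rho$ is given. Set $K_0:=2C_d/\eta''$, where $C_d$ is the multiplicity constant of \cref{lem slabcovering}; it depends only on $d$ and $\eta''$. Define the bad set
\[
B:=\{x\in\Lambda_1:\ \mu(3X(x,m,\rho,\delta))>K_0\,\mu(X(x,m,\rho,\delta))\},
\]
and put $\Lambda_0:=\Lambda_1\setminus B$. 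The set $B$ is measurable because $U^{m-1}$ is measurable and slab masses depend measurably on the center and the subspace; for the covering step measurability is not even needed, since one may bound the outer measure.

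Second, I apply \cref{lem slabcovering} to $B\subset\Lambda_1$. This gives centers $x_1,\dots,x_L\in B$ with $B\subset\bigcup_l X(x_l,m,\rho,\delta)$ and with the tripled slabs $3X(x_l,m,\rho,\delta)$ having multiplicity at most $C_d$. Since every $x_l\in B$,
\[
\mu(B)\le\sum_{l}\mu(X(x_l,m,\rho,\delta))<\frac1{K_0}\sum_l\mu(3X(x_l,m,\rho,\delta))\le\frac{C_d}{K_0}\,\mu\Big(\bigcup_l 3X(x_l,m,\rho,\delta)\Big)\le\frac{C_d}{K_0}=\frac{\eta''}{2}.
\]
Hence $\mu(\Lambda_0)>1-\eta''$, and \eqref{eqn:measure 3K} holds on $\Lambda_0$ by construction.

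The only point requiring care is that the covering centers must lie in $B$ itself, and not merely near it. This is what allows the defining inequality of $B$ to be applied at each $x_l$. \cref{lem slabcovering} provides exactly this, since it selects a subfamily of $\{X(x,m,\rho,\delta)\}_{x\in B}$. The bounded multiplicity of the tripled family, which rests on the Hölder control of $U^{m-1}$ and on $\rho^{1+\alpha}\ll\delta$, is already contained in that lemma. I therefore expect no real obstacle beyond checking that the largeness of $n$ yields the required smallness of $\rho$ uniformly in $\Lambda_1$.
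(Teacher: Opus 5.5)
Your proposal is correct and matches the paper's proof essentially line for line. Both use the same $\Lambda_1$ from \cref{prop Holdermap} with $\mu(\Lambda_1)>1-\eta''/2$, the same $K_0=2C_d/\eta''$, the same bad set of centers in $\Lambda_1$, and the same application of \cref{lem slabcovering} to bound its mass by $C_d/K_0=\eta''/2$. Your remark that the H\"older constant on $\Lambda_1$ depends only on $(\cF,\eta'')$, so that $n\gg_{\cF,\eps,\eta''}1$ already forces $\rho$ small enough for \cref{lem slabcovering}, makes explicit a point the paper leaves implicit.
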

		
		\begin{proof}

			Let us fix some $\Lambda_1 \subset \Lambda$ 
			given by \cref{prop Holdermap} with $\mu(\Lambda_1) > 1 - \eta'' / 2$ such that  $x \mapsto U^{m-1}(x)$ is  $C^{\alpha}$ restricted to $\Lambda_1$.

			Let $K_0=2C_d/\eta^{''}$ with $C_d>0$ the constant given in \Cref{lem slabcovering}. We define
			\aryst
			\Lambda_2 = \{ x \in \Lambda_1 : 	\mu( 3 X(x, m, \rho, \delta) ) > K_0\mu( X(x, m, \rho, \delta) ) \}.
			\earyst
			By \cref{lem slabcovering} applied with $B = \Lambda_2 \subset \Lambda_1$, we may extract from the family $\{ X(x, m, \rho, \delta) \}_{x \in \Lambda_2}$ a finite subcovering $\{ X(x_l, m, \rho, \delta )\}_{l = 1}^{L}$ of $\Lambda_2$ such that the tripled family $\{ 3 X(x_l, m, \rho, \delta )\}_{l = 1}^{L}$ has multiplicity bounded by $C_d$.   
			Then we have 
			\aryst
			\mu(\Lambda_2)  &\leq&  \sum_{l = 1}^{L} \mu( X(x_l, m, \rho, \delta ) ) \\
			&\leq& K_0^{-1}    \sum_{l = 1}^{L} \mu( 3 X(x_l, m, \rho, \delta ) ) \leq K_0^{-1} C_d\leq \eta''/2.
			\earyst
			Hence for $\Lambda_0 := \Lambda_1 \setminus \Lambda_2$, we have  $	\mu(\Lambda_0) > 1 - \eta''$, and it satisfies \eqref{eqn:measure 3K} by the definition of $\Lambda_2$.
		\end{proof}

		\begin{lemma} \label{lem slabmapping}
			Let $m\in \{1,\cdots,d\}$. Given parameters $\rho, \delta, \eps, \eta'' > 0$ and an integer $n > 0$. 
			Assume that $n \gg_{\cF, \eps, \eta''} 1$ and $e^{n( \chi_{m-1} - \eps )} > \rho \geq \delta > e^{n ( \chi_{m} + \eps )}$. 
			Then there is a subset $\Gamma'' \subset \Lambda \times \Sigma_{+}$ with $(\mu \otimes \nu_{+})(\Gamma'') > 1 - \eta''$ such that for every $(y, \bi_{+}) \in \Gamma''$,
			we have
			\aryst
			f_{\bi_{+}}^n(y + Y(\bi_{+}, n, m, \rho, \delta )) \overset{e^{n\eps}} \sim X(f_{\bi_{+}}^n(y),  m, \rho, \delta ).
			\earyst
		\end{lemma}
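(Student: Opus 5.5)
The plan is to replace the flag decomposition in $Y$ by the Oseledets splitting. Since $M := Df^n_{\bi_{+}}$ maps the splitting equivariantly, the image slab can be read off directly. All error factors will come from angles between Oseledets subspaces, or from deviations of growth rates from $\chi_j$. Each of these is $e^{o(n)}$ on a set of large measure, by \cref{thm Oseledets} and Egorov's theorem.

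\emph{Step 1 (good set).} Under SSC, identify $(y,\bi_+)$ with $\bi=(\Pi^{-1}y,\bi_+)\in\Sigma$, and write $E_j=E_j(y,\bi_+)$ and $E_j'=E_j(\sigma^n(y,\bi_+))=ME_j$. Using parts (4) and (5) of \cref{thm Oseledets}, Egorov's theorem, and the $\sigma$-invariance of $\mu\otimes\nu_+$, I would choose $\Gamma''$ with $(\mu\otimes\nu_+)(\Gamma'')>1-\eta''$ such that for all $(y,\bi_+)\in\Gamma''$ and $n\gg_{\cF,\eps,\eta''}1$ the following hold.
\begin{enumerate}
\item[(i)] $e^{n(\chi_j-\eps/10)}\|v\|\le\|Mv\|\le e^{n(\chi_j+\eps/10)}\|v\|$ for every $v\in E_j$ and every $j$.
\item[(ii)] The angles $\measuredangle(E_J,E_{J^c})$ at both $(y,\bi_+)$ and $\sigma^n(y,\bi_+)$ are at least $e^{-\eps n/10}$, for every nonempty proper $J\subset\{1,\dots,d\}$.
\end{enumerate}
Condition (i) follows from the Oseledets limits plus Egorov. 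For the time-$n$ half of (ii), the law of $\sigma^n(y,\bi_+)$ is again $\mu\otimes\nu_+$, so the set where it fails at time $n$ has the same small measure as at time $0$.

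\emph{Step 2 (compare $Y$ with an Oseledets box).} Put $r_j=e^{-n\chi_j}\delta$ for $j\ge m$ and $r_j=e^{-n\chi_j}\rho$ for $j<m$. Then $r_1\le\dots\le r_d$, and $Y$ has radius $r_j$ along the direction $F=V_j\cap V_{j+1}^\perp$. I claim
\[
Y\overset{e^{n\eps/3}}{\sim}\textstyle\sum_j B^{E_j}(r_j).
\]
Since $V_j=\bigoplus_{l\ge j}E_l$, a vector of $E_j$ has its flag components only in directions with index $\ge j$. Those directions have radii $r_l\ge r_j$, so $B^{E_j}(r_j)\subset e^{n\eps/10}Y$ by (ii). Conversely, a vector in the direction $V_j\cap V_{j+1}^\perp$ decomposes into $E_l$-components with $l\ge j$. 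These components have norm at most $e^{n\eps/10}$ times its norm by (ii), and their radii only increase. The change of basis is triangular in the favourable direction, and this is where the ordering of the radii is used.

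\emph{Step 3 (push forward).} By (i), $M B^{E_j}(r_j)$ is $e^{n\eps/10}$-commensurable with $B^{E_j'}(\delta)$ for $j\ge m$, and with $B^{E_j'}(\rho)$ for $j<m$. Hence $f^n_{\bi_+}(y+Y)$ is $e^{n\eps/2}$-commensurable with
\[
f^n_{\bi_+}(y)+B^{U'}(\rho)+B^{W'}(\delta),
\]
where $U'=\bigoplus_{j<m}E_j'=U^{m-1}(f^n_{\bi_+}y)$ and $W'=\bigoplus_{j\ge m}E_j'$. Here I use the sum-of-boxes comparison inside $U'$ and inside $W'$ with (ii) at time $n$. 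Finally, $B^{U'}(\rho)+B^{W'}(\delta)$ is commensurable with $B^{U'}(\rho)+B^{U'^\perp}(\delta)$, up to a factor of $\sin\measuredangle(U',W')^{-1}\le e^{n\eps/10}$ (using $\rho\ge\delta$). The latter set is $X(f^n_{\bi_+}y,m,\rho,\delta)-f^n_{\bi_+}y$. Multiplying the constants gives $e^{n\eps}$.

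The main obstacle is Step 1: making the growth and angle estimates hold simultaneously at time $0$ and time $n$, uniformly in $n$, on one set of large measure. Invariance handles the time-$n$ angles. The growth estimate (i) needs Egorov applied to $\frac1n\log\|M|_{E_j}\|$, which is fine because $E_j$ is one-dimensional by simplicity of the spectrum. The window $e^{n(\chi_{m-1}-\eps)}>\rho\ge\delta>e^{n(\chi_m+\eps)}$ seems to be used only to make the statement meaningful, namely so that $X$ and $Y$ have the intended shapes. I would double-check whether the proof actually needs it, or only the ordering $\rho\ge\delta$.
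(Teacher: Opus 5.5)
Your proposal is correct and is essentially the paper's proof. It uses the same chain: $Y\sim\sum_j B^{E_j}(r_j)$, then $M$ scales each Oseledets line by $e^{n(\chi_j\pm o(1))}$, then $\sum_j B^{E_j'}(\rho_j)$ is compared with $X$. The only variation is that you bound the change of basis by the splitting angles $\measuredangle(E_J,E_{J^c})$ from \cref{thm Oseledets}. The paper instead uses only the angles $\measuredangle(V_{j+1},E_j)$, i.e.\ the diagonal of a triangular change of basis, and absorbs the off-diagonal terms via the gaps $r_{k-1}\le e^{-2n\eps}r_k$.

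Your closing doubt is misplaced, though. The ordering $r_{m-1}\le r_m$ that you assert without proof, and that Step 2 needs, is precisely $\rho/\delta\le e^{n(\chi_{m-1}-\chi_m)}$, and this comes from the window. With only $\rho\ge\delta$ the conclusion fails in general. A vector of length about $r_{m-1}$ in $V_{m-1}(\bi_{+})\cap V_m(\bi_{+})^{\perp}$ generically has a nonzero $E_m$-component. $M$ sends this component to size about $\rho\,e^{n(\chi_m-\chi_{m-1})}$, transversally to $U^{m-1}(f^n_{\bi_{+}}y)$. That exceeds $e^{n\eps}\delta$ once $\rho/\delta\ge e^{n(\chi_{m-1}-\chi_m+2\eps)}$. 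What is genuinely irrelevant is only the absolute position of $\rho,\delta$ inside the window, since the statement is homogeneous.
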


		\begin{proof}
			Let $g := \min_{1 \leq j \leq d-1} ( \chi_{j} - \chi_{j+1} ) > 0$ (the spectrum is simple) and $\eps_{0} := 5 (d+1) g$, which depends only on $\cF$. It suffices to prove the lemma for $\eps \leq \eps_{0}$. Let $\kappa =   \eps / (10 (d+1))$.
			
			We define $\Gamma''$ to be the collection of $(y, \bi_{+}) \in \Lambda \times \Sigma_{+}$ with the following properties:
			\enmt
			\item for every $1 \leq j \leq d$, we have 
			\ary \label{eq Gamma''prop1}
			e^{n (\chi_{j} - \kappa)} \leq \| D_yf_{\bi_{+}}^n |_{E_j(y, \bi_{+})  } \| \leq e^{n (\chi_{j} + \kappa)},
			\eary
			\item for every $1 \leq j \leq d - 1$, we have
			\ary 
			\label{eq Gamma''prop2}
			{\sin \measuredangle(V_{j+1}(\bi_{+}),  E_j(y, \bi_{+}) ) \geq e^{- n \kappa},} \\
			\label{eq Gamma''prop3}
			{\sin \measuredangle( V_{j+1}(\sigma^n(\bi_{+})), E_j (f_{\bi_{+}}^n(y), \sigma^{n}\bi_{+})   ) \geq e^{- n \kappa}.}
			\eary
			\eenmt
			By \Cref{thm Oseledets}, we see that by letting $n \gg_{\cF, \eps, \eta''} 1$, we have  $(\mu \otimes \nu_{+})(\Gamma'') > 1 - \eta''$: indeed, \eqref{eq Gamma''prop1} holds on a large set by Oseledets' theorem applied to the Oseledets direction $E_{j}(y, \bi_{+}) $ together with Egorov's theorem; for \eqref{eq Gamma''prop3}, we also use that $(y, \bi_{+}) \mapsto (f_{\bi_{+}}^n(y), \sigma^{n}\bi_{+})$ preserves $\mu \otimes \nu_{+}$.
			
			{It remains to  verify 	 that
				\ary \label{eq slabimage}
				f_{\bi_{+}}^n(y + Y(\bi_{+}, n, m, \rho, \delta )) \overset{{e^{(2d+3) n\kappa}}} \sim X(f_{\bi_{+}}^n(y),  m, \rho, \delta )
				\eary
				{for every $(y, \bi_{+}) \in \Gamma''$; this gives the lemma since $(2d+3) \kappa = \frac{2d+3}{10(d+1)} \, \eps \leq \eps$.} 
				
				Write $M = D_{y} f_{\bi_{+}}^n${, $z := f_{\bi_{+}}^n(y)$, $V_{j} := V_{j}(\bi_{+})$, $E_{j} := E_{j}(y, \bi_{+}) = V_{j} \cap U^{j}(y)$, and at the image pair $(z, \sigma^{n}(\bi_{+}))$, $V_{j}' := V_{j}(\sigma^{n}(\bi_{+}))$, $E_{j}' := E_{j}(z, \sigma^{n}(\bi_{+}))$, and $U' := U^{m-1}(z)$}.{ Set $\rho_{j} := \rho$ for $j \leq m-1$, $\rho_{j} := \delta$ for $j \geq m$, $r_{j} := e^{-n\chi_{j}} \rho_{j}$. { By the definition of $g$ and the hypothesis on $e^{n(\chi_{m-1}-\epsilon)}>\rho\geq \delta>e^{n(\chi_m+\epsilon)}$, we obtain $r_{k-1}\leq e^{-2n\eps}r_{k}$ for every $2\leq k\leq d$ --- indeed $r_{k-1}\leq e^{-ng}r_{k}$ for $k\neq m$, while at $k=m$ the inequality is exactly $\rho\leq e^{n(\chi_{m-1}-\chi_m-2\eps)}\delta$ ---} and for $e^{-2n\eps} \leq 1/2$,
					\ary \label{eq step1 radii}
					\sum_{j \leq k} r_{j} \ \leq \ 2 \, r_{k} \qquad (1 \leq k \leq d).
					\eary
					{Note that $r_{j} = e^{-n\chi_{j}} \rho_{j}$ increases with $j$ by \eqref{eq step1 radii}, whereas $\rho_{j}$ equals $\rho$ for $j \leq m-1$ and $\delta$ for $j \geq m$, with $\rho \geq \delta$. The long direction of the box therefore passes from the high indices at the source to the low indices at the image.}
					
					Consider the boxes
					\aryst
					\begin{array}{c}
						Y := Y(\bi_{+}, n, m, \rho, \delta), \qquad \widetilde Y := \sum_{j = 1}^{d} B^{E_{j}}(r_{j}), \qquad \widetilde Y' := \sum_{j = 1}^{d} B^{E_{j}'}(\rho_{j}), \\[6pt]
						X_{0} := X(z, m, \rho, \delta) - z = B^{U'}(\rho) + B^{U'^{\perp}}(\delta) .
					\end{array}
					\earyst

					\begin{center}
						\begin{tikzpicture}[scale=0.95,>=latex]
							\def\sx{0.423}\def\sy{0.906}
							\def\nx{-0.906}\def\ny{0.423}
							\begin{scope}
								\def\L{2.30}\def\h{0.55}
								\fill[gray!12] (-\L,-\h) rectangle (\L,\h);
								\draw[thick]   (-\L,-\h) rectangle (\L,\h);
								\draw[thick,dashed]
								({\L+\h*\sx},{\h*\sy}) -- ({\L-\h*\sx},{-\h*\sy})
								-- ({-\L-\h*\sx},{-\h*\sy}) -- ({-\L+\h*\sx},{\h*\sy}) -- cycle;
								\draw[->,thick] (0,0) -- (\L+1.05,0) node[below right]{\small $f_{2}=E_{2}$};
								\draw[->,thick] (0,0) -- (0,1.55) node[above]{\small $f_{1}$};
								\draw[->,thick] (0,0) -- ({1.30*\sx},{1.30*\sy}) node[right]{\small $E_{1}$};
								\node at (-1.70,1.15) {\small $Y$};
								\node at (1.95,1.15)  {\small $\widetilde Y$};
							\end{scope}
							\begin{scope}[xshift=9.6cm]
								\def\L{1.70}\def\h{0.52}
								\fill[gray!12]
								({\L*\sx+\h*\nx},{\L*\sy+\h*\ny}) -- ({\L*\sx-\h*\nx},{\L*\sy-\h*\ny})
								-- ({-\L*\sx-\h*\nx},{-\L*\sy-\h*\ny}) -- ({-\L*\sx+\h*\nx},{-\L*\sy+\h*\ny}) -- cycle;
								\draw[thick]
								({\L*\sx+\h*\nx},{\L*\sy+\h*\ny}) -- ({\L*\sx-\h*\nx},{\L*\sy-\h*\ny})
								-- ({-\L*\sx-\h*\nx},{-\L*\sy-\h*\ny}) -- ({-\L*\sx+\h*\nx},{-\L*\sy+\h*\ny}) -- cycle;
								\draw[thick,dashed]
								({\L*\sx+\h},{\L*\sy}) -- ({\L*\sx-\h},{\L*\sy})
								-- ({-\L*\sx-\h},{-\L*\sy}) -- ({-\L*\sx+\h},{-\L*\sy}) -- cycle;
								\draw[->,thick] (0,0) -- (1.5,0) node[below right]{\small $E_{2}'=f_{2}'$};
								\draw[->,thick] (0,0) -- ({2.05*\sx},{2.05*\sy}) node[above right]{\small $E_{1}'=U'$};
								\draw[->,thick] (0,0) -- ({1.85*\nx},{1.85*\ny}) node[below]{\small $U'^{\perp}$};
								\node at (-1.90,1.05) {\small $X_{0}$};
								\node at (1.85,0.95)  {\small $\widetilde Y'$};
							\end{scope}
							\draw[->,very thick] (3.85,2.05) -- (5.65,2.05) node[midway,below]{\small $M$};
						\end{tikzpicture}
						{\small The case $d=1,m=1$: The $E_{2}$-direction is drawn horizontal in both panels, so the box, which is long along $E_{2}=f_{2}$ at the source, is long along $E_{1}'=U'$ at the image. 
						}
					\end{center}
					\smallskip

					With $\Theta := e^{n\kappa} (1 + e^{n\kappa})^{d-1}$, we shall prove
					\ary \label{eq boxes}
					\begin{array}{c}
						\mbox{(i)} \quad { \widetilde Y \ \subset \ 2 \, Y \ \subset \ 4 e^{n\kappa} \, \widetilde Y} , \\[4pt]
						\mbox{(ii)} \quad { e^{-n\kappa} \, \widetilde Y' \ \subset \ M \widetilde Y \ \subset \ e^{n\kappa} \, \widetilde Y'} , \\[4pt]
						\mbox{(iii)} \quad { \widetilde Y' \ \subset \ d \, X_{0} \ \subset \ 2 d \Theta \, \widetilde Y'} .
					\end{array}
					\eary
					Assuming \eqref{eq boxes}, since $d \leq 2^{d} \leq 2 \Theta$, 
					\aryst
					M Y \ \subset \ 2 e^{n\kappa} \, M \widetilde Y \ \subset \ 2 e^{2n\kappa} \, \widetilde Y' \ \subset \ 4 e^{2n\kappa} \Theta \, X_{0}
					\quad \mbox{and} \quad
					X_{0} \ \subset \ 2 \Theta \, \widetilde Y' \ \subset \ 2 e^{n\kappa} \Theta \, M \widetilde Y \ \subset \ 4 e^{n\kappa} \Theta \, M Y.
					\earyst
					This implies $$f_{\bi_{+}}^n(y + Y) = z + M Y \overset{C_{n}}{\sim} X(z, m, \rho, \delta)$$ with $C_{n} := 4 e^{2n\kappa} \Theta = 4 e^{3n\kappa} (1 + e^{n\kappa})^{d-1} \leq 2^{d+1} e^{(d+2) n\kappa} \leq e^{(2d+3) n\kappa}$ as soon as $e^{n\kappa} \geq 2$. This is \eqref{eq slabimage}.}
				
				{The proof of \eqref{eq boxes} is based on estimating the coordinates under two different bases. Let $f_{j} \in V_{j} \cap V_{j+1}^{\perp}$ be unit vectors ($V_{d+1} := \{0\}$). Then $V_{j} = \mathrm{span}(f_{j}, \cdots, f_{d})$, and 
					\aryst
					Y \ = \ \Big\{ \sum_{k = 1}^{d} s_{k} f_{k} \ : \ |s_{k}| < r_{k} \Big\} .
					\earyst
					Let $e_{j} \in E_{j}$ be unit vectors and $a_{kj} := \langle e_{j}, f_{k} \rangle$. As $e_{j} \in V_{j}$, $a_{kj} = 0$ for $k < j$. Moreover, $|a_{kj}| \leq 1$, and since $V_{j} = \R f_{j} \oplus V_{j+1}$, $$|a_{jj}| = \sin \measuredangle (e_{j}, V_{j+1}) = \sin \measuredangle (V_{j+1}, E_{j}) \geq e^{-n\kappa}$$ for $j \leq d-1$ by \eqref{eq Gamma''prop2}, while $|a_{dd}| = 1$ because $E_{d} = V_{d} = \R f_{d}$. Hence, $(a_{kj})$ is lower triangular with a non-zero diagonal; $(e_{j})$ is a basis of $\R^{d}$. We have $$\widetilde Y = \{ \sum_{j} t_{j} e_{j} : |t_{j}| < r_{j} \},$$ and the coordinates of $v = \sum_{j} t_{j} e_{j} = \sum_{k} s_{k} f_{k}$ satisfy $s_{k} = \sum_{j \leq k} a_{kj} t_{j}$, and solving from the top, we obtain for $1\leq k\leq d$,
					\begin{equation} \label{eq step1 recursion}
						|s_{k}| \leq \ \sum_{j \leq k} |t_{j}|
						\qquad \mbox{and} \qquad
						|t_{k}| \ \leq \ \frac{1}{|a_{kk}|} \Big( |s_{k}| + \sum_{j < k} |a_{kj}| \, |t_{j}| \Big)
						\ \leq \ e^{n\kappa} \Big( |s_{k}| + \sum_{j < k} |t_{j}| \Big).
					\end{equation}
					Since $|s_{k}| \leq \| v \|$, the second inequality gives by induction on $k$
					\ary \label{eq step1 normbound}
					|t_{k}| \ \leq \ e^{n\kappa} ( 1 + e^{n\kappa} )^{k-1} \, \| v \| \ \leq \ \Theta \, \| v \| \qquad (1 \leq k \leq d)
					\eary
					(if this holds for all $j < k$, then $\sum_{j < k} |t_{j}| \leq ( (1 + e^{n\kappa})^{k-1} - 1 ) \| v \|$). All of the above holds verbatim at the image pair $(z,\sigma^n(\bi_+))$, with ($f_{j}', e_{j}', a_{kj}'$) and \eqref{eq Gamma''prop3} in place of \eqref{eq Gamma''prop2}.}
				
				{We prove (i) in \eqref{eq boxes}. If $v \in \widetilde Y$, then \eqref{eq step1 recursion} and \eqref{eq step1 radii} give $|s_{k}| < \sum_{j \leq k} r_{j} \leq 2 r_{k}$, which proves $v \in 2 Y$. Conversely, if $v \in Y$, we show by induction on $k$ that $|t_{k}| < 2 e^{n\kappa} r_{k}$: granting this for $j < k$, \eqref{eq step1 radii} and $r_{k-1} \leq e^{-ng'} r_{k}$ give $\sum_{j < k} |t_{j}| \leq 4 e^{n\kappa} r_{k-1} \leq 4 e^{-n(g' - \kappa)} r_{k} \leq r_{k}$ for $n \gg_{\cF,  \eps} 1$. Then \eqref{eq step1 recursion} gives $|t_{k}| < e^{n\kappa} (r_{k} + r_{k})$, proving $Y\subset 2^{n\kappa}\widetilde{Y}$.}
				
				\smallskip
				{The middle inclusions (ii) in \eqref{eq boxes} follow from the identity $M B^{E_{j}}(r_{j}) = B^{E_{j}'}( \| M|_{E_{j}} \| \, r_{j} )$, and $e^{-n\kappa} \rho_{j} = e^{n(\chi_{j} - \kappa)} r_{j} \leq \| M|_{E_{j}} \| \, r_{j} \leq e^{n(\chi_{j} + \kappa)} r_{j} = e^{n\kappa} \rho_{j}$ by \eqref{eq Gamma''prop1}, for each $j\in \{1,\cdots,d\}$.}
				
				\smallskip
				{Finally, we compare $\widetilde Y'$ with $X_{0}$ and prove the (iii) in \eqref{eq boxes}. Note that $X_{0} = \{ v \in \R^{d} : \| P^{U'^{\perp}} v \| < \rho , \ \| P^{U'} v \| < \delta \}$, and 
					\ary \label{eq step3 U}
					U' \ = \ E_{1}' \oplus \cdots \oplus E_{m-1}'.
					\eary
					Let $v = \sum_{j} t_{j} e_{j}' \in \widetilde Y'$ and split $v = a + b$ with $a := \sum_{j \leq m-1} t_{j} e_{j}' \in U'$ and $b := \sum_{j \geq m} t_{j} e_{j}'$. Then $\| a \| < (m-1) \rho$ and $\| b \| < (d-m+1) \delta$, so $\| P^{U'^{\perp}} v \| \leq \| a \| + \| b \| < d \rho$ and $\| P^{U'} v \| = \| P^{U'} b \| < d \delta$, which proves \ $v \in d X_{0}$. 
					
					Conversely, let $v \in X_{0}$ and split $v = p + q$ with $p := P^{U'^{\perp}} v \in U'$, $\| p \| < \rho$, and $q := P^{U'} v$, $\| q \| < \delta$. Writing $t_{j}(\cdot)$ for the coordinates in the basis $(e_{j}')$, we have $t_{j}(p) = 0$ for $j \geq m$ by \eqref{eq step3 U}, and $|t_{j}(p)| < \Theta \rho$ and $|t_{j}(q)| < \Theta \delta$ for all $j$ by applying \eqref{eq step1 normbound} to $p,q$ at the image pair $(z,\sigma^n(\bi_+))$. Hence $|t_{j}(v)| < 2 \Theta \rho$ for $j \leq m-1$ and $|t_{j}(v)| = |t_{j}(q)| < \Theta \delta$ for $j \geq m$, which proves \ $v \in 2 \Theta \widetilde Y'$.}}
		\end{proof}

		Taking $\rho = \delta$ in \cref{lem slabmapping}, we obtain the following. 
		
		\begin{cor} \label{cor slabmapping}
			Let $m \in \{1, \cdots, d\}$, let $\delta, \eps, \eta'' > 0$ and let $n > 0$ be an integer.
			Assume that $n \gg_{\cF,  \eps, \eta''} 1$ and $e^{n(\chi_{m-1} - \eps)} > \delta > e^{n(\chi_{m} + \eps)}$.
			Then there is a subset $\Gamma'' \subset \Lambda \times \Sigma_{+}$ with $(\mu \otimes \nu_{+})(\Gamma'') > 1 - \eta''$ such that for every $(y, \bi_{+}) \in \Gamma''$,
			\aryst
			f_{\bi_{+}}^n \big( y + Y(\bi_{+}, n, m, \delta, \delta) \big) \ \overset{\sqrt{2} \, e^{n\eps}}{\sim} \ B^{\R^{d}} \big( f_{\bi_{+}}^n(y), \delta \big) .
			\earyst
		\end{cor}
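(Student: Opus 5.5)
The plan is to read \cref{cor slabmapping} off \cref{lem slabmapping} with $\rho=\delta$. The only work is to identify the target slab with a ball.

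First, the hypotheses. With $\rho=\delta$, the assumption $e^{n(\chi_{m-1}-\eps)}>\rho\geq\delta>e^{n(\chi_m+\eps)}$ of \cref{lem slabmapping} becomes exactly the hypothesis $e^{n(\chi_{m-1}-\eps)}>\delta>e^{n(\chi_m+\eps)}$ of the corollary. The lemma does not require $\delta^{1-\alpha/10}>\rho$, so nothing else needs checking. \cref{lem slabmapping} then supplies $\Gamma''$ with $(\mu\otimes\nu_+)(\Gamma'')>1-\eta''$ such that, for $(y,\bi_+)\in\Gamma''$ and $z=f^n_{\bi_+}(y)$,
\[
f^n_{\bi_+}\big(y+Y(\bi_+,n,m,\delta,\delta)\big)\ \overset{e^{n\eps}}{\sim}\ X(z,m,\delta,\delta).
\]

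Second, compare the slab $X(z,m,\delta,\delta)=z+B^{U}(\delta)+B^{U^\perp}(\delta)$, with $U=U^{m-1}(z)$, to the ball $B^{\R^d}(z,\delta)$. Split $v=a+b$ with $a\in U$ and $b\in U^\perp$. Since $\|v\|^2=\|a\|^2+\|b\|^2$, we get
\[
B^{\R^d}(z,\delta)\subset X(z,m,\delta,\delta)\subset B^{\R^d}(z,\sqrt2\,\delta).
\]
The same holds with $\delta$ replaced by $c\delta$ for any $c>0$, since the slab and the ball scale together.

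Third, compose the two commensurabilities. Write $A$ for the image set. From the lemma, $e^{-n\eps}X\subset A\subset e^{n\eps}X$, and by the second step
\[
e^{-n\eps}X\supset B(z,e^{-n\eps}\delta)\supset B(z,(\sqrt2\,e^{n\eps})^{-1}\delta),
\qquad
e^{n\eps}X\subset B(z,\sqrt2\,e^{n\eps}\delta).
\]
Hence $A\overset{\sqrt2 e^{n\eps}}{\sim}B^{\R^d}(z,\delta)$. Here the ball is viewed as a slab with the trivial flag $(\R^d)$, so that $\lambda$-dilation means scaling the radius.

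The only point needing care is the degenerate case $m=1$, where $U^0=\{0\}$ and $\chi_0$ is unspecified. There the slab $X(z,1,\delta,\delta)$ is already the ball $B(z,\delta)$, and the constraint on $\delta$ reduces to the lower bound. No real obstacle is expected.
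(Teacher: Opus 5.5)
Your proposal is correct and is the paper's own argument: the paper obtains \cref{cor slabmapping} in one line by taking $\rho=\delta$ in \cref{lem slabmapping}. You additionally make explicit the step the paper leaves implicit, namely $B^{\R^d}(z,\delta)\subset X(z,m,\delta,\delta)\subset B^{\R^d}(z,\sqrt2\,\delta)$, which accounts for the factor $\sqrt2$ in the commensurability constant.
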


		\subsection{A covering lower bound for positive-mass sets}
		
		\begin{lemma} \label{lem conditionalentropy}
			Let $m \in \{1, \cdots, d\}$ with $D_{m} = m-1$, let $n \geq 1$ be an integer,  and let $\eps, \eps', \eta'', \eps'', \eta > 0$. Assume that $\eps'' \ll_{\cF,   \eps, \eps', \eta} 1$,  $n \gg_{\cF,  \eps, \eps', \eta'', \eta } 1$.
			Then  for every $\rho > \delta > 0$ satisfying $e^{n( \chi_{m-1} - \eps )} > \rho > \delta > e^{n ( \chi_{m} + \eps )}$ and $\rho >  \delta^{1 - \eps'}$, there is a subset $\Gamma' \subset \Lambda \times \Sigma_{+}$ with $(\mu \otimes \nu_{+})(\Gamma') > 1 - \eta''$ such that for every $(y, \bi_{+}) \in \Gamma'$, the following is true. Set $Y' = Y(\bi_{+}, n, m, \rho, \delta )$ and $Y'' = Y(\bi_{+}, n, m, \delta, \delta )$. Then  for every Borel subset $A$ with $\mu^{y + e^{- \eps'' n} Y'}(A) > \eta$, the set  $A$ requires at least $(\rho / \delta)^{m - 1 - \eps}$ many translations of $Y''$ to cover. \footnote{It is trivial to see that $Y'$ is covered by $C (\rho / \delta)^{m - 1}$ many translations of $Y''$.}  
		\end{lemma}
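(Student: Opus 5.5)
The plan is to convert the covering statement into an upper bound on the $\mu$-mass of each translate $w+Y''$, relative to $\mu(y+e^{-\eps''n}Y')$, and then apply \cref{lem counting}. If, for $(y,\bi_+)$ in a large set, every translate satisfies
\[
\mu'\big(w+Y''\big)\le (\delta/\rho)^{m-1-\eps/2}\,\mu\big(y+e^{-\eps''n}Y'\big)
\]
for the good part $\mu'$ of $\mu^{y+e^{-\eps''n}Y'}$, with bad part of mass $<\eta/2$, then \cref{lem counting} with $\eta'=\eta$ gives the required $(\rho/\delta)^{m-1-\eps}$ translates once $n$ is large, since $(\rho/\delta)^{\eps/2}\ge \delta^{-\eps\eps'/2}$ grows exponentially in $n$. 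The mass estimates will come from the Ledrappier--Young data of \cref{thm Exactdim}, read in the coordinates $(y_j)$ adapted to the flag $V_1(\bi_+)\supset\cdots\supset V_d(\bi_+)$. The box $Y'$ has size $e^{-n\chi_j}\delta$ along $f_j$ for $j\ge m$ and $e^{-n\chi_j}\rho$ for $j\le m-1$, while $Y''$ has size $e^{-n\chi_j}\delta$ in every direction.

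The first step is a lower bound for $\mu(y+e^{-\eps''n}Y')$. I would decompose $\mu$ along the fibers $x+V_m(\bi_+)$, that is, into $P^{V_m(\bi_+)}\mu$ and the fiber measures $\mu^{V_m(\bi_+)}_x$. The projection $P^{V_m(\bi_+)}(Y')$ is a box in $V_m^\perp$, spanned by $f_1,\dots,f_{m-1}$, with side lengths $e^{-n\chi_j}\rho$ for $j\le m-1$. I will use two facts. Because $D_m=m-1$, the measure $P^{V_m}\mu$ has full dimension $m-1$ on the $(m-1)$-dimensional space $V_m^\perp$. In the $V_m$ direction, the box has sides $e^{-n\chi_j}\delta$ for $j\ge m$. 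To handle the anisotropic sides, I would pass to the one-dimensional successive projections of \eqref{eq exactdim}, which are exact dimensional of dimension $D_{j+1}-D_j$ along $f_j$. Since the sides are anisotropic, Egorov's theorem alone does not suffice; I would instead use the product-like structure that comes from iterating the chain of fibers $V_1\supset\cdots\supset V_{d+1}$. Rather than estimating directly, the cleaner route is to push forward by $f^n_{\bi_+}$. By \cref{lem slabmapping}, $y+Y'$ maps to the slab $X(f^n_{\bi_+}y,m,\rho,\delta)$, and by \cref{cor slabmapping}, translates of $Y''$ map to sets commensurable with $\delta$-balls, up to $e^{n\eps}$ distortion. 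Under SSC the self-affinity gives $\mu|_{f_{\bw}\Lambda}=\mu(f_{\bw}\Lambda)\,(f_{\bw})_*\mu$ with $\bw=\bi_+[n]$, applied in the reversed direction, i.e.\ $\mu$ is pushed by $f_{\bi_+}^n$ onto the cylinder containing the image point. So the statement is equivalent, up to $e^{O(\eps''n)}$ losses that are absorbed by taking $\eps''\ll\eps$, to the following: near a $\mu$-typical point $z$, inside the slab $X(z,m,\rho,\delta)$ restricted to the cylinder, every set of relative mass $>\eta$ needs $(\rho/\delta)^{m-1-\eps}$ $\delta$-balls to cover it.

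The second step proves this reduced statement. For $\mu$-typical $z$ and a suitable $\bi'$, write $U=U^{m-1}(z)$. The slab is long, of length $\rho$, in the $(m-1)$-dimensional direction $U$, and thin, of width $\delta$, transversally. Here I would use the projection dimension $D_m=m-1$ along $V_m(\bj_+)$ for typical $\bj_+$: $V_m(\bj_+)$ is transverse to $U$, so $P^{V_m(\bj_+)}$ maps the slab onto a $\rho$-ball in an $(m-1)$-dimensional space, and $\delta$-balls onto $\delta$-balls. This gives an upper Frostman bound $P^{V_m}\mu(B(p,\delta))\le\delta^{m-1-\eps''}$, together with the lower bound $\mu(X)\gtrsim\rho^{m-1+\eps''}\cdot(\text{transverse mass})$. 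The delicate part is the transverse mass: the $\delta$-balls inside the slab must carry at most a proportion $(\delta/\rho)^{m-1-\eps}$ of the slab's mass, not merely an absolute bound. I would obtain this by the same argument as the claim in \cref{lem regularmeasureeverywhere}, namely Lebesgue density for partitions plus the conditional-measure disintegration. In that argument the fiber factor cancels between the numerator and the denominator, leaving the ratio of the projected masses, $\delta^{m-1}/\rho^{m-1}$, up to $\rho^{-O(\eps'')}$. Because $\rho\ge\delta^{1-\eps'}$, the ratio $(\rho/\delta)$ is at least $\delta^{-\eps'}$, so these $\rho^{O(\eps'')}$ errors are negligible compared with $(\rho/\delta)^{\eps/2}$ once $\eps''\ll\eps\eps'$.

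I expect the main obstacle to be the cancellation of the fiber factor in the second step. Exact dimensionality only controls the fiber measure at the base point, whereas the slab spans many fibers, each carrying a different fiber mass. A Besicovitch or density argument, of the kind in \cref{lem besicovich} applied to the good set, is needed to show that most of the slab's mass lies on fibers whose conditional mass in the thin direction is comparable to the mass at the base point.
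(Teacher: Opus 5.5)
Your last step matches the paper: bound the good part of the mass of $w+Y''$ against $\mu(y+e^{-\eps''n}Y')$, then apply \cref{lem counting}. However, your route to the two mass bounds has a genuine gap, because it misses the observation that makes this lemma elementary. Since $\delta>e^{n(\chi_m+\eps)}$ and $\chi_j\le\chi_m$ for $j\ge m$, the sides of $Y''$ and of $Z:=e^{-\eps''n}Y'$ along every $f_j$ with $j\ge m$ are at least $e^{n(\eps-\eps'')}\ge 1\ge\diam\Lambda$. On $\Lambda$ these boxes are therefore saturated in the whole $V_m(\bi_+)$ direction. Concretely, $(y+Z)\cap\Lambda$ is the full $P^{V_m(\bi_+)}$-preimage in $\Lambda$ of a box in $V_m(\bi_+)^\perp\cong\R^{m-1}$, and $(w+Y'')\cap\Lambda$ lies in such a preimage. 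Both masses are thus masses of $P^{V_m(\bi_+)}\mu$, which has the full dimension $m-1$ of that space. No fiber measures, successive projections or push-forwards are needed, and the anisotropy along $V_m$ that worries you is irrelevant.

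The paper proceeds as follows. For the upper bound, it covers the base of $Y''$ by $O_d(e^{-n\sum_{j<m}\chi_j})$ cubes of side $\delta$, centred in an Egorov set $\Gamma_*$ on which $\mu(x+V_m(\bi_+)+B(d\delta))\le\delta^{m-1-\eps/4}$. For the lower bound, it shows $\mu(y+Z)>\mathrm{vol}(P^{V_m(\bi_+)}Z)^{1+\eps}$ outside a set of mass at most $4^d\,\mathrm{vol}(P^{V_m(\bi_+)}Z)^{\eps}$, by packing disjoint half-boxes $y_i+\frac12 Z$; this again uses the saturation along $V_m$. This packing count is what handles the anisotropy of the base box, which you rightly note defeats a plain Egorov lower bound. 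Finally, \cref{lem besicovich} puts most of the mass of $y+Z$ inside $\Gamma_*$.

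Instead, you push forward by $f^n_{\bi_+}$ and reduce to slab filling for $X(z,m,\rho,\delta)$. That is \cref{prop slabfilling}, which the paper deduces \emph{from} this lemma via \cref{lem slabmapping}, so your reduction runs the wrong way. Your direct proof of it then fails at the step you flag, and the problem is not that fiber masses vary. The slab meets a fiber $p+V_m(\bj_+)$ in a $\delta$-piece centred on $z+U$, not at a point of the support of the fiber measure. Exact dimensionality only controls balls centred at typical points, so no density or Besicovitch argument can supply the lower bound $\mu(X)\gtrsim\rho^{m-1}\cdot(\text{transverse mass})$.

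In fact your argument uses nothing about $U$ beyond its transversality to $V_m(\bj_+)$. It would therefore prove slab filling for every such direction, which is false in general. Take $\mu=\mathrm{Leb}_{[0,1]}\otimes\kappa$ on $\R^2$, with $\kappa$ an $s$-dimensional Cantor measure and $s<1$. Every projection along a non-horizontal direction is one-dimensional, and the fiber measures have dimension $s$. Yet a $\delta\times\rho$ rectangle in the vertical direction has mass $\approx\delta\rho^s$ rather than $\rho\delta^s$, and about $(\rho/\delta)^{s}$ balls of radius $\delta$ cover most of it. What singles out $U^{m-1}(z)$ is the dynamics: the pullback by $f^{-n}$ turns the slab into a $V_m$-saturated box. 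That is exactly the frame you started in and then abandoned.
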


		\begin{proof} 	
			Denote 
			\aryst
			W'(\bi_{+}) &=& S( 0, (  V_{d+1-j}(\bi_+) )_{j = 1}^{d},  (  e^{- n \chi_d} \delta, \cdots, e^{- n \chi_{m}} \delta, \delta, \cdots, \delta  )  ).
			\earyst
			We first claim that
			\ary \label{eq W'intube}
			x + W'(\bi_{+}) \subset x + V_m(\bi_{+}) + B^{V_m(\bi_{+})^{\perp}}( d \delta ), \qquad x \in \R^d.
			\eary
			Indeed, each layer of $W'(\bi_{+})$ with exponent $\chi_k$, $k \geq m$, lies in $V_k(\bi_{+}) \subset V_m(\bi_{+})$, while each layer with exponent $\chi_k$, $k \leq m - 1$, is a ball of radius $\delta$, whose projection to $V_m(\bi_{+})^{\perp}$ has length at most $\delta$; since there are at most $m - 1 \leq d$ such layers, the claim follows. 
			By \cref{thm Exactdim}, for $(\mu\otimes \nu_+)$-a.e. $(x,\bi_+)\in \Lambda\times \Sigma_+$, 
			we have
			\aryst
			\lim_{\delta \to 0}  \frac{\log	\mu( x + V_m(\bi_{+}) + B^{V_m(\bi_{+})^{\perp}}( d \delta ) ) }{\log (d \delta) } = D_{m} = m - 1.
			\earyst
			Hence, by \eqref{eq W'intube}, for $(\mu\otimes \nu_+)$-a.e. $(x,\bi_+)\in \Lambda\times \Sigma_+$,
			\aryst
			\liminf_{\delta \to 0} \frac{ \log \mu( x +  W'(\bi_{+}) ) }{ \log \delta } = D_m = m - 1.
			\earyst

			Let $C_B=C_B(d)>0$ be the constant given in \Cref{lem besicovich}. Set 
			\ary
			\label{eqn:epsilon star}
			\eps_*:=\frac{\eta \eta''}{8C_B}
			\eary
			which depends only on $d,\eta$ and $\eta''$.
			By letting $n \gg_{\cF,  \eps_*} 1$, there is a subset $\Gamma_* \subset \Lambda \times \Sigma_{+}$ with $(\mu \otimes \nu_{+})(\Gamma_*) > 1 - \eps_*$ such that  for every $(y, \bi_{+}) \in \Gamma_*$, we have 
			\aryst
			\mu( y + W'(\bi_{+}) ) \leq \delta^{m - 1 - \eps/4}.
			\earyst
			For each $\bi_{+} \in \Sigma_{+}$, we denote
			\aryst
			\Gamma_* (\bi_{+}) = \{ y\in \Lambda : (y, \bi_{+})\in\Gamma_* \}.
			\earyst
			Given $\bi_{+} \in \Sigma_{+}$ and $y \in \Lambda$, by Vitali's Covering Lemma, we see that the set $(y + Y'') \cap \Gamma_{*}(\bi_{+})$ can be covered by at most $O_d(e^{- n \sum_{i = 1}^{m-1}\chi_i})$-many translates of  $W'(\bi_{+})$ centered at points in $\Gamma_{*}(\bi_{+})$. Thus we have
			\ary \label{eq upperboundinabox}
			\mu( (y + Y'' ) \cap \Gamma_{*}(\bi_{+}) ) \leq C_d e^{- n \sum_{i = 1}^{m-1}\chi_i}  \cdot \delta^{m - 1 - \eps/4}.
			\eary
			
			Now, we let $ \Gamma'$ be the set of $(y,  \bi_{+})$ such that  
			\ary 
			\label{eq lowerboundmeasure}
			\mu( y + e^{-\eps''n} Y' ) > (\mathrm{vol}( P^{ V_{m}(\bi_{+})} ( e^{-\eps''n} Y' ) ))^{1 + \eps}
			\eary
			and
			\ary \label{eq lowerbounddensity}
			\mu^{y + e^{-\eps''n} Y'}( \Gamma_{*}(\bi_{+}) ) > 1 - \eta / 2.
			\eary
			\textbf{We claim that} for $\eps_* \ll_{\eta, \eta''} 1$ and $n \gg_{\cF,  \eps_*, \eps, \eta, \eta''} 1$,
			\ary
			\label{eqn:Gamma ''}
			(\mu\otimes \nu_+)(\Gamma') > 1 - \eta''.
			\eary
			We first show that the condition \eqref{eq lowerbounddensity} holds on a set of measure $> 1 - \eta''/2$ by \cref{lem besicovich}. For a $\nu_+$-typical $\bi_+$, set $Z:=e^{-\eps''n}Y'$. Apply \Cref{lem besicovich} with $\omega=\mu$, $\Gamma=\Gamma_*(\bi_+)$, $W=Z$, and $\eta/4$ in place of $\eta$:
			the set
			\aryst
			A(\bi_+):=\{y\in \R^d: \mu((y+Z)\cap \Gamma_*(\bi_+))\geq (1-\eta/4)\mu(y+Z)\}
			\earyst
			satisfies $\mu(\R^d\setminus A(\bi_+))\leq \frac{4C_B}{\eta}\mu(\Lambda\setminus \Gamma(\bi_+))$. Hence, 
			\aryst
			&&(\mu\otimes \nu_+)\big(\{(y,\bi_+)\in \Lambda\times \Sigma_+: \eqref{eq lowerbounddensity}\,\,\text{fails}\}\big)\leq \frac{4C_B}{\eta}\int \mu(\Lambda\setminus \Gamma_*(\bi_+))d\nu_+(\bi_+)\\
			&=& \frac{4C_B}{\eta} (\mu\otimes \nu_+)((\Lambda\times \Sigma_+)\setminus \Gamma_*)<\frac{4C_B\epsilon_*}{\eta}=\eta''/2,
			\earyst
			where we use the definition of $\epsilon_*$ given in \eqref{eqn:epsilon star}.
			
			Next, we  show that \eqref{eq lowerboundmeasure} fails on a set of $(\mu\otimes \nu_+)$-measure less than $\eta''/2$. Assume $\eps''\leq \epsilon$. Then $r_j:=e^{-\eps''n}e^{-n\chi_j}\rho<1$ for $1\leq j\leq m-1$, and $r_j:=e^{-\eps'' n} e^{-n\chi_j}\delta\geq 1$ for $m\leq j\leq d$. Using the hypotheses on $\rho$ and $\delta$, we obtain
			\begin{equation}
				\label{eqn:vol PZ}
				\mathrm{vol}(P^{V_m(\bi_+)}(Z))=\prod_{j=1}^{m-1}2r_j=(2e^{-n\eps''})^{m-1} \rho^{m-1} e^{-n\sum_{j=1}^{m-1}\chi_j}
				\leq(2e^{-n\epsilon})^{m-1}\leq e^{-n\epsilon/2}.
			\end{equation}
			Let
			\ary
			B(\bi_+):=\{y\in \Lambda: \mu(y+Z)\leq (\mathrm{vol}(P^{V_m(\bi_+)}(Z)))^{1+\epsilon}\}.
			\eary
			We cover $B(\bi_+)$ by translates of $Z$ centered in $B(\bi_+)$. More precisely, let $\{y_i\}_{i\in J}$ be a maximal family of points in $B(\bi_+)$ such that $y_i+\frac{1}{2}Z$, $i\in J$, are pairwise disjoint. Then we claim
			\ary
			\label{eqn:J card}
			|J|\leq N:=4^d/\mathrm{vol}(P^{V_m(\bi_+)}(Z)).
			\eary
			
			We deduce from the maximality of $\{y_i\}_{i\in J}$ that for any $y\in B(\bi_+)\setminus \{y_i\ :  i\in J\}$ and any $i\in J$, $(y+\frac{1}{2}Z)\cap (y_i+\frac{1}{2}Z)\neq \emptyset$, which gives $y-y_i\in \frac{1}{2}Z-\frac{1}{2}Z=Z$. Hence, $B(\bi_+)\subset \bigcup_{i\in J}(y_i+Z)$. Using \eqref{eqn:vol PZ} and \eqref{eqn:J card}, we obtain
			\aryst
			\mu(B(\bi_+))\leq \sum_{i\in J}\mu(y_i+Z)\leq |J|(\mathrm{vol}(P^{V_m(\bi_+)}(Z)))^{1+\epsilon}\leq 4^{d}e^{-n\epsilon^2/2}.
			\earyst
			Consequently, for $n\gg _{\epsilon,\eta''} 1$, integrating the above inequality in $\bi_+$ gives
			\aryst
			(\mu\otimes \nu_+)\big(\{(y,\bi_+)\in \Lambda\times \Sigma_+: \eqref{eq lowerboundmeasure} \,\,\text{fails}\}\big)=\int \mu(B(\bi_+)) d\nu_+(\bi_+)\leq 4^{d} e^{-n\epsilon^2/2}<\frac{\eta''}{2},
			\earyst
			\textbf{completing the proof of the claim \eqref{eqn:Gamma ''}.}

			Take an arbitrary $(y, \bi_{+}) \in \Gamma'$ and let $A$ be a Borel subset with $\omega(A) > \eta$, where we abbreviate
			\aryst
			\omega \ := \ \mu^{y + e^{-\eps''n} Y'} .
			\earyst
			By \eqref{eq lowerbounddensity}, $\omega(\Gamma_{*}(\bi_{+})) > 1 - \eta/2$. Hence the measures
			\aryst
			\omega' := \omega( \, \cdot \, \cap \Gamma_{*}(\bi_{+}) ) , \qquad \omega'' := \omega( \, \cdot \, \setminus \Gamma_{*}(\bi_{+}) )
			\earyst
			satisfy $\omega = \omega' + \omega''$ and $|\omega''| \leq \eta/2$. Moreover, for every $z \in \Gamma_{*}(\bi_{+})$, \eqref{eq upperboundinabox} and \eqref{eq lowerboundmeasure} give
			\aryst
			\omega'\big( z + Y'' \big) \ = \ \frac{ \mu\big( (z + Y'')\cap \Gamma_{*}(\bi_{+}) \big) }{ \mu( y + e^{-\eps''n} Y' ) } \ \leq \ \frac{ C_d \, e^{- n \sum_{j = 1}^{m-1}\chi_j} \, \delta^{m - 1 - \eps/4} }{ \big( \mathrm{vol}( P^{V_m(\bi_{+})} ( e^{-\eps''n} Y' ) ) \big)^{1 + \eps} } \ =: \ t .
			\earyst
			\Cref{lem counting}, applied to $\omega$, to the family $\cK$ of translates of $Y''$ centered in $\Gamma_{*}(\bi_{+})$, and to $A$, shows that $A$ requires at least \[(\omega(A) - |\omega''|)/t \geq \eta/(2t)\] members of $\cK$ to cover. Substituting the value of $\mathrm{vol}( P^{V_m(\bi_{+})} ( e^{-\eps''n} Y' ) )$ given in \eqref{eqn:vol PZ}, the remaining factors are absorbed into the exponent of $\rho/\delta$ at the cost of an $O(\eps)$ loss, by $\eps'' \ll_{\cF,\eps} 1$, $\rho > \delta^{1-\eps'}$ and the choice of $n$. Since $\eps$ is arbitrary, we may equivalently fix it smaller from the outset. Hence $A$ requires at least $(\rho/\delta)^{m - 1 - O(\eps)}$ translations of $Y''$, which concludes the proof.
		\end{proof}

		\subsection{The Slab-Filling property: proof of Proposition \ref{prop slabfilling}}

		Let  $n \gg_{\cF,   \eps, \eps', \eta,  \eta_0} 1$ be a large integer to be determined in due course.  Let $\eta''\ll_{\eta_0,\eta} 1$ be a small parameter determined by \eqref{eqn:eta'' cond 1} and \eqref{eq muXAwLambdaX}. Let $\epsilon''\ll_{\mathcal{F},\epsilon, \epsilon',\eta} 1$	be a small parameter that satisfies the requirement of  \cref{lem conditionalentropy}, and that $d \eps'' \leq (|\chi_m| - \eps) \eps \eps'$ (used for the final absorption).

		Let $\Lambda_0\subset \Lambda$, $K>1$ be given by \cref{lem 2XX} with $\eta''$ and $\Lambda_0$ also satisfy \cref{prop Holdermap}.
		
		Let $\Gamma' \subset \Lambda \times \Sigma_{+}$ be given by  \cref{lem conditionalentropy} for $\eta''$, $\frac{\eta }{4\cdot 3^d K}$ in place of $\eta$ for some $C_d > 1$ depending only on $d$, and $\epsilon'''$.
		Let $\Gamma'' \subset  \Lambda \times \Sigma_{+}$ be given by   \cref{lem slabmapping} and \cref{cor slabmapping}  for $\eta''$, and for $\eps''$ in place of $\eps$.
		
		Since $(\mu \otimes \nu_{+})(\Gamma' \cap \Gamma'') > 1 - 2 \eta''$, we see that there exist $\bi_{+} \in \Sigma_{+}$ and a subset $\Lambda_1 \subset \Lambda$ with $\mu(\Lambda_1 )  > 1 - 3 \eta''$ such that for every $x \in \Lambda_1$, we have $ (x_{-n}, x[n] \bi_{+}) := \sigma^{-n}(x, \bi_{+})  \in \Gamma' \cap \Gamma''$. Hence,  
		\begin{properties}
			\item\label{P:comparision slabs} by \cref{lem slabmapping},
			\ary \label{eq fYX}
			f_{x[n]}(x_{-n} + Y(x[n] \bi_{+}, n, m, \rho, \delta ) ) \overset{e^{n\eps''}} \sim X( x, m, \rho, \delta );
			\eary
			\item\label{P:card lower} by \cref{lem conditionalentropy}, every Borel subset $A$ with $\mu^{x_{-n} + e^{- \eps'' n} Y'}(A) > \eta^2/(40K)$ is covered by at least $(\rho / \delta)^{m-1-\eps}$-many translations of $Y(x[n] \bi_{+}, n, m, \delta, \delta )$.
		\end{properties}

		We set
		\aryst
		\widetilde\Lambda = \Lambda_0 \cap \Lambda_1.
		\earyst
		By our choices of $\Lambda_0$ and $\Lambda_1$, we have 
		\aryst
		\mu(\widetilde\Lambda ) > 1 - 4 \eta''.
		\earyst
		We let $\Lambda''$ be the set of $x \in \widetilde\Lambda$ such that 
		\aryst
		\mu^{X(x, m, \rho, \delta)}(\widetilde\Lambda ) > 1 - \sqrt{\eta''}.
		\earyst
		Due to \cref{lem slabcovering}, one can extract from $\{ X(x, m, \rho, \delta) \}_{x \in  B}$ with any $B \subset \Lambda_0$, a subcovering of $B$ with multiplicity $O_d(1)$.
		Then from covering argument, it is direct to see that 
		\ary
		\label{eqn:eta'' cond 1}
		\mu(\Lambda'') > 1 -  C_d \sqrt{\eta''}.
		\eary
		By letting $\eta'' \ll_d \eta_0$, we deduce that $\mu(\Lambda'') > 1  - \eta_0$.

		Fix an arbitrary $x \in \Lambda''$, and an arbitrary Borel subset $A$ with $\mu^{X(x, m, \rho, \delta)}(A) > \eta$.  For simplicity, we denote  $X = X(x, m, \rho, \delta)$.
		By letting $\eta'' < \eta^2 / 4$, we have  
		\begin{align} \label{eq muXAwLambdaX}
			\mu(X \cap A \cap \widetilde\Lambda) > \mu(X \cap A) - \mu(X \setminus \widetilde\Lambda) > (\eta - \sqrt{\eta''} )  \mu(X) > \eta \mu(X) / 2.
		\end{align}
		By $x \in \Lambda_0$, we deduce 
		\ary \label{eq X2X0}
		\mu(3 X) \leq K \mu( X)\leq 2\eta^{-1}K\mu(X \cap A \cap \widetilde\Lambda).
		\eary
		
		Since $\mu$ is a $\nu_0$-stationary measure  we have
		\aryst
		\mu  = \int_{I^n} (f_{\bj})_*  \mu  d\nu_0^{\otimes n}(\bj).
		\earyst
		By \eqref{eq X2X0} and Markov's inequality, we deduce that there is $\bj \in I^n$ such that 
		\ary \label{eq 2XXAwLambda}
		\mu( (f_{\bj})^{-1}  (3 X) ) \leq 2 \eta^{-1} K  \mu  ( (f_{\bj})^{-1} (X \cap A \cap \widetilde\Lambda) ).
		\eary
		
		Now, let us denote
		\aryst
		Y' &=&  Y(\bj \bi_{+}, n, m, \rho, \delta ), \\
		Y'' &=&  Y(\bj \bi_{+}, n, m, \delta, \delta ).
		\earyst
		We fix a covering $\{ y_l  + e^{ - n \eps''} Y' \}_{l = 1}^{L}$ of $(f_{\bj})^{-1} (X  \cap A \cap \widetilde\Lambda ) \cap \Lambda$  with $\{ y_l \}_{l = 1}^{L} \subset (f_{\bj})^{-1} (X \cap A \cap \widetilde\Lambda) \cap \Lambda$, and with multiplicity at most $3^d$. 
		For each $1 \leq l \leq L$, we have $y_l \in (f_{\bj})^{-1}(\widetilde\Lambda)  \cap \Lambda \subset (f_{\bj})^{-1}(\Lambda_1) \cap \Lambda$,  and hence $(y_l, \bj\bi_{+}) \in \Gamma' \cap \Gamma''$.
		Then by \eqref{eq fYX}
		\aryst
		\bigcup_{l = 1}^{L} ( y_l +  e^{ - n \eps''} Y'  ) \subset ( f_{\bj})^{-1}(  3X ).
		\earyst
		Then by \eqref{eq 2XXAwLambda}, we have 
		\begin{align}
			\sum_{l = 1}^{L} \mu(  y_l +  e^{ - n \eps'' } Y'  )  &\leq 3^d  \mu( (f_{\bj})^{-1}  (3 X) )   \leq  2 \cdot 3^d\eta^{-1} K  \mu( ( f_{\bj})^{-1}( X \cap  A \cap \widetilde\Lambda)  )   ) \\
			&\leq 2\cdot 3^d \eta^{-1} K    \sum_{l = 1}^{L}  \mu(  ( y_l +  e^{ - n \eps''} Y'  )  \cap ( f_{\bj})^{-1}( A \cap \widetilde\Lambda)  ) ).
		\end{align}
		We deduce that there is $1 \leq l \leq L$ such that 
		\aryst
		\mu^{y_l + e^{ - n \eps''}  Y'}( (f_{\bj})^{-1}( A \cap \widetilde\Lambda) ) > \frac{\eta}{4\cdot 3^d K}.
		\earyst
		Then by \ref{P:card lower} applied at $(y_l, \bj\bi_{+}) \in \Gamma'$ (the condition is met by the above inequality), the set $(f_{\bj})^{-1}( A \cap \widetilde\Lambda)$ requires at least $(\rho / \delta)^{m - 1 - \eps}$ many translations of $Y''$ to cover. We claim that   $N_{\delta}(A) \geq  (\rho / \delta)^{m - 1 - 2\eps}$. Indeed, let $\{ B(z_k, \delta) \}_{k}$ be a minimal covering of  $A \cap \widetilde\Lambda \cap f_{\bj}(\Lambda)$ centered in this set. We have $y_k' := (f_{\bj})^{-1}(z_k) \in (f_{\bj})^{-1}(\widetilde\Lambda) \cap \Lambda \subset (f_{\bj})^{-1}(\Lambda_1) \cap \Lambda$, hence $(y_k', \bj\bi_{+}) \in \Gamma''$   by the property of $\Lambda_1$; by \cref{cor slabmapping} applied at $(y_k', \bj\bi_{+})$, we have $(f_{\bj})^{-1}( B(z_k', \delta) ) \subset y_k' + e^{n\eps''} Y''$, and each $e^{n\eps''} Y''$ is covered by at most $C_d e^{d n \eps''}$ translations of $Y''$. Therefore $(f_{\bj})^{-1}( A \cap \widetilde\Lambda)$ is covered by at most $C_d e^{d n \eps''} N_{\delta}(A)$ translations of $Y''$, and hence
		\aryst
		N_{\delta}(A) \geq C_d^{-1} e^{- d n \eps''} (\rho/\delta)^{m-1-\eps} \geq (\rho/\delta)^{m-1-2\eps},
		\earyst
		where the last inequality uses $\rho/\delta > \delta^{-\eps'} \geq e^{(|\chi_m| - \eps) n \eps'}$ (from the assumption $\rho > \delta^{1 - \eps'}$) and $d \eps'' \leq (|\chi_m| - \eps) \eps \eps'$; this we add to the requirement on $\eps''$ in \cref{lem conditionalentropy}. Since $\eps$ is a free parameter, replacing $\eps$ by $2\eps$ gives the claimed regularity. The proof is completed.

		\section{Dimension Increment: Proof of Proposition \ref{prop diminc}} \label{sec proofofprop diminc}
		
		In this section, we prove \cref{prop diminc}. Our proof is inspired by that of \cite[Theorem 4.11]{wuProjectionTheoremsCountably2025}.
		
		\noindent\textit{Overview of the proof.} We assume to the contrary that $D_{d_0 + 1} > D_{m+1}$ and denote as before by $m_0$ the largest number such that $D_{m+1} = D_{m_0 +1}$.
		We will reach a contradiction by counting at a fine scale $0 < r_n \ll 1$.  
		
		More precisely, on one hand, the exact dimensionality of the projected measures $\nu_{\bj_{+}} = P^{V_{m+1}(\bj_{+})}_{*} \mu$ 
		yields the upper bound  $$N_{r_n}(P^{V_{m+1}(\bj_{+})} Y) < 2^{(D_{m+1} + \eps_0) n'}$$ for a large part $Y$ of a well-chosen box and for $\nu_{+}$-almost every $\bj_{+}$ (see \eqref{eq projYupper}). On the other hand, two ingredients give a stronger lower bound. The slab-filling property (\cref{prop slabfilling}) provides $(m-1)$-dimensional regularity inside the small boxes; the spreading of the projected fiberwise measures (\cref{prop fiberwisedim}) provides the input of \cref{thm Wu}/\cref{lem:ball-sumset} along the line $J_m (x, \bj_{+})^{\perp}$. Combining them, Wu's entropy increment yields the stronger lower bound $$N_{r_n}(P^{V_{m+1}(\bj_{+})} Y) > 2^{(D_{m+1} + 2 \eps_0) n'}$$ (see \eqref{eq coveringofYlowerbound}), a contradiction.
		
		\smallskip
		
		\noindent\textit{The input from Wu.}
		A crucial ingredient in our proof is  \cite[Theorem 4.7]{wuProjectionTheoremsCountably2025}. We state a slightly generalized version here.%
		
		\begin{thm}[entropy increase theorem] \label{thm Wu}
			Given $0 < \gamma,{ c_*} < 1$ and $l \in \Z_{+}$, there exists $\delta=\delta(\gamma,l) > 0$ such that the following holds for all   $n \gg_{\gamma, l, { c_*}} 1$. Let $A \subset [0, 1] \cap 2^{-n}\Z$, $\eta \in \cP([0, 1] \cap 2^{-n} \Z)$. Suppose that $\eta(A) \geq { c_*}$ and for each $x \in A$,
			\aryst
			| \{ 1 \leq k \leq n :  \eta(\cD_k(x)) \geq 2 \eta( \cD_{k+l}(x) )   \} | \geq (1 - \gamma / 2) n.
			\earyst
			Let $B \subset [0, 1] \cap 2^{-n} \Z$, and for each $a \in A$, let $B_a \subset B$. We have the following two cases.
			
			\begin{enumerate}
				\item 
				Suppose that $|B| \leq 2^{n (1 - \gamma)}$ and $| B_a | \geq | B |^{1 - \delta}$ for all $a \in A$. Then we have
				\aryst
				\Big  | \bigcup_{a \in A} (a + B_a)  \Big | \geq | B |^{1 + \delta}.
				\earyst
				\item Suppose that $|B|<2^{\delta n}$ and $B_a$ are non-empty for all $a\in A$. Then we have
				\aryst
				\Big  | \bigcup_{a \in A} (a + B_a)  \Big | \geq   2^{\delta n}.
				\earyst
			\end{enumerate}
		\end{thm}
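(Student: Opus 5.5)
The plan is to deduce \cref{thm Wu} from Wu's original \cite[Theorem 4.7]{wuProjectionTheoremsCountably2025}. I will treat case (1) as a reformulation of that theorem, and case (2) as its degenerate small-$|B|$ regime, which comes out of the same spreading hypothesis on $\eta$ alone.

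For case (1), I first check that the hypotheses match Wu's. Wu works with a measure $\eta$ on $[0,1]\cap 2^{-n}\Z$ whose typical points satisfy a branching or porosity condition at most scales. This is exactly the condition $\eta(\cD_k(x))\geq 2\eta(\cD_{k+l}(x))$ for $(1-\gamma/2)n$ values of $k$: at those scales the dyadic cell of $x$ does not concentrate more than half its mass in one subcell $l$ levels down. Wu's statement is formulated with $\eta(A)$ bounded below by an absolute constant. The generalization here is that the bound is an arbitrary $c_*$, with $n\gg_{\gamma,l,c_*}$ and $\delta$ independent of $c_*$. I will handle this by restricting and renormalizing: set $\eta'=\eta^A$. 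The spreading condition at $x\in A$ is about $\eta$, not $\eta'$, so I need to transfer it. If $\eta(\cD_k(x))\geq 2\eta(\cD_{k+l}(x))$, then $\eta'$ satisfies the analogous inequality up to the density ratio $\eta(A\cap\cdot)/\eta(\cdot)$. A Lebesgue-density argument on the dyadic tree then shows that most $x\in A$, in $\eta'$-measure, have density close to $1$ at all but $O_{c_*}(1)\cdot(\text{small})\,n$ scales. So after shrinking $A$ to a set of $\eta'$-measure $\geq 1/2$, the spreading holds for $\eta'$ with $\gamma/2$ replaced by $\gamma$ and $l$ replaced by $l+O(1)$. I then apply Wu with $\gamma/2$ in place of $\gamma$; the loss in $c_*$ is absorbed into $n\gg_{c_*}1$. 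I expect this density transfer to be the only real work in case (1).

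For case (2), $|B|<2^{\delta n}$ and each $B_a$ is nonempty. I choose $b_a\in B_a$ for each $a\in A$, so that $\bigcup_a(a+B_a)\supset\{a+b_a\}$. The sets $A_b=\{a\in A: b_a=b\}$ partition $A$ into at most $2^{\delta n}$ pieces, so some $b$ has $\eta(A_b)\geq c_*2^{-\delta n}$. It therefore suffices to show that any $A'\subset A$ with $\eta(A')\geq c_*2^{-\delta n}$ has $|A'|\geq 2^{2\delta n}$.

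This follows from spreading. At each good scale $k$ for $x$, the mass of $\cD_{k+l}(x)$ is at most half that of $\cD_k(x)$. Chaining over the good scales in disjoint blocks of length $l$ gives $\eta(\{x\})\leq 2^{-(1-\gamma/2)n/l+O(1)}$. I need this for most points of $A$, so I pigeonhole the good scales into $l$ residue classes mod $l$, which gives about $n/l^2$ halvings along one class. Hence every $x\in A$ has $\eta(\{x\})\leq 2^{-c n/l^2}$ with $c=c(\gamma)>0$. Then $|A'|\geq c_*2^{-\delta n}2^{cn/l^2}\geq 2^{\delta n}$ once $\delta\leq c/(3l^2)$ and $n\gg_{c_*}1$. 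This is the step where $\delta$ must be chosen depending on $l$, consistent with $\delta=\delta(\gamma,l)$.

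Finally, I take $\delta$ to be the minimum of Wu's $\delta$ for $\gamma/2$ and $l+O(1)$, and $c/(3l^2)$. I expect the main obstacle to be the density transfer in case (1), namely handling the $c_*$-dependence without letting $\delta$ depend on $c_*$.
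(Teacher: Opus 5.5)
Your case (2) is correct and is the paper's argument. The paper bounds $\eta(\{x\})\le 2^{-(1-\gamma/2)n/l+2}$ for every $x\in A$ by greedily chaining $l$-separated good scales, so your residue-class pigeonholing and the resulting $l^2$ are unnecessary but harmless.

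Case (1), however, has a genuine gap: the density step is false. The dyadic tree has finite depth $n$, and nothing forces the relative density $f_k(x):=\eta(A\cap\cD_k(x))/\eta(\cD_k(x))$ to be close to $1$ at most scales. Take $\eta$ uniform on $[0,1)\cap 2^{-n}\Z$, which is spreading with $l=1$ at every scale $k<n$, and $A$ a random subset of relative density $c_*$. Then $f_k\approx c_*$ for all $k\le n-O(\log n)$. Moreover $\eta^A(\cD_k(x))/\eta^A(\cD_{k+1}(x))=2f_k(x)/f_{k+1}(x)$, and for $\eta^A$-typical $x$ this falls below $2$ at roughly half of the scales. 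What is actually true is weaker: $\log f_k$ is an $\eta^A$-submartingale with total expected increment at most $\log(1/c_*)$, so Pinsker gives $\eta^A(\cD_{k+l}(x))\le(\tfrac12+\epsilon)\,\eta^A(\cD_k(x))$ at all but $O_{c_*,l,\epsilon}(1)$ of the good scales. Recovering the exact factor $2$ then needs a second good scale of $\eta$ inside an enlarged window. Since the bad scales may come in long runs, this in general loses a positive proportion of scales.

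No such loss can be absorbed, because case (1) has no slack: the same $\gamma$ appears in the spreading count $(1-\gamma/2)n$ and in the ceiling $|B|\le 2^{n(1-\gamma)}$. If $\eta^A$ spreads on only $(1-\gamma)n$ scales, Wu's theorem can be applied only with parameter $2\gamma$, which demands $|B|\le 2^{n(1-2\gamma)}$. You do not have that. Applying it ``with $\gamma/2$ in place of $\gamma$'', as you propose, goes the wrong way, since it would need spreading on $(1-\gamma/4)n$ scales. So the black-box reduction to $c_*=1/2$ by restricting $\eta$ to $A$ does not close.

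The paper avoids the issue by not using Wu's Theorem~4.7 as a black box. It observes that in Wu's proof the value $1/2$ of $\eta(A)$ is used only in the last displayed inequality. Running the same proof with $c_*$ therefore changes only how large $n$ must be, and leaves $\delta=\delta(\gamma,l)$ unchanged.
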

		\begin{rema}
			The first case is \cite[Theorem 4.7]{wuProjectionTheoremsCountably2025} for ${ c_*}=1/2$. For general $0<{ c_*}<1$, this can be deduced by the same proof, noting that $1/2$ is only used in the last displayed equation in the proof of \cite[Theorem 4.7]{wuProjectionTheoremsCountably2025}; in particular, $\delta$ does not depend on ${ c_*}$, and ${ c_*}$ only enters through the required largeness of $n$. The second case handles $D_{m+1}=m-1$.
			
		\end{rema}
		\begin{proof}[Proof of the second case]
			By the spreading property, for any $x\in A$, we have $\eta(\{x\})\leq 2^{-(1-\gamma/2)n/l+2}$. 
			Hence, we have $|A|\geq { c_*} 2^{(1-\gamma/2)n/l-2}$. For each $a\in A$, take $b_a\in B_a\subset B$. Then the number of $a$ such that $a+b_a=x$ for some fixed $x$ is less than $|B| \leq 2^{\delta n}$. Hence, as long as $\delta\in (0,\frac{1-\gamma/2}{4l})$ and $n>\frac{2l}{1-\gamma/2}(2-\log_2 c_*)$, we obtain
			\[\Big  | \bigcup_{a \in A} (a + B_a)  \Big | \geq |\cup_{a\in A}(a+b_a)|\geq |A|/|B|\geq 2^{\delta n}.\] 
		\end{proof}
		The actual version we need is a continuous version with a covering number instead of a counting number; see \cref{lem:ball-sumset}.
		
		\smallskip

		\smallskip
		\noindent\textit{Scales.} Three scales govern the argument:
		\begin{equation}\label{eq scales}
			2^{- C_0 n} \ \gg \ \rho_n := 2^{- (C_0+1) n} \ \gg \ r_n := \rho_n 2^{- n'} = 2^{- (C_0+1+c) n}, \qquad \mbox{where } n' := c n.
		\end{equation}  
		{{The exponent $c := \frac{1}{2}\frac{\chi_m - \chi_{m+1}}{\chi_m - \chi_{m_0+1}} \in (0, 1)$ depends only on the Lyapunov exponents; it is chosen so that in changing projection directions, \eqref{eq B'jandBj} holds.}}
		We may and do assume that $c n \in \N$, which causes no loss of generality. 
		
		The coarse scale $2^{-C_0 n}$ is the long side of the master box $Z(\bi_{+}, n)$ \eqref{eqn:Z master box} and the side length of the atoms $\cD_{m+1, \bi_{+}, C_0 n}$; this is where the fiberwise structure of $\mu$ lives. The intermediate scale $\rho_n$ is the size of the small boxes $Z_0(\bi_{+}, n)$ \eqref{eqn:Z0 smaller box} into which the argument is localized, and after the rescaling $\Dil_{2^{(C_0+1)n}}$, it becomes $1$. The fine scale $r_n$ is the working scale: after the rescaling, it becomes the dyadic scale $2^{-n'}$ at which all covering numbers are evaluated and \cref{thm Wu}/\cref{lem:ball-sumset} is applied.

		\smallskip
		
		\noindent\textit{Parameters.}
		The parameters are chosen in the following order.
		First, choose a sufficiently large integer $C_0=C_0(\cF)$ such that
		\[
		(C_0+1)(\chi_{m-1}-\chi_m)>c|\chi_{m-1}|,
		\qquad
		(C_0+1)\alpha>c(10-\alpha).
		\]
		Next, choose
		\[
		c_0:=\frac{1}{4C_d},
		\qquad
		{ c_2:=\frac{1}{100\,C_d^{3}}},
		\qquad
		{ c_3:=\frac{c_2^{2}}{10\,C_d}=\frac{1}{10^{5}C_d^{7}}}.
		\]
		{Here and below $C_d\ge1$ denotes a constant depending only on $d$, chosen larger than the multiplicity constant appearing in \cref{lem besicovich}.}
		Set $\beta:=D_{m+1}-(m-1)$ and choose
		\[
		\eta_0\ll_{\cF,d}
		\min\bigl\{(c_0c_2)^4,\,1-\beta\bigr\}.
		\]
		With these parameters fixed, let
		\[
		l=l(\cF,\eta_0),
		\qquad
		\delta=\delta(2l,2\eta_0)>0.
		\]
		Choose $\eps_0>0$ sufficiently small that
		\begin{align}
			\eps_0\ll_{\cF,d,\eta_0}
			\min\Biggl\{
			& \delta,\ 2\eta_0^{1/4},\
			\frac{c}{C_0+1+c},
			\frac{(C_0+1)(\chi_{m-1}-\chi_m)-c|\chi_{m-1}|}
			{2(C_0+1)+c},
			\\
			& { 1-\beta},\ {\eta_0},\ \frac{c_2}{2},\ { c_3},\ {\delta\max\{\beta,\,\frac{\beta}{5-3\delta},\,\frac{1}{10}\}}
			\Biggr\}.
		\end{align}
		Finally, set
		\[
		\eps_1:=
		\frac{c}{100(C_0+1)(d+1)}\,\eps_0,
		\qquad
		\eta_1:=\frac{\eps_1\eta_0}{2C_d},
		\]
		choose $r_0=r_0(\eps_1,\eta_1)$, and take
		\[
		n\gg_{\cF,\eps_0,\eta_0,c_0}1.
		\]
		
		\smallskip

		\noindent\textit{The set $S$.}     
		To simplify notations, given a subset $\Gamma \subset \Lambda \times \Sigma_{+} $ with positive $(\mu \otimes \nu_{+})$-measure, for each $\bi_{+} \in \Sigma_{+}$, we denote 
		\ary
		\label{eqn:fiber set}
		\Gamma(\bi_{+}):= \{ x \in \Lambda : (x, \bi_{+}) \in \Gamma \}.
		\eary
		For $\bi_{+} \in \Sigma_{+}$, we denote 
		\aryst
		\nu_{\bi_{+}}:= (P^{V_{m+1}(\bi_{+})})_{*} \mu.
		\earyst
		Recall that $\nu_{\bi_{+}}$ is exact dimensional with $\dim \nu_{\bi_{+}} = D_{m+1}$ for $\nu_{+}$-almost every $\bi_{+}$. By Egorov's theorem, there exist $r_0 = r_0(\eps_1, \eta_1) > 0$ and a Borel subset $S \subset \Lambda \times \Sigma_{+}$ with $(\mu \otimes \nu_{+})(S) > 1 - \eta_1$ such that for every $(x, \bi_{+}) \in S$ and every $r \leq r_0$, we have
		\ary \label{eq Sregularity}
		\nu_{\bi_{+}} \big ( B( P^{V_{m+1}(\bi_{+})} x, r ) \big ) \in   [ r^{D_{m+1} + \eps_1}, r^{D_{m+1} - \eps_1} ].
		\eary

		\smallskip
		
		\subsection{Covering lemmas}\label{subsec covering lemmas}
		
		We collect here the counting and linear algebra lemmas used throughout the proof of \cref{prop diminc}: \Cref{lem:mea-counting} converts pointwise regularity of a measure into covering bounds, and \cref{lem numbercoverofprojection} compares the covering number of a set with that of a projection and with those of the fibers. Another covering lemma used in this section, \cref{lem:comparison}, is postponed to the appendix.
		
		\begin{lemma}\label{lem:mea-counting}
			Let $\nu$ be a Borel probability measure on $\R^{d}$, $\delta_*, \eps > 0$ and $r_* > 0$. Let $Y$ be a subset of points $y$ satisfying $\nu(B(y, r)) \in   [r^{\delta_*  + \eps}, r^{\delta_* -\eps}]$ for all $r \leq r_*$. Then there is a constant $C' = C'(\delta_*) > 0$ such that for every $x\in \R^d$ and every $r_2 < r_1 \leq r_* / 4$, we have 
			\aryst
			N_{r_2}(Y\cap B(x,r_1))\leq C' \, r_1^{\delta_*-\eps} r_2^{-(\delta_*+\eps)}.
			\earyst  	
		\end{lemma}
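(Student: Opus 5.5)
The plan is a greedy packing argument combined with a volume-doubling count, the same scheme as the proof of \cref{lem slabcovering}. Fix $x\in\R^d$ and $r_2<r_1\le r_*/4$. Among the points of $Y\cap B(x,r_1)$, choose a maximal $r_2$-separated subset $\{y_1,\dots,y_N\}$. By maximality the balls $B(y_i,r_2)$ cover $Y\cap B(x,r_1)$, so $N_{r_2}(Y\cap B(x,r_1))\le N$, and it suffices to bound $N$.

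The balls $B(y_i,r_2/2)$ are pairwise disjoint, and each lies in $B(x,2r_1)$ because $r_2<r_1$. Each center $y_i$ is in $Y$ and $r_2/2\le r_*$, so the lower bound in the hypothesis gives $\nu(B(y_i,r_2/2))\ge (r_2/2)^{\delta_*+\eps}$. Summing over the disjoint balls yields
\[
N\,(r_2/2)^{\delta_*+\eps}\le \nu(B(x,2r_1)).
\]

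It remains to bound $\nu(B(x,2r_1))$ from above, and this is the one step needing care, since $x$ itself need not lie in $Y$. If $Y\cap B(x,r_1)=\emptyset$ there is nothing to prove. Otherwise pick $y\in Y\cap B(x,r_1)$. Then $B(x,2r_1)\subset B(y,3r_1)$, and $3r_1\le r_*$ by the assumption $r_1\le r_*/4$. The upper bound in the hypothesis gives
\[
\nu(B(x,2r_1))\le (3r_1)^{\delta_*-\eps}.
\]
Combining the two displays,
\[
N\le 2^{\delta_*+\eps}\,3^{\delta_*-\eps}\,r_1^{\delta_*-\eps}\,r_2^{-(\delta_*+\eps)}.
\]

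Taking $\eps\le 1$ (the only relevant regime), the prefactor is at most $C'(\delta_*):=2^{\delta_*+1}3^{\delta_*}$. This proves the lemma, and no real obstacle is expected beyond the re-centering at $y\in Y$.
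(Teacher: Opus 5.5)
Your proposal is correct and is essentially the paper's proof: a maximal $r_2$-separated set, disjoint balls $B(y_i,r_2/2)$ inside $B(x,2r_1)$, and re-centering at a point $y_0\in Y\cap B(x,r_1)$ (the paper uses $B(y_0,4r_1)$ where you use the slightly tighter $B(y,3r_1)$). Your restriction $\eps\le 1$ is unnecessary, since $2^{\delta_*+\eps}3^{\delta_*-\eps}=6^{\delta_*}(2/3)^{\eps}\le 6^{\delta_*}$ for every $\eps>0$.
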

		\begin{proof}
			We may assume that $Y \cap B(x, r_1) \neq \emptyset$ and take $y_0 \in Y \cap B(x, r_1)$. Let $\{ y_i \}_{i = 1}^{N}$ be a maximal $r_2$-separated subset of $Y \cap B(x, r_1)$, so that $N_{r_2}(Y \cap B(x, r_1)) \leq N$. The balls $B(y_i, r_2/2)$ are disjoint and contained in $B(x, 2 r_1) \subset B(y_0, 4 r_1)$. Hence
			\aryst
			N \cdot (r_2/2)^{\delta_*+ \eps} \leq \sum_{i = 1}^{N} \nu( B(y_i, r_2/2) ) \leq \nu( B(y_0, 4 r_1) ) \leq (4 r_1)^{\delta_*- \eps},
			\earyst
			and the claim follows with $C' = 2^{\delta_*+ \eps} 4^{\delta_*- \eps}$.
		\end{proof}  
		
		The following basic lemma allows one to add or quotient directions under a projection.
		\begin{lemma}\label{lem numbercoverofprojection}
			There exists $c_d>0$ such that the following holds. Let $P^U$ be an orthogonal projection of Euclidean space $V=R^d$ and $T$ a bounded subset of $V$ with $S\subset T$. Then, for any $r>0$, we have
			\ary\label{equ:number-cover-of-projection} N_r(P^US)\leq c_d\frac{N_r(T)}{\min_{y\in S}N_r(T\cap (y+(P^U)^{-1}(B^{U^\perp}(r))))}. \eary
		\end{lemma}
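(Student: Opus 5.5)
The plan is a direct double-counting argument. We cover $P^U S$ by a small number of $r$-balls in $U^\perp$, obtained from a maximal separated set. Then we compare this number with an upper bound on $N_r(T)$, coming from the fact that the preimage tubes of distinct balls are almost disjoint. Set
\[
\Phi(y):=y+(P^U)^{-1}\bigl(B^{U^\perp}(r)\bigr),
\]
the tube of radius $r$ around $y+U$, and
\[
m:=\min_{y\in S}N_r\bigl(T\cap \Phi(y)\bigr).
\]
If $m=0$ there is nothing to prove, so we assume $m\geq 1$.

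First, we choose a maximal $2r$-separated family $\{y_i\}_{i=1}^{K}\subset S$ with respect to the pseudo-distance $|P^U y-P^U y'|$. By maximality, the balls $B^{U^\perp}(P^U y_i,2r)$ cover $P^U S$. Each such ball is covered by $O_d(1)$ balls of radius $r$, so $N_r(P^U S)\leq C_d K$. It therefore suffices to show $K\leq C'_d\,N_r(T)/m$.

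Next, fix a minimal cover $\mathcal{B}$ of $T$ by $r$-balls, so $|\mathcal{B}|=N_r(T)$. For each $i$, the subfamily $\mathcal{B}_i$ of balls meeting $T\cap\Phi(y_i)$ covers $T\cap \Phi(y_i)$. Hence $|\mathcal{B}_i|\geq m$.

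A ball $B(z,r)$ in $\mathcal{B}_i$ satisfies $|P^U z-P^U y_i|<2r$, since it meets the tube. Consequently, a given ball can belong to $\mathcal{B}_i$ only for indices $i$ with $P^U y_i\in B^{U^\perp}(P^U z,2r)$. The points $P^U y_i$ are $2r$-separated, so a volume count in $U^\perp$ shows there are at most $C''_d$ such indices.

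Summing over $i$ gives
\[
Km\leq \sum_i|\mathcal{B}_i|\leq C''_d\,N_r(T),
\]
which yields \eqref{equ:number-cover-of-projection} with $c_d=C_dC''_d$.

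No step is a genuine obstacle. The only point requiring care is the bounded-multiplicity count, which relies on the separation being measured after projection. That is why the maximal family is chosen separated in $U^\perp$ and not in $\R^d$.
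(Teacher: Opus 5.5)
Your proof is correct and is essentially the paper's argument: a maximal separated family in the projection, a minimal $r$-cover of $T$, and double counting in which the bounded multiplicity comes from separation measured in $U^\perp$. The only difference is cosmetic. You separate at scale $2r$ and pay an extra $O_d(1)$ factor to cover each $2r$-ball by $r$-balls, whereas the paper takes an $r$-separated subset of $P^U S$ directly and obtains $c_d = 5^d$.
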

		\begin{proof}
			Set
			\aryst
			M := \min_{y\in S}N_r \big (T\cap (y+(P^U)^{-1}(B^{U^\perp}(r))) \big ),
			\earyst
			and let $\{ B(x_i, r) \}_{i = 1}^{N}$ be an optimal covering of $T$, such that $N = N_r(T)$. Let $\{ z_k \}_{k = 1}^{K}$ be a maximal $r$-separated subset of $P^U S$; by maximality, we have $P^U S \subset \bigcup_{k = 1}^{K} B^{U^\perp}(z_k, r)$, hence
			\aryst
			N_r(P^U S) \leq K.
			\earyst
			Choose $s_k \in S$ with $P^U s_k = z_k$ and set $F_k := s_k + (P^U)^{-1}(B^{U^\perp}(r)) = (P^U)^{-1}(B^{U^\perp}(z_k, r))$. For each $k$, the balls $B(x_i, r)$ that meet $T \cap F_k$ cover $T \cap F_k$, so there are at least $N_r(T \cap F_k) \geq M$ of them. On the other hand, if $B(x_i, r)$ meets $F_k$, then $z_k \in B^{U^\perp}(P^U x_i, 2r)$; since the points $z_k$ are $r$-separated, the balls $B^{U^\perp}(z_k, r/2)$ are disjoint and contained in $B^{U^\perp}(P^U x_i, 5r/2)$, hence for each $i$ there are at most $5^{d}$ such indices $k$. Double counting the pairs $(i, k)$ such that $B(x_i, r)$ meets $T \cap F_k$ gives
			\aryst
			K M \leq 5^{d} N,
			\earyst
			and the claim follows with $c_d = 5^{d}$.
		\end{proof} 
		
		The next lemma compares projections along nearby hyperplanes; it is used in Subsection \ref{subsec concludingtheproof}.
		\begin{lemma}\label{lem projhyperplanes}
			For any two hyperplanes $H, H' \subset \R^d$, we have
			\aryst
			\| P^{H'} P_H \| \leq 2 \bd(H, H').
			\earyst
		\end{lemma}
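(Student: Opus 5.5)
The statement to prove is $\|P^{H'}P_H\|\le 2\,\bd(H,H')$ for hyperplanes $H,H'\subset\R^d$. Here $P_H$ is orthogonal projection onto $H$, and $P^{H'}=P_{H'^\perp}$ is orthogonal projection onto the line $H'^\perp$. The approach is to reduce the operator norm to a statement about unit vectors of $H$. By definition,
\[
\|P^{H'}P_H\|=\sup_{\|v\|\le 1}\|P^{H'}(P_Hv)\|=\sup_{h\in H,\ \|h\|\le 1}\|P^{H'}h\|,
\]
since $P_H$ maps the unit ball onto the unit ball of $H$. So it suffices to bound $\|P^{H'}h\|$ for unit vectors $h\in H$.

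For a unit vector $h\in H$, $\|P^{H'}h\|$ is the distance from $h$ to $H'$, which equals $\sin\measuredangle(h,H')$. Since $\dim H=\dim H'$, the definition of $\bd$ in \S\ref{subsec grassmannianflagsslabs} gives
\[
\bd(H,H')=\max_{v\in H\setminus\{0\}}\sin\measuredangle(v,H').
\]
Hence $\|P^{H'}h\|\le \bd(H,H')$ for every unit $h\in H$. Taking the supremum yields $\|P^{H'}P_H\|\le \bd(H,H')\le 2\,\bd(H,H')$.

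There is no real obstacle here. The only point needing care is the identity $d(h,H')=\sin\measuredangle(h,H')$ for unit $h$, together with the observation that the minimum in the definition of $\measuredangle(h,H')$ is attained at $w=P_{H'}h$. This follows from the Pythagorean decomposition $h=P_{H'}h+P^{H'}h$. The factor $2$ is slack; it would also absorb the alternative normalization in which $\bd$ is computed from $H'$ to $H$, since the two are equal for subspaces of equal dimension.
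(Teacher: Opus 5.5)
Your argument is correct, and like the paper's it actually yields the constant $1$; in fact it shows $\|P^{H'}P_H\|=\bd(H,H')$.

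Your route is more direct than the paper's. You restrict the operator norm to unit vectors $h\in H$ and read $\|P^{H'}h\|=d(h,H')=\sin\measuredangle(h,H')$ straight off the first clause of the definition of $\bd$. The paper instead chooses unit normals $u\in H^{\perp}$, $u'\in (H')^{\perp}$ and computes $P^{H'}P_Hv=\langle v,P_Hu'\rangle u'$. It bounds the norm by $\|P_Hu'\|=\sin\measuredangle(u,u')$, and only afterwards identifies this with $\bd(H,H')$ via $\max_{v\in H}|\langle v,u'\rangle|=\|P_Hu'\|$, which is your identity in disguise.

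Your version buys brevity and generality: it never uses that $H,H'$ are hyperplanes, and holds verbatim whenever $\dim H\le\dim H'$. The paper's detour buys the explicit formula $\bd(H,H')=\sin\measuredangle(H^{\perp},(H')^{\perp})$. This formula is visibly symmetric in $H,H'$, so for hyperplanes it settles directly the overlap of the two clauses in the definition of $\bd$ when dimensions are equal. It is also the description of $\bd$ behind the bound $\|u-u'\|\le 2\bd(H,H')$ in the proof of \cref{lem spreading nearby}.

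Your closing remark about the factor $2$ is slightly off. If the two one-sided quantities coincide, which you assert without proof, there is nothing to absorb. If they did not, the factor $2$ would not obviously help. Your argument uses only the clause with $V=H$, $W=H'$, so you need neither.
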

		\begin{proof}
			Let $u, u'$ be unit vectors in $H^{\perp}$ and $(H')^{\perp}$. For every unit vector $v$ we have $P_H v = v - \langle v, u \rangle u$, and hence
			\aryst
			P^{H'} P_H v = \langle P_H v, u' \rangle u' = \langle v, u' - \langle u', u \rangle u \rangle \, u',
			\earyst
			which gives $\| P^{H'} P_H \| \leq \| u' - \langle u', u \rangle u \| = \sin \measuredangle(u, u')$. For hyperplanes, this equals $\bd(H, H')$: indeed, by the definition of the Grassmannian distance, $\bd(H, H') = \max_{V^1 \subset H} \bd(V^1, H')$, and $\bd(V^1, H') = | \langle v, u' \rangle |$ for a unit vector $v \in V^1$, whose maximum over $v \in H$ is $\| P_H u' \| = \sin \measuredangle(u, u')$.
		\end{proof}
		
		\subsection{Preparing a random set}  \label{subsec preparingarandomset}
		
		\
		
		\smallskip

		\noindent{\textbf{Step 1}(H\"older continuity).}
		Let $\alpha \in (0, 1)$ be given by \cref{prop Holdermap}. 
		We fix some $\Lambda_1 \subset \Lambda$ given by \cref{prop Holdermap} with $\mu(\Lambda_1) > 1 - \eta_0$ such that  $x \mapsto U^k(x)$ is  $C^{\alpha}$ restricted to $\Lambda_1$ for every $k\in \{1,\cdots,d\}$.

		\smallskip
		\smallskip

		\noindent{\textbf{Step 2} (Slab-Filling property).}
		We apply \cref{prop slabfilling} with $(\rho, \delta, \eps', \eps, \eta, \eta_0) = (\rho_n, r_n, \eps_0, \eps_0, \eps_0, \eta_0)$, with its time parameter replaced by an integer $\tilde n$ chosen as follows.
		
		One constraint on $\eps_0$ is that $(C_0 + 1)(\chi_{m-1} - \chi_m - 2\eps_0) > c(|\chi_{m-1}| + \eps_0)$, which gives
		\aryst
		\alpha_0 \ := \ \frac{(C_0+1+c) \log 2}{|\chi_m| - \eps_0} \ < \ \alpha_1 \ := \ \frac{(C_0+1) \log 2}{|\chi_{m-1}| + \eps_0} .
		\earyst
		Choose an integer $\tilde n$ with $\alpha_0 n < \tilde n < \alpha_1 n$; such an integer exists for $n \gg_{\cF, C_0, \eps_0} 1$. Then, since $\chi_m < \chi_{m-1} < 0$,
		\aryst
		e^{\tilde n(\chi_m + \eps_0)} \ < \ e^{\alpha_0 n (\chi_m + \eps_0)} \ = \ 2^{-(C_0+1+c)n} \ = \ r_n, \\
		\qquad
		e^{\tilde n(\chi_{m-1} - \eps_0)} \ > \ e^{\alpha_1 n (\chi_{m-1} - \eps_0)} \ = \ 2^{-(C_0+1)n} \ = \ \rho_n ,
		\earyst
		so the two scale hypotheses of \cref{prop slabfilling} hold, and its largeness requirement is met since $\tilde n \asymp n$. The remaining two constraints in the parameter list, $(C_0+1)\alpha > c(10-\alpha)$ and $\eps_0(C_0+1+c) < c$, say respectively that $r_n^{1-\alpha/10} > \rho_n$ and $r_n^{1-\eps_0} < \rho_n$.
		
		Applying the proposition and by letting $n \gg_{\cF, \eps_0} 1$,
		there is a Borel subset $\Lambda_2(n) \subset \Lambda$  such that $\mu(  \Lambda_2(n)  ) > 1 - \eta_0$ and for every $x \in \Lambda_2(n)$, for subset
		\aryst
		W(x, n) :=    S( 0, ( U^{m - 1}(x), \R^{d} ),  (  \rho_n, r_n   ) )
		\earyst 
		the measure $(\Dil_{2^{(C_0+1)n}})_{*}   \mu^{x+W(x, n)}  $ is $( m-1 ,  2^{-n'}, \eps_0, \eps_0   )$-regular.
		
		\smallskip
		\smallskip
		
		\noindent{\textbf{Step 3} (Lower regularity for boxes for $\bi_{+}$).}
		For each $\bi_{+} \in \Sigma_+$,  we denote 
		\begin{align}
			\label{eqn:Z master box}
			Z( \bi_{+}, n) &:= S(0,  (V_{ m_0 +1}(\bi_{+}), \R^{d} ), (2^{ - C_0 n },  \rho_n) ), \\
			\label{eqn:Z0 smaller box}
			Z_0( \bi_{+}, n) &:=  S(0,  (V_{m+1}(\bi_{+}), \R^{d} ), (  \rho_n,   \rho_n ) ).
		\end{align}
		By \cref{lem regularmeasureeverywhere} applied with $(C, n)$ replaced by $\big ( \frac{C_0 + 1}{c},   n' \big )$ (note that $2^{- \frac{C_0 + 1}{c} n'} = \rho_n$, so $Z_0(\bi_{+}, n)$ is exactly the box appearing in \cref{lem regularmeasureeverywhere}), for $n \gg_{\cF,  C_0, \eps_0,  \eta_1 } 1$,  there is a subset $\Gamma'_3(n) \subset \Lambda \times \Sigma_{+}$ with $(\mu \otimes  \nu_{+})(\Gamma'_3(n)) > 1 - \eta_1$  such that for each $(x, \bi_{+}) \in \Gamma'_3(n)$,    the following is true:
		the measure 
		\aryst
		( \Dil_{2^{(C_0+1)n}} )_{*}\Big (    P^{V_{m+1}(\bi_{+})}_{*}  \mu^{x + Z_0(\bi_{+}, n )} \Big )
		\earyst
		is $(D_{m+1}, 2^{-n'}, \eps_0, \eps_0 )$-lower regular.
		
		Let $\Gamma_3(n)$ be the subset of those $(x, \bi_{+}) \in \Gamma'_3(n)$ such that \aryst \mu^{x + Z(\bi_{+}, n)} \big ( \Gamma'_3(n)(\bi_{+}) \big ) > 1 - \eps_1, \earyst where $\Gamma_3'(n)(\bi_+)$ is the fiber subset defined as in \eqref{eqn:fiber set} .   By \cref{lem besicovich} and the definition of $\eta_1$, we know $(\mu \otimes \nu_{+})(\Gamma_3(n)) > 1 - \eta_0$. 
		
		\noindent{\textbf{Step 4} (Upper regularity for boxes).}
		Let $S\subset \Lambda\times \Sigma_+$ be the subset with $(\mu\otimes \nu_+)(S)>1-\eta_1$ which satisfies \eqref{eq Sregularity}. We define 
		\begin{equation} \label{eq Gamma34}
			\Gamma_4(n) := \big \{ (x, \bi_{+}, \bj_{+}) \in \Lambda \times \Sigma_{+} \times \Sigma_{+} : \mu^{x + Z(\bi_{+}, n)}( S(\bi_{+}) \cap S(\bj_{+}) ) > 1 - 2\eps_0 \big \}.
		\end{equation}
		We claim that 
		\ary \label{eq gamma34lower}
		(\mu \otimes \zeta_n)(\Gamma_4(n)) > 1 - \eta_0,
		\eary
		where $\zeta_n$ is the self-coupling of $\nu_+$ defined in \eqref{eqn:self coupling}.
		
		Indeed, fix $(\bi_{+}, \bj_{+})$ and apply Besicovitch's covering lemma (\cref{lem besicovich}) with $W=Z(\bi_+,n)$ and $A=\Gamma_4(n)(\bi_+,\bj_+)$ and then integrate over $(\bi_+,\bj_+)$ %
		\aryst
		(\mu \otimes \zeta_n)( \Gamma_4(n)^{c} ) \leq 2 C_d \eps_0^{-1} \eta_1 < \eta_0
		\earyst
		by the definition of $\eta_1$ and $\eps_1 \leq \eps_0$.

		\smallskip

		\smallskip

		\noindent{\textbf{Step 5} (Angle control and spreading property).}
		By \cref{prop fiberwisedim} (applied with $\eps_1$ in place of $\eps$ there),
		there exists an integer $l = l(\cF,  c\eta_0) > 0$ and a Borel subset $  \Gamma_5(n)  \subset \Lambda \times \Sigma_{+} \times \Sigma_{+}$ with $ (\mu \otimes \zeta_n)   ( \Gamma_5(n) ) > 1 - c\eta_0$  
		such that    the following is true:
		\enmt
		\item for every $(x, \bi_{+}, \bj_{+}) \in   \Gamma_5(n)$, we have 
		\begin{align}
			\label{equ:angle-m0m}
			&d(V_{m_0+1}(\bi_{+}), V_{m+1}(\bj_{+})) \leq  2^{- n +   \eps_1 n}\\
			\label{equ:angle-mm}
			&d(V_{m+1}(\bi_{+}), V_{m+1}(\bj_{+})) \leq  2^{- 2c n +   \eps_1 n}\\
			\label{eqn:UV lower bound}
			&\measuredangle( V_{m+1}(\bi_{+}), U^{m-1}(x) ), \,\,\measuredangle( V_{m+1}(\bj_{+}), U^{m-1}(x) ) \geq 2^{-   \eps_1 n},
		\end{align}
		\item 
		for every $(x, \bi_{+}, \bj_{+}) \in   \Gamma_5(n)$, the probability measure
		$$(\Dil_{2^{(C_0+1)n}})_*(P^{V_{m+1}(\bj_{+}) + U^{m-1}(x)})_* ( \mu_{x, \bi_{+}} )^{\cD_{m+1, \bi_{+}, C_0 n}(x) }$$ is $(n, l, c\eta_0)$-ball spreading and the diameter of the support is less than $2^{\eps_1 n}$. From the definition of ball spreading, we actually get that the above measure is $(n',l,\eta_0)$-ball spreading.
		\eenmt

		\smallskip
		
		To finish our construction,  we  define $\widetilde\Gamma$ by
		\aryst
		\widetilde\Gamma:=  \{ (x, \bi_{+}, \bj_{+}) \in   \Gamma_4(n)\cap \Gamma_5(n) : x \in \Lambda_1\cap \Lambda_2(n), (x,\bi_+)\in\Gamma_3(n)\}.
		\earyst
		It is clear that we have
		\ary \label{eq wildegammalower}
		(\mu \otimes \zeta_n)(\widetilde\Gamma) \geq 1 - 5\eta_0.
		\eary

		\subsection{Finding a good point} \label{subsec findingagoodpoint}

		For each $\bi_{+}, \bj_{+} \in \Sigma_{+}$, we define
		\aryst
		\Lambda_{\bi_{+}, \bj_{+}}:=  \{ x \in \Lambda : (x, \bi_{+}, \bj_{+}) \in \widetilde\Gamma \}.
		\earyst
		Let  $\widehat\Sigma$ be the set of  $(\bi_{+}, \bj_{+})$ such that 
		\ary \label{eq lambdaijlower}
		\mu( \Lambda_{\bi_{+}, \bj_{+}}  ) > 1 -  ( \eta' )^2,
		\eary
		where $$\eta':=\eta_0^{1/4}.$$
		By \eqref{eq wildegammalower}, we have $\zeta_n( \widehat \Sigma ) > 1 - 5  ( \eta' )^2$.
		
		Fix some $(\bi_{+}, \bj_{+}) \in \widehat\Sigma$ for the rest of the section.  
		
		\smallskip
		
		\noindent{\textbf{Stage I}.}
		{{We find a point $x$ in $\Lambda_{\bi_+,\bj_+}$ with nice properties described in \cref{lem J0isalmostfullinJ} and \cref{lem Yjisstructured}.}}

		\begin{lemma}[Good base point] \label{lem goodbasepoint}
			Let $c_0 = c_0(d) \in (0, 1)$ be small enough. Then there exists $x \in \Lambda_{\bi_{+}, \bj_{+}}$ such that
			\enmt
			\item \label{itm basedoubling} {\rm (Doubling)}
			\ary \label{eq localdensitybetweennearbyscales0} \label{eq localdensitybetweennearbyscales}
			\mu( x + Z(\bi_{+}, n) ) > %
			c_0\mu( x + 8 Z(\bi_{+}, n)) ;
			\eary
			\item \label{itm basedensity} {\rm (Density)}
			\ary \label{eq localdensitybetweennearbyscales1} 
			\mu(  \Lambda_{\bi_{+}, \bj_{+}}   \cap (x + Z(\bi_{+}, n))  ) > (1 - \eta' ) \mu( x + Z(\bi_{+}, n) ). 
			\eary
			\eenmt
		\end{lemma}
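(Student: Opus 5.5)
The plan is to show that each of the two properties fails only on a set of small $\mu$-measure inside $\Lambda_{\bi_{+},\bj_{+}}$, and then to pick $x$ outside both exceptional sets. This works because \eqref{eq lambdaijlower} gives $\mu(\Lambda_{\bi_{+},\bj_{+}})>1-(\eta')^2$, which is close to $1$. Throughout, $\bi_{+}$ is fixed, so $Z:=Z(\bi_{+},n)$ is a fixed centered slab $S(0,\bE,\br)$ with $\bE=(V_{m_0+1}(\bi_{+}),\R^d)$. Each of the two estimates is then a covering argument of the type in \cref{lem besicovich}, carried out in this fixed slab geometry.

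\emph{Density.} Apply \cref{lem besicovich} with $\omega=\mu$, $\Gamma=\Lambda_{\bi_{+},\bj_{+}}$, $W=Z$ and $\eta'$ in place of $\eta$. It gives that the set $A_2$ of points $x$ failing \eqref{eq localdensitybetweennearbyscales1} satisfies
\[
\mu(A_2)\le \frac{C_B}{\eta'}\,\mu(\Lambda_{\bi_{+},\bj_{+}}^c)\le C_B\,\eta'.
\]
This is small because $\eta'=\eta_0^{1/4}$ and $\eta_0\ll_d 1$.

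\emph{Doubling.} Let $A_1$ be the set of $x\in\Lambda$ with $\mu(x+Z)\le c_0\,\mu(x+8Z)$. I will argue as in \cref{lem 2XX}, with a Vitali/Besicovitch-type covering adapted to a single fixed slab. The translates $x+Z$, $x\in A_1$, cover $A_1$; choose greedily a subfamily $\{x_l+Z\}$ such that the half-slabs $x_l+\tfrac12 Z$ are disjoint. Since all the slabs are translates of one convex symmetric body, volume comparison shows that $\{x_l+8Z\}$ has multiplicity at most $C_d'=O(17^d)$. This case is simpler than \cref{lem slabcovering}, because there is no Hölder variation of the direction. Then
\[
\mu(A_1)\le\sum_l\mu(x_l+Z)\le c_0\sum_l\mu(x_l+8Z)\le c_0\,C_d'.
\]
One small adjustment is needed: the cover must be by $x_l+Z$ while disjointness is imposed on $x_l+\tfrac12 Z$. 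I will handle this with the same greedy argument as in \cref{lem slabcovering}, or simply replace $Z$ by $2Z$ and $8Z$ by a fixed multiple when counting; this changes only the constant $C_d'$.

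The main point to check is the order of the constants. Since $c_0=1/(4C_d)$ with $C_d$ chosen to dominate the multiplicity constants, we get $\mu(A_1)\le 1/4$ (or smaller). Hence
\[
\mu\bigl(\Lambda_{\bi_{+},\bj_{+}}\setminus(A_1\cup A_2)\bigr)>1-(\eta')^2-\tfrac14-C_B\eta'>0,
\]
and any $x$ in this set satisfies both (1) and (2). The only delicate point I expect is confirming that the multiplicity constant of the $8Z$ cover is indeed absorbed by the $C_d$ used in defining $c_0$. If it is not, I would shrink $c_0=c_0(d)$, which the statement explicitly allows.
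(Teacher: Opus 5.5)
Your proposal is correct and has the same overall structure as the paper: bound the density-bad set and the doubling-bad set separately, then intersect their complements with $\Lambda_{\bi_{+},\bj_{+}}$. The density step, via \cref{lem besicovich}, is exactly the paper's. The difference is in the doubling step.

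\textbf{The paper's doubling step.} It invokes \cref{lem:comparison}, a Tonelli double-counting bound $\int \mu(x+W')/\mu(x+W)\,d\mu(x)\le N$. This controls the bad set directly, with no covering or multiplicity bookkeeping. However, that lemma is stated for concentric cubes. Applying it with $W=Z(\bi_{+},n)$ and $W'=8Z(\bi_{+},n)$ therefore tacitly requires an adaptation: either a linear change of variables with the slab sandwiched between cubes, or a subdivision of $y+8Z$ into $O_d(1)$ pieces $Q$ with $Q-Q\subset Z$.

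\textbf{Your doubling step.} Your greedy Vitali-type selection uses only that $Z$ is a bounded open convex symmetric body. It therefore applies verbatim to the anisotropic slab and gives the explicit constant $17^d$.

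\textbf{The adjustment you flag is automatic.} Choose $x_{l+1}\in A_1\setminus\bigcup_{l'\le l}(x_{l'}+Z)$ at each step. Then the sets $x_l+Z$ cover $A_1$, and $x_{l'}-x_l\notin Z=\tfrac12 Z-\tfrac12 Z$, so the half-slabs $x_l+\tfrac12 Z$ are pairwise disjoint. The process terminates because $\Lambda$ is bounded. If $z\in x_l+8Z$, then $x_l+\tfrac12 Z\subset z+\tfrac{17}{2}Z$, so the multiplicity of $\{x_l+8Z\}$ is at most $17^d$. No enlargement of $Z$ is needed.

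Since $c_0=c_0(d)$ may be shrunk, the resulting bound $\mu(A_1)\le 17^d c_0$ is absorbed exactly as you say.
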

		\begin{proof}
			Let $\Lambda^{(1)}_{\bi_{+}, \bj_{+}} \subset \Lambda_{\bi_{+}, \bj_{+}}$ be the subset of $x$ satisfying \eqref{eq localdensitybetweennearbyscales0}. By \eqref{eq lambdaijlower} and \cref{lem:comparison} (applied with $W = Z(\bi_{+}, n)$ and $W' = 8 Z(\bi_{+}, n)$), we see that $$ \mu( \Lambda^{(1)}_{\bi_{+}, \bj_{+}} ) > 1 - C_d c_0 - ( \eta' )^2 .$$
			Next, let $\Lambda^{(2)}_{\bi_{+}, \bj_{+}} \subset \Lambda_{\bi_{+}, \bj_{+}}$ be the subset of $x$ satisfying \eqref{eq localdensitybetweennearbyscales1}. By \eqref{eq lambdaijlower} and Besicovitch's covering lemma (\cref{lem besicovich}), we have 
			\aryst
			\mu( \Lambda^{(2)}_{\bi_{+}, \bj_{+}} ) > 1 -  C_d \eta'.
			\earyst
			By letting $c_0, \eta_0 \ll_d 1$, we see that   $  \Lambda^{(1)}_{\bi_{+}, \bj_{+}} \cap  \Lambda^{(2)}_{\bi_{+}, \bj_{+}}$ is not empty since
			\aryst
			\mu(   \Lambda^{(1)}_{\bi_{+}, \bj_{+}} \cap  \Lambda^{(2)}_{\bi_{+}, \bj_{+}} ) > 1 - C_d c_0 - ( \eta' )^2 -  C_d \eta'  > 0.
			\earyst
			Any point in this intersection satisfies the conclusions.
		\end{proof}  
		
		From now on, we fix a point $x$ given by   \cref{lem goodbasepoint}.
		
		\smallskip 
		\smallskip
		
		\noindent{\textbf{Stage II}.}  
		The goal of this stage is to remove from $x + Z(\bi_{+}, n)$ the points failing the lower regularity of Step 3 or the uniform exact dimensionality of \eqref{eq Sregularity}. The surviving set $Y$ has nearly full measure, and \cref{lem:mea-counting} then gives the upper bound \eqref{eq projYupper}.
		
		We set
		\aryst
		Y^{(3)} := (x + Z(\bi_{+}, n)) \cap \Gamma'_3(n)(\bi_{+}).
		\earyst
		By Step 3, we have
		\ary \label{eq Y33density}
		\mu^{x + Z(\bi_{+}, n)}(Y^{(3)})>1-\eps_1   \geq 1 - \eta'.
		\eary

		We set
		\aryst 
		Y^{(4)} := (x + Z(\bi_{+}, n)) \cap S(\bi_{+})\cap S(\bj_{+}).
		\earyst
		By $(x, \bi_{+}, \bj_{+}) \in   \Gamma_4(n) $, we have 
		\ary \label{eq Y34density}
		\ \mu^{x + Z(\bi_{+}, n)}(Y^{(4)}) > 1 - 2\eps_0   \geq 1 - \eta'.
		\eary 
		Due to \eqref{equ:angle-m0m}, both $P^{V_{m+1}(\bi_{+})} ( Z(\bi_{+}, n) )$ and $P^{V_{m+1}(\bj_{+})} ( Z(\bi_{+}, n) )$ have diameter $ \leq 4\rho_n 2^{\eps_1 n}$; note also that $ \rho_n 2^{\epsilon_1 n} \leq r_0 / 4$ for $n$ large.  Applying \cref{lem:mea-counting} with $\nu = \nu_{\bi_{+}}$, $Y = P^{V_{m+1}(\bi_{+})}(Y^{(4)})$, $\delta_*= D_{m+1}$, $\eps = \eps_1$, $r_1 = 4\rho_n 2^{\eps_1 n}$ and $r_2 = r_n$, we obtain
		\ary  \label{eq coveringofY4bi} 
		N_{r_n}( P^{V_{m+1}(\bi_{+})}(Y^{(4)}) ) \leq C' r_1^{D_{m+1} - \eps_1} r_n^{- (D_{m+1} + \eps_1)} <  2^{(D_{m+1} + \eps_0) n'}, 
		\eary
		where in the second inequality we used the definitions of $\eps_1$ and $n \gg 1$ to absorb the error terms. The same argument with $\bj_{+}$ in place of $\bi_{+}$ gives
		\ary \label{eq coveringofY4}
		N_{r_n}( P^{V_{m+1}(\bj_{+})}(Y^{(4)}) ) <  2^{(D_{m+1} + \eps_0)   n'}.
		\eary

		We set
		\ary \label{eq defY}
		Y :=     Y^{(3)} \cap Y^{(4)} \cap \Lambda_{\bi_{+}, \bj_{+}}.
		\eary
		In particular, we have $Y \subset Y^{(3)} \subset   x + Z(\bi_{+}, n )$. Thus, we deduce from \eqref{eq Y33density}, \eqref{eq Y34density} and \eqref{eq localdensitybetweennearbyscales1}  
		\begin{align}
			\mu^{x + Z(\bi_{+}, n)}(Y)  \geq&  \mu^{x + Z(\bi_{+}, n)}( \Lambda_{\bi_{+}, \bj_{+}} )   - 	\mu^{x + Z(\bi_{+}, n)}((Y^{(3)})^c) - 	\mu^{x + Z(\bi_{+}, n)}((Y^{(4)})^c)   \nonumber  \\
			\geq& 1 - 3 \eta'.  \label{eq lowerboundforY}
		\end{align}
		By $Y \subset Y^{(4)}$ and \eqref{eq coveringofY4}, it is clear that 
		\ary \label{eq projYupper}
		N_{ r_n}( P^{V_{m+1}(\bj_{+})}(Y) ) <  2^{(D_{m+1} + \eps_0) n'}.
		\eary

		\smallskip 
		\smallskip
		
		\noindent{\textbf{Stage III}.}  
		The goal of this stage is to partition $Y$ into pieces $Y_j, \,j \in \cJ$ in the $\rho_n$-boxes $y_j + Z_0(\bi_{+}, n)$ and to single out the good pieces $Y_j (j  \in \cJ_0)$. The two lemmas below summarize the structure needed for the lower bound in Subsection \ref{subsec concludingtheproof}.
		
		Let us fix a finite subset of points $\{  y_j \}_{j \in \cJ} \subset Y$ so that 
		$\{ y_j + Z_0(\bi_{+}, n) \}_{j \in \cJ}$ forms a covering of $Y$ with multiplicity $O_d(1)$.
		
		We define
		\aryst
		Y_j:= Y \cap (y_j + Z_0(\bi_{+}, n) ),  \quad j \in \cJ.
		\earyst
		Let $c_2 \ll_d c_0^2$ be a small parameter that satisfies \cref{lem J0isalmostfullinJ}. We say an index $j\in \cJ$ is {\it $Y$-dense} if $\mu^{y_j + Z_0(\bi_{+}, n)} ( Y_j ) > c_2$, and {\it doubling} if $\mu( y_j + Z_0(\bi_{+}, n) ) > c_2 \mu( y_j +  3 Z_0(\bi_{+}, n) )$. 
		The doubling condition is used to transfer mass from the enlarged box $y_j +  3 Z_0(\bi_{+}, n)$ back to $y_j + Z_0(\bi_{+}, n)$ in the proofs of \cref{lem J0isalmostfullinJ,lem Yjisstructured}. We set
		\begin{equation}\label{def;j0}
			\cJ_0:= \Big\{ j \in \cJ \ \Big| \ \mu^{y_j + Z_0(\bi_{+}, n)} (  Y_j ) >  c_2 \ \mbox{ and } \   \mu( y_j + Z_0(\bi_{+}, n) ) > c_2 \mu( y_j +  3 Z_0(\bi_{+}, n) )   \Big\}.
		\end{equation}

		The following lemmas summarize the main properties we need from $x$. Recall the notation we introduced in \eqref{eq J def}:
		\begin{equation}\label{eq Hmxbi+}
			J_m(x, \bi_{+}  )=  V_{m+1}(\bi_{+}) + U^{m-1}(x)\qquad \text{and}\qquad J_m(x, \bj_{+}  )=  V_{m+1}(\bj_{+}) + U^{m-1}(x).
		\end{equation}
		
		\cref{lem J0isalmostfullinJ} provides the input of Wu's theorem: a ball-spreading measure on the line $J_m(x, \bj_{+})^{\perp}$ charging the $r_n$-tubes around the projected centers of the good pieces with mass $> c_0/2$. \Cref{lem Yjisstructured} states that each good piece is structured: it has big fibers (item \eqref{itm Gj1}), and its images on the line $J_m(x, \bi_{+})^{\perp}$ are large but not excessively large (item \eqref{itm Gj2}).
		
		\begin{lemma} \label{lem J0isalmostfullinJ} 
			For every $c_2 \ll_d c_0^2$,   there is a probability measure $\zeta$ on $	J_m(x, \bj_{+}  )^{\perp}$
			such that    $(\Dil_{2^{(C_0+1)n}})_*  \zeta $ is   $( n', l+2 , 2\eta_0 )$-ball spreading with support of diameter less than $e^{\eps_0 n'}$, and 
			\begin{equation}
				\label{eqn:mass lower bound}
				\zeta( \bigcup_{j \in \cJ_0} \big (  B^{J_m(x, \bj_{+}  )^{\perp}}( r_n)  +    P^{ J_m(x, \bj_{+}  ) }   P_{J_m(x, \bi_{+}  )}   (y_j)  \big ) ) > c_0(d)/2.
			\end{equation}
		\end{lemma}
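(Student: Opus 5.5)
The plan is to take $\zeta$ to be the projected normalized fiber measure supplied by Step 5,
\[
\zeta := (P^{J_m(x,\bj_+)})_*\,(\mu_{x,\bi_+})^{\cD_{m+1,\bi_+,C_0n}(x)} ,
\]
viewed as a measure on $J_m(x,\bj_+)^{\perp}$. Since $x\in\Lambda_{\bi_+,\bj_+}$, we have $(x,\bi_+,\bj_+)\in\Gamma_5(n)$. Item (2) of Step 5 then says that $(\Dil_{2^{(C_0+1)n}})_*\zeta$ is $(n',l,\eta_0)$-ball spreading, with support of diameter $<2^{\eps_1 n}\le e^{\eps_0 n'}$ by the choice of $\eps_1$. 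Passing from $l$ to $l+2$ and from $\eta_0$ to $2\eta_0$ only weakens the property: doubling between radii $2^{-k}$ and $2^{-k-l}$ implies it between $2^{-k}$ and $2^{-k-l-2}$. So the spreading and diameter claims are immediate, and the real content is the mass bound \eqref{eqn:mass lower bound}.

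For the mass bound I will relate the fiber measure to the pieces $Y_j$. Write $D:=\cD_{m+1,\bi_+,C_0n}(x)$. The fiber $x+V_{m+1}(\bi_+)\cap D$ sits inside $x+Z(\bi_+,n)$ up to bounded enlargement, using \cref{cor supportoffiberwisemeasure} (the support lies in $x+V_{m_0+1}(\bi_+)$) together with the box side $2^{-C_0n}$. For $y$ in the fiber, $P_{J_m(x,\bi_+)}y$ differs from $P_{J_m(x,\bi_+)}x$ by a vector in $V_{m+1}(\bi_+)$. By \eqref{equ:angle-m0m} and \cref{lem projhyperplanes}, $P^{J_m(x,\bj_+)}$ moves points of $x+Z(\bi_+,n)$ by an amount comparable to $2^{-n}\cdot 2^{-C_0n}=\rho_n$, up to $2^{\eps_1 n}$ factors. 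Hence a point $y$ of the fiber lying in $y_j+Z_0(\bi_+,n)$ projects to within $O(\rho_n 2^{\eps_1 n})$ of the projected center. I will refine this as follows. Because $y-y_j$ has $V_{m+1}(\bi_+)^\perp$-component of size $\le\rho_n$, \eqref{equ:angle-mm} and \cref{lem projhyperplanes} bound the displacement by $2^{-2cn+\eps_1n}\rho_n\le r_n$, since $2c>c$. This is the role of the choice of $c$, and it gives $P^{J_m(x,\bj_+)}y\in B(r_n)+P^{J_m(x,\bj_+)}P_{J_m(x,\bi_+)}y_j$, at least after replacing $y_j$ by the point of the fiber nearest to it in the $V_{m+1}(\bi_+)$ direction.

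With this containment in hand, it suffices to show
\[
\mu_{x,\bi_+}^{D}\Big(\bigcup_{j\in\cJ_0}(y_j+Z_0(\bi_+,n))\Big)>c_0/2 .
\]
This is a statement about a single fiber, whereas $\cJ_0$ is defined in terms of $\mu$. To connect the two, I will use that the density \eqref{eq localdensitybetweennearbyscales1} and the doubling \eqref{eq localdensitybetweennearbyscales} hold at $x$. I will then average over fibers, since $\mu|_{x+Z}$ disintegrates into the $\mu_{y,\bi_+}$. I expect this fiber-versus-ambient transfer to be the main obstacle. The first thing I will try is to strengthen the choice of $x$ in \cref{lem goodbasepoint} by adding a further Besicovitch/Markov condition (\cref{lem besicovich}): the fiber through $x$ should carry a proportion at least $c_0$ of its mass in good boxes, comparable to the proportion carried by $\mu^{x+Z}$. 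The ambient proportion itself is obtained by splitting off the non-$Y$-dense and non-doubling indices, and bounding them as follows. The non-dense ones have total mass $\le c_2\cdot O_d(1)\mu(x+3Z)$, using the bounded multiplicity of the covering. The non-doubling ones have mass $\le c_2\sum_j\mu(y_j+3Z_0)\le C_dc_2\,\mu(x+8Z)\le C_dc_2c_0^{-1}\mu(x+Z)$. Since $c_2\ll_d c_0^2$, and using \eqref{eq lowerboundforY}, these losses are absorbed, leaving a proportion $>c_0/2$.
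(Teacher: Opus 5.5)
Your spreading and diameter claims are fine, since passing from $(l,\eta_0)$ to $(l+2,2\eta_0)$ only weakens ball spreading. The mass bound is where the proposal breaks down. You reduce it to
\[
\mu_{x,\bi_+}^{\cD_{m+1,\bi_+,C_0n}(x)}\Big(\bigcup_{j\in\cJ_0}\big(y_j+Z_0(\bi_+,n)\big)\Big)>c_0/2 ,
\]
which is a statement about the single fiber through the fixed point $x$. That fiber is $\mu$-null, so the ambient density \eqref{eq localdensitybetweennearbyscales1} and doubling \eqref{eq localdensitybetweennearbyscales0} at $x$ say nothing about it. Nothing forces this particular atom to meet the good boxes at all.

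Your proposed remedy is to add a Besicovitch/Markov condition on the fiber through $x$ to \cref{lem goodbasepoint}. That is circular. The sets $Y$, the covering $\{y_j+Z_0(\bi_+,n)\}_{j\in\cJ}$, $\cJ_0$ and $R:=\bigcup_{j\in\cJ_0}(y_j+Z_0(\bi_+,n))$ are all built from $x+Z(\bi_+,n)$ after $x$ has been chosen. So there is no $x$-independent family of good boxes to which such an averaging argument could be applied in advance.

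The missing idea is to keep $x$ as the base point for $Y$, $\cJ_0$ and $J_m(x,\cdot)$, but to take the fiber measure through a different point. Write $Z:=Z(\bi_+,n)$.

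First, your ambient estimate (splitting off the non-dense and non-doubling indices) does give $\mu^{x+Z}\big((R\cap Y)^c\big)\ll c_0$. Combined with the doubling at $x$, this yields $\mu\big((x+Z)\cap R\cap Y\big)>\frac{c_0}{2}\,\mu(x+2Z)$.

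Next, disintegrate along $V_{m_0+1}(\bi_+)$ and then over the atoms $\cD_{m_0+1,\bi_+,C_0n}$; these give the same conditionals as $\cD_{m+1,\bi_+,C_0n}$ by \cref{cor supportoffiberwisemeasure}. This produces some $y\in(x+2Z)\cap R\cap Y$ such that $\widetilde\zeta:=\mu_{y,\bi_+}^{\cD_{m+1,\bi_+,C_0n}(y)}$ satisfies $\widetilde\zeta(R\cap Y)>c_0/2$.

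Since $y\in Y\subset\Lambda_{\bi_+,\bj_+}$, Step 5 applies at $(y,\bi_+,\bj_+)$. However, it gives spreading along $J_m(y,\bj_+)$, not along the fixed hyperplane $J_m(x,\bj_+)$. To get back, use $x,y\in\Lambda_1$, $|x-y|\lesssim 2^{-C_0n}$ and H\"older continuity of $U^{m-1}$, which give $\bd(J_m(x,\bj_+),J_m(y,\bj_+))\le 2^{-3n}$, and then apply \cref{lem spreading nearby}. This transfer is exactly why the lemma is stated with $(l+2,2\eta_0)$.

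Finally, define $\zeta:=(Q')_*\widetilde\zeta$ with $Q'=P^{J_m(x,\bj_+)}P_{J_m(x,\bi_+)}$, rather than $(P^{J_m(x,\bj_+)})_*\widetilde\zeta$. On $y+V_{m+1}(\bi_+)\subset y+J_m(x,\bi_+)$ the two maps differ by the constant $P^{J_m(x,\bj_+)}P^{J_m(x,\bi_+)}y$. That constant is in general of order $1$, not $O(r_n)$, so only the translated measure charges the balls around $Q'y_j$. Translation does not affect spreading or support diameter, and \eqref{eqn:Q' diam} then gives $Q'(y_j+Z_0(\bi_+,n))\subset Q'y_j+B(r_n)$ directly, which is the containment you were after.
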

		
		\begin{lemma} \label{lem Yjisstructured}
			There is a collection of finite subsets $( G_j \subset Y_j  )_{j \in \cJ_0}$ such that:
			\enmt
			\item \label{itm Gj1} 	for every $j \in \cJ_0$,  for every $y \in G_j$, we have 
			\aryst
			N_{r_n}( Y_j \cap (y + 2W(x, n)) ) > 2^{(m-1 - \eps_0) n'},
			\earyst
			\item  \label{itm Gj2} for every $j \in \cJ_0$, we set
			\begin{equation}
				B_j :=  P^{ 	J_m(x, \bi_{+}  )}(G_j).
			\end{equation}
			Then  
			\begin{align}
				&N_{r_n}(B_j) > 2^{(D_{m+1} - m + 1 - 2 \eps_0)  n'}, \quad j \in \cJ_0,\\
				&N_{ r_n}( \cup_{j \in \cJ_0} B_j ) < 2^{(D_{m+1} - m + 1 + 3\eps_0)n'}.
			\end{align}
			\eenmt
		\end{lemma}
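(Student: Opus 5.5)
The statement to prove is \cref{lem Yjisstructured}. I would build each $G_j$ as the set of points of $Y_j$ at which the slab-filling regularity of Step 2 is usable inside the box $y_j+Z_0(\bi_+,n)$. Then item \eqref{itm Gj1} comes from slab filling. The lower bound in item \eqref{itm Gj2} comes from Step 3 lower regularity combined with \cref{lem numbercoverofprojection}. The upper bound comes from the global projected upper bound \eqref{eq coveringofY4bi}, again through \cref{lem numbercoverofprojection}.

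\emph{Construction of $G_j$ and item \eqref{itm Gj1}.} For $y\in Y_j\subset\Lambda_{\bi_+,\bj_+}$ we have $y\in\Lambda_1\cap\Lambda_2(n)$. So $(\Dil_{2^{(C_0+1)n}})_*\mu^{y+W(y,n)}$ is $(m-1,2^{-n'},\eps_0,\eps_0)$-regular. Since $y$ and $x$ both lie in $\Lambda_1$ and $|x-y|\lesssim 2^{-C_0n}$, H\"older continuity together with $(C_0+1)\alpha>c(10-\alpha)$ gives $W(y,n)\subset 2W(x,n)$, as in \cref{lem slabcovering}. Lower regularity applied to the set $Y_j\cap(y+W(y,n))$ would give $N_{r_n}\ge 2^{(m-1-\eps_0)n'}$, but only if that set carries $\mu^{y+W(y,n)}$-mass at least $\eps_0$. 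So I define $G_j$ as the set of $y\in Y_j$ with $\mu^{y+W(y,n)}(Y_j)\ge\eps_0$. I then take a finite $r_n$-separated subset of it, since only covering numbers matter.

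To see that $G_j$ carries most of $Y_j$, apply \cref{lem besicovich} with the slabs $W$ in place of balls. Because $W(y,n)$ varies only slightly in $y$, commensurability covers this variation. The lemma bounds the mass of $Y_j\setminus G_j$ by $C_d\eps_0^{-1}\mu(y_j+3Z_0\setminus Y)$ plus boundary losses. The $Y$-density and doubling conditions in \eqref{def;j0}, together with \eqref{eq lowerboundforY} averaged over $j$, make this small relative to $\mu(y_j+Z_0)$. The catch is that a per-$j$ bound needs $\eta'$ small compared with $c_2$. So I would first prune the $j\in\cJ_0$ for which the bad set is large. By Markov these carry little mass; if that is not allowed, I would shrink $\cJ_0$ in the statement.

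\emph{Lower bound in item \eqref{itm Gj2}.} Write $\Pi=P^{J_m(x,\bi_+)}$; its range is a line, and $J_m(x,\bi_+)\supset V_{m+1}(\bi_+)$. Apply \cref{lem numbercoverofprojection} with $U=J_m(x,\bi_+)$, $S=G_j$ and $T=Y_j\cap G_j^{(r_n)}$. This gives
\[
N_{r_n}(B_j)\gtrsim N_{r_n}(T)\Big/\max_{y}N_{r_n}\big(T\cap \Pi^{-1}B(\Pi y,r_n)\big).
\]
For the numerator, Step 3 (since $y_j\in\Gamma_3'(n)(\bi_+)$) makes the $V_{m+1}(\bi_+)$-projection of $\mu^{y_j+Z_0}$ lower regular of dimension $D_{m+1}$. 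The mass of $G_j$ is at least of order $c_2$, so $N_{r_n}(P^{V_{m+1}(\bi_+)}G_j)\ge 2^{(D_{m+1}-\eps_0)n'}$. For the fibres, the set $\Pi^{-1}B(\Pi y,r_n)$ projected along $V_{m+1}(\bi_+)$ is an $r_n$-neighbourhood of the $(m-1)$-plane $P^{V_{m+1}}U^{m-1}(x)$ of diameter $\rho_n$. Its covering number is at most $2^{(m-1)n'}$, up to an angle loss of $2^{O(\eps_1 n)}$ from \eqref{eqn:UV lower bound}. Dividing gives $2^{(D_{m+1}-m+1-2\eps_0)n'}$.

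\emph{Upper bound in item \eqref{itm Gj2}.} Apply \cref{lem numbercoverofprojection} again, now with $S=\bigcup_j G_j$ and $T=P^{V_{m+1}(\bi_+)}Y^{(4)}$ projected further. The numerator is at most $2^{(D_{m+1}+\eps_0)n'}$ by \eqref{eq coveringofY4bi}. For the denominator, item \eqref{itm Gj1} shows that each fibre through $y\in G_j$ contains the slab piece $Y_j\cap(y+2W(x,n))$. This piece has $r_n$-covering number at least $2^{(m-1-\eps_0)n'}$. It stays essentially injective after projecting along $V_{m+1}(\bi_+)$, because $U^{m-1}(x)$ is transverse to $V_{m+1}(\bi_+)$ with angle at least $2^{-\eps_1n}$. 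Dividing gives the required $2^{(D_{m+1}-m+1+3\eps_0)n'}$.

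The main obstacle, I expect, is the construction of $G_j$: making the mass of $G_j$ uniformly comparable to $\mu(y_j+Z_0)$ for every $j\in\cJ_0$. The slabs $y+W(y,n)$ have centre-dependent directions, and they can stick out of the box $y_j+Z_0$. I would handle this with \cref{rem slabcovering}, the doubling condition, and the choice of $c_2$ relative to $\eta'$.
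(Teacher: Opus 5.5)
Your plan follows the paper's architecture for both items. $G_j$ is the set of points where $Y_j$ has non-negligible density in $y+W(y,n)$. Item \eqref{itm Gj1} comes from Step 2 together with $W(y,n)\subset 2W(x,n)$. The lower bound in item \eqref{itm Gj2} is Step 3 lower regularity at $y_j$, divided by the trivial $2^{(m-1)n'+O(\eps_1 n)}$ bound on the fibres of $P^U$. The upper bound is \eqref{eq coveringofY4bi} fed into \cref{lem numbercoverofprojection}, with denominators from item \eqref{itm Gj1}. One minor point: for the lower bound you are really using the elementary reverse inequality $N_r(T)\lesssim N_r(\Pi T)\cdot\max_y N_r(T\cap\Pi^{-1}B(\Pi y,2r))$. \Cref{lem numbercoverofprojection} goes the other way, but this is harmless.

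\textbf{The gap.} It is in the step you flag, and your fallback proves a different statement.
\begin{itemize}
\item \emph{What is needed.} You need $\mu^{y_j+Z_0(\bi_+,n)}(G_j)\gtrsim c_2$ for \emph{every} $j\in\cJ_0$. This is what lets you apply Step 3 lower regularity at threshold $\eps_0$.
\item \emph{Why your route fails.} \Cref{lem besicovich} controls low-density points by the mass of a complement, here $Y^c$. That mass is small only on average over $j$, via \eqref{eq lowerboundforY}.
\item \emph{The boundary losses are not small either.} A slab around a point of $Y_j$ near $\partial(y_j+Z_0)$ can carry most of its mass in $Y\setminus Y_j$, or in $(y_j+3Z_0)\setminus(y_j+Z_0)$. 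The latter region's mass is only bounded by $c_2^{-1}\mu(y_j+Z_0)$.
\item \emph{Pruning changes the lemma.} Shrinking $\cJ_0$ proves a different statement, and $\cJ_0$ is used with the same meaning in \cref{lem J0isalmostfullinJ} and \S\ref{subsec concludingtheproof}.
\end{itemize}

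\textbf{The fix.} Pay for the bad set with its own density threshold $\tau$ (your $\eps_0$, or the paper's $c_3\ll_d c_2^2$), not with a complement. Cover the bad points of $Y_j$ by a bounded-multiplicity family of bad slabs $y_i+W(y_i,n)\subset y_j+3Z_0(\bi_+,n)$. Then
\[
\mu(\mathrm{bad})\le\sum_i\mu\big((y_i+W(y_i,n))\cap Y_j\big)<\tau\sum_i\mu(y_i+W(y_i,n))\le C_d\,\tau\,c_2^{-1}\mu(y_j+Z_0),
\]
using only the multiplicity bound and the doubling condition in \eqref{def;j0}. $Y$-density gives $\mu(Y_j)>c_2\mu(y_j+Z_0)$, and $\tau\ll c_2^2$. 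So the good part keeps relative mass $>c_2/2$ for every $j\in\cJ_0$. This needs no averaging and no comparison of $\eta'$ with $c_2$, and points near the boundary are absorbed automatically.

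This is exactly the paper's computation. There $G_j\subset\hat G_j$, and the mass sits on $\widetilde Y_j=\bigcup_{y\in G_j}(y+W(y,n))\cap Y_j$. With this step in place, the rest of your argument goes through as written.
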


		\bigskip

		\subsection{The comparison at scale $r_n$} \label{subsec concludingtheproof}

		The proof of \cref{prop diminc} now reduces to a single comparison at scale $r_n$. Recall the upper bound given in \eqref{eq projYupper}:
		\aryst
		N_{ r_n}( P^{V_{m+1}(\bj_{+})}(Y) ) <  2^{(D_{m+1} + \eps_0)  n'}.
		\earyst
		We will show the opposite lower bound:
		\ary \label{eq coveringofYlowerbound}
		N_{ r_n}  \big (P^{V_{m+1}(\bj_+)} Y) > 2^{(D_{m+1} + 2 \eps_0) n'}.
		\eary
		This concludes the proof, since the lower bound contradicts the upper bound.

		We denote 
		\aryst
		W' :=P^{V_{m+1}(\bj_{+})} ( W(x, n) ).
		\earyst
		Let $( G_j )_{j \in \cJ_0}$ be the subsets given by \cref{lem Yjisstructured}. Given $j \in J_0$, we denote as in \cref{lem Yjisstructured}
		\aryst
		B_j:= P^{J_m(x, \bi_{+}  ) }(G_j).
		\earyst
		We also denote
		\aryst
		B'_j := P^{J_m(x, \bj_{+}  ) }(G_j), \qquad 
		E'_j :=  P^{V_{m+1}(\bj_{+})} (  G_j  ), \qquad 
		Y' _j := P^{V_{m+1}(\bj_{+})} (Y_j).
		\earyst

		\smallskip
		
		\noindent{(\textbf{Structure of $E'_j$}).}
		We have
		\aryst
		P^{J_m(x, \bj_{+}  ) } = P^{U'} P^{V_{m+1}(\bj_{+})}
		\earyst
		where
		\begin{equation}
			\label{eqn:U projection}
			U':= P^{V_{m+1}(\bj_{+})}( U^{m-1}(x) ) \in \Gr( V_{m+1}(\bj_{+})^{\perp}, m -1).
		\end{equation}
		Thus we have  
		\aryst
		B'_j =  P^{U'} ( E'_j ). 
		\earyst

		For any $e'\in E'_j$, by definition, there is $y \in G_j$ such that  $e' = P^{V_{m+1}(\bj_{+})} (y)$.
		By \cref{lem Yjisstructured}\eqref{itm Gj1},  
		\[ N_{ r_n} ((y  + 2 W(x, n) ) \cap Y_j)\geq 2^{(m-1 - \eps_0) n'}.\]
		By  Step 5(1)  in Subsection \ref{subsec preparingarandomset}, that is $\measuredangle(V_{m+1}(\bj_+),U^{m-1}(x))>2^{- \eps_1 n}$, we have 
		\[\diam((P^{V_{m+1}(\bj_{+})})^{-1}(B^{V_{m+1}(\bj_{+})^\perp}( r_n))\cap W(x,n))= O(2^{ \eps_1 n} r_n).  \] 
		Thus, we have
		\begin{equation}\label{eq coveringnumberoffiber}
			\begin{split}
				& N_ { r_n}((e' + 2 W' ) \cap  Y'_j )\geq N_ { r_n}( P^{V_{m+1}(\bj_{+})}( (y  + 2 W(x, n) ) \cap Y_j ))\\
				&  \geq 2^{ - d \eps_1 n - O(1) }  N_ { r_n }( (y  + 2 W(x, n) ) \cap Y_j ) \geq 2^{(m-1 -  2 \eps_0) n'}.	
			\end{split}
		\end{equation}
		Here, the first inequality follows from the inclusion $(e' + 2 W' ) \cap  Y'_j \supset P^{V_{m+1}(\bj_{+})}( (y  + 2 W(x, n) ) \cap Y_j )$. The second one follows from the fiber diameter bound displayed above: an $r_n$-covering of the projection pulls back to a covering of $(y + 2W(x,n)) \cap Y_j$ by $N_{r_n}$ of the projection many sets of diameter $O(2^{\eps_1 n} r_n)$, and each such set is covered by $(C_d 2^{\eps_1 n})^d$ balls of radius $r_n$. The last one uses \cref{lem Yjisstructured}\eqref{itm Gj1} and that $d \eps_1 n + O(1) \leq \eps_0 n'$ by the choice of $\eps_1$ and $n \gg 1$.

		\smallskip
		
		\noindent{(\textbf{Compare $B_j$ and $B'_j$}).} By direct computation, we obtain 
		\aryst
		P^{ J_m(x, \bj_{+}  ) }  =  Q     P^{ J_m(x, \bi_{+}  ) }   +   Q'
		\earyst
		where
		\begin{equation}
			\label{eqn:Q and Q'}
			Q  =  P^{  J_m(x, \bj_{+}  ) }  |_{ J_m(x, \bi_{+} )^{\perp}}, \qquad
			Q' =  P^{J_m(x, \bj_{+}  ) }P_{J_m(x, \bi_{+}  )}.
		\end{equation}
		The angle constraints among $V_{m+1}(\bj_{+})$, $V_{m+1}(\bi_{+})$ and $U^{m-1}(x)$ given in \eqref{equ:angle-mm} and \eqref{eqn:UV lower bound} give
		\begin{equation}
			\label{eqn:distance H planes}
			\bd( J_m(x, \bi_{+} ), J_m(x, \bj_{+} ) ) \lesssim   2^{- 2c n + 2\eps_1 n}\leq 2^{- c n-\eps_0n}.
		\end{equation}
		Hence, the map $Q$ is a linear isomorphism between $J_m(x, \bi_{+} )^{\perp}$ and $J_m(x, \bj_{+} )^{\perp}$ such that 
		\ary
		\label{eqn:Q norm}
		\| Q \|, \,\| Q^{-1} \| \leq 2.
		\eary 
		
		By \cref{lem projhyperplanes}, we have $\| P^{H'}P_H\|\leq 2\bd(H,H')$ for two hyperplanes $H, H'$ in $\R^d$, and we apply this result with $H= J_m(x, \bi_{+})$ and $H' = J_m(x, \bj_{+})$). Combining this with the estimates $\diam(Z_0(\bi_{+}, n)) \lesssim  \rho_n$ and \eqref{eqn:distance H planes}, 
		we obtain   
		\begin{equation}
			\label{eqn:Q' diam}
			\diam(   Q'(Z_0( \bi_{+}, n )) )< r_n
		\end{equation}
		for sufficiently large $n$.
		Since $G_j \subset Y_j \subset y_j + Z_0(\bi_{+}, n)$, we have  
		\ary \label{eq B'jandBj}
		d_{H}( B'_j, Q B_j + Q' (y_j) ) =  O(  r_n).
		\eary
		Here $d_H$ denotes the Hausdorff distance. 
		
		\smallskip
		
		\noindent{(\textbf{Dimension increment}).}
		We denote
		\aryst
		A := Q'( \{ y_j \}_{j \in J_0} ).
		\earyst
		By \cref{lem J0isalmostfullinJ}, there is a probability  measure $\zeta$ on $J_m(x, \bj_{+} )^{\perp}$ such that the measure $(\Dil_{2^{(C_0+1)n}})_*\zeta$ is $( n', l+2 , 2\eta_0 )$-ball spreading, and 
		\aryst
		\zeta(A + B^{J_{m}(x, \bj_{+})^{\perp}}( r_n)) >  c_0/2.
		\earyst 
		
		Recall that we set $\beta=D_{m+1}-(m-1)$. By \cref{lem Yjisstructured} and \eqref{eqn:Q norm}, if { $\beta>0$} we have
		\[ N_{r_n}(QB_j)>N_{r_n}(\cup_j QB_j)^{1-\delta}.  \]
		Case 1  of \cref{lem:ball-sumset} therefore applies for every $\beta>0$. If $\beta=0$, we are in case 2  of \cref{lem:ball-sumset}: item \eqref{itm Gj2} of \cref{lem Yjisstructured} and \eqref{eqn:Q norm} give $N_{r_n}(\cup QB_j)\leq { 2^{3\eps_0n'}}\leq 2^{\delta n'}$, where we use the constraint $3\eps_0<\delta$.
		
		Now we can apply \cref{lem:ball-sumset} (with $2\eta_0$ in place of $\gamma$, $c_0/2$ in place of the threshold $ c_*$, $n'$ in place of $n$, and $\eps_0=\eps$; the constant $\delta = \delta(2 l, 2\eta_0) > 0$ is given by this lemma), and we obtain 
		\aryst
		N_{r_n}(\bigcup_{j \in \cJ_0}  ( Q B_j + Q' (y_j)  ))\geq \max\{2^{(1  + \delta)(\beta -  2\eps_0)  n'},2^{\delta n'}\}> 2^{(\beta  + 5 \eps_0)  n'}
		\earyst
		provided that $\eps_0$ meets the constraint in the parameter list, $\eps_0\ll\delta\max\{\beta,1/10\}$.
		The conditions of \cref{lem:ball-sumset} are met as well: $\gamma<1/100$ and $\gamma\le c_*^2/400$ follow from the choice of $\eta_0$, and the last one, $0<\eps\le\gamma/200$, reads $\eps_0\le\eta_0/100$, likewise included in the choice of $\eps_0$.

		By \eqref{eq B'jandBj}, we see that 
		\aryst
		N_{ r_n} ( \bigcup_{j \in \cJ_0}  B'_j ) > 2^{(D_{m+1} - m + 1 + 5  \eps_0)  n' - C}.
		\earyst
		By \eqref{eq coveringnumberoffiber} and \eqref{equ:number-cover-of-projection} with $S=\cup_{j\in \cJ_0} E_j'$, $U=U'$ given in \eqref{eqn:U projection}, $T=P^{V_{m+1}(\bj_+)} Y,\ r= 2r_n $ and $(P^U)^{-1}B^{U^\perp}(r)\supset  2W'$, we see that 
		\aryst
		N_{ r_n} (P^{V_{m+1}(\bj_+)} Y)  > \frac{1}{c_d} N_{ 2r_n} (\cup_{j\in \cJ_0}B_j') \min_{e'\in \cup_{j\in \cJ_0} E_j'}N_{ 2r_n} ((e'+2W')\cap Y_j')\geq  2^{(D_{m+1} + 2 \eps_0)  n'}.
		\earyst
		Here the left-hand side uses $N_{r_n} \geq N_{2r_n}$. Converting the covering numbers between the scales $r_n$ and $2r_n$ (both for $\cup_j B_j'$ and for the fibers) costs a factor $2^{O(d)}$, which is absorbed into the $+5\eps_0 n'$ slack in the exponent.
		This implies \eqref{eq coveringofYlowerbound}.

		\smallskip
		
		To complete the proof, we still need to prove \cref{lem J0isalmostfullinJ} and \cref{lem Yjisstructured}. These will be the subjects of the rest of this section.

		\bigskip
		
		\subsection{A spreading measure charging the good pieces: proof of Lemma \ref{lem J0isalmostfullinJ}}

		We  set
		\aryst
		R:=   \bigcup_{j \in \cJ_0} (  y_j + Z_0(\bi_{+}, n) ).
		\earyst
		We first prove that
		\begin{equation}
			\label{eqn:Rc measure estimate}
			\mu^{x + Z(\bi_{+}, n)}( R^c )  \leq  \mu^{x+Z(\bi_+,n)}(Y^c)+2C_d c_0^{-1} c_2.
		\end{equation}
		All complements below are taken inside $x + Z(\bi_{+}, n)$. Since $R^{c} \subset Y^{c} \cup \bigcup_{j \in \cJ \setminus \cJ_{0}} Y_{j}$, we have
		\aryst
		\mu( R^{c} ) \leq \mu( Y^{c} ) + \sum_{j \in \cJ \setminus \cJ_{0}} \mu( Y_{j} ).
		\earyst
		For $j \in \cJ \setminus \cJ_{0}$, one of the two conditions in \eqref{def;j0} fails. Accordingly, we split $\cJ \setminus \cJ_{0} = \cJ_{1} \cup \cJ_{2}$, where 
		\aryst
		&&\cJ_{1} := \{ j \in \cJ \setminus \cJ_{0}: \mu( Y_{j} ) \leq c_{2} \mu( y_{j} + Z_{0}(\bi_{+}, n) ) \}, \quad
		\\
		&&\cJ_{2} := \{ j \in \cJ \setminus \cJ_{0}:\mu( y_{j} + Z_{0}(\bi_{+}, n) ) \leq c_{2} \mu( y_{j} +  3 Z_{0}(\bi_{+}, n) ) \}.
		\earyst
		
		We estimate on $\cJ_{1}$. Since the family $\{ y_{j} + Z_{0}(\bi_{+}, n) \}_{j \in \cJ}$ has multiplicity $O_d(1)$, and $y_{j} + Z_{0}(\bi_{+}, n) \subset x +  3 Z(\bi_{+}, n)$ for every $j \in \cJ$ (because $y_{j} \in Y \subset x + Z(\bi_{+}, n)$ and clearly $Z_{0}(\bi_{+}, n) \subset \sqrt{2}\, Z(\bi_{+}, n)$), we have
		\aryst
		\sum_{j \in \cJ_{1}} \mu( Y_{j} ) \leq c_{2} \sum_{j \in \cJ_{1}} \mu( y_{j} + Z_{0}(\bi_{+}, n) ) \leq C_{d} c_{2} \mu( x +  3 Z(\bi_{+}, n) ) \leq C_{d} c_{0}^{-1} c_{2} \mu( x + Z(\bi_{+}, n) ),
		\earyst
		where the last inequality is \eqref{eq localdensitybetweennearbyscales0}.
		
		We estimate on $J_{2}$. We first claim that the enlarged family $\{ y_{j} +  3 Z_{0}(\bi_{+}, n) \}_{j \in \cJ}$ still has multiplicity $O_d(1)$. %
		Since $y_{j} +  3 Z_{0}(\bi_{+}, n) \subset x +   6 Z(\bi_{+}, n)$ (again by   $Z_{0}(\bi_{+}, n) \subset \sqrt{2}\, Z(\bi_{+}, n)$), we obtain
		\aryst
		\sum_{j \in \cJ_{2}} \mu( Y_{j} ) \leq \sum_{j \in \cJ_{2}} \mu( y_{j} + Z_{0}(\bi_{+}, n) ) \leq c_{2} \sum_{j \in \cJ_{2}} \mu( y_{j} +  3 Z_{0}(\bi_{+}, n) ) \\
		\leq C_{d} c_{2} \mu( x +  6 Z(\bi_{+}, n) ) \leq C_{d} c_{0}^{-1} c_{2} \mu( x + Z(\bi_{+}, n) ),
		\earyst
		where the last inequality is \eqref{eq localdensitybetweennearbyscales0}. 
		
		Combining the two estimates and dividing by $\mu( x + Z(\bi_{+}, n) )$, we obtain \eqref{eqn:Rc measure estimate}.

		By \eqref{eq lowerboundforY}, and by $c_2 \ll_d c_0^2$   and $\eta' \ll_{d} c_0 c_2$, we have  	 
		\aryst
		\mu^{x + Z(\bi_{+}, n)}( ( R \cap Y )^c ) \leq     2 \eta'  + 2C_d c_0^{-1} c_2 \ll_d c_0.
		\earyst
		Due to \eqref{eq localdensitybetweennearbyscales0}, we further obtain
		\begin{equation}
			\mu((x+Z(\bi_+,n))\cap R\cap Y)\geq (1-c_0)\mu(x+Z(\bi_+,n))> \frac{c_0}{2}\mu(x+2Z(\bi_+,n))
		\end{equation}
		Applying Disintegration along $V_{m_0+1}(\bi_+)$, there exists $z$ such that 
		\begin{equation}\label{eqn:lower bound box}
			\mu_z^{V_{m_0+1}(\bi_+)}((x+Z(\bi_+,n))\cap R\cap Y)> \frac{c_0}{2}\mu_z^{V_{m_0+1}(\bi_+)}(x+2Z(\bi_+,n)) 
		\end{equation} 
		By the definition of $Z(\bi_+,n)$, in each fiber $z+V_{m_0+1}(\bi)$, 
		\aryst (x+Z(\bi_+,n))\cap (z+V_{m_0+1}(\bi))&\subset& \bigcup _{y\in (x+Z(\bi_+,n))\cap (z+V_{m_0+1}(\bi))}\cD_{m_0+1, \bi_{+}, C_0 n}(y)\\
		&\subset& (x+2Z(\bi_+,n))\cap (z+V_{m_0+1}(\bi)) \earyst
		Applying Rokhlin's Disintegration Theorem to $(z+V_{m_0+1}(\bi_+))\cap (x + 2Z(\bi_{+}, n))$ along the partition $\cD_{m_0+1, \bi_{+}, C_0 n}$, together with the above inclusion relation and \eqref{eqn:lower bound box} give that there is $y\in (x+Z(\bi_+,n))\cap(z+V_{m_0+1}(\bi_+))$ such that
		\[ \mu_y^{V_{m_0+1}(\bi_+)}(\cD_{m_0+1, \bi_{+}, C_0 n}(y)\cap (R\cap Y))> \frac{c_0}{2} \mu_y^{V_{m_0+1}(\bi_+)}(\cD_{m_0+1, \bi_{+}, C_0 n}(y)).\]
		We notice that for $m_0\geq m$, we can apply \cref{lem smallersupport} to obtain that for a.e. $y\in x+Z(\bi_+,n)$, we have $\mu_{y, \bi_{+}}^{\cD_{m+1, \bi_{+}, C_0 n}(y)}=\mu_{y, \bi_{+}}^{\cD_{m_0+1, \bi_{+}, C_0 n}(y)}$. The above equation translates to
		\begin{equation}
			\label{equ large fiber}
			\mu_{y, \bi_{+}}^{  \cD_{m+1, \bi_{+}, C_0 n}(y)   } (R \cap Y) > c_0/2.
		\end{equation}
		The conditional measure is supported on the atom, so the atom intersects $R \cap Y$. Choosing the base point $y$ inside this intersection (the conditional measure only depends on the atom), we may assume $y \in (x + 2Z(\bi_{+}, n)) \cap (R \cap Y)$ (only $y \in Y$ and $d(x, y) \lesssim 2^{-C_0 n}$ are used below).

		We set
		\aryst
		\widetilde\zeta :=  \mu_{y, \bi_{+}}^{   \cD_{m+1, \bi_{+}, C_0 n}(y )  }.
		\earyst
		As $y \in Y \subset \Lambda_{\bi_{+}, \bj_{+}}$, by  Step 5(2) in Subsection \ref{subsec preparingarandomset}, the measure 
		$$ (\Dil_{2^{(C_0+1)n}})_*(P^{J_m(y,\bj_+)})_* \widetilde\zeta $$
		is $( n', l, \eta_0)$-ball spreading. 
		Since both $x, y \in \Lambda_1$, we have 
		\aryst
		d( J_m(x, \bj_{+}  ), J_m(y, \bj_{+}  )  )  \leq
		d( U^{m-1}(x), U^{m-1}(y) ) \leq C_{\eta_0}2^{- C_0 \alpha n} \leq 2^{- 3n},
		\earyst
		where $C_{\eta_0}$ is a constant from Step 1 and $n$ is taken large enough depending on $\eta_0$ to kill this constant.
		By \cref{lem spreading nearby}, the measure 
		\aryst
		(\Dil_{2^{(C_0+1)n}})_*(P^{J_m(x,\bj_+)})_* \widetilde\zeta
		\earyst
		is also $( n', l+2, 2\eta_0)$-ball spreading. 
		
		Recall that $Q'=P^{J_m(x,\bj_+)}P_{J_m(x,\bi_+)}$. To finish the proof, we let
		\begin{equation}
			\zeta:=(Q')_* \widetilde\zeta. 
		\end{equation}
		Since $\widetilde{\zeta}$ is supported on $y+V_{m+1}(\bi_+)\subset y+J_m(x,\bi_+)$, for $\widetilde{\zeta}$-a.e. $z$, we have $P^{J_m(x,\bi_+)}z=P^{J_m(x,\bi_+)}(y)$, which gives $Q'z=P^{J_m(x,\bj_+)}z-P^{J_m(x,\bj_+)}P^{J_m(x,\bi_+)}y$. Thus $\zeta$ is a translation of $(P^{J_m(x,\bj_+)})_*\widetilde{\zeta}$, so its ball-spreading property and support diameter are preserved.
		
		We still need to verify the mass condition \eqref{eqn:mass lower bound} for $\zeta$. \eqref{eqn:Q' diam} gives
		\begin{equation}
			Q'(y_j+Z_0(\bi_+,n))\subset Q' y_j+B^{J_m(x,\bj_+)^{\perp}}(r_n).
		\end{equation}
		It follows that
		\begin{equation}
			\zeta\left(\bigcup_{j\in \cJ_0}\big(Q'y_j+B^{J_m(x,\bj_+)^{\perp}}(r_n)\big)\right)\geq \widetilde{\zeta}\left(\bigcup_{j\in \cJ_0}\big(y_j+Z_0(\bi_+,n)\big)\right)>c_0/2.
		\end{equation}
		\smallskip
		
		\subsection{Structure of the good pieces: proof of Lemma \ref{lem Yjisstructured}}
		
		For any $y \in Y  \subset \Lambda_1 \cap (x + Z(\bi_{+}, n ))$, we deduce that 
		\aryst
		d( U^{m-1}(x), U^{m-1}(y) ) \leq C_{\eta_0} 2^{ - \alpha C_0 n} \leq 2^{- n} / d,
		\earyst
		and hence
		\ary \label{eq WxWy}
		\frac{1}{2}  W(x, n) \subset  W(y, n) \subset 2W(x, n).
		\eary
		By Besicovitch's Covering Lemma, we can find a finite subset $\hat{G}_j \subset Y_j$ such that $\{  y + W(y, n) \}_{y \in \hat{G}_j}$ forms a covering of $Y_j$ with multiplicity $O_d(1)$.

		Let $c_3 \ll_d c_2^2$ be a small parameter. 
		We let $G_j$ be the set of $y \in \hat{G}_j$ such that 
		\aryst
		\mu^{y + W(y, n)}(  Y_j   ) > c_3.
		\earyst
		By letting $\eps_0 < c_3$ and by $G_j \subset Y_j \subset Y \subset \Lambda_2(n)$, we deduce by Step 2 in Subsection \ref{subsec preparingarandomset}  that for $y\in G_j$
		\aryst
		N_{ r_n}( Y_j \cap (y + W(y, n)) ) > 2^{(m-1 - \eps_0) n'},
		\earyst 
		Item \eqref{itm Gj1} then follows from \eqref{eq WxWy}.

		Denote 
		\aryst
		\widetilde{Y}_j = \bigcup_{y \in  G_j} (y + W(y, n)) \cap Y_j.
		\earyst
		Then  by the definitions of $\hat{G}_j$, $G_j$ and \eqref{def;j0}, we have 
		\aryst
		&&\mu^{y_j + Z_0(\bi_{+}, n)} (\widetilde{Y}_j  ) \\
		&\geq&  \mu^{y_j + Z_0(\bi_{+}, n)} (Y_j) - \mu^{y_j + Z_0(\bi_{+}, n)} ( \bigcup_{y \in \hat{G}_j \setminus G_j} (y + W(y, n)) \cap Y_j ) \\
		&>& c_2    -   \sum_{y \in \hat{G}_j \setminus G_j }
		\frac{\mu( y + W(y, n) )}{\mu(y_j + Z_0(\bi_{+}, n))}
		\mu^{ y + W(y, n) }(  Y_j  ) \\
		&>&  c_2 - C_d c_3 \frac{ \mu(y_j +   3Z_0(\bi_{+}, n)) }{ \mu(y_j + Z_0(\bi_{+}, n)) } \\
		&>&  c_2 - C_d  c_3 / c_2 > c_2/2.
		\earyst
		Note that in the third inequality above, we have used \eqref{def;j0} and that 
		\aryst
		\bigcup_{y \in \hat{G}_j}  y + W(y, n) \subset y_j +   3 Z_0(\bi_{+}, n).
		\earyst
		Indeed, since orthogonal projection is 1-Lipschitz and both layers of $W(y,n)$ have radius $\le \rho_n$, we have $W(y, n) \subset (1 + r_n/\rho_n) Z_0(\bi_{+}, n) = (1 + 2^{-n'}) Z_0(\bi_{+}, n)$, hence $y + W(y, n) \subset y_j + (2 + 2^{-n'}) Z_0(\bi_{+}, n) \subset y_j + 3 Z_0(\bi_{+}, n)$ for every $y \in \hat G_j \subset y_j + Z_0(\bi_{+}, n)$.
		By  Step 3 in Subsection \ref{subsec preparingarandomset} and by letting $\eps_0 < c_2/2$, we have 
		\ary  \label{eq coveringPVY'jlower}
		N_{ r_n}( P^{V_{m+1}(\bi_{+})} (\widetilde{Y}_j ) ) > 2^{(D_{m+1}  - \eps_0)  n'}.
		\eary

		Note that  we have 
		\ary  \label{eq P=PP}
		P^{ 	J_m(x, \bi_{+}  )} =  P^{V_{m+1}(\bi_{+}) + U^{m-1}(x)} = P^{U} P^{V_{m+1}(\bi_{+})}
		\eary 
		where
		\aryst
		U = P^{V_{m+1}(\bi_{+})}( U^{m-1}(x) ) \in \Gr( V_{m+1}(\bi_{+})^{\perp}, m -1).
		\earyst 
		Then by \eqref{eq coveringPVY'jlower} and \eqref{eq P=PP}, we deduce 
		\aryst
		N_{ r_n}(   P^{ 	J_m(x, \bi_{+}  )}  (\widetilde{Y}_j ) )  &>&  2^{- (m-1 +\eps_0/2)  n'} N_{ r_n}( P^{V_{m+1}(\bi_{+})} (\widetilde{Y}_j ) )  \\
		&>&  2^{(D_{m+1} - m + 1  - 3 \eps_0/2)  n'}.
		\earyst 
		Note that in the first inequality above we have used the following trivial upper bound on the size of the fiber (compare with item  \eqref{itm Gj1}):
		\aryst 
		\sup_{y}	N_{ r_n}( Y_j \cap (y + W(x, n)) ) < 2^{(m-1 + \eps_0/2) n'},	\earyst
		and 
		\[P^{V_{m+1}(\bi_+)}\widetilde{Y}_j \cap (P^U)^{-1}B_{J_m(x,\bi_+)^\perp}(y,r_n/2)\subset P^{V_{m+1}(\bi_+)}(y+W(x,n)). \]
		Due to \eqref{eq WxWy},  for $y\in G_j$, we have 
		\aryst P^{ 	J_m(x, \bi_{+}  )}  (y+W(y,n))\subset P^{ 	J_m(x, \bi_{+}  )}  (y)+B_{J_m(x, \bi_{+}  )^\perp}(2r_n)
		\earyst
		Hence by definition of $\widetilde{Y}_j$
		\aryst
		N_{r_n}(B_j)=N_{r_n}(P^{J_m(x, \bi_{+}  )}G_j)\geq N_{2r_n}(P^{J_m(x, \bi_{+}  )}(\widetilde{Y}_j)) \geq 2^{-d} N_{ r_n}(   P^{ 	J_m(x, \bi_{+}  )}  (\widetilde{Y}_j ) ).
		\earyst
		This gives the first inequality in \eqref{itm Gj2}. 
		
		It remains to prove the second inequality in \eqref{itm Gj2}.
		Note that we have 
		\aryst
		\bigcup_{j \in \cJ_0} G_j \subset  \bigcup_{j \in \cJ_0} Y_j \subset Y.
		\earyst
		Thus we have
		\aryst
		\bigcup_{j \in \cJ_0} B_j \subset   P^{ 	J_m(x, \bi_{+}  )}  ( \bigcup_{j \in \cJ_0} G_j)  \subset P^{ 	J_m (x, \bi_{+}  )}( Y ).   
		\earyst
		Recall that
		\aryst
		N_{r_n}( P^{V_{m+1}(\bi_{+})}( Y )   ) <  2^{(D_{m+1} + \eps_0)  n'}.
		\earyst
		By \eqref{eq P=PP} and item \eqref{itm Gj1}, we can deduce
		\aryst
		N_{ r_n}(  \bigcup_{j \in \cJ_0} B_j   ) \leq 		N_{ r_n}(  P^{U} P^{V_{m+1}(\bi_{+})}( \cup_{j\in \cJ_0} G_j )   ) <  2^{(D_{m+1} - m + 1 +   3\eps_0)  n'}.
		\earyst
		This is because:  by applying \cref{lem numbercoverofprojection} to $S=P^{V_{m+1}(\bi_{+})}( \cup_{j\in \cJ_0} G_j ), T= P^{V_{m+1}(\bi_{+})}( Y)   , r=2r_n$ and noticing that $(P^U)^{-1}(B^{U^\perp}(r))\supset P^{V_{m+1}(\bi_+)}2W(x,n) $, we have
		\aryst
		N_{2r_n}(  P^{U} P^{V_{m+1}(\bi_{+})}( \cup_{j\in \cJ_0} G_j )   ) \leq \frac{N_{2r_n}(  P^{V_{m+1}(\bi_{+})}( Y)   ) }{\min_{y\in G_j,j\in \cJ_0}N_{2r_n}(  P^{V_{m+1}(\bi_{+})}(Y\cap ( y+2W(x,n))   )) }
		\earyst
		By \eqref{eq coveringnumberoffiber} applied with $\bi_{+}$ in place of $\bj_{+}$ (the angle condition in Step 5(1) holds for both), for $y \in G_j$ the denominator satisfies
		\aryst
		&N_{r_n}(  P^{V_{m+1}(\bi_{+})}(Y\cap ( y+2W(x,n)) ) ) \geq N_{r_n}(  P^{V_{m+1}(\bi_{+})}(Y_j\cap ( y+2W(x,n)) ) ) \\
		&\geq 2^{-d\eps_1 n - O(1)} N_{r_n}( Y_j \cap (y + 2W(x,n)) ) \geq 2^{(m-1-\eps_0)n' - d\eps_1 n - O(1)}.
		\earyst
		This completes the proof.

		\appendix

		\section{More on the Oseledets splitting}
		We are collecting more results on Oseledets splitting.

		\begin{cor} \label{cor Oseledets restriction}
			Assume that the Lyapunov spectrum $\chi_1 > \cdots > \chi_d$ is simple.
			Then for $\nu_{+}$-a.e.\ $\bi_{+}$ and every $k \in \{1, \cdots, d\}$,
			\ary \label{eq restriction norm}
			&&\lim_{t \to \infty} \frac{1}{t} \log \big\| Df^{t}_{\bi_{+}} |_{V_k(\bi_{+})} \big\| \ = \ \chi_k\\
			&&\lim_{t \to \infty} \frac{1}{t} \log \mathrm{J}_{V_k(\bi_{+})} \big( Df^{t}_{\bi_{+}} \big) \ = \ \chi_k + \chi_{k+1} + \cdots + \chi_d,\nonumber
			\eary
			where $\| Df^{t}_{\bi_{+}} |_{V_k(\bi_{+})} \| = \sup \{ \| Df^{t}_{\bi_{+}} v \| / \| v \| : v \in V_k(\bi_{+}) \setminus \{0\} \}$.
			In particular, for every $\eps > 0$ and all $t \geq t_0(\bi_{+}, \eps)$,
			\ary \label{eq restriction sv}
			&&\big\| Df^{t}_{\bi_{+}} |_{V_k(\bi_{+})} \big\| \ \leq \ e^{\eps t} \, \alpha_k \big( Df^{t}_{\bi_{+}} \big),\\
			&&\mathrm{J}_{V_k(\bi_{+})} \big( Df^{t}_{\bi_{+}} \big) \ \leq \ e^{\eps t} \, \alpha_k \big( Df^{t}_{\bi_{+}} \big) \cdots \alpha_d \big( Df^{t}_{\bi_{+}} \big).\nonumber
			\eary
		\end{cor}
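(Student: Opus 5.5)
The plan is to reduce everything to the Oseledets splitting of \cref{thm Oseledets} by fixing an auxiliary past. Since $V_k(\bi_+)$ depends only on $\bi_+$ and $\nu=\nu_-\otimes\nu_+$, Fubini gives, for $\nu_+$-a.e.\ $\bi_+$, some $\bi_-$ such that $\bi=(\bi_-,\bi_+)$ is Oseledets-regular, i.e.\ parts (1)--(5) hold along its forward orbit. With this $\bi_-$ fixed, \cref{thm Oseledets}(4) gives $V_k(\bi_+)=\bigoplus_{l\ge k}E_l(\bi)$. Since the spectrum is simple, each $E_l$ is a line. Fix unit vectors $e_l\in E_l(\bi)$ and write $M^t:=Df^t_{\bi_+}=M^t_{\bi_+}$. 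By \cref{thm Oseledets}(4), $\frac1t\log\|M^te_l\|\to\chi_l$.

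\textbf{Norm on $V_k$.} Because $(e_k,\dots,e_d)$ is a basis of $V_k(\bi_+)$, there is a constant $C(\bi)$, depending only on the angles between the $E_l(\bi)$ at time $0$, such that $v=\sum_{l\ge k}t_le_l$ implies $|t_l|\le C\|v\|$. Then
\[
\|M^tv\|\le C\|v\|\sum_{l\ge k}\|M^te_l\|\le C' e^{(\chi_k+\eps)t}\|v\|
\]
for large $t$, since $\chi_k>\chi_{k+1}>\cdots$. The matching lower bound comes from taking $v=e_k$. This proves the first limit.

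\textbf{Jacobian.} We have
\[
\mathrm J_{V_k}(M^t)=\frac{\|M^te_k\wedge\cdots\wedge M^te_d\|}{\|e_k\wedge\cdots\wedge e_d\|}.
\]
The upper bound $\le \prod_{l\ge k}\|M^te_l\|/\|e_k\wedge\cdots\wedge e_d\|$ gives $\limsup\le\sum_{l\ge k}\chi_l$. For the lower bound I expect the only real point: the images $M^te_l$ span $E_l(\sigma^t\bi)$, and the wedge norm is at least $\prod_l\|M^te_l\|$ times a product of sines of angles between $E_l(\sigma^t\bi)$ and $\bigoplus_{l'>l}E_{l'}(\sigma^t\bi)$. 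To see this, write the parallelepiped volume as base times height, successively. I would apply \cref{thm Oseledets}(5) with $J=\{l\}$ and bound $\measuredangle(E_l,\bigoplus_{l'>l}E_{l'})\ge\measuredangle(E_{\{l\}},E_{\{l\}^c})$. This shows each sine is $e^{-o(t)}$, which gives $\liminf\ge\sum_{l\ge k}\chi_l$.

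\textbf{Singular-value bounds.} By \cref{thm Oseledets}(1), $\frac1t\log\alpha_j(M^t)\to\chi_j$ for every $j$. Hence, for large $t$,
\[
\alpha_k(M^t)\ge e^{(\chi_k-\eps/2)t},\qquad \prod_{j\ge k}\alpha_j(M^t)\ge e^{(\sum_{j\ge k}\chi_j-\eps/2)t}.
\]
Comparing with the two limits just proved, used with $\eps/2$, yields both inequalities in \eqref{eq restriction sv} for $t\ge t_0(\bi_+,\eps)$. The threshold $t_0$ may depend on the auxiliary $\bi_-$, but that choice is a function of $\bi_+$, so this is harmless.
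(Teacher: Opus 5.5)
Your proof is correct. The paper gives no written proof of this corollary. It records it in the appendix as a direct consequence of \cref{thm Oseledets}, alongside \cref{cor Oseledets restriction sv}, whose part (1) packages the same information. Indeed, $\|Df^t_{\bi_+}|_{V_k(\bi_+)}\|$ is the top singular value of the restricted cocycle, and $\mathrm J_{V_k(\bi_+)}(Df^t_{\bi_+})$ is the product of its singular values. The restricted cocycle has exponents $\chi_k,\dots,\chi_d$, so both limits follow, and comparing with part (1) of \cref{thm Oseledets} gives the two singular-value inequalities.

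You instead verify the growth rates by hand through the two-sided splitting $V_k(\bi_+)=\bigoplus_{l\ge k}E_l(\bi)$. This forces you to choose an auxiliary past $\bi_-$ by Fubini and to use the subexponential angles of part (5) for the Jacobian lower bound. Both steps are valid, but they can be avoided. For any $(d-k+1)$-dimensional subspace $V$, singular-value interlacing gives $\|M|_V\|\ge\alpha_k(M)$ and $\mathrm J_V(M)\ge\alpha_k(M)\cdots\alpha_d(M)$. So both lower bounds follow from part (1) alone. The upper bounds only need a basis $f_l\in V_l(\bi_+)\setminus V_{l+1}(\bi_+)$, $l\ge k$, whose growth rates $\chi_l$ are given by part (2), together with Hadamard's inequality. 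That version uses only the one-sided statements in $\bi_+$, so no auxiliary past is needed. Your route is somewhat heavier, but fully rigorous.
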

		
		Recall \cref{thm Oseledets}, we write $\lambda_1 \geq \cdots \geq \lambda_d$ for the Lyapunov exponents repeated according to multiplicity, so that $\lambda_m = \chi_j$ whenever
		$d_1 + \cdots + d_{j-1} < m \leq d_1 + \cdots + d_j$.
		
		\begin{cor}\label{cor Oseledets restriction sv}
			Let $\cF$, $\nu$,
			$\chi_j, d_j, V_j, U^j, E_j$ be as in \Cref{thm Oseledets}. Then for $\nu$-almost every $\bi \in \Sigma$, the following holds.
			\enmt
			
			\item For $j \in \{1, \dots, k\}$, set $r_{j} = \dim V_{j}(\bi_{+}) = d_{j} + \cdots + d_{k}$
			and $s_{j} = d_{1} + \cdots + d_{j-1}$, so that $s_{j} + r_{j} = d$. Then for every
			$p \in \{1, \dots, r_{j}\}$,
			\aryst
			\lim_{n \to \infty} \frac{1}{n}
			\log \alpha_{p} \big( M_{\bi_{+}}^{n} |_{V_{j}(\bi_{+})} \big) = \lambda_{s_{j} + p} .
			\earyst
			
			\item For every $j \in \{1, \dots, k\}$ and every $\varepsilon > 0$, there is
			$C = C(\bi, \varepsilon) \geq 1$ such that
			\aryst
			C^{-1} e^{n ( \chi_{j} - \varepsilon )} \norm{v}
			\ \leq \ \norm{ M_{\bi_{+}}^{n} v }
			\ \leq \ C e^{n ( \chi_{j} + \varepsilon )} \norm{v} ,
			\qquad v \in E_{j}(\bi), \ n \geq 0 .
			\earyst
			\eenmt
		\end{cor}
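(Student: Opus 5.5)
Both items will be obtained by applying \cref{thm Oseledets} to the cocycle $M^n_{\bi_+}$ restricted to an invariant measurable subbundle, and then reading off the Lyapunov spectrum of the restriction from the growth rates of individual vectors, which \cref{thm Oseledets} already gives for the full cocycle.

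\textbf{Step 1: restricted cocycles.} Let $F(\bi)$ be either $V_j(\bi_+)$ or $E_j(\bi)$. By \cref{thm Oseledets}, $M_{i_0}F(\bi)=F(\sigma\bi)$ on a $\sigma$-invariant full-measure set. I fix a measurable choice of orthonormal bases of $F(\bi)$; measurable Gram--Schmidt applied to a measurable spanning family does this. Writing $M_{i_0}|_{F(\bi)}:F(\bi)\to F(\sigma\bi)$ in these bases gives a measurable $\GL_{r}(\R)$-valued cocycle $A(\bi)$ over the ergodic system $(\Sigma,\sigma,\nu)$, where $r=\dim F$. Since the alphabet $I$ is finite, $\log^+\|A^{\pm1}\|\le\max_i\log\max\{\|M_i\|,\|M_i^{-1}\|\}$ is bounded, so $A$ is integrable. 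Because the bases are orthonormal, the singular values of $A^{(n)}(\bi)$ coincide with those of $M^n_{\bi_+}|_{F(\bi)}$. The two-sided Oseledets theorem therefore applies to $A$: the limits $\frac1n\log\alpha_p(M^n_{\bi_+}|_F)$ exist a.e., they equal the Lyapunov spectrum of $A$ listed with multiplicity, and every nonzero $v\in F$ has its exponent $\lim\frac1n\log\|M^n_{\bi_+}v\|$ in that spectrum.

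\textbf{Step 2: identifying the spectrum.} For $F=V_j$, the filtration $V_j\supsetneq V_{j+1}\supsetneq\cdots\supsetneq V_k$ is invariant. By \cref{thm Oseledets}(2), vectors in $V_l\setminus V_{l+1}$ have exponent exactly $\chi_l$, and $\dim(V_l/V_{l+1})=d_l$. By the uniqueness of the Oseledets filtration of $A$ (the filtration is determined by the exponents of vectors), this is the forward Oseledets filtration of $A$. Hence the spectrum of $A$ is $\chi_j,\dots,\chi_k$ with multiplicities $d_j,\dots,d_k$, that is, $\lambda_{s_j+1}\ge\cdots\ge\lambda_d$. This proves item (1).

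For $F=E_j$, \cref{thm Oseledets}(4) shows that every nonzero vector has exponent $\chi_j$, so the spectrum of $A$ is $\chi_j$ with multiplicity $d_j$. Step 1 then gives $\frac1n\log\|M^n_{\bi_+}|_{E_j}\|\to\chi_j$ and $\frac1n\log\alpha_{d_j}(M^n_{\bi_+}|_{E_j})\to\chi_j$.

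\textbf{Step 3: the tempered constant.} Fix $\varepsilon>0$. By Step 2 there is $n_0(\bi,\varepsilon)$ such that for $n\ge n_0$ the norm and the conorm of $M^n_{\bi_+}|_{E_j}$ both lie in $[e^{n(\chi_j-\varepsilon)},e^{n(\chi_j+\varepsilon)}]$. For the finitely many $n<n_0$, these quantities are positive and finite. Taking $C(\bi,\varepsilon)$ to be the maximum of the corresponding ratios over $n<n_0$ yields item (2) for all $n\ge0$.

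\textbf{Main obstacle.} The only delicate point is Step 2's appeal to uniqueness, namely that the Oseledets data of the restricted cocycle $A$ are compatible with the full one. I would handle this by the characterization $V^A_l=\{v:\limsup\frac1n\log\|A^{(n)}v\|\le\chi_l\}$, which holds for any Oseledets-regular cocycle and identifies the filtration of $A$ directly with $V_l$.
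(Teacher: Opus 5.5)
Your argument is correct: restricting the cocycle to the invariant bundles $V_j$ and $E_j$ through measurable orthonormal frames, applying the multiplicative ergodic theorem to the resulting integrable $\GL_r$-valued cocycle, and identifying its spectrum and multiplicities from the vector growth rates in \cref{thm Oseledets}(2),(4) proves both items. For the identification you use the characterization of the filtration by $\limsup$ exponents, and the tempered constant in (2) then follows as you describe. The paper gives no separate proof and treats the corollary as a direct consequence of the Oseledets theorem \cite[Thm.~3.4.11]{Arnold}; your restricted-cocycle argument is the standard way to supply that step.
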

		
		The following lemma is used in Step 1 of the proof of \Cref{prop fiberwisedim}. We use the notation of Subsection \ref{subsec setting1}: the integers $m$ and $m_0$, the time $T_n = \lfloor n \log 2 / (\chi_m - \chi_{m_0+1}) \rfloor$ and the number $c$ with $2c = (\chi_m - \chi_{m+1}) / (\chi_m - \chi_{m_0+1})$. The lemma follows directly from \cref{cor Oseledets restriction} and \cref{cor Oseledets restriction sv}.
		
		\begin{lemma} \label{lem Oseledets statistics}
			Let $\eps > 0$ and $\eta > 0$. There is a Borel set $A = A(\eps, \eta) \subset \Sigma_{+}$ with $\nu_{+}(A) > 1 - \eta$ such that for every integer $n \gg_{\cF,  \eps, \eta} 1$ and every $\bi_{+} \in A$, the map $M := Df^{T_n}_{\bi_{+}}$ satisfies
			\ary
			\label{eq stat gap1}
			\alpha_{m}(M) &>& 2^{(1 - \frac{\eps}{3}) n} \, \alpha_{m_0+1}(M), \\
			\label{eq stat gap2}
			\alpha_{m}(M) &>& 2^{(2c - \frac{\eps}{3}) n} \, \alpha_{m+1}(M), \\
			\label{eq stat jac}
			\mathrm{J}_{V_{m+1}(\bi_{+})}(M) &\leq& 2^{\frac{\eps n}{3}} \, \alpha_{m+1}(M) \cdots \alpha_{d}(M), \\
			\label{eq stat norm1}
			\sup_{v \in V_{m_0+1}(\bi_{+}) \setminus \{0\}} \frac{\| M v \|}{\| v \|} &\leq& 2^{\frac{\eps n}{3}} \, \alpha_{m_0+1}(M), \\
			\label{eq stat norm2}
			\sup_{v \in V_{m+1}(\bi_{+}) \setminus \{0\}} \frac{\| M v \|}{\| v \|} &\leq& 2^{\frac{\eps n}{3}} \, \alpha_{m+1}(M).
			\eary
		\end{lemma}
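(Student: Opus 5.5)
The plan is to derive each of the five inequalities from the almost-sure asymptotics in \cref{cor Oseledets restriction} and \cref{cor Oseledets restriction sv}, and then to make them uniform on a large set by Egorov's theorem. Throughout, $t=T_n=\lfloor n\log 2/(\chi_m-\chi_{m_0+1})\rfloor$, so $t\to\infty$ linearly in $n$, with $t(\chi_m-\chi_{m_0+1}) = n\log 2 + O(1)$ and $t(\chi_m-\chi_{m+1}) = 2cn\log 2 + O(1)$.

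First, the singular value gaps \eqref{eq stat gap1} and \eqref{eq stat gap2}. By \cref{thm Oseledets}(1) with simple spectrum, for $\nu_+$-a.e. $\bi_+$ we have $\frac1t\log\alpha_p(M^t_{\bi_+})\to\chi_p$ for every $p$. Given $\eps'>0$ (to be chosen as a small multiple of $\eps(\chi_m-\chi_{m_0+1})$), Egorov's theorem gives a set $A_1$ with $\nu_+(A_1)>1-\eta/3$ and a $t_1$ such that $|\log\alpha_p(M^t_{\bi_+})-t\chi_p|\le\eps' t$ for all $p$, all $t\ge t_1$ and all $\bi_+\in A_1$. It follows that
\[
\log\frac{\alpha_m(M)}{\alpha_{m_0+1}(M)}\ \ge\ t(\chi_m-\chi_{m_0+1})-2\eps' t\ \ge\ n\log 2-O(1)-2\eps' t .
\]
Since $t\le n\log 2/(\chi_m-\chi_{m_0+1})$, the loss $2\eps' t$ is at most $\frac{\eps}{6}n\log 2$ for suitable $\eps'$. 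For $n$ large the $O(1)$ term is absorbed, which gives \eqref{eq stat gap1}. The same computation with $m+1$ in place of $m_0+1$ gives \eqref{eq stat gap2}, since $t(\chi_m-\chi_{m+1})\ge 2cn\log 2-O(1)$.

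Second, the restriction bounds \eqref{eq stat jac}, \eqref{eq stat norm1} and \eqref{eq stat norm2}. These are exactly \eqref{eq restriction sv} of \cref{cor Oseledets restriction}: take $k=m+1$ for the Jacobian and for \eqref{eq stat norm2}, and $k=m_0+1$ for \eqref{eq stat norm1}, with error $e^{\eps' t}\le 2^{\eps n/3}$. The threshold $t_0(\bi_+,\eps')$ there is a measurable, a.e. finite function. Choosing $T$ with $\nu_+\{t_0>T\}<\eta/3$ gives a set $A_2$ on which the bounds hold for all $t\ge T$.

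Finally, set $A=A_1\cap A_2$, so that $\nu_+(A)>1-\eta$, and take $n$ so large that $T_n\ge\max(t_1,T)$ and the $O(1)$ terms are absorbed. The only mildly delicate point is this bookkeeping: converting the $e^{\eps' t}$ errors into $2^{\eps n/3}$ via $t\asymp n$, and checking that the floor in $T_n$ changes $t(\chi_m-\chi_{m_0+1})$ only by $O(1)$. There is no real obstacle beyond that.
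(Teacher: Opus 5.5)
Your proposal is correct and follows the same route as the paper, which simply notes that the lemma follows directly from \cref{cor Oseledets restriction} and \cref{cor Oseledets restriction sv}: you take the almost-sure singular-value and restriction asymptotics, make them uniform on a set of $\nu_+$-measure $>1-\eta$ via Egorov's theorem (or a tail bound on the measurable threshold $t_0$), and convert the errors using $T_n \asymp n$. Your bookkeeping is also right, namely $t(\chi_m-\chi_{m_0+1}) = n\log 2 - O(1)$, $t(\chi_m-\chi_{m+1}) = 2cn\log 2 - O(1)$, $e^{\eps' t}\le 2^{\eps n/3}$ for $\eps'\lesssim \eps(\chi_m-\chi_{m_0+1})$, and $A$ depending only on $(\eps,\eta)$ with $n$ taken large afterwards.
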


		\section{Linear algebra: deferred proofs} \label{app linear algebra}
		\label{app proofs lemmas}
		
		Recall that for a subspace $V \subset \R^d$ of dimension $k$ and $M \in \GL(d,\R)$, we write
		\[ \mathrm{J}_V(M) := \frac{\| (\wedge^{k} M) \, v \|}{\|v\|}, \qquad 0 \neq v \in \wedge^{k} V. \]
		
		\begin{lemma} \label{lem lowerboundforpojection}
			Let $J \in \Gr(d, d-1)$ and $M \in \GL(d, \R)$. Then, for every $v \in \R^d$,
			\ary \label{eq proj identity}
			\| P^{MJ} M v \| = \frac{|\det M|}{\mathrm{J}_J(M)} \, \| P^{J} v \| .
			\eary
			Moreover, let $\rho > 0$ and $L \in \Gr(d, 1)$. If 
			\begin{equation}
				\label{eqn:angle bound}
				\sin \measuredangle(L, J), \, \sin \measuredangle(ML, MJ) \geq \rho, 
			\end{equation}
			then for every nonzero $v \in \R^d$ and $w \in L$,
			\ary \label{eq proj ratio}
			\frac{\| P^{MJ} M v \|}{\| M v \|} \ \in \ [\rho, 1/\rho] \cdot \frac{\| P^{J} v \| \, \| M w \|}{\| M v \| \, \| w \|} .
			\eary
		\end{lemma}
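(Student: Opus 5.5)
The plan is to prove the identity \eqref{eq proj identity} by the ``base times height'' volume formula in $\wedge^d\R^d$, and then obtain \eqref{eq proj ratio} by applying the same identity a second time to a vector of $L$, so as to eliminate the unknown factor $|\det M|/\mathrm{J}_J(M)$.

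For the identity, fix an orthonormal basis $u_1,\dots,u_{d-1}$ of $J$ and put $\omega=u_1\wedge\cdots\wedge u_{d-1}\in\wedge^{d-1}J$, so $\|\omega\|=1$. For any $v\in\R^d$, write $v=P_Jv+P^Jv$. Since $\omega\wedge P_Jv=0$,
\[
\|\omega\wedge v\|=\|\omega\wedge P^Jv\|=\|\omega\|\,\|P^Jv\|=\|P^Jv\|,
\]
where the middle equality holds because $P^Jv\perp J$. The same reasoning applies to the hyperplane $MJ$, spanned by $Mu_1,\dots,Mu_{d-1}$, with the unnormalized $(d-1)$-vector $(\wedge^{d-1}M)\omega$. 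It gives
\[
\|(\wedge^{d-1}M)\omega\wedge Mv\|=\|(\wedge^{d-1}M)\omega\|\,\|P^{MJ}Mv\|=\mathrm{J}_J(M)\,\|P^{MJ}Mv\|.
\]
On the other hand, $(\wedge^{d-1}M)\omega\wedge Mv=(\wedge^dM)(\omega\wedge v)$. Since $\wedge^dM$ acts on the line $\wedge^d\R^d$ as multiplication by $\det M$, its norm is $|\det M|\,\|\omega\wedge v\|=|\det M|\,\|P^Jv\|$. Comparing the two expressions yields \eqref{eq proj identity}.

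For the ratio, apply \eqref{eq proj identity} to a nonzero $w\in L$. For a line $L$ and a hyperplane $J$ one has $\|P^Jw\|=d(w,J)=\|w\|\sin\measuredangle(L,J)$. Similarly, since $Mw$ spans $ML$, one has $\|P^{MJ}Mw\|=\|Mw\|\sin\measuredangle(ML,MJ)$. Hence
\[
\frac{|\det M|}{\mathrm{J}_J(M)}=\frac{\|Mw\|}{\|w\|}\cdot\frac{\sin\measuredangle(ML,MJ)}{\sin\measuredangle(L,J)}.
\]
Both sines lie in $[\rho,1]$ by \eqref{eqn:angle bound}, so their quotient lies in $[\rho,1/\rho]$.

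Substituting this into \eqref{eq proj identity} for the general vector $v$ and dividing by $\|Mv\|$ gives \eqref{eq proj ratio}.

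There is no real obstacle here. The only point needing care is the identification of $\|\omega\wedge v\|$ with the distance from $v$ to the hyperplane spanned by $\omega$. This identification must also be used for the non-orthonormal frame $Mu_1,\dots,Mu_{d-1}$; it holds there because only the decomposable vector, and hence the hyperplane together with the norm of the $(d-1)$-vector, enters the formula.
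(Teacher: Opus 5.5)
Your proposal is correct and follows essentially the same route as the paper: both express $\|P^J v\|$ and $\|P^{MJ}Mv\|$ through $(d-1)$-vectors and use $\wedge^d M = \det M$ to obtain the identity. Both then eliminate the factor $|\det M|/\mathrm{J}_J(M)$ by evaluating the same formula at $w \in L$ via the two sines. Your version is just a more explicit write-up of the paper's short computation, for example in checking why the non-orthonormal frame $Mu_1,\dots,Mu_{d-1}$ causes no trouble.
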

		
		\begin{proof}[Proof of \cref{lem lowerboundforpojection}]
			
			By definition of projection, we have
			\[\|P^Jv\|=\frac{\|v\wedge \wedge^{d-1}J \|  }{\|\wedge^{d-1}J \|},\  \|P^{MJ}Mv\|=\frac{\|Mv\wedge \wedge^{d-1}MJ \|  }{\|\wedge^{d-1}MJ \|}= \frac{|\det M|\|v\wedge \wedge^{d-1}J \|  }{\|\wedge^{d-1}MJ \|} \]
			On the other hand, from the angle, we have
			\[ \sin\measuredangle(ML,MJ)=\frac{\|Mw\wedge\wedge^{d-1} MJ\|}{\|Mw\| \|\wedge^{d-1} MJ\|}=\frac{|\det M|\|w\|\|\wedge^{d-1}J\|}{\|Mw\| \|\wedge^{d-1}MJ\|}\sin\measuredangle(L,J) \]
			
			Hence, we have
			\[ \frac{\|P^{MJ}Mv\|}{\|P^Jv \|}=\frac{\sin\measuredangle(ML,MJ)}{\sin\measuredangle(L,J) }\frac{\|M w\|}{\|w\|}. \]
		\end{proof}
		\begin{lemma} \label{lem distance-control}
			Let $m,m'\in \{1,\cdots,d-1\}$. Let $M \in \GL(d, \R)$, $L \in \Gr(d, 1)$, $V \in \Gr(d, d-m)$, and ${ C_2}, { C_3} \geq 1$.
			Suppose that
			\aryst
			\| M w \| \ \leq \ { C_2} \, \alpha_{m'+1}(M) \, \| w \| \quad \mbox{for } w \in L ,
			\quad \mbox{and} \quad
			\mathrm{J}_V(M) \ \leq \ { C_3} \, \alpha_{m+1}(M) \cdots \alpha_d(M) .
			\earyst
			Then
			\[ d(V, L) \ \leq \ { C_2}{ C_3} \, \frac{\alpha_{m'+1}(M)}{\alpha_m(M)} . \]
		\end{lemma}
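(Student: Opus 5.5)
The plan is to reduce $d(V,L)$ to a comparison of exterior-power norms. Fix a unit vector $w\in L$. Since $d(V,L)=\sin\measuredangle(w,V)=\|P^{V}w\|$ (here $\dim L=1\le\dim V$), the identity used in the proof of \cref{lem lowerboundforpojection} gives
\[
\|P^V w\| \;=\; \frac{\|w\wedge \omega\|}{\|\omega\|}, \qquad 0\neq\omega\in\wedge^{d-m}V .
\]
I will push this quantity forward by $M$ and bound the numerator from above and the denominator from below in terms of singular values.

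For the pushed-forward ratio, note that $\wedge^{d-m+1}M$ applied to $w\wedge\omega$ equals $Mw\wedge (\wedge^{d-m}M)\omega$. The wedge of the $d-m+1$ vectors $Mw$ and a basis of $MV$ has norm at least the product of the bottom $d-m+1$ singular values times $\|w\wedge\omega\|$; concretely,
\[
\|\wedge^{d-m+1}M\,(w\wedge\omega)\|\;\ge\;\alpha_m(M)\alpha_{m+1}(M)\cdots\alpha_d(M)\,\|w\wedge\omega\| .
\]
This is the smallest singular value of $\wedge^{d-m+1}M$, which is the product of the $d-m+1$ smallest singular values. On the other hand, $\|Mw\wedge(\wedge^{d-m}M)\omega\|\le \|Mw\|\,\|(\wedge^{d-m}M)\omega\| = \|Mw\|\,\mathrm{J}_V(M)\,\|\omega\|$.

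Combining the two displays yields
\[
\|P^Vw\|\;\le\;\frac{\|Mw\|\,\mathrm{J}_V(M)}{\alpha_m(M)\cdots\alpha_d(M)} .
\]
Inserting the hypotheses $\|Mw\|\le C_2\alpha_{m'+1}(M)$ and $\mathrm{J}_V(M)\le C_3\,\alpha_{m+1}(M)\cdots\alpha_d(M)$ makes the product $\alpha_{m+1}\cdots\alpha_d$ cancel. What remains is exactly $C_2C_3\,\alpha_{m'+1}(M)/\alpha_m(M)$.

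The only step needing care is the lower bound for $\wedge^{d-m+1}M$ on a decomposable vector. This is standard because the singular values of $\wedge^kM$ are the $k$-fold products of the $\alpha_i(M)$, and the minimal one is the product of the last $k$. There is no real obstacle here; one must only check the conventions that $\dim V=d-m$ and that $d(V,L)$ is the sine of the angle as defined via $\bd$.
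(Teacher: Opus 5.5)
Your proposal is correct and is essentially the paper's argument. The paper computes the $(d-m+1)$-volume of $Me_1,\dots,Me_{d-m},Mw$ in two ways: from below it is $\mathrm{J}_{V\oplus L}(M)\,d(V,L)\ge \alpha_m(M)\cdots\alpha_d(M)\,d(V,L)$, and from above it is $\mathrm{J}_V(M)\,\mathrm{dist}(Mw,MV)\le \mathrm{J}_V(M)\|Mw\|$. This is exactly your exterior-power estimate written in volume language, and your version has the small bonus of not needing to assume $d(V,L)>0$ separately.
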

		
		\begin{proof}
			We may assume that $\delta:=d(V,L)>0$. Then $V' := V \oplus L \in \Gr(d, d-m+1)$.
			Let $w\in L$ be a unit vector. Fix an orthonormal basis $e_1, \cdots, e_{d-m}$ of $V$.
			We have $\mathrm{vol}_{d-m+1}(e_1, \cdots, e_{d-m}, w) = \mathrm{dist}(w, V) = \delta$.
			This gives
			\aryst
			\mathrm{vol}_{d-m+1}(M e_1, \cdots, M e_{d-m}, M w)
			=\mathrm{J}_{V'}(M) \, \delta \ \geq \ \alpha_{m}(M) \alpha_{m+1}(M) \cdots \alpha_{d}(M) \, \delta .
			\earyst
			On the other hand, $M e_1, \cdots, M e_{d-m}$ span $MV$ and $\mathrm{vol}_{d-m}(M e_1, \cdots, M e_{d-m}) = \mathrm{J}_{V}(M)$, so 
			\aryst
			\mathrm{vol}_{d-m+1}(M e_1, \cdots, M e_{d-m}, M w)
			&=& \mathrm{J}_{V}(M) \cdot \mathrm{dist}(M w, MV) \ \leq \ \mathrm{J}_{V}(M) \, \| M w \| \\
			&\leq& { C_3} \, \alpha_{m+1}(M) \cdots \alpha_{d}(M) \cdot { C_2} \, \alpha_{m'+1}(M) .
			\earyst
			Comparing the two displays and dividing by $\alpha_{m}(M) \cdots \alpha_{d}(M) > 0$ yields $\delta \leq {C_2}{ C_3} \alpha_{m'+1}(M) / \alpha_m(M)$.
		\end{proof}

		The following identity is used to define $\lambda_t$ in \eqref{eq scaling}, to prove \eqref{eq tube identity} and \eqref{eq lambda increments}, and in the proof of \cref{lem fullness}.
		
		\begin{lemma} \label{lem base times height}
			Let $M \in \GL(d, \R)$. Let $V \subset \R^d$ be a subspace and $W$ a hyperplane of $V$, and let $u$ be a unit vector of $V \cap W^{\perp}$. Then
			\aryst
			\mathrm{J}_{V}(M) \ = \ \mathrm{J}_{W}(M) \, d(Mu, MW),
			\earyst
			and, for every $v \in V$,
			\aryst
			d(Mv, MW) \ = \ \frac{ \mathrm{J}_{V}(M) }{ \mathrm{J}_{W}(M) } \, d(v, W) .
			\earyst
		\end{lemma}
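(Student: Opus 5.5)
The plan is to prove the two identities of \cref{lem base times height} directly from the definition of $\mathrm{J}$ as a volume ratio, using the multiplicativity of the exterior power under a base-times-height decomposition. Let $k = \dim V$, so $\dim W = k-1$. Fix an orthonormal basis $w_1, \dots, w_{k-1}$ of $W$. Since $u$ is a unit vector of $V \cap W^{\perp}$, the tuple $(w_1, \dots, w_{k-1}, u)$ is an orthonormal basis of $V$. The element $\omega := w_1 \wedge \cdots \wedge w_{k-1} \in \wedge^{k-1} W$ has norm $1$, and $\omega \wedge u \in \wedge^k V$ also has norm $1$. By definition,
\[
\mathrm{J}_W(M) = \|(\wedge^{k-1}M)\,\omega\|, \qquad \mathrm{J}_V(M) = \|(\wedge^k M)(\omega \wedge u)\| = \|((\wedge^{k-1}M)\,\omega) \wedge Mu\|.
\]

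For the first identity I will use the standard fact that, for a nonzero decomposable $\theta \in \wedge^{k-1}\R^d$ spanning a $(k-1)$-dimensional subspace $S$ and any vector $z$, one has $\|\theta \wedge z\| = \|\theta\| \, d(z, S)$. This is the Gram determinant computation: write $z = z_S + z_\perp$ with $z_S \in S$ and $z_\perp \perp S$; then $\theta \wedge z_S = 0$, and $\|\theta \wedge z_\perp\| = \|\theta\|\,\|z_\perp\|$ because $z_\perp$ is orthogonal to $S$. Apply this with $\theta = (\wedge^{k-1}M)\,\omega$, which spans $MW$, and $z = Mu$. This gives $\mathrm{J}_V(M) = \mathrm{J}_W(M)\, d(Mu, MW)$.

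For the second identity, write $v \in V$ as $v = w + s u$ with $w \in W$ and $s \in \R$. Since $u \perp W$, we have $d(v, W) = |s|$. Also $Mv = Mw + sMu$ with $Mw \in MW$, so $d(Mv, MW) = |s|\, d(Mu, MW)$; translating a point by an element of the subspace does not change its distance to that subspace. Substituting the first identity,
\[
d(Mv, MW) = \frac{\mathrm{J}_V(M)}{\mathrm{J}_W(M)}\, d(v, W).
\]

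There is no real obstacle. The only point requiring care is the normalization: $\omega$ and $\omega \wedge u$ are unit vectors precisely because the basis $(w_1, \dots, w_{k-1}, u)$ is orthonormal. The fact $\|\theta \wedge z\| = \|\theta\|\, d(z,S)$ is routine and I will cite it as elementary. The degenerate case $k = 1$ (so $W = \{0\}$ and $\mathrm{J}_W(M) = 1$) reduces to $\mathrm{J}_V(M) = \|Mu\|$, which is consistent with both formulas.
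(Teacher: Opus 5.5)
Your proof is correct and is essentially the paper's argument. The first identity is the same base-times-height computation in an orthonormal basis of $W$ completed by $u$, which you justify a bit more explicitly through $\|\theta\wedge z\| = \|\theta\|\,d(z,S)$. For the second identity, the paper notes that $v\mapsto\langle Mv,u'\rangle$ (with $u'$ a unit vector of $MV\cap(MW)^{\perp}$) vanishes on $W$ and hence equals $c\,\langle v,u\rangle$ with $|c| = d(Mu,MW)$; this is just a reformulation of your decomposition $v = w + su$.
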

		
		\begin{proof}
			Put $V' := MV$ and $W' := MW$. 
			Computing the Jacobian of $M|_{V}$ in an orthonormal basis of $W$ completed by $u$ gives $\mathrm{J}_{V}(M) = \mathrm{J}_{W}(M) \, d(Mu, W')$, which is the first identity. 
			
			Since $W$ is a hyperplane of $V$, $d(v, W) = |\phi(v)|$ for $v \in V$, where $\phi(v) := \langle v, u \rangle$. Similarly, $d(v', W') = |\phi'(v')|$ for $v' \in V'$, with $\phi'(v') := \langle v', u' \rangle$ and $u'$ a unit vector of $V' \cap (W')^{\perp}$. The linear functional $\phi' \circ M$ on $V$ vanishes on $W$. Hence, $$\phi' \circ M = c \, \phi\quad \text{with} \quad|c| = |\phi'(M u)| = d(Mu, W').$$ Therefore, for every $v\in V$,  $$d(Mv, W') = |\phi'(Mv)| = |c| \, |\phi(v)| = \big( \mathrm{J}_{V}(M) / \mathrm{J}_{W}(M) \big) \, d(v, W).$$ 
		\end{proof}

		\subsection{Proofs of Lemmas \ref{lem slab transfer} and \ref{lem kernel pullback}}
		\begin{proof}[Proof of \cref{lem slab transfer}]
			Let $N$ be the level of the atom $D$, so that $D \in \cD_{m+1, \bi_{+}, N}$, and let $A = A(x, \bi_{+})$, $J = J(x, \bj_{+})$, $K$ and $\lambda$ be as in \eqref{eq A def}-\eqref{eqn:lambda norm}. Recall that $\mu_{x, \bi_{+}}$ is carried by $A$, as $(x, \bi_{+}) \in \Xi$.
			Write $V := V_{m_0+2}(\bi_{+})$.
			We shall use the elementary inequality
			\begin{equation}
				\label{eqn:dist K V}
				| d(w, K) - d(w, V) | \leq \| P^{K} - P^{V} \| \, |w|\quad \text{for}\quad w \in V_{m_0+1}(\bi_{+}).
			\end{equation}
			
			Put $r_1 := 2^{-N-l}$. 
			For $y\in A$, let $\widehat\Tube_{r}(y, V) = \widehat\Tube_{r}(y, \bi_{+})$ and $\widehat\Tube_{r}(y, K)$ be the tubes around $y + V$ and $y + K$ in $A$, given in \eqref{eq slab def}.
			Suppose $\widehat\Tube_{r_1}(y, K) \cap D \neq \emptyset$. For any $y',z \in \widehat\Tube_{r_1}(y, K) \cap D$, we have
			\aryst
			&&d(z - y', V) \ \leq \ d(z - y', K) + \| P^{K} - P^{V} \| \, |z - y'| \\
			&<& \ 2 r_1 + \| P^{K} - P^{V} \| \diam (D) \ \leq \ 3 r_1,
			\earyst
			where we use  \eqref{eqn:dist K V} in the first inequality and the upper bound $\diam (D)\leq \sqrt{d}2^{-N}$ in the last one.
			Thus $\widehat\Tube_{r_1}(y, K) \cap D \subset \widehat\Tube_{3 r_1}(y', V)$. Combining it with the $\eta$-good hypothesis, we obtain
			\ary
			\label{eqn:measure tube}
			\mu^{D}_{x, \bi_{+}} \big( \widehat\Tube_{r_1}(y, K) \big) \ < \ 4 \eta \qquad \mbox{for every } y \in A .
			\eary
			The restriction of $P^{J}$ to $V_{m_0+1}(\bi_{+})$ is a linear map of rank one with kernel $K$. So $| P^{J} w | = \lambda \, d(w, K)$ for $w \in V_{m_0+1}(\bi_{+})$ by  definition of $\lambda$.
			This gives, for $y \in A$, 
			\ary
			\label{eqn:tube ball}
			\{z\in A: P^{J} z \in B(P^{J} y, \lambda r_1)\}=\widehat\Tube_{r_1}(y, K).
			\eary
			\eqref{eqn:measure tube} and \eqref{eqn:tube ball} imply $$\sup_{p} (P^{J})_* \mu^{D}_{x, \bi_{+}} \big( B(p, \lambda 2^{-N-l}) \big) < 4 \eta,$$ which is \eqref{eq good sequence} with $4 \eta$ in place of $\eta$. Hence, $\bj_{+}$ is $4 \eta$-good for $(D, l)$ with respect to $(x, \bi_{+})$.
		\end{proof}
		
		\begin{proof}[Proof of \cref{lem kernel pullback}]
			We abbreviate
			\aryst
			&&E' := E_{m_0+1}(x', \bi'_{+}), \quad V' := V_{m_0+2}(\bi'_{+}), \quad K' := K(x', \bi'_{+}, \bj'_{+}) =  H' \cap V_{m_0+1}(\bi'_{+}),\\
			&&E_0 := E_{m_0+1}(x, \bi_{+}), \quad V_0 := V_{m_0+2}(\bi_{+}).
			\earyst
			We have $V_{m_0+1}(\bi'_{+}) = E' \oplus V'$ and $V_{m_0+1}(\bi_{+}) = E_0 \oplus V_0$.
			
			We first prove that $K'$ is a graph over $V'$ with a slope of at most $e^{\eps n}$.
			By \eqref{equ:hum-1}, $E' \not\subset H'$. As $E' \subset V_{m_0+1}(\bi'_{+})$, this shows $V_{m_0+1}(\bi'_{+}) \not\subset H'$, so $K'$ is a hyperplane of $V_{m_0+1}(\bi'_{+})$ with $K' \cap E' = \{0\}$.
			Hence the projection of $K'$ to $V'$ along $E'$ is injective between spaces of the same dimension, thus bijective, and there is a linear map $\phi : V' \to E'$ such that
			\aryst
			K' = \{ v + \phi(v) \ : \ v \in V' \} .
			\earyst
			Let $v \in V'$ be a unit vector and put $w := v + \phi(v) \in K'$.
			Then we have $d(\phi(v), K') = d(v, K') \leq |v| = 1$.
			On the other hand, 
			\aryst
			d(\phi(v), K') \ \geq \ |\phi(v)| \, \sin \measuredangle(E', K') \ \geq \ |\phi(v)| \, \sin \measuredangle(E', H') \ > \ |\phi(v)| \, e^{-\eps n} .
			\earyst
			Therefore $\| \phi \| \leq e^{\eps n}$.
			
			Next, we pull back to the time $0$.
			By \eqref{eq M equivariance}, $M V_0 = V'$, $M E_0 = E'$ and $M K = K'$. Hence,
			we get
			\aryst
			K = M^{-1} K' = \{ u + \psi(u) \ : \ u \in V_0 \} ,
			\qquad
			\psi := \big( M |_{E_0} \big)^{-1} \circ \phi \circ M |_{V_0} : V_0 \to E_0 .
			\earyst
			By hypothesis (a), $\| M |_{V_0} \| \leq e^{t (\chi_{m_0+2} + \eps)}$ and, $E_0$ being a line, $$\| (M |_{E_0})^{-1} \| = \big( \min_{v' \in E_0, |v'| = 1} | M v' | \big)^{-1} \leq e^{- t (\chi_{m_0+1} - \eps)}.$$
			Combining with the upper bound for $\|\phi\|$,
			\aryst
			\| \psi \| \ \leq \ \| (M |_{E_0})^{-1} \| \, \| \phi \| \, \| M |_{V_0} \| \ \leq \ e^{- t (\chi_{m_0+1} - \eps)} \, e^{\eps n} \, e^{t (\chi_{m_0+2} + \eps)}.
			\earyst
			
			For a unit vector $u \in V_0$, the vector $u + \psi(u)$ lies in $K$, so $d(u, K) \leq | \psi(u) | \leq \| \psi \|$.
			Taking the maximum over $u$ gives $d(V_0, K) \leq \| \psi \|$, which is \eqref{eq kernel close}.
		\end{proof}
		
		\subsection{Fullness of the pulled-back cubes}
		We use the notation from the proof of \cref{lem goodonaverage}: the linear parts $L_{x,t}$ of \eqref{eqn:L equivariant}, the functions $\lambda_t$ of \eqref{eq scaling},  the stopping times $\tau_k$ of \eqref{eq tau bounds}, the set $\Gamma_0\subset \Lambda\times \Sigma_+$ with its conditions (G1)–(G3),  the sets $T_{n,k}(x, \bi_{+})$, the notion of a full set, and the level $N = C_0 n + k$.
		
		\begin{lemma} \label{lem fullness}
			Let $0 \leq k < n$. If $(x, \bi_{+}) \in \Gamma_0$, $\sigma^{- \tau_k(x, \bi_{+})}(x, \bi_{+}) \in \Gamma_0$ and $T_{n, k}( x, \bi_{+} )$ is not full, then $$d \big( x, \partial \cD_{ \bi_{+}, C_0n+k}(x) \big) < 2^{-l/4 - C_0 n - k}.$$
		\end{lemma}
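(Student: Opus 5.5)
The plan is to argue by contrapositive: assume $d(x, \partial \cD_{\bi_+,N}(x)) \geq 2^{-l/4-N}$ with $N = C_0 n + k$, and show that $T_{n,k}(x,\bi_+)$ is full. Write $t := \tau_k(x,\bi_+)$, $\bw := x[t]$ and $(x', \bi'_+) := \sigma^{-t}(x,\bi_+) = (f_{\bw}^{-1}(x), \bw\bi_+)$. Fullness means that the fiber $\mathcal{A}_{m_0+1}(x',\bi'_+) \cap \Lambda = (x' + V_{m_0+1}(\bi'_+)) \cap \Lambda$ is contained in $f_x^{-t}(\cD_{\bi_+,N}(x))$. Equivalently, after applying $f_{\bw}$, we need
\[
A \cap f_{\bw}(\Lambda) \subset \cD_{\bi_+,N}(x).
\]
Since $\Lambda \subset B(1/2)$, the set $f_{\bw}(\Lambda)\cap A$ lies in $x + L_{x,t}^{-1}\bigl(V_{m_0+1}(\bi'_+) \cap B(1)\bigr)$. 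The task is therefore to show that $L_{x,t}^{-1}$ maps the unit ball of $V_{m_0+1}(\bi'_+)$ into a set of diameter less than $2^{-l/4-N}$ inside $V_{m_0+1}(\bi_+)$.

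The first step is to decompose $V_{m_0+1}(\bi_+) = E_{m_0+1}(x,\bi_+) \oplus V_{m_0+2}(\bi_+)$ and treat the two components separately. Here $M := L_{x,t}^{-1} = Df^t_{\bi'_+}$ is the forward product along $\bw$, since $\bi'_+ = \bw\bi_+$.

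For the $E$-direction, I will use (G3) at the point $(x',\bi'_+)$. It says that $E_{m_0+1}(x',\bi'_+)$ makes an angle of at least $3\cdot 2^{-l/4}$ with $V_{m_0+2}(\bi'_+)$. Hence a unit vector $v \in V_{m_0+1}(\bi'_+)$ splits as $v = a + b$ with $a \in E_{m_0+1}(x',\bi'_+)$, $b \in V_{m_0+2}(\bi'_+)$ and $|a|,|b| \lesssim 2^{l/4}$. By \cref{lem base times height} and \eqref{eq tube identity}, the distance to $V_{m_0+2}$ contracts exactly by the factor $2^{-\lambda_t(x,\bi_+)}$. Combined with \eqref{eq tau bounds}, this gives $d(Mv, V_{m_0+2}(\bi_+)) = 2^{-\lambda_t}\, d(v, V_{m_0+2}(\bi'_+)) \leq 2^{-N-l/2}$.

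For the $V_{m_0+2}$-component, I will use (G2) at $(x',\bi'_+)$. This is legitimate because $t \geq l$. It gives $\|M|_{V_{m_0+2}(\bi'_+)}\| \leq |Me|$ for the unit vector $e \in E_{m_0+1}(x',\bi'_+)$. Then (G3) at $(x,\bi_+)$, transported via the identity $|Me|\, \sin\measuredangle(Me, V_{m_0+2}(\bi_+)) = 2^{-\lambda_t} \sin\measuredangle(e, V_{m_0+2}(\bi'_+))$, bounds $|Me|$ by roughly $2^{-\lambda_t} 2^{l/4}/3 \lesssim 2^{-N-l/4}$. Hence $|Mb| \lesssim 2^{l/4}\cdot 2^{-N-l/2}\cdot 2^{l/4}$.

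The step I expect to be the main obstacle is this last estimate. The powers of $2^{l/4}$ coming from the two angle conditions must be shown to fit inside the margin $2^{-l/2}$ supplied by the stopping time, with the constant $3$ in (G3) absorbing the factors of $2$. If the exponents do not close as written, I would adjust by using (G3) at both endpoints more carefully, or by exploiting $\lambda_t > N + l/2$ strictly. This should yield $\operatorname{diam}(A \cap f_{\bw}(\Lambda)) < 2^{-l/4-N}$. Note that $A \cap f_{\bw}(\Lambda)$ contains $x$ and lies in the affine fiber of $\cD$ through $x$. So if $x$ is at distance at least $2^{-l/4-N}$ from the relative boundary of the atom, then $A\cap f_{\bw}(\Lambda)\subset\cD_{\bi_+,N}(x)$, i.e.\ $T_{n,k}$ is full, contradicting the hypothesis.
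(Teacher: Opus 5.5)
Your reduction matches the paper's: argue by contrapositive, bound how far points of $A\cap f_{\bw}(\Lambda)$ lie from $x$ by $\|M|_{V_{m_0+1}(\bi'_+)}\|$, and use the identity $d(Mv,V_{m_0+2}(\bi_+))=2^{-\lambda_t}d(v,V_{m_0+2}(\bi'_+))$, (G2) at $(x',\bi'_+)$ and (G3) at $(x,\bi_+)$. However, the estimate you flag as the obstacle does fail as written, and neither suggested remedy repairs it. You bound $|b|\lesssim 2^{l/4}$ via (G3) at $(x',\bi'_+)$, and $|Me|\lesssim 2^{-\lambda_t}2^{l/4}$ after discarding $\sin\measuredangle(e,V_{m_0+2}(\bi'_+))\le 1$. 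Together these give only $|Mb|\lesssim 2^{-\lambda_t}2^{l/2}$, which is of order $2^{-N}$, a factor $2^{l/4}$ above the target $2^{-N-l/4}$. Using (G3) ``more carefully'' cannot help, since it only supplies lower bounds on sines. The strictness of $\lambda_t>N+l/2$ gives no quantitative gain either. (A minor point: what you need is the operator norm bound, i.e.\ a radius about $x$. Asking the image of the unit ball to have diameter $<2^{-N-l/4}$ would cost an extra factor $2$.)

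The missing idea is a cancellation. The quantity $\theta':=\sin\measuredangle(E',W)$, with $E'=E_{m_0+1}(x',\bi'_+)$ and $W=V_{m_0+2}(\bi'_+)$, enters twice with opposite effects, so it should never be estimated. Write a unit $v\in V_{m_0+1}(\bi'_+)$ as $v=\beta e'+w$ with $e'\in E'$ a unit vector and $w\in W$. Pairing with a unit vector $u\in V_{m_0+1}(\bi'_+)\cap W^{\perp}$ gives $|\beta|\le 1/\theta'$ and $|w|\le 1+|\beta|\le 2/\theta'$. Your own identity, kept exact, says $|Me'|=2^{-\lambda_t}\,\theta'/\sin\measuredangle(E,W_1)$, where $E=E_{m_0+1}(x,\bi_+)$ and $W_1=V_{m_0+2}(\bi_+)$. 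Hence, by (G2) at $(x',\bi'_+)$,
\[
|Mv|\le |\beta|\,|Me'|+|w|\,\|M|_{W}\|\le \frac{3\,|Me'|}{\theta'}=\frac{3\cdot 2^{-\lambda_t}}{\sin\measuredangle(E,W_1)}<\frac{3\cdot 2^{-N-l/2}}{3\cdot 2^{-l/4}}=2^{-N-l/4},
\]
using (G3) at $(x,\bi_+)$ and $\lambda_t>N+l/2$ in the last step. The factor $\theta'$ cancels, so (G3) at $(x',\bi'_+)$ is not needed at all. The constant $3$ in (G3) exactly absorbs $|\beta|+|w|\le 3/\theta'$. With this bound, $A\cap f_{\bw}(\Lambda)$ lies within $\|M|_{V_{m_0+1}(\bi'_+)}\|\,\diam\Lambda<2^{-N-l/4}$ of $x$, and the rest of your argument closes as you describe.
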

		
		\begin{proof}
			Write $t := \tau_k(x, \bi_{+})$, $(x', \bi'_{+}) := \sigma^{-t}(x, \bi_{+}) = (f_x^{-t}(x), x[t]\bi_{+})$ and $M_t := Df_{x[t]} = Df^{t}_{\bi'_{+}} = L_{x,t}^{-1}$. Put $$V := V_{m_0+1}(\bi'_{+}),\quad W := V_{m_0+2}(\bi'_{+}),\quad E' := E_{m_0+1}(x', \bi'_{+}) \subset V,$$ so that $M_t V = V_{m_0+1}(\bi_{+})$, $M_t W = V_{m_0+2}(\bi_{+}) =: W_1$ and $M_t E' = E_{m_0+1}(x, \bi_{+}) =: E$. 
			
			Let $u$ be a unit vector of $V \cap W^{\perp}$ and $e'$ a unit vector of $E'$. By \cref{lem base times height}, applied to $M_t = L_{x,t}^{-1}${, $V$ and $W$},
			\aryst
			2^{-\lambda_t(x, \bi_{+})} = \frac{ \mathrm{J}_{V}(M_t) }{ \mathrm{J}_{W}(M_t) } = d(M_t u, M_t W) .
			\earyst
			Write $u = \alpha e' + w_0$ with $w_0 \in W$. We have $1 = \alpha \langle e', u \rangle$, and $|\langle e', u \rangle| = d(e', W) = \sin \measuredangle(E', W)$, so $|\alpha| = 1 / \sin \measuredangle(E', W)$ and
			\aryst
			d(M_t u, M_t W) = |\alpha| \, d(M_t e', W_1) = \frac{ |M_t e'| \, \sin \measuredangle(E, W_1) }{ \sin \measuredangle(E', W) } .
			\earyst
			
			Let $v \in V$ be a unit vector and write $v = \beta e' + w$ with $w \in W$. Pairing with $u$ gives $\beta = \langle v, u \rangle / \langle e', u \rangle$, so $|\beta| \leq 1 / \sin \measuredangle(E', W)$ and $|w| \leq |v| + |\beta| \leq 2 / \sin \measuredangle(E', W)$. Hence, by (G2) at $(x', \bi'_{+}) \in \Gamma_0$ with this $t = \tau_k(x, \bi_{+}) \geq l$,
			\aryst
			|M_t v| \ \leq \ |\beta| \, |M_t e'| + |w| \, \| M_t|_{W} \| \ \leq \ \frac{ |M_t e'| + 2 \| M_t|_{W} \| }{ \sin \measuredangle(E', W) } \ \leq \ \frac{ 3 \, |M_t e'| }{ \sin \measuredangle(E', W) } .
			\earyst
			These estimates give 
			\begin{equation} 
				\label{eq full norm bound}
				\| M_t|_{V} \| \ \leq \ \frac{ 3 }{ \sin \measuredangle(E, W_1) } \, 2^{-\lambda_t(x, \bi_{+})} \ < \ \frac{3}{3 \cdot 2^{-l/4}} \, 2^{-N - l/2} \ = \ 2^{-N - l/4},
			\end{equation}
			where we use (G3) at $(x, \bi_{+}) \in \Gamma_0$ together with \eqref{eq tau bounds} to obtain the second inequality.
			
			The set $f_{x[t]} \big( \cal A_{m_0+1}(x', \bi'_{+}) \cap \Lambda \big) = x + M_t \big( \cal A_{m_0+1}(x', \bi'_{+}) \cap \Lambda - x' \big)$ contains $x$, and it is contained in $x + V_{m_0+1}(\bi_{+}) \subset x + V_{m+1}(\bi_{+})$. It has a diameter of at most $\| M_t|_{V} \| \diam(\cal A_{m_0+1}(x', \bi'_{+}) \cap \Lambda) < 2^{-N-l/4}$ by \eqref{eq full norm bound}, since $\diam \Lambda \leq 1$. If $d(x, \partial \cD_{\bi_{+}, N}(x)) \geq 2^{-N-l/4}$, this set is therefore contained in $\cD_{\bi_{+}, N}(x)$, i.e.\ $T_{n,k}(x, \bi_{+})$ is full. This is a contradiction.
		\end{proof}

		\section{Auxiliary measure-theoretic lemmas} \label{app miniscule}

		\begin{lemma}\label{lem:ball-compare}
			Let $\omega', \omega''$ be positive finite measures on $[0,1]$, and let
			\[
			\zeta := \omega' + \omega''
			\]
			be a probability measure on $[0,1]$. Let $0 < \eta < 1$ and $0 < r < 1$, and
			suppose that
			\begin{equation}\label{eq:hyp}
				|\omega''| := \omega''([0,1]) \;\le\; \eta^{4}.
			\end{equation}
			Then, writing $B(x,r) := [0,1] \cap (x-r, x+r)$ for the ball of radius $r$
			around $x$, we have
			\begin{equation}\label{eq:concl}
				\zeta\Bigl\{ x : \omega''\bigl(B(x,r)\bigr) > \eta^{1/2}\, \omega'\bigl(B(x,r)\bigr) \Bigr\}
				\;\le\; 2\eta^{7/2} + \eta^{4}.
			\end{equation}
			In particular, whenever $2\eta^{1/2} + \eta \le 1$ ,
			\[
			\zeta\Bigl\{ x : \omega''\bigl(B(x,r)\bigr) > \eta^{1/2}\, \omega'\bigl(B(x,r)\bigr) \Bigr\}
			\;\le\; \eta^{3}.
			\]
		\end{lemma}
		
		\begin{proof}
			Set
			\[
			E := \Bigl\{ x : \omega''\bigl(B(x,r)\bigr) > \eta^{1/2}\, \omega'\bigl(B(x,r)\bigr) \Bigr\}.
			\]
			For every $x \in E$ we have
			\begin{equation}\label{eq:bad}
				\omega'\bigl(B(x,r)\bigr) \;\le\; \eta^{-1/2}\, \omega''\bigl(B(x,r)\bigr),
			\end{equation}
			and \eqref{eq:bad} also holds trivially when either ball mass vanishes.
			
			By the Besicovitch covering theorem on the real line, there exists a
			countable subfamily of the balls $\{B(x_i,r)\}_{i}$, $x_i \in E$, which
			covers $E$ and whose covering multiplicity is at most $2$, i.e.\
			$\sum_i \mathbf 1_{B(x_i,r)}(y) \le 2$ for every $y \in \mathbb{R}$.
			Applying \eqref{eq:bad} at each center $x_i$, we obtain
			\aryst
			\omega'(E)
			&\le& \sum_i \omega'\bigl(E \cap B(x_i,r)\bigr)
			\le \sum_i \omega'\bigl(B(x_i,r)\bigr) \\
			&\le& \eta^{-1/2} \sum_i \omega''\bigl(B(x_i,r)\bigr)
			\le 2 \eta^{-1/2}\, \omega''([0,1])
			\le 2 \eta^{7/2},
			\earyst
			where the second last inequality uses the covering multiplicity. On the
			other hand, \eqref{eq:hyp} gives $\omega''(E) \le \omega''([0,1]) \le
			\eta^4$. Adding the two estimates yields
			\[
			\zeta(E) = \omega'(E) + \omega''(E) \le 2\eta^{7/2} + \eta^{4},
			\]
			which is \eqref{eq:concl}. 
		\end{proof}

		\begin{lemma}\label{lem:comparison}
			Let $\mu$ be a finite Borel measure on $\mathbb R^d$, and let
			$W=[-s/2,s/2]^d\subset W'=[-s'/2,s'/2]^d$ with $0<s<s'$. Put $N:=\lceil 2s'/s\rceil^{d}$.
			Then
			\begin{equation}\label{eq:master}
				\int_{\mathbb R^d}\frac{\mu(x+W')}{\mu(x+W)}\,d\mu(x)\ \le\ N\,\mu(\mathbb R^d),
			\end{equation}
			and consequently, for every $c_0>0$,
			\begin{equation}\label{eq:bad mass}
				\mu\bigl(\{x:\ \mu(x+W)\le c_0\,\mu(x+W')\}\bigr)\ \le\ N\,c_0\,\mu(\mathbb R^d).
			\end{equation}
		\end{lemma}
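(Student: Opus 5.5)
The plan is to prove \eqref{eq:master} by a discretization into a grid of cubes of side $s/2$, and then to deduce \eqref{eq:bad mass} from it by Markov's inequality.

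\emph{Grid and counting.} Tile $\mathbb R^d$ by the half-open grid cubes $Q$ of side $s/2$. For $x$ in a cube $Q$ we have $Q\subset x+W$, since every point of $Q$ lies within $s/2$ of $x$ in each coordinate. Likewise $x+W'$ is covered by the grid cubes $Q'$ that meet $x+W'$, all of which lie in $Q+[-s'/2-s/2,\,s'/2+s/2]^d$. The number of such $Q'$ is at most $(\lceil s'/(s/2)\rceil+1)^d$. This count may exceed the stated $N=\lceil 2s'/s\rceil^d$ by a dimensional factor, so I will redo the bookkeeping carefully; the alternative is to bound $\mu(x+W')\le\sum_{Q'\in \mathcal N(Q)}\mu(Q')$, where $\mathcal N(Q)$ is a neighbourhood set that is symmetric in $(Q,Q')$.

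\emph{Swapping sums.} With these two facts,
\[
\int\frac{\mu(x+W')}{\mu(x+W)}\,d\mu(x)\le\sum_Q\int_Q\frac{\sum_{Q'\in\mathcal N(Q)}\mu(Q')}{\mu(Q)}\,d\mu=\sum_Q\sum_{Q'\in\mathcal N(Q)}\mu(Q').
\]
The last sum is $\sum_{Q'}\mu(Q')\,|\{Q:Q'\in\mathcal N(Q)\}|\le |\mathcal N|\,\mu(\mathbb R^d)$, by symmetry of the relation. Points with $\mu(x+W)=0$ form a $\mu$-null set, because such an $x$ lies in a cube $Q\subset x+W$ with $\mu(Q)=0$, so they can be discarded.

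\emph{Main obstacle.} The only delicate point is matching the constant $N$ exactly. I expect this to work by choosing the grid so that each $x+W'$ meets at most $\lceil 2s'/s\rceil$ cubes per coordinate. One way to try this is to use the grid of side $s$ and compare with $x+W$ through a translate argument. Failing that, I accept a constant $C_d N$, which suffices for every use in the paper, namely \cref{lem goodbasepoint}, where only $C_d c_0$ enters.

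\emph{Markov step.} Finally, \eqref{eq:bad mass} follows from Markov's inequality. On the set in question the integrand $\mu(x+W')/\mu(x+W)$ is at least $1/c_0$, so by \eqref{eq:master} that set has $\mu$-measure at most $c_0N\mu(\mathbb R^d)$.
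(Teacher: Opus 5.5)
Your argument is correct as far as it goes, but it does not prove the lemma with the stated constant. With a fixed global grid of side $s/2$, the neighbourhood $\mathcal N(Q)=\{Q' : Q'\cap(Q+W')\neq\emptyset\}$ is symmetric, as you say, because $W'=-W'$. The trouble is its size: in your scheme the same union of cells must bound $\mu(x+W')$ for every $x\in Q$, so it must cover $Q+W'$, which has length $s'+s/2$ per coordinate. That takes between $\lceil 2s'/s\rceil+1$ and $\lceil 2s'/s\rceil+2$ cells per coordinate, always strictly more than $\lceil 2s'/s\rceil$. Your double counting therefore yields \eqref{eq:master} with $(\lceil 2s'/s\rceil+2)^d\le (5/3)^d N$ in place of $N$, and this boundary loss cannot be removed within a fixed-grid scheme. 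Your fallback with a grid of side $s$ also fails. A cube of side $s$ is not contained in $x+W$ when $x$ is near one of its corners; side at most $s/2$ is what guarantees $Q\subset x+W$ for every $x\in Q$.

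The missing idea is to let the subdivision depend on the outer variable. Use $W'=-W'$ and Tonelli to rewrite the left side of \eqref{eq:master} as $\int\bigl(\int_{y+W'}\mu(x+W)^{-1}\,d\mu(x)\bigr)\,d\mu(y)$. For each fixed $y$, subdivide the cube $y+W'$ itself into exactly $\lceil 2s'/s\rceil^d$ congruent subcubes of side $s'/\lceil 2s'/s\rceil\le s/2$. For $x$ in such a subcube $Q$ you have $Q\subset x+W$, hence $\int_Q\mu(x+W)^{-1}\,d\mu(x)\le 1$, and the inner integral is at most $N$. This is the paper's proof. It uses the same containment $Q\subset x+W$ as yours, but because the grid is adapted to $y+W'$, no cells are wasted at the boundary. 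Your Markov step for \eqref{eq:bad mass} is the same as the paper's. You are also right that the weaker constant $C_dN$ would suffice for the lemma's only use, in \cref{lem goodbasepoint}, where only $C_d c_0$ enters.
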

		
		\begin{proof}
			Since $W'=-W'$, Tonelli's theorem gives
			\[
			\int\frac{\mu(x+W')}{\mu(x+W)}\,d\mu(x)
			=\iint\frac{\mathbf 1_{W'}(y-x)}{\mu(x+W)}\,d\mu(y)\,d\mu(x)
			=\int\Bigl(\int_{y+W'}\frac{d\mu(x)}{\mu(x+W)}\Bigr)d\mu(y).
			\]
			Fix $y$ and subdivide the cube $y+W'$ into $N=\lceil 2s'/s\rceil^{d}$ congruent
			half-open subcubes $Q$, each of side $s'/\lceil 2s'/s\rceil\le s/2$. If $x\in Q$
			then $\|z-x\|_\infty\le s/2$ for all $z\in Q$, i.e. $Q\subset x+W$, whence
			$\mu(x+W)\ge\mu(Q)$ and $\int_Q \mu(x+W)^{-1}d\mu(x)\le 1$. Summing over the $N$
			subcubes yields $\int_{y+W'}\mu(x+W)^{-1}d\mu(x)\le N$, and \eqref{eq:master}
			follows. Finally, on the set in \eqref{eq:bad mass} one has
			$1\le c_0\,\mu(x+W')/\mu(x+W)$, so \eqref{eq:bad mass} follows from \eqref{eq:master}.
		\end{proof}

		\subsubsection*{Random dyadic partitions}
		\label{subsec:random partition}
		
		For $n \ge 1$ recall ${\mathcal{D}}_n$ denotes the standard dyadic partition of
		$\mathbb{R}^d$ into half-open cubes of side length $2^{-n}$, i.e.\ the cubes
		$[k_1 2^{-n}, (k_1+1) 2^{-n}) \times \cdots \times [k_d 2^{-n}, (k_d+1) 2^{-n})$,
		$k \in \mathbb{Z}^d$, and write $\mathcal{D}_n + t$ for its translate by
		$t \in \mathbb{R}^d$. For $c > 0$ let
		\[
		\partial_c( {\mathcal{D}}_n + t)
		:= \Bigl\{ x \in \mathbb{R}^d : \dist\bigl(x,\; \partial( {\mathcal{D}}_n+t)\bigr) < c\, 2^{-n} \Bigr\}
		\]
		be the $c\, 2^{-n}$-neighbourhood of $\partial(\mathcal{D}_n + t)$.
		
		\begin{lemma}\label{lem:avg}
			Let $\mu$ be a probability measure on $\mathbb{R}^d$. For all integers $C \ge 1$ and $N \ge 1$ there exists a translation
			$t \in [0,\, 2^{-CN})^d$ such that for all $c\in(0,1/2]$
			\begin{equation}\label{eq:avg}
				\frac{1}{N} \sum_{n = CN}^{(C+1)N - 1}
				\mu\bigl( \partial_c( {\mathcal{D}}_n + t) \bigr)
				\;\le\; 9d\sqrt{c} .
			\end{equation}
		\end{lemma}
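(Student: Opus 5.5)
The plan is a first-moment (averaging) argument over a uniformly random translate $t\in[0,2^{-CN})^d$, followed by Markov's inequality to fix a single good $t$ that works simultaneously for all $c\in(0,1/2]$. First fix $c$ and a point $x\in\R^d$. The point $x$ lies in $\partial_c(\cD_n+t)$ only if some coordinate $x_i-t_i$ lies within $c2^{-n}$ of $2^{-n}\Z$. For $n\ge CN$, the period $2^{-n}$ divides $2^{-CN}$, so $t_i\bmod 2^{-n}$ is uniform when $t_i$ is uniform on $[0,2^{-CN})$. Hence each coordinate fails with probability $2c$, and
\[
\mathbb P_t\bigl(x\in\partial_c(\cD_n+t)\bigr)\le 2dc .
\]
By Fubini,
\[
\mathbb E_t\,\frac1N\sum_{n=CN}^{(C+1)N-1}\mu\bigl(\partial_c(\cD_n+t)\bigr)\le 2dc .
\]

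To make one $t$ work for every $c$, discretize: the left side of \eqref{eq:avg} is monotone in $c$, so it suffices to treat $c_j=2^{-j}$ for $j\ge1$ and then lose a factor $2$. Indeed, for $c_{j+1}<c\le c_j$ we have $\sqrt{c_j}\le\sqrt2\sqrt c$. Define
\[
F_j(t)=\frac1N\sum_{n}\mu\bigl(\partial_{c_j}(\cD_n+t)\bigr)\Big/\sqrt{c_j}.
\]
The key step is to bound the single quantity $\mathbb E_t\sup_j F_j\le \sum_j \mathbb E_t F_j\le\sum_j 2d\sqrt{c_j}=2d\sum_j 2^{-j/2}=2d/(\sqrt2-1)<4.9d$. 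Therefore some $t$ satisfies $\sup_j F_j(t)\le 4.9d$, i.e.\ the bound holds at every dyadic $c_j$ with constant $4.9d$.

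Extending to general $c\in(0,1/2]$ by monotonicity then gives the bound $4.9d\sqrt2\sqrt c<9d\sqrt c$ for all $c\in(0,1/2]$, which is \eqref{eq:avg}.

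The only mildly delicate point is the uniformity claim for $t_i \bmod 2^{-n}$. It uses that $C,N$ are integers and $n\ge CN$, so that $2^{-CN}$ is an integer multiple of $2^{-n}$. Boundary conventions (open versus closed neighbourhoods) affect only null sets of $t$ and cause no difficulty.
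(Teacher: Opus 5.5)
Your argument is correct and is essentially the paper's proof: the same Fubini first-moment bound $2cd$ for the scale-averaged boundary mass under a uniform translate $t\in[0,2^{-CN})^d$, followed by a union bound over a geometric sequence of values of $c$ and monotonicity in $c$. The only difference is cosmetic: the paper applies Markov's inequality separately at each $c=4^{-m}$ with threshold $4d\sqrt{c}$ and sums the failure probabilities, whereas you sum the normalized expectations over $c=2^{-j}$, which is equivalent and gives the slightly better constant $(4+2\sqrt{2})d$.
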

		
		\begin{proof}
			Since $2^{-CN}/2^{-n} = 2^{n-CN}$ is an integer for every
			$n \in \{CN, \dots, (C+1)N - 1\}$, the box $T := [0,\, 2^{-CN})^d$ is a
			fundamental domain for each grid $\mathcal{D}_n$ in the block.
			
			Endow $T$ with Lebesgue measure. By Tonelli's theorem,
			\aryst
			\frac{1}{N} \sum_{n = CN}^{(C+1)N-1}
			\int_T \mu\bigl( \partial_c( {\mathcal{D}}_n + t) \bigr)\, dt
			=
			\int_{\mathbb{R}^d}
			\frac{1}{N} \sum_{n = CN}^{(C+1)N-1}
			\bigl| \bigl\{ t \in T : x \in \partial_c( {\mathcal{D}}_n + t) \bigr\} \bigr|
			\, d\mu(x) .
			\earyst
			Fix $x \in \mathbb{R}^d$ and $n \in \{CN, \dots, (C+1)N-1\}$. The point $x$
			lies in $\partial_c( {\mathcal{D}}_n + t)$ iff for some coordinate $i \in \{1, \dots, d\}$
			and some $k \in \mathbb{Z}$,
			\[
			\bigl| x_i - t_i - k\, 2^{-n} \bigr| < c\, 2^{-n}.
			\]
			For a fixed coordinate $i$, the set of $t_i \in [0, 2^{-n})$ satisfying this
			is one or two intervals of total length at most $2 c\, 2^{-n}$. Since $T$
			contains exactly $2^{n-CN}$ full periods of the grid of scale $2^{-n}$ in
			each coordinate, the set of $t_i \in [0, 2^{-CN})$ satisfying the condition
			has measure at most
			\[
			2 c\, 2^{-n} \cdot 2^{n - CN} = 2 c\, 2^{-CN},
			\]
			uniformly for $N$ in the block. Consequently, for fixed $x$ and $n$, the set
			of translations $t \in T$ with $x \in \partial_c(\mathcal{D}_n + t)$ has
			Lebesgue measure at most
			\[
			d \cdot \bigl( 2 c\, 2^{-CN} \bigr) \cdot
			\bigl( 2^{-CN} \bigr)^{d-1}
			= 2cd \cdot 2^{-CNd}
			= 2cd \cdot |T| .
			\]
			Hence, since $\mu$ is a probability measure,
			\[
			\frac{1}{N} \sum_{n = CN}^{(C+1)N-1}
			\int_T \mu\bigl( \partial_c( {\mathcal{D}}_n + t) \bigr)\, dt
			\;\le\; 2cd\, |T| .
			\]
			Dividing by $|T|$, the average over $t \in T$ of the scale-averaged boundary
			measure is at most $2cd$; therefore the probability of $t \in T$ for which
			\eqref{eq:avg} doesn't hold with $4d\sqrt{c}$ is less than $ \sqrt{c}/2$. Then we sum over $c\in\{4^{-m},\ m\in\N \}$, the total bad probability of $t\in T$ is less than
			\[ \sum_{m\geq 1}\sqrt{4^{-m}}/4\leq 1/4. \]
			Hence, there exists $t$ such that for all $c\in\{4^{-m},\ m\in\N \}$, \eqref{eq:avg} holds with $4d\sqrt{c}$. Hence for all $c\in(0,1/2]$, we have that \eqref{eq:avg} holds.
		\end{proof}

		The following lemma says that ball spreading is stable under nearby projections.
		\begin{lemma}\label{lem spreading nearby}
			Let $\zeta$ be a Borel probability measure on $\R^d$ whose support has diameter at most $D$. Let $H, H' \subset \R^d$ be hyperplanes and denote by $\zeta^{H}, \zeta^{H'}$ the push-forwards of $\zeta$ under $P^{H}$ and $P^{H'}$, viewed as measures on $\R$ by identifying the lines $H^{\perp}, (H')^{\perp}$ with $\R$. Let $n, l \geq 1$ be integers and $\eps > 0$, and suppose that
			\ary \label{eq nearby hyp}
			2 D \, \bd(H, H') \ \leq \ 2^{- n - l - 2}.
			\eary
			If $\zeta^{H}$ is $(n, l, \eps)$-ball spreading, then $\zeta^{H'}$ is $(n, l + 2, \eps + n^{-1})$-ball spreading.
		\end{lemma}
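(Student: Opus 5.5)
The plan is to transfer, point by point and scale by scale, the ball-spreading inequality for $\zeta^{H}$ to $\zeta^{H'}$ by showing that $P^{H}$ and $P^{H'}$ differ on $\supp\zeta$ by a translation plus an error much smaller than the finest scale $2^{-n-l}$. Fix a base point $p_0\in\supp\zeta$ and unit normals $u,u'$ of $H,H'$, oriented so that $\langle u,u'\rangle\ge0$; identify $H^{\perp},(H')^{\perp}$ with $\R$ via $s=\langle z-p_0,u\rangle$ and $s'=\langle z-p_0,u'\rangle$. For $z\in\supp\zeta$ we have $|s'-s|\le |z-p_0|\,\|u-u'\|\le D\cdot 2\bd(H,H')\le 2^{-n-l-2}$ (using $\|u-u'\|\le 2\sin\measuredangle(u,u')=2\bd(H,H')$ as in \cref{lem projhyperplanes}). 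Hence, with $\phi:=s'-s$ on the support, $|\phi|\le \tau:=2^{-n-l-2}$.

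Next compare balls. For any $z\in\supp\zeta$ and radius $r$, writing $x=s(z)$, $x'=s'(z)$, we get the sandwich
\[
\zeta^{H}\big(B(x,r-2\tau)\big)\le\zeta^{H'}\big(B(x',r)\big)\le\zeta^{H}\big(B(x,r+2\tau)\big).
\]
Now for $1\le k\le n$ with $k\le n-1$ (drop at most one scale, costing the $n^{-1}$), the radii satisfy $2\tau\le 2^{-k-l-1}$. So if $\zeta^{H}(B(x,2^{-k}))\le 2\zeta^{H}(B(x,2^{-k-l}))$ fails in the spreading sense — recall the definition counts scales where the doubling-type inequality holds and requires few of them — I need to phrase it carefully: the spreading condition says for most $x$, at most $\eps n$ scales have $\zeta(B(x,2^{-k}))\le 2\zeta(B(x,2^{-k-l}))$. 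So I show: if $\zeta^{H'}(B(x',2^{-k}))\le 2\zeta^{H'}(B(x',2^{-k-l-2}))$ then
\[
\zeta^{H}(B(x,2^{-k}-2\tau))\le 2\zeta^{H}(B(x,2^{-k-l-2}+2\tau))\le 2\zeta^{H}(B(x,2^{-k-l-1})),
\]
and I then need this to imply the bad event for $\zeta^H$ at scale $k$ or $k+1$. Since $2^{-k}-2\tau\ge 2^{-k-1}$, monotonicity gives $\zeta^{H}(B(x,2^{-k-1}))\le 2\zeta^{H}(B(x,2^{-(k+1)-l}))$, i.e. the $H$-bad event at scale $k+1$. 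Thus each $H'$-bad scale $k\le n-1$ (with parameter $l+2$) yields an $H$-bad scale $k+1\le n$ (parameter $l$), injectively; the possible scale $k=n$ adds at most $1$. So the number of $H'$-bad scales at $x'$ is at most (number of $H$-bad scales at $x$)$+1$, giving fraction $<\eps+n^{-1}$.

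Finally the measure statement: the set of $z$ whose $H$-image is good has $\zeta$-mass $>1-\eps$, and its $H'$-image has $\zeta^{H'}$-mass at least that, since good points are defined via $z$; pushing forward, $\zeta^{H'}$ of the good set exceeds $1-\eps\ge 1-\eps-n^{-1}$. The one subtlety (main obstacle) is that goodness is a property of the image point, while distinct $z$ with the same $s'$ may have different $s$; I handle it by noting the bad-scale count bound above holds for every $z$ with image $x'$, so $x'$ is good as soon as some (hence the measure argument uses any) preimage $z$ is $H$-good, and the set of such $x'$ contains $P^{H'}$ of the $H$-good preimages, of mass $>1-\eps$.
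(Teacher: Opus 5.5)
Your proposal is correct and follows essentially the same route as the paper. Both arguments use the additive comparison $\zeta^{H}(B(x,r-2\tau))\le\zeta^{H'}(B(x',r))\le\zeta^{H}(B(x,r+2\tau))$, the injection $k\mapsto k+1$ from $H'$-bad scales $k\le n-1$ (window $l+2$) into $H$-bad scales (window $l$) giving $|K'|\le|K|+1$, and a push-forward through $\zeta$ to compare the good sets.

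The only cosmetic difference is that the paper bounds $\bigl|\|P^{H}(w-z)\|-\|P^{H'}(w-z)\|\bigr|\le 2D\,\bd(H,H')$ directly for $z,w$ in the support, rather than going through a base point $p_0$. This gives error $\delta\le 2^{-n-l-2}$ instead of your $2\tau$. Your factor-$2$ loss is harmless, because you already discard the scale $k=n$.
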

		
		\begin{proof}
			Let $u, u'$ be unit vectors spanning $H^{\perp}$ and $(H')^{\perp}$. For every $v \in \R^d$,
			\aryst
			\big| \| P^{H} v \| - \| P^{H'} v \| \big| = \big| | \langle v, u \rangle | - | \langle v, u' \rangle | \big| \leq \| v \| \, \| u - u' \| \leq 2 \, \| v \| \, \bd(H, H').
			\earyst
			Since $\operatorname{supp} \zeta$ has diameter at most $D$, for every $z, w \in \operatorname{supp} \zeta$,
			\begin{equation} \label{eq nearby comparison}
				\big| \| P^{H} (w - z) \| - \| P^{H'} (w - z) \| \big| \ \leq \ \delta, \qquad \delta := 2 D \, \bd(H, H') \leq 2^{- n - l - 2}.
			\end{equation}
			Writing $t = P^{H} z$ and $s = P^{H'} z$ and integrating \eqref{eq nearby comparison} in $w$, we obtain
			\ary \label{eq nearby balls}
			\zeta^{H}\big( B(t, r - \delta) \big) \ \leq \ \zeta^{H'}\big( B(s, r) \big) \ \leq \ \zeta^{H}\big( B(t, r + \delta) \big), \qquad r > 0.
			\eary
			For $s \in (H')^{\perp}$, denote
			\aryst
			K'(s) = \Big\{ 1 \leq k \leq n \,\Big|\, \zeta^{H'}\big( B(s, 2^{-k}) \big) \leq 2 \, \zeta^{H'}\big( B(s, 2^{-k-l-2}) \big) \Big\},
			\earyst
			and define $K(t)$ for $t \in H^{\perp}$ similarly, with window $l$ in place of $l + 2$. If $k + 1 \notin K(t)$ and $1 \leq k \leq n - 1$, then by \eqref{eq nearby balls} and $\delta \leq 2^{- n - l - 2}$,
			\aryst
			\zeta^{H'}\big( B(s, 2^{-k}) \big) & \geq & \zeta^{H}\big( B(t, 2^{-k} - \delta) \big) \geq \zeta^{H}\big( B(t, 2^{-(k+1)}) \big)  >  2 \, \zeta^{H}\big( B(t, 2^{-(k+1)-l}) \big)\\
			& \geq & 2 \, \zeta^{H}\big( B(t, 2^{-k-l-2} + \delta) \big) \geq 2 \, \zeta^{H'}\big( B(s, 2^{-k-l-2}) \big),
			\earyst
			that is, $k \notin K'(s)$. Hence
			\ary \label{eq nearby count}
			| K'(s) | \ \leq \ | K(t) | + 1.
			\eary
			Since $\zeta^{H}$ and $\zeta^{H'}$ are the push-forwards of the same measure $\zeta$ under $z \mapsto t$ and $z \mapsto s$, respectively, \eqref{eq nearby count} gives
			\aryst
			\zeta^{H'}\big( \big\{ s : | K'(s) | \geq \eps n + 1 \big\} \big) \ \leq \ \zeta^{H}\big( \big\{ t : | K(t) | \geq \eps n \big\} \big) \ \leq \ \eps,
			\earyst
			where the last inequality is the $(n, l, \eps)$-ball spreading of $\zeta^{H}$. Thus $\zeta^{H'}\big( \big\{ s : | K'(s) | < (\eps + n^{-1}) n \big\} \big) > 1 - \eps \geq 1 - (\eps + n^{-1})$, which means exactly that $\zeta^{H'}$ is $(n, l + 2, \eps + n^{-1})$-ball spreading.
		\end{proof}

		The following lemma transfers ball spreading to dyadic spreading.
		
		\begin{lemma}\label{lem ball to dyadic}
			Let $n, l \geq 1$ be integers, let $0 < \eta < 1/10$, and let $\zeta \in \cP(\R)$. Suppose that there is a Borel set $\Omega \subset \R$ with $\zeta(\Omega) > 1 - \eta$ such that every $x \in \Omega$ satisfies
			\begin{equation}\label{eq ball spreading hyp}
				\frac{1}{n} \big| \big\{ 1 \leq k \leq n : \zeta\big( B(x, 2^{-k}) \big) > 2\, \zeta\big( B(x, 2^{-k-l}) \big) \big\} \big| > 1 - \eta .
			\end{equation}
			Set
			\aryst
			l_0 := \lceil 3 \log_2 (1/\eta) \rceil + 2, \qquad L := l  + l_0 + 1,
			\earyst
			and assume that $n \geq ( l_0 + 1)/\eta$. Then there is a Borel set $I \subset [0, 1]$ with $\mathrm{Leb}(I) > 1 - \eta$ such that for every $h \in I$, the measure $\zeta$ is $(n, L, 3\eta, h)$-dyadic spreading; that is,
			\aryst
			\zeta \Big\{ x \in \R \ \Big| \ \frac{1}{n} \big| \big\{ 1 \leq j \leq n : \zeta\big( \cD^{h}_{j}(x) \big) > 2\, \zeta\big( \cD^{h}_{j+L}(x) \big) \big\} \big| > 1 - 3\eta \Big\} > 1 - 3\eta .
			\earyst
		\end{lemma}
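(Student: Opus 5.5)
The plan is to randomize the offset $h$ and compare dyadic intervals with balls. For $h\in[0,1]$ I take $\cD^h_j$ to be the dyadic partition of scale $2^{-j}$ shifted by $h2^{-j}$, or an equivalent convention. Two inclusions drive the argument:
\[
\cD^h_j(x)\subset B(x,2^{-j}),\qquad B(x,2^{-j-l_0})\subset\cD^h_j(x)\ \text{ if } d\big(x,\partial\cD^h_j(x)\big)\ge 2^{-j-l_0}.
\]
If $x$ is at distance at least $2^{-j-l_0}$ from $\partial \cD^h_j$, and also at distance at least $2^{-j-L}$ from $\partial\cD^h_{j+L}$, then
\[
\zeta(\cD^h_j(x))\ge \zeta(B(x,2^{-j-l_0}))\qquad\text{and}\qquad \zeta(\cD^h_{j+L}(x))\le \zeta(B(x,2^{-j-L})).
\]
Apply \eqref{eq ball spreading hyp} at the scale $k=j+l_0$. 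Since $L=l+l_0+1$ gives $2^{-k-l}=2^{-j-L+1}\ge 2^{-j-L}$, I get
\[
\zeta(\cD^h_j(x))\ge\zeta(B(x,2^{-k}))>2\zeta(B(x,2^{-k-l}))\ge 2\zeta(\cD^h_{j+L}(x)).
\]
So every good ball scale $k$ with $k-l_0\ge1$ gives a good dyadic scale $j=k-l_0$, unless $x$ lies near the boundary at one of two levels. The unmatched scales are the top $l_0$ values of $k$ and the $j$ in $(n-l_0,n]$. These number at most $2l_0\le 2\eta n$ by the assumption $n\ge(l_0+1)/\eta$. Even if the second (finer) boundary condition turns out to be unnecessary, which seems likely, keeping it costs nothing.

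Next I control the boundary events by averaging over $h$, as in \cref{lem:avg}. For fixed $x$ and $j$, the Lebesgue measure of the set of $h\in[0,1]$ with $d(x,\partial\cD^h_j)<2^{-j-l_0}$ is at most $2\cdot 2^{-l_0}$. Hence, by Tonelli,
\[
\int_0^1\int \frac1n\#\{j\le n: x \text{ near } \partial\cD^h_j\}\,d\zeta(x)\,dh\le 2^{1-l_0}\le \eta^3/2,
\]
using $l_0\ge 3\log_2(1/\eta)+2$. The same bound holds for the finer level. By Markov's inequality in $h$, the set $I$ of $h$ for which the inner $\zeta$-integral is at most $\eta^2$ has $\mathrm{Leb}(I)>1-\eta$.

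Now fix $h\in I$. By a second Markov inequality in $x$, the set of $x$ for which the fraction of near-boundary scales exceeds $\eta$ has $\zeta$-measure at most $\eta$. Intersect its complement with $\Omega$; this set has measure greater than $1-2\eta$. For such $x$, the fraction of bad $j$ is at most
\[
\eta+\eta+2l_0/n\le 3\eta,
\]
where the terms come from failures of ball spreading, boundary proximity, and index shifts respectively. The mass bound is $>1-2\eta\ge1-3\eta$.

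The main obstacle is the exponent bookkeeping rather than any single step. The two Markov steps and the index shifts must together fit within $3\eta$, while the boundary estimate must be handled at both levels with the constants $l_0$ and $L$ exactly as given. If the budget is tight, I would first merge the two boundary events into one averaged quantity before applying Markov's inequality.
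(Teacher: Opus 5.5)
Your overall route is the paper's: a random shift $h$, centred coarse scales, the matching $k=j+l_0$, and Fubini plus two Markov inequalities (the paper applies Markov in $x$ first and then in $h$; your order is equivalent). However, two steps fail as written.

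\textbf{The finer-level boundary condition.} This condition can never hold. $\cD^h_{j+L}(x)$ is an interval of length $2^{-j-L}$ containing $x$, so $x$ always lies within $2^{-j-L-1}$ of $\partial\cD^h_{j+L}$. Hence ``the same bound holds for the finer level'' is false: for fixed $x,j$, every $h$ is exceptional. Keeping this event in the averaged count, or merging it as you propose, makes every scale bad, so it does not ``cost nothing''. With the threshold $2^{-j-L-l_0}$ instead, the averaging would work, but the condition would then only bound $\zeta(\cD^h_{j+L}(x))$ from below, which is the wrong direction. The fix is to drop it. Your own first inclusion, applied at level $j+L$, gives $\cD^h_{j+L}(x)\subset B(x,2^{-j-L})\subset B(x,2^{-k-l})$ unconditionally, and this is what the paper uses. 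Only the coarse-level centring should enter the Markov steps.

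\textbf{The final count.} From $n\ge(l_0+1)/\eta$ you only get $2l_0/n<2\eta$, so $\eta+\eta+2l_0/n$ is merely $<4\eta$. For example, $\eta=1/20$ gives $l_0=15$, and $n=320$ is admissible with $\eta+\eta+2l_0/n\approx0.194>3\eta$. The unmatched ball scales $k\le l_0$ should not be charged at all, since you are counting good $j$'s. Only the $l_0$ indices $j\in(n-l_0,n]$ are lost. Since $l_0\le\eta n-1$, the number of good $j$ exceeds $(1-\eta)n-\eta n-l_0\ge(1-3\eta)n$, as in the paper.

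\textbf{Convention.} The paper's $\cD^h_j$ is $\cD_j+h$, the same shift at every level, not a shift by $h2^{-j}$. This makes the partitions nested, which the proof of \cref{lem:ball-sumset} needs. Your averaging bound $2^{1-l_0}$ holds verbatim for this convention, since the grid $h+2^{-j}\Z$ runs through $2^j$ full periods as $h$ ranges over $[0,1)$.
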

		
		\begin{proof}
			A scale $j \in \{1, \cdots, n\}$ is called {\em $h$-centered} for $x \in \R$ if $d(2^{j}(x - h), \Z) > 2^{-l_0}$. Since $\cD^{h}_{j}(x)$ is an interval of length $2^{-j}$ whose boundary lies at distance $2^{-j} d(2^{j}(x-h), \Z)$ from $x$, in this case $B(x, 2^{-j-l_0}) \subset \cD^{h}_{j}(x)$; and $\cD^{h}_{j+L}(x)$, being an interval of length $2^{-(j+L)}$ containing $x$, is contained in $B(x, 2^{-j-L+1})$. Hence, if the ball inequality in \eqref{eq ball spreading hyp} holds at a scale $k \in \{l_0 + 1, \cdots, n\}$ and $j := k - l_0$ is $h$-centered for $x$, then, using $j + L = k + l + 1$,
			\begin{equation}\label{eq dyadic from ball}
				\zeta\big( \cD^{h}_{j}(x) \big) \geq \zeta\big( B(x, 2^{-k}) \big) > 2\, \zeta\big( B(x, 2^{-k-l}) \big) \geq 2\, \zeta\big( \cD^{h}_{j+L}(x) \big) .
			\end{equation}
			
			The key point is that the proportion of ill-centered scales can be computed exactly and uniformly in $x$ and $j$: for every $x \in \R$ and $j \geq 1$,
			\begin{equation}\label{eq bad h measure}
				\mathrm{Leb}\big\{ h \in [0, 1) : d\big( 2^{j}(x - h), \Z \big) \leq 2^{-l_0} \big\} = 2^{1 - l_0} ,
			\end{equation}
			because $h \mapsto 2^{j}(x - h) \bmod 1$ pushes the Lebesgue measure on $[0,1)$ forward to the Lebesgue measure on $\R/\Z$. %
			Set
			\aryst
			b(x, h) := \frac{1}{n} \big| \big\{ 1 \leq j \leq n : j \mbox{ is not $h$-centered for } x \big\} \big| .
			\earyst
			By \eqref{eq bad h measure} and Fubini's theorem, $\int_0^1 \int_{\R} b(x, h) \, d\zeta(x) \, dh = 2^{1 - l_0} \leq \eta^{3}/2$ by the choice of $l_0$. Writing $E_{h} := \{ x \in \R : b(x, h) > \eta \}$, Markov's inequality first in $x$ and then in $h$ gives $\int_0^1 \zeta(E_h) \, dh \leq \eta^2 / 2$, and the Borel set
			\aryst
			I := \big\{ h \in [0, 1] : \zeta(E_{h}) \leq \eta \big\} \quad \mbox{satisfies} \quad \mathrm{Leb}(I) \geq 1 - \eta/2 .
			\earyst
			Fix $h \in I$. For $x \in \Omega \setminus E_h$ (a set of $\zeta$-measure $> 1 - 2\eta$), the scales $k \in \{l_0 + 1, \cdots, n\}$ satisfying the ball inequality \eqref{eq ball spreading hyp} with $k - l_0$ being $h$-centered for $x$ number at least $(1-\eta)n - l_0 - \eta n \geq (1 - 3\eta)n$, since $n \geq (l_0 + 1)/\eta$; by \eqref{eq dyadic from ball}, each $j = k - l_0$ is then a dyadic spreading scale for $x$. This means that $\zeta$ is $(n, L, 3\eta, h)$-dyadic spreading for every $h \in I$.
		\end{proof}

		We can now state and prove the continuous version of \cref{thm Wu} used in \cref{subsec concludingtheproof}.
		
		\begin{lemma}\label{lem:ball-sumset}
			For every $0 < \gamma < 1/100$, every integer $l \geq 1$, every $0 < c_* < 1$ with $\gamma \leq c_*^{2}/400$ and every  $0 < \eps \leq \gamma / 200$, there exist $\delta = \delta(\gamma, l) > 0$ and $n_0 = n_0(\gamma, l, c_*, \eps) \geq 1$ such that for every integer $n \geq n_0$ the following holds. Let $\zeta \in \cP(\R)$ be $(n, l, \gamma)$-ball spreading. Let $A$ be a discrete subset of an interval of length $2^{\eps n}$ with
			\begin{equation}\label{eq mass on A}
				\zeta\Big( \bigcup_{a \in A} B(a, 2^{-n}) \Big) \geq c_* .
			\end{equation}
			Let $B$ be a subset of an interval of length $2^{\eps n}$, and for every $a \in A$, let $B_{a} \subset B$ be non-empty. Then, writing $r = 2^{-n}$:
			\enmt
			\item \label{itm sumset big} if $ N_{r}(B) \leq 2^{(1 - 12\gamma) n}$ and $N_{r}(B_{a}) \geq N_{r}(B)^{1 - \delta}$ for every $a \in A$, then
			\aryst
			N_{r}\Big( \bigcup_{a \in A} (a + B_{a}) \Big) \geq N_{r}(B)^{1 + \delta} ;
			\earyst
			\item \label{itm sumset small} if $N_{r}(B) \leq 2^{\delta n}$, then
			\aryst
			N_{r}\Big( \bigcup_{a \in A} (a + B_{a}) \Big) \geq 2^{\delta n} .
			\earyst
			\eenmt
		\end{lemma}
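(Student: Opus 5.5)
The plan is to discretize and reduce to \cref{thm Wu}, passing from ball spreading to dyadic spreading with \cref{lem ball to dyadic}. First rescale: translate so that the intervals of length $2^{\eps n}$ containing $A$ and $B$ start at $0$, and work at total scale $n_1:=n+\lceil\eps n\rceil$ after dilating by $2^{-\lceil \eps n\rceil}$, so that all sets lie in $[0,1]$ and $r$ becomes $2^{-n_1}$. Since $\eps\le\gamma/200$, ball spreading at scales $1\le k\le n$ for $\zeta$ yields, after dilation, ball spreading for the dilated measure on all but $\eps n_1+O(1)$ of the scales $1\le k\le n_1$. So the dilated measure is $(n_1,l,2\gamma)$-ball spreading once $n$ is large.

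Next, apply \cref{lem ball to dyadic} with $\eta=2\gamma$. This gives a shift $h$, and after translating by $h$ we may assume $h=0$, such that the measure is $(n_1,L,6\gamma)$-dyadic spreading. Here $L=l+l_0+1$ depends only on $\gamma$ and $l$. Then discretize. Replace each $a\in A$ and each $b\in B_a$ by the dyadic point of $2^{-n_1}\Z$ at its left endpoint. Let $\eta$ be the projection of the measure onto $2^{-n_1}\Z\cap[0,1]$, each atom receiving the mass of its cell. Cardinalities and $N_r$ are then comparable up to a factor $3$, and unions of sumsets are preserved up to $O(1)$ neighbouring cells. The mass condition \eqref{eq mass on A} transfers to the discretized $A'$, at most $3$ cells around each $a$, with mass at least $c_*$. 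Intersecting with the good set of the dyadic spreading, which has mass at least $1-6\gamma\ge 1-c_*/2$ since $\gamma\le c_*^2/400$, we obtain $A''\subset A'$ with $\eta(A'')\ge c_*/2$. Every point of $A''$ satisfies the dyadic condition of \cref{thm Wu} on at least $(1-6\gamma)n_1$ scales. We may assume this count is at least $(1-\gamma'/2)n_1$ with $\gamma':=12\gamma$; this is where the hypothesis $N_r(B)\le 2^{(1-12\gamma)n}$ is matched.

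For each $a\in A''$ pick an original $a^\ast\in A$ lying in one of the neighbouring cells, and use $B_{a^\ast}$ discretized as $B'_a$. Then apply \cref{thm Wu} with parameters $(\gamma',L,c_*/2)$ and scale $n_1$. In case (1), $|B'|\le 3N_r(B)\le 2^{n_1(1-\gamma')}$ up to constants absorbed by slightly shrinking $\gamma'$. We also have $|B'_a|\ge N_r(B_a)/3\ge |B'|^{1-\delta'}$ after adjusting $\delta$. The theorem gives $|\bigcup(a+B'_a)|\ge|B'|^{1+\delta'}$, which converts back to $N_r$ with $O(1)$ losses. Take $\delta=\delta'/2$ to absorb these losses and the difference between $n$ and $n_1$. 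Case (2) is identical using the second case of \cref{thm Wu}, with $|B'|\le 3\cdot2^{\delta n}<2^{\delta' n_1}$ for suitable $\delta\le\delta'/2$.

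The main obstacle is bookkeeping. The constant $\delta$ must depend only on $(\gamma,l)$, through $L(\gamma,l)$, and not on $c_*$ or $\eps$. The $\eps n$ scale shift from the length-$2^{\eps n}$ intervals must be absorbed without degrading the spreading threshold beyond $\gamma'/2$.
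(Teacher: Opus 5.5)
Your outline follows the paper's route: rescale by $2^{-\lceil \eps n\rceil}$, pass from ball to dyadic spreading with \cref{lem ball to dyadic}, discretize, and apply both cases of \cref{thm Wu}. The one structural difference is how you transfer \eqref{eq mass on A} from balls to dyadic cells. The paper coarsens to level $N=n_1-1$ and averages over the shift $h$ to find $h\in I$ for which the cells containing points of $A_1$ carry mass $\ge c_*/4$. You instead keep every level-$n_1$ cell meeting some $B(a,r)$ and reassign each such cell to a nearby $a^\ast\in A$. That works: the reassignment moves each piece $a+B_a$ by $O(r)$, which costs only a constant in $N_r$. The digit set keeps mass $\ge c_*$, and any shift $h\in I$ may be used.

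The step that fails as written is the spreading count. Discretizing at level $n_1$ loses the top $L$ scales, because for $k+L>n_1$ the level-$(k+L)$ atom of the discretized measure is a whole cell. So $(n_1,L,6\gamma)$-dyadic spreading gives more than $(1-6\gamma)n_1-L$ good scales at points of $A''$. That falls short of the $(1-\gamma'/2)n_1=(1-6\gamma)n_1$ which \cref{thm Wu} needs with $\gamma'=12\gamma$. Shrinking $\gamma'$ makes this worse. Enlarging it does not help either. The size check $3\cdot 2^{(1-12\gamma)n}\le 2^{(1-\gamma')n_1}$ only has slack of order $\eps n$, and $\eps$ may be arbitrarily small while $\delta$ must not depend on it. (The paper's bookkeeping breaks at exactly this point, in the other direction: it applies \cref{thm Wu} with $48\gamma$, but its bound $6\cdot 2^{(1-12\gamma)(N+1)}\le 2^{(1-48\gamma)N}$ is false. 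So your matching $\gamma'=12\gamma$ is the right one.)

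The fix is a sharper count before invoking \cref{lem ball to dyadic}. The rescaled measure has fewer than $\gamma n+\lceil\eps n\rceil$ bad ball scales at good points. That is a fraction at most $\gamma+\eps+1/n\le 1.01\gamma$ of $n_1$, so the rescaled measure is $(n_1,l,1.01\gamma)$-ball spreading. Apply \cref{lem ball to dyadic} with its parameter equal to $1.01\gamma$. Then:
\begin{itemize}
\item the points of $A''$ have fewer than $3.03\gamma n_1+L<6\gamma n_1$ bad scales;
\item you still have $\eta(A'')\ge c_*-3.03\gamma\ge c_*/2$;
\item $L$ and $\delta'=\delta(12\gamma,L)$ depend only on $(\gamma,l)$.
\end{itemize}
Finally, absorbing the constant factors in case (1) by ``adjusting $\delta$'' requires $N_r(B)$ to be large. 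Handle $N_r(B)\le 2^{\delta n/2}$ through case (2), as the paper does.
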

		
		\begin{proof}
			Let $\delta_0 := \delta(48\gamma, L) > 0$ be the gain given by \cref{thm Wu} applied with $48\gamma$ in place of $\gamma$ and $L$ in place of $l$; here and below, $L = l + O_\gamma(1)$ is the window given by \cref{lem ball to dyadic}. We will prove the lemma with $\delta := \delta_0 / 4$, which depends only on $\gamma$ and $l$.
			
			\smallskip
			\noindent{\it Step 0 (reduction to unit support).} Set
			\aryst
			m := \lceil \eps n \rceil, \qquad n_1 := n + m, \qquad r_1 := 2^{- n_1} = 2^{- m} r, \qquad S(x) := 2^{- m} x,
			\earyst
			and $\zeta_1 := S_* \zeta$, $A_1 := S(A)$, $B_1 := S(B)$ and $B_{1, a_1} := S(B_{S^{-1} a_1})$ for $a_1 \in A_1$. Translating $(\zeta, A)$ together and $B$ separately beforehand, we may assume that both intervals in the statement equal $[0, 2^{\eps n}]$; then $A_1, B_1 \subset [0, 1]$. Note that $N_{r_1}(S(E)) = N_r(E)$ for every set $E$; hence
			\begin{equation} \label{eq scaled envelope}
				\begin{split}
					&N_{r_1}(B_1) \leq 2^{(1 - 12\gamma) n} = 2^{(1 - 12\gamma)(n_1 - m)} \leq 2^{(1 - 12\gamma) n_1} \quad \mbox{in case \eqref{itm sumset big}},\\
					& N_{r_1}(B_1) \leq 2^{\delta n} \leq 2^{\delta n_1} \quad \mbox{in case \eqref{itm sumset small}},	
				\end{split}
			\end{equation}
			and
			\ary \label{eq scaled charging}
			\zeta_1\Big( \bigcup_{a_1 \in A_1} B(a_1, r_1) \Big) = \zeta\Big( \bigcup_{a \in A} B(a, 2^{-n}) \Big) \geq c_*.
			\eary
			We claim that $\zeta_1$ is $(n_1, l, 2\gamma)$-ball spreading. Indeed, for $j \in \{1, \cdots, n_1\}$ and $x \in \R$,
			\aryst
			\zeta_1\big( B(Sx, 2^{-j}) \big) = \zeta\big( B(x, 2^{-(j - m)}) \big),
			\earyst
			so a bad scale $j > m$ of $\zeta_1$ at $Sx$ corresponds to a bad scale $j - m \in \{1, \cdots, n\}$ of $\zeta$ at $x$. For $x$ in the good set $\Omega$ with $\zeta(\Omega) > 1 - \gamma$ given by the ball spreading of $\zeta$, the bad scales of $\zeta_1$ at $Sx$ therefore number less than $\gamma n + m \leq 2\gamma n_1$, as required.
			
			\smallskip
			\noindent{\it Step 1 (dyadic spreading at a good shift).} By \cref{lem ball to dyadic} applied to $\zeta_1$ with $\eta = 2\gamma$ (its threshold $n_1 \geq (l_0 + 1)/(2\gamma)$ holds for $n \geq n_0(\gamma, l, \eps)$), there is a Borel set $I \subset [0, 1]$ with $\mathrm{Leb}(I) > 1 - 2\gamma$ such that $\zeta_1$ is $(n_1, L, 6\gamma, h)$-dyadic spreading for every $h \in I$. 
			
			Set {$N := n_1 - 1$ and $r_2 := 2^{-N} = 2 r_1$}, so that level-$N$ dyadic cells have length $r_2$; this is the finest level at which the averaging below sees radius-$r_1$ balls. For $x, a \in \R$, the point $x$ lies in $\cD^{h}_{N}(a)$ if and only if the grid $h + 2^{-N}\Z$ does not separate $x$ from $a$; as $h$ ranges over $[0, 1)$ the grid runs through $2^{N}$ full periods, and in each period the separating shifts form an interval of length $\min\{ |x - a|, 2^{-N} \}$. Hence, with $a = a(x)$ a nearest point of $A_1$ to $x$,
			\aryst
			\int_{0}^{1} \mathbf{1}_{\cup_{a_1\in A_1}\cD^{h}_{N}(a_1)}(x) \, dh = \big( 1 - 2^{N} |x - a| \big)_{+} \geq \frac{1}{2} \, \mathbf{1}_{\cup_{a_1\in A_1}B(a_1, 2^{-N-1})}(x).
			\earyst
			Integrating against $\zeta_1$ and using \eqref{eq scaled charging} gives
			\aryst
			\int_{0}^{1} \zeta_1 \Big( \bigcup_{a_1 \in A_1} \cD^{h}_{N}(a_1) \Big) \, dh \geq \frac{c}{2} .
			\earyst
			Since the integrand is at most $1$ and $\mathrm{Leb}([0,1] \setminus I) \leq 2\gamma \leq c/4$, there is $h \in I$ with
			\ary \label{eq good h}
			\zeta_1 \Big( \bigcup_{a_1 \in A_1} \cD^{h}_{N}(a_1) \Big) \geq \frac{c}{4} .
			\eary
			Fix such an $h$ and abbreviate $\cD_j = \cD^{h}_{j}$. %
			
			\smallskip
			\noindent{\it Step 2 (discretization and Wu's theorem).} For $D \in \cD_{N}$ let $x_D$ be its left endpoint, and set
			\aryst
			\eta := \sum_{D \in \cD_{N}} \zeta_1(D) \, \delta_{x_D}, \qquad A' := \{ x_D : D \in \cD_{N}, \ D \cap A_1 \neq \emptyset \}, \qquad \widetilde B_{x_D} := \bigcup_{a_1 \in A_1 \cap D} B_{1, a_1}.
			\earyst
			By \eqref{eq good h} we have $\eta(A') \geq c/4$; $\widetilde B_{a'}$ is a non-empty subset of $B_1$, and in case \eqref{itm sumset big} it satisfies $N_{r_1}(\widetilde B_{a'}) \geq N_{r_1}(B_1)^{1-\delta}$ (it contains $B_{1, a_1}$ for some $a_1 \in A_1 \cap D$). Let $G_1$ be the set of $(n_1, L, 6\gamma, h)$-dyadic spreading points of $\zeta_1$, so $\zeta_1(G_1) > 1 - 6\gamma$, and set $A'' := A' \cap \{ x_D : D \cap G_1 \neq \emptyset \}$; then $\eta(A'') \geq c/4 - 6\gamma \geq c/8$. If $D \in \cD_{N}$ meets $G_1$, then every $x \in G_1 \cap D$ shares its dyadic atoms with $x_D$ at all levels $j \leq N$, and the dyadic masses for $\zeta_1$ and $\eta$ agree at those levels; hence every point of $A''$ is $(N, L, 12\gamma)$-dyadic spreading with respect to $\eta$ (cf. \cite[proof of Corollary 5.4, Step 1]{LPTX}). Discretize the sets at scale $r_2=2^{-N}$: writing $E^{(r_2)} := E(r_2) \cap r_2 \Z$, set $D := B_1^{(r_2)}$ and $D_{a'} := \widetilde B_{a'}^{(r_2)}$. By \cite[Lemma 5.3]{LPTX},
			\begin{equation}
				\label{eq envelope discrete}
				|D| \leq 6 N_{r_2}(B_1), \qquad |D_{a'}| \geq N_{r_2}(\widetilde B_{a'}) \ \ (a' \in A''), \qquad N_{r_2}(E) \geq \frac{1}{6} |E^{(r_2)}|.
			\end{equation}
			
			In case \eqref{itm sumset big}, { if $N_{r_1}(B_1) \leq 2^{\delta n/2}$ then case \eqref{itm sumset small} applies and gives the conclusion, so we may assume} $N_{r_1}(B_1) > 2^{\delta n / 2} \geq 18^{2/\delta_0}$ for $n \geq n_0(\gamma, l)$ (recall $\delta = \delta_0 / 4$). Then by \eqref{eq scaled envelope} and \eqref{eq envelope discrete}, for $n \geq n_0(\gamma, l, c, \eps)$ (the one-scale shift $n_1 = N + 1$ and the $O(1)$ shifts below are absorbed since $\gamma N \gg 1$),
			\aryst
			|D| \leq 6 N_{r_2}(B_1) \leq 6 N_{r_1}(B_1) \leq 6 \cdot 2^{(1 - 12\gamma)(N+1)} \leq 2^{(1 - 48\gamma) N}, \\
			\qquad |D_{a'}| \geq N_{r_2}(\widetilde B_{a'}) \geq \frac{1}{2} N_{r_1}(B_1)^{1-\delta} \geq \frac{1}{2} \big( |D| / 6 \big)^{1-\delta} \geq |D|^{1 - \delta_0},
			\earyst
			where the last inequality uses $N_{r_1}(B_1) \geq |D|/6$ and $|D|^{\delta_0 - \delta} \geq |D|^{3 \delta_0 / 4} \geq 18^{3/2} > 12$ (as $|D| \geq N_{r_1}(B_1) \geq 18^{2/\delta_0}$).
			We may therefore apply \cref{thm Wu} (with $48\gamma,c_*/8,L,N$ in place of $\gamma,c_*,l,n$): since every point of $A''$ is $(N, L, 12\gamma)$-dyadic spreading, its hypothesis holds with $48\gamma$, and hence
			\aryst
			\Big| \bigcup_{a' \in A''} (a' + D_{a'}) \Big| \geq |D|^{1 + \delta_0}.
			\earyst
			Since $D_{a'} \subset \widetilde B_{a'}^{(r_2)}$, the set $\bigcup_{a'}(a' + D_{a'})$ is contained in $\big(\bigcup_{a'}(a' + \widetilde B_{a'})\big)^{(r_2)}$, so by \eqref{eq envelope discrete},
			\aryst
			N_{r_2}\Big( \bigcup_{a' \in A''} (a' + \widetilde B_{a'}) \Big) \geq \frac{1}{6} |D|^{1+\delta_0} \geq \frac{1}{6} N_{r_2}(B_1)^{1+\delta_0}.
			\earyst
			Finally, each $a_1 \in A_1 \cap D$ satisfies $|a_1 - x_D| \leq r_2$, so $a' + \widetilde B_{a'} \subset \big(\bigcup_{a_1 \in A_1}(a_1 + B_{1, a_1})\big)(r_2)$; applying \cite[Lemma 5.3]{LPTX} once more and using $N_{r_1}(E) \geq N_{r_2}(E)$ and $N_{r_2}(B_1) \geq \frac{1}{2} N_{r_1}(B_1)$,
			\begin{equation}
				\label{eq sumset bound}
				N_{r_1}\Big( \bigcup_{a_1 \in A_1} (a_1 + B_{1, a_1}) \Big) \geq N_{r_2}\Big( \bigcup_{a_1 \in A_1} (a_1 + B_{1, a_1}) \Big) \geq \frac{1}{72} N_{r_1}(B_1)^{1+\delta_0} \geq N_{r_1}(B_1)^{1+\delta_0/4}.
			\end{equation} 
			
			\smallskip
			\noindent{\it Step 3.} It remains to treat \eqref{itm sumset small}, where $N_{r_1}(B_1) < 2^{\delta n} \leq 2^{\delta n_1}$ by \eqref{eq scaled envelope}. Then by \eqref{eq envelope discrete}, for $n \geq n_0(\gamma, l, c, \eps)$,
			\aryst
			|D| \leq 6 \cdot 2^{\delta n_1} \leq 2^{\delta_0 N} ,
			\earyst
			and $D_{a'}$ is non-empty for every $a' \in A''$. Applying the second case of \cref{thm Wu} (with $48\gamma$, $c_*/8$ and $L$ as above, and with $N$ in place of $n$) to the digit set $A''$ and the pieces $D_{a'}$ gives
			\aryst
			\Big| \bigcup_{a' \in A''} (a' + D_{a'}) \Big| \geq 2^{\delta_0 N} ,
			\earyst
			and the same conversion as in Step 2 yields
			\ary \label{eq sumset small bound}
			N_{r_1}\Big( \bigcup_{a_1 \in A_1} (a_1 + B_{1, a_1}) \Big) \geq \frac{1}{18} \cdot 2^{\delta_0 N} \geq 2^{\delta_0 N/2}
			\eary
			for $n$ large. %
			
			\smallskip
			\noindent{\it Step 4 (scaling back).} Since $N_{r_1}(S(E)) = N_r(E)$ for every set $E$, scaling back by $S^{-1}$ transfers \eqref{eq sumset bound} and \eqref{eq sumset small bound} verbatim to the original scale, and the lemma follows (using $\delta \leq \delta_0/4$ and $N = n_1 - 1 = n + m - 1 \geq n/2$ for $n \geq n_0(\gamma)$).
		\end{proof}

		\bibliography{bibfile}

	\end{document}